\renewcommand{\div}{\mbox{div}\,}
\renewcommand{\div}{\mbox{div}\,}
\newcommand{\R}{{\mathbb R}} 
\newcommand{\N}{{\mathbb N}}
\newcommand{\A}{\mathbb{A}}
\newcommand{\B}{\mathbb{B}}
\newcommand\norm[1]{\left\| #1\right\|}
\newcommand{\M}{{\mathcal M}}
\newcommand{\wei}[1]{\langle #1 \rangle}
\newcommand{\hmu}{{\hat{\mu}}}
\newcommand{\F}{{\mathbf F}}
\newtheorem{theorem}{Theorem}[section]
\newtheorem{definition}[theorem]{Definition}
\newtheorem{remark}[theorem]{Remark}
\newtheorem{lemma}[theorem]{Lemma}
\newtheorem{proposition}[theorem]{Proposition}
\numberwithin{equation}{section}
\newcommand{\beq}{\begin{equation}}
\newcommand{\eeq}{\end{equation}}
\definecolor{darkred}{rgb}{.70,.12,.20}
\definecolor{darkgreen}{rgb}{.20,.52,.14}
\title[Weighted gradient estimates, degenerate Elliptic Equations] {Weighted $W^{1,p}$- estimates for weak solutions of degenerate elliptic equations with coefficients degenerate in one variable}
\author{Tadele Mengesha and Tuoc Phan}
\address{Department of Mathematics, University of Tennessee, Knoxville, 227 Ayres Hall, 1403 Circle Drive, Knoxville, TN 37996, U.S.A.}
\email{mengesha@math.utk.edu, phan@math.utk.edu}
\begin{document}
\begin{abstract} This paper studies the Sobolev regularity of weak solution of degenerate elliptic equations in divergence form $\textup{div}[\A(X) \nabla u] = \textup{div}[\F(X)]$, where $X = (x,y) \in \R^{n} \times \R$ . The coefficient matrix $\A(X)$ is a symmetric, measurable $(n+1) \times (n+1)$ matrix, and it could be degenerate or singular in the one dimensional $y$-variable as a weight function in the Muckenhoupt class $A_2$ of weights. Our results give weighted Sobolev regularity estimates of Calder\'{o}n-Zygmund type for weak solutions of this class of singular, degenerate equations. As an application of these estimates, we establish global Sobolev regularity estimates for solutions of the spectral fractional elliptic equation with measurable coefficients.
This result can be considered as the Sobolev counterpart of the recently established Schauder regularity theory of fractional elliptic equations. 
\end{abstract}

\maketitle

Keywords:  Degenerate elliptic equations, Muckenhoupt weights, Weighted Hardy-Littlewood maximal functions, Weighted Sobolev estimates, Fractional elliptic equations
\section{Introduction}
This paper investigates Sobolev regularity theory for weak solutions of 
linear elliptic equations with measurable and degenerate coefficients 
\begin{equation} \label{introc-eqn}
\textup{\div}[\A(X) \nabla u(X)] = \textup{div}[\F(X)], \quad X = (x,y) \in \Omega \times (0,2),
\end{equation}
over some bounded domain $\Omega \subset \R^n$,  $n \in \N$, and with suitable boundary conditions. In  the equation \eqref{introc-eqn},  $\A$ is a given symmetric and  measurable $(n+1) \times (n+1)$ matrix, which can be either singular or degenerate in the $y$-variable in $\R^{n+1}$.  Essentially, we assume that the smallest and the largest eigenvalues of $\A$ behave proportionally as a weight in the $A_{2}$-Muckenhoupt class, which will be defined shortly. Precisely, we assume that there exist $\Lambda>0$ and a weight function $\mu : \R \rightarrow [0,\infty)$ in $A_{2}(\mathbb{R})$ such that
\begin{equation} \label{variable-degenerate}
\Lambda^{-1} |\xi|^2 \mu(y) \leq \wei{\A(X)\xi, \xi} \leq \Lambda |\xi|^2 \mu(y), \quad \text{for a.e.} \ X = (x,y) \in \Omega \times (0,2), \quad 
\forall \ \xi \in \R^{n+1}.
\end{equation}
The vector field $\F$ is an $(n+1)$-tuple of measurable functions.

This work is a continuation of  \cite{CMP}, where Calder\'{o}n-Zygmund type regularity estimates in weighted spaces are established  for weak solutions of degenerate elliptic equations. In \cite{CMP}, the matrix $\A$ is degenerate/singular in all directions of $X$, while the current work focuses in  the case that $\A$ is only singular or degenerate in the one dimensional $y$-variable of $X$ as in \eqref{variable-degenerate}.  
The motivations for studying equations \eqref{introc-eqn} with partially degenerate/singular coefficient $\A$ as in \eqref{variable-degenerate}  are twofold. The first motivation is 
 to extend the Calder\'{o}n-Zygmund type regularity estimates for uniformly elliptic equation, which holds for the wider class of partial \textup{BMO}/\textup{VMO} coefficients as documented in
  \cite{B-Kim, Byun-P, Dong-Kim-1, Dong-Kim, Kim-Krylov, Kim-Krylov-1, Krylov}, to the broader class of  degenerate equations.  This will be achieved by introducing an appropriate means of measuring mean oscillations that is compatible with the degeneracy of the coefficients. 
  Equations with degenerate coefficients appear in applications, see for example the model in mathematical finance  in \cite{Pop-1, Pop-2}. 
  We note  that equations of type \eqref{introc-eqn} with degenerate/singular coefficients $\A$ have been investigated extensively, see \cite{Fabes-1, Fabes, GW, MRW, Murthy-Stamp, NPS, Str, Stredulinsky, Turesson}, to cite a few, in connection with developing a  Schauder regularity theory. 
  In this work we develop the Sobolev counterpart. 
   The second motivation is to obtain Sobolev regularity estimates for solutions of some  (nonlocal) fractional elliptic equations with measurable coefficients. It turns out that some fractional elliptic operators can be obtained as Dirichlet-to-Neumann maps for degenerate elliptic equations in one more space dimension, see for example \cite{Caffa-Sil, T-Stinga2010, Caffa-Stinga, Cabre, Capella}. As a consequence, we obtain Sobolev estimates for solutions of  fractional elliptic equations  from estimate for solutions of degenerate elliptic equations in one more space dimension. 
This result can be considered as the Sobolev counterpart of the Schauder regularity theory for fractional elliptic equations that has recently been developed in \cite{Caffa-Sil, Caffa-Stinga}. 

We focus on the following two model problems where the degeneracy or singularity of $\A$ appears on the hyperplane $y =0$. The first one is the Neumann boundary value type problem
\begin{equation} \label{main-eqn}
\left\{
\begin{array}{cccl}
\text{div}[\A(X) \nabla u(X)] & = &\text{div}[{\bf F}], & \quad X = (x,y) \in Q_2: = B_2 \times (0,2), \\
\lim_{y\rightarrow 0^+}\wei{\A(X) \nabla u(X) - {\bf F}(X) , e_{n+1}} & = & f(x),  & \quad X= (x, y)\ \in B_2 \times\{0\},
\end{array}
\right.
\end{equation}
and the second is the mixed Dirichlet-Neumann boundary value problem over the half cylinder with base the half-ball 
\begin{equation} \label{main-eqn-flat-domain}
\left\{
\begin{array}{cccl}
\text{div}[\A(X) \nabla u(X)] & = &\text{div}[{\bf F}], & \quad X = (x,y) \in Q_2^+ :=B_2^+ \times (0,2), \\
u(X) &= & 0, & \quad X = (x,y) \in   T_2 \times (0,2),\\
\lim_{y\rightarrow 0^+}\wei{\A(X) \nabla u(X) - {\bf F}(X) , e_{n+1}} & = & f(x),  & \quad X= (x, y)\ \in B_2^+ \times\{0\}.
\end{array} \right.
\end{equation}
We explain the notation used in \eqref{main-eqn} and \eqref{main-eqn-flat-domain}.  The ball in $\R^n$ with radius $r>0$ and  centered at the origin is denoted by $B_r$. Its upper half ball is denoted by $B_r^+$:
\[
B_r^+ = \left\{ (x', x_n) \in B_r: x_n >0 \right\}, \quad \text{and}  \quad T_r = \left\{ x \in B_r: x= (x',0), \ x' \in \R^{n-1} \right\}.
\]
We also write $Q_r = B_r \times (0,r)$, and $Q_r^+ = B_r^+ \times (0,r)$. Moreover, $e_{n+1} = (0, 0, \cdots, 0, 1)$ is the standard $(n+1)^{th}$ unit coordinate vector in $\mathbb{R}^{n+1}$.  With either $\Omega = B_2$ or $B_2^+$,  $\F : \Omega \times (0,2) \rightarrow \R^{n+1}$ is a given measurable vector field $\F = (F_1, F_2,\cdots, F_n)$, and the data $f: \Omega \rightarrow \R$ is a given measurable function. We also write
\[
\nabla u(X) = (\nabla_x u(X), \partial_y u(X)), \quad \text{div}[{\bf F}] = \sum_{k=1}^n \partial_{x_k} F_k(X) + \partial_y F_{n+1} (X), \quad X = (x,y) \in \Omega \times (0,2).
\] 
We also use the standard notation $L^p(Q_r, \mu)$ (or $L^p(Q_r^+, \mu)$) for the weighted Lebesgue space with weight $\mu$ that consists of all measurable function $f$ defined on $Q_r$ (or $Q_r^+$) such that $|f|^p$ is integrable on $Q_r$ (or $Q_r^+$) with the $\mu(y) dxdy$ measure. Similarly $W^{1,p}(Q_r, \mu)$ or $W^{1,p}(Q_r^+, \mu)$ is the weighted Sobolev space with weight $\mu$, where both the function and its distributional gradient are in the weighted Lebesgue space. For a weight function $\mu : \R \rightarrow [0,\infty)$, we denote $\hmu(X) = \mu(y)$ with $X = (x,y) \in \R^{n+1}$, and $\hmu(E) = \int_E \mu(y) dX$ for any measurable set $E\subset \R^{n+1}$. For a locally integrable function $f$ in $\R^n$, we also denote $\wei{f}_E$ the average of $f$ on the measurable set $E$, 
$\wei{f}_E = \fint_{E} f(x) dx.
$

We can now state our first result on Sobolev regularity estimates of weak solutions of \eqref{main-eqn}. The standard definitions of weak solutions of \eqref{main-eqn}-\eqref{main-eqn-flat-domain} are given in Definition \ref{grounding-weak-solution-Q-R} and Definition \ref{weak-solution-Q-R-plus}. The definition of Muckenhoupt classes of $A_{p}$-weights is recalled in Section \ref{statement-coefficients}. 
\begin{theorem} \label{local-grad-estimate-interior} Let $\Lambda >0, M_0 \geq 1$, and $p \geq 2$. Then, there exists $\delta = \delta (\Lambda, M_0, n, p) >0$ and sufficiently small such that the following statement holds: 
Suppose that $\mu \in \A_2(\R)$ with $[\mu]_{A_2} \leq M_0$, and $\A \in \mathcal{A}(Q_2, \Lambda, M_0, \mu)$ satisfies the degeneracy  condition
\begin{equation} \label{Q-2-ellipticity-condition}
\Lambda^{-1} \mu(y) |\xi|^2 \leq \wei{\A(X)\xi, \xi} \leq \Lambda \mu(y) |\xi|^2, \quad \forall \ \xi \in \R^{n+1}, \quad \text{for a.e} \ X = (x,y) \in Q_2,
\end{equation}
and the following smallness condition on the mean oscillation with weight $\mu$
\begin{equation}\label{PBMO}
\sup_{0< \rho <1} \sup_{X_0 = (x_0, y_0)  \in Q_1} \frac{1}{\hmu(D_\rho(X_0))} \int_{D_\rho(X_0) \cap Q_2} |\A(x,y) - \wei{\A}_{B_\rho(x_0)}(y)|^2 \mu^{-1}(y) dx dy < \delta^2.
\end{equation}
If $\F : Q_2 \rightarrow \R^{n+1}$, $f : B_2 \rightarrow \R$ satisfy $|\F/\mu| \in L^p(Q_2, \mu)$ and $f/\mu \in L^p(Q_2, \mu)$, then for every weak solution $u \in W^{1,2}(Q_2, \mu)$  of \eqref{main-eqn}, it holds that $\nabla u \in L^p(Q_1, \mu)$, and moreover 
\[
\|\nabla u\|_{L^{p}(Q_{1}, \mu)} \leq C \left( \hmu(Q_{1})^{\frac{1}{p} - \frac{1}{2}} \|\nabla u\|_{L^{2}(Q_{2}, \mu)} + \left\|\F/\mu \right\|_{L^{p}(Q_{2}, \mu)} + \left\|f/\mu\right\|_{L^{p}(Q_{2}, \mu)}\right),
\]
for some constant $C$ depending only on $\Lambda, p, n, M_{0}$. 
\end{theorem}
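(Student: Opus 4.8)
The plan is to run the by-now-standard perturbation / level-set scheme for Calder\'on--Zygmund estimates (in the spirit of Caffarelli--Peral and Byun--Wang), carried out with respect to the doubling measure $\hmu\,dX$ as in \cite{CMP}, but with a \emph{limiting equation whose coefficients depend on $y$ only}, as forced by the partial mean-oscillation hypothesis \eqref{PBMO}. As a first reduction I would add the qualitative assumption $\nabla u\in L^{p}(Q_{1},\mu)$ and prove only the displayed inequality: the full statement then follows by truncating $\F$ and $f$, applying the inequality, and passing to the limit, combined with an exhaustion of $Q_{1}$ by interior cylinders. Using the homogeneity of \eqref{main-eqn} and of the estimate, together with a dilation, one may also normalize so that $\hmu(Q_{1})^{1/p-1/2}\|\nabla u\|_{L^{2}(Q_{2},\mu)}+\|\F/\mu\|_{L^{p}(Q_{2},\mu)}+\|f/\mu\|_{L^{p}(Q_{2},\mu)}\le 1$, and then it suffices to bound $\|\nabla u\|_{L^{p}(Q_{1},\mu)}$ by a constant depending only on $\Lambda,p,n,M_{0}$.

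\textbf{Two comparison estimates.} Fix $X_{0}\in Q_{1}$ and $0<\rho<1$ with $D_{6\rho}(X_{0})$ admissible, where $D_{\rho}(X_{0})$ is the cylinder appearing in \eqref{PBMO} (with its natural modification when it meets $\{y=0\}$). Let $v$ solve $\div[\A\nabla v]=0$ in $D_{6\rho}(X_{0})$ with the same boundary data as $u$ on $\partial D_{6\rho}(X_{0})$ and the same conormal condition on $\{y=0\}$. Testing the weak formulations of \eqref{main-eqn} and of $v$ with $u-v$, using \eqref{Q-2-ellipticity-condition} and a weighted trace inequality on $\{y=0\}$ (available since $\mu\in A_{2}$) to absorb the term carrying $f$, gives $\frac{1}{\hmu(D_{6\rho})}\int_{D_{6\rho}}|\nabla(u-v)|^{2}\mu\,dX\le C\big(\text{weighted averages of }|\F/\mu|^{2}\text{ and }|f/\mu|^{2}\text{ over }D_{6\rho}\big)$. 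Next let $w$ solve the $x$-frozen reference equation $\div[\bar\A_{\rho}(y)\nabla w]=0$ in $D_{3\rho}(X_{0})$, where $\bar\A_{\rho}(y)=\wei{\A}_{B_{\rho}(x_{0})}(y)$ (which still satisfies \eqref{Q-2-ellipticity-condition}), with $w=v$ on $\partial D_{3\rho}(X_{0})$ and the same conormal data as $v$. Caccioppoli, \eqref{PBMO}, H\"older's inequality, and a self-improving higher-integrability estimate $\nabla v\in L^{2+\sigma}_{\mathrm{loc}}(\mu)$ (weighted reverse H\"older inequality for $A_{2}$-degenerate equations, as in \cite{CMP}) then yield $\frac{1}{\hmu(D_{3\rho})}\int_{D_{3\rho}}|\nabla(v-w)|^{2}\mu\,dX\le C\,\delta^{\theta}\,\frac{1}{\hmu(D_{6\rho})}\int_{D_{6\rho}}|\nabla v|^{2}\mu\,dX$ for some $\theta>0$, which is small once $\delta$ is small.

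\textbf{Reference regularity --- the crux.} The core step is to show that $w$ solving $\div[\bar\A_{\rho}(y)\nabla w]=0$, with $\bar\A_{\rho}$ depending on $y$ only and with vanishing conormal flux at $\{y=0\}$ (the datum $f$ having been split off), has a gradient with good weighted excess decay: writing $(\nabla w)_{D_{r}}$ for the $\hmu$-weighted average of $\nabla w$ over $D_{r}$, one has, for $r\le 3\rho/2$,
\[
\frac{1}{\hmu(D_{r})}\int_{D_{r}}|\nabla w-(\nabla w)_{D_{r}}|^{2}\,\mu\,dX\ \le\ C\Big(\tfrac{r}{\rho}\Big)^{\beta}\,\frac{1}{\hmu(D_{3\rho})}\int_{D_{3\rho}}|\nabla w|^{2}\,\mu\,dX
\]
for some $\beta>0$. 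Since $\bar\A_{\rho}$ is $x$-independent, each $\partial_{x_{k}}w$ is again a weak solution of the same degenerate equation, so the De Giorgi--Nash--Moser theory for $A_{2}$-degenerate equations (Fabes--Kenig--Serapioni) controls $\nabla_{x}w$ (local boundedness and H\"older continuity in the appropriate weighted sense); the conormal flux $\wei{\bar\A_{\rho}\nabla w,e_{n+1}}$, whose $y$-derivative equals $-\sum_{k}\partial_{x_{k}}\wei{\bar\A_{\rho}\nabla w,e_{k}}$, is then controlled from the equation itself, and with vanishing flux at $\{y=0\}$ this transfers control to $\partial_{y}w$. The inhomogeneous datum $f=f(x)$ is handled separately: once split off, its contribution to $\nabla u$ goes directly into $L^{p}(Q_{1},\mu)$ using $\mu\in A_{2}\subset A_{p}$ for $p\ge2$, which is exactly why the hypothesis is phrased as $f/\mu\in L^{p}(Q_{2},\mu)$. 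The flat part $\{y=0\}$, with its Neumann-type condition, can be treated either by testing directly with functions not vanishing there, or by even reflection in $y$ (reflecting $F_{n+1}$ oddly, $F_{k}$ evenly, and $\bar\A_{\rho}$ so as to preserve both the $A_{2}$-degeneracy --- since $\mu(|y|)$ is again in $A_{2}$ --- and \eqref{PBMO}). I expect this reference regularity up to $\{y=0\}$, phrased compatibly with the degeneracy, to be the main obstacle; the lateral boundary $\partial B_{2}\times(0,2)$ plays no role because all cylinders used on $Q_{1}$ stay inside $Q_{2}$.

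\textbf{Density estimate and iteration.} Combining the two comparisons with the reference excess decay produces the usual density (good-$\lambda$) estimate: there is $N>1$ and, for each $\varepsilon>0$, a $\delta=\delta(\varepsilon)$ (which is in turn fixed in terms of $\Lambda,M_{0},n,p$), such that for every admissible cylinder $D_{\rho}(X_{0})$ with $X_{0}\in Q_{1}$,
\[
\hmu\big(\{\M_{\hmu}(|\nabla u|^{2})>N^{2}\lambda^{2}\}\cap D_{\rho}(X_{0})\big)>\varepsilon\,\hmu(D_{\rho}(X_{0}))\ \Longrightarrow\ D_{\rho}(X_{0})\subset\{\M_{\hmu}(|\nabla u|^{2})>\lambda^{2}\}\cup\{\M_{\hmu}(\mathbf{g}^{2})>\delta_{1}^{2}\lambda^{2}\},
\]
where $\M_{\hmu}$ is the Hardy--Littlewood maximal operator over cylinders with respect to $\hmu\,dX$ and $\mathbf{g}=|\F/\mu|+|f/\mu|$. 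This uses the weak $(1,1)$ and strong $(q,q)$ bounds for $\M_{\hmu}$ (the measure $\hmu\,dX$ is doubling since $\mu\in A_{2}$) and a weighted Vitali covering lemma, all available from \cite{CMP}. Iterating this density estimate --- equivalently, invoking the abstract lemma of \cite{CMP} that converts it into an $L^{p}$ bound for the distribution function of $\M_{\hmu}(|\nabla u|^{2})$ --- and then integrating, using $\mathbf{g}\in L^{p}(Q_{2},\mu)$ and $|\nabla u|^{2}\le\M_{\hmu}(|\nabla u|^{2})$ $\hmu$-a.e., yields $\nabla u\in L^{p}(Q_{1},\mu)$ with the asserted quantitative bound; undoing the normalization, together with a routine approximation argument to remove the a priori assumption, completes the proof.
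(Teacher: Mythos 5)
Your overall blueprint---the two-step comparison $u\to v\to w$, the good-$\lambda$ density estimate with the weighted maximal function over $\hmu\,dX$, and the iteration via a Vitali covering lemma---matches the paper's scheme in Section~\ref{Appro-Section} and Section~\ref{density-est-sec}. Replacing the $L^\infty$-gradient bound for the reference solution by a Campanato-type excess decay is a cosmetic change; both feed the level-set iteration in the same way.

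The real problem is your treatment of what you correctly call ``the crux.'' Theorem~\ref{local-grad-estimate-interior} is stated under the hypothesis $\A\in\mathcal{A}(Q_2,\Lambda,M_0,\mu)$, and, by Definition~\ref{class-A}, membership in that class \emph{is precisely} the Lipschitz estimate for weak solutions of the $x$-frozen reference equations $\textup{div}[\wei{\A}_{B_r(x_0)}(y)\nabla v]=0$, both in the interior and up to the base $\{y=0\}$. Your proposal never invokes this hypothesis; instead you try to \emph{prove} the reference regularity for an arbitrary $\mu\in A_2(\R)$. That argument cannot close as written. After differentiating in $x$ and using Fabes--Kenig--Serapioni to control $\nabla_x w$, and writing the conormal flux $\Phi(x,y)=\wei{\bar\A_\rho(y)\nabla w,e_{n+1}}$ as $\Phi(x,y)=\int_0^y\partial_y\Phi(x,s)\,ds$ (using $\Phi(\cdot,0)=0$ and the equation to bound $\partial_y\Phi$ by $C\mu(y)$), you obtain $\mu(y)|\partial_y w(x,y)|\lesssim\int_0^y\mu(s)\,ds$. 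To convert this into a uniform pointwise bound on $\partial_y w$ you need $\int_0^y\mu(s)\,ds\lesssim y\,\mu(y)$, which is exactly condition~\eqref{growth-mu} imposed in Lemma~\ref{Ex-1}---and this condition is \emph{not} a consequence of $\mu\in A_2(\R)$. It does hold for $\mu(y)=|y|^\alpha$ with $\alpha\in(-1,1)$, which is why Section~\ref{Lipchits-est-section} proves that such coefficients belong to $\mathcal{A}(Q_2,\Lambda,M_0,\mu)$ rather than proving the Lipschitz bound for all of $A_2$. In short: either quote the Lipschitz estimate directly from the hypothesis $\A\in\mathcal{A}(Q_2,\Lambda,M_0,\mu)$, as the paper does, or make explicit that your argument covers only the power-weight case; as written, the reference-regularity step does not follow from $\mu\in A_2(\R)$ alone.
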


We would like to note that in \eqref{PBMO}, we only measure the oscillation of $\A$ in the $x$-variable.  When $\mu =1$, this type of  bounded mean oscillation is used in \cite{Byun-P, Dong-Kim, Kim-Krylov, Kim-Krylov-1, Krylov} and in this case $\A$ is referred as a variably partially \textup{BMO} coefficient. For general $\mu$, functions with bounded mean oscillation as measured in all $x,y$ directions as in \eqref{PBMO} is referred as functions of bounded mean oscillation with weight \cite{MW1, MW2, Cuerva}. As noted in \cite{MW1, MW2}, the space of functions with bounded mean oscillation with weight is different from usual weighted \textup{BMO} space, and is also different from the well-known John-Nirenberg \textup{BMO} space. 

We postpone the precise definition for the class of coefficients $\mathcal{A}(Q_2, \Lambda, M_0, \mu)$ to Section \ref{statement-coefficients}, but essentially  this class consists of all measurable symmetric matrix-valued functions with the property that  weak solutions of  the corresponding homogeneous ``freezed coefficient equations'' with coefficient $\langle \mathbb{A} \rangle_{B_{r}(x_{0})} (y)$ satisfy a Lipschitz estimates. When $\mu =1$, this class consists of all uniformly elliptic, symmetric, measurable $(n+1) \times (n+1)$ matrices  and as such, the uniformly elliptic coefficient matrices that are in the variably partially  \textup{VMO} space as used in \cite{B-Kim, Byun-P, Dong-Kim, Kim-Krylov, Kim-Krylov-1, Krylov} satisfy all conditions of Theorem \ref{local-grad-estimate-interior}.  Moreover, we will show in Section \ref{Lipchits-est-section} that coefficient matrices of the form
\[
\mathbb{A}(x,y) = |y|^{\alpha} \begin{bmatrix}
\mathbb{B}(x, y)&0\\
0&b(x,y)
\end{bmatrix},\quad \alpha\in(-1, 1)
\]
where $\begin{bmatrix}
\mathbb{B}(x, y)&0\\
0&b(x,y)
\end{bmatrix}$ is uniformly elliptic in $\mathbb{R}^{n+1}$ and has small mean oscillation in the $x$-variable satisfy all conditions of Theorem \ref{local-grad-estimate-interior}. 
There types of coefficients are important as they arise from the so called ``extension  problem" for fractional elliptic equations \cite{Caffa-Sil, T-Stinga2010, Caffa-Stinga, Cabre, Capella} and are of matrices in Grushin   type operators \cite{Grushin}.

We also remark that the smallness  condition on the mean oscillation with respect to the weight $\mu$ for the coefficient $\A$ as defined in \eqref{PBMO} is natural. A similar but distinct  smallness condition on mean oscillations for degenerate equations has already appeared  in our previous work \cite{CMP}, where we have demonstrated the optimality of this smallness condition via a counterexample. We should also point out that in case of uniformly elliptic equations, i.e. $\mu =1$, the counterexample of Meyers in \cite{M} has demonstrated the necessity of requiring a small mean oscillation on coefficients to obtained desired higher integrability of gradients of weak solutions. This smallness condition on $\A$ in \eqref{PBMO} is reduced to the standard smallness condition in the \text{BMO} space for the  uniformly elliptic case that has been used  in  \cite{ B-Kim, Byun-P,  BW2,  Chiarenza,  Dong-Kim, Kim-Krylov,Krylov, KZ1, MP-1, MP, NP} for elliptic equations and in \cite{BCC,B1,BW1, LTT, Kim-Krylov-1, TN} for parabolic equation.

We next state our second result on the $W^{1,p}$-regularity estimate for weak solutions of  the mixed boundary value problem \eqref{main-eqn-flat-domain}. In the statement, we use the notation for the class of degenerate-singular coefficients $\mathcal{B}(Q_2^+,  \Lambda, M_0, \mu)$, which is defined in Definition \ref{class-B}. This class of matrices is defined similarly as the class $\mathcal{A}(Q_2, \Lambda, M_0, \mu)$ that we have already discussed, with the only difference happens on the flat boundary $T_1 \times [0,1]$ part of $\partial Q_2^+$. 
\begin{theorem}\label{local-grad-estimate-half-cylinder} Let $p\geq 2$, $M_0 \geq 1, \Lambda >0$,  then there exists a sufficiently small positive number  $\delta= \delta(\Lambda, M_0, p, n) > 0$  such that the following holds:  Suppose that $\mu \in A_{2}(\R)$ such that $[\mu]_{A_{2}} \leq M_{0}$, and suppose that the matrix $\mathbb{A} \in \mathcal{B}(Q_2^+, \Lambda, M_0, \mu)$ satisfies the  degeneracy  condition
\begin{equation} \label{Q-2-plus-ellipticity-condition}
\Lambda^{-1} \mu(y) |\xi|^2 \leq \wei{\A(X)\xi, \xi} \leq \Lambda \mu(y) |\xi|^2, \quad \forall \ \xi \in \R^{n+1}, \quad \text{for a.e} \ X = (x,y) \in Q_2^+,
\end{equation}
and the following smallness condition on the mean oscillation with weight $\mu$
\begin{equation}\label{PBMO-Q-2-plus}
 \sup_{0< \rho <1} \sup_{X_0 = (x_0, y_0)  \in Q_1^+} \frac{1}{\hmu(D_\rho(X_0))} \int_{D_\rho(X_0) \cap Q_2^+} |\A(x,y) - \wei{\A}_{B_\rho(x_0) \cap B_2^+}(y)|^2 \mu^{-1}(y) dx dy < \delta^2.
\end{equation}
Suppose also that a vector field $\F: Q_2^+ \rightarrow \R^{n+1}$ and a function $f : B_2^+ \rightarrow \R$ satisfy ${\bf F}/\mu, f/\mu \in L^p(Q_2^+, \mu)$. Then if $u\in W^{1, 2}(Q_{2}^+, \mu)$ is a weak solution of \eqref{main-eqn-flat-domain}, 
it holds that $\nabla u\in L^{p}(Q^{+}_{1}, \mu) $, and moreover
\[
\|\nabla u\|_{L^{p}(Q^{+}_{1}, \mu)} \leq C \left( \hmu(Q^{+}_{1})^{\frac{1}{p} - \frac{1}{2}} \|\nabla u\|_{L^{2}(Q^{+}_{2}, \mu)} + \left\|\F/\mu\right\|_{L^{p}(Q^{+}_{2}, \mu)} + \left\|f/\mu\right\|_{L^{p}(Q^{+}_{2}, \mu)}\right),
\]
for some constant $C$ depending only on $\Lambda, p, n, M_{0}$. 
\end{theorem}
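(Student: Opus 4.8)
The plan is to prove Theorem~\ref{local-grad-estimate-half-cylinder} by the same perturbation/level-set scheme used for the interior estimate in Theorem~\ref{local-grad-estimate-interior}, adapting every ingredient to the presence of the flat Dirichlet boundary $T_2\times(0,2)$. First I would record the basic weighted energy (Caccioppoli) inequality for weak solutions $u\in W^{1,2}(Q_2^+,\mu)$ of \eqref{main-eqn-flat-domain}, using cutoff test functions that vanish on $T_2\times(0,2)$. Since $\mu\in A_2(\R)$, the measure $\hmu\,dX$ is doubling on $\R^{n+1}$ and supports a weighted Poincaré inequality, so the usual arguments yield control of $\|\nabla u\|_{L^2(Q_r^+,\mu)}$ by the $L^2(Q_{r'}^+,\mu)$ data ($r<r'$) with constants depending only on $\Lambda,M_0,n$.

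The central step is a two-parameter comparison estimate near the flat boundary. Fix $X_0=(x_0,y_0)\in Q_1^+$ and a radius $\rho$, and first freeze the $x$-dependence of the coefficients, replacing $\A(x,y)$ by $\wei{\A}_{B_\rho(x_0)\cap B_2^+}(y)$ and solving the corresponding homogeneous problem with the same boundary data; call the solution $v$. This is precisely the situation covered by the defining property of the class $\mathcal{B}(Q_2^+,\Lambda,M_0,\mu)$, which by Definition~\ref{class-B} furnishes a Lipschitz-type bound for $\nabla v$ up to $T_2\times(0,2)$. The difference $u-v$ is controlled in $L^2(\cdot,\mu)$ by $\delta\|\nabla u\|_{L^2(\cdot,\mu)}+\|\F/\mu\|_{L^2(\cdot,\mu)}+\|f/\mu\|_{L^2(\cdot,\mu)}$ via the smallness condition \eqref{PBMO-Q-2-plus} together with the energy estimate. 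Combining the small-oscillation comparison with the Lipschitz bound for $\nabla v$ gives the key fact: on each half-ball, $\nabla u$ is close in the $\mu$-weighted $L^2$ sense to a bounded function, with the error quantified by $\delta$ and the local weighted $L^p$-norm of the data.

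The third step passes from this comparison to the $L^p$ estimate by a measure-theoretic argument. I would introduce the $\hmu$-weighted Hardy--Littlewood maximal function $\M_{\hmu}$ restricted to $Q_2^+$, which is bounded on $L^q(\hmu)$ for $q>1$ since $\hmu\in A_\infty$, and prove a density/good-$\lambda$ estimate of Calder\'on--Zygmund--Krylov--Wang type: the super-level set $\{\M_{\hmu}(|\nabla u|^2)>N^2\lambda\}$ has $\hmu$-measure at most a small fraction (in $\delta$ and $1/N$) of $\{\M_{\hmu}(|\nabla u|^2)>\lambda\}\cup\{\M_{\hmu}(|\F/\mu|^2+|f/\mu|^2)>\lambda\}$, using a weighted Vitali covering lemma valid in the doubling metric-measure space $(\R^{n+1},|\cdot|,\hmu\,dX)$. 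Two families of balls appear — interior balls $D_\rho(X_0)\subset Q_2^+$ and boundary balls centered on $T_2\times(0,2)$ — and the comparison estimate of the previous step supplies the needed decay for each family. Choosing $\delta$ small depending on $N$, then $N$ large depending on $p$, converts the density estimate into $\|\M_{\hmu}(|\nabla u|^2)\|_{L^{p/2}(Q_1^+,\hmu)}<\infty$ with the asserted bound, and the weighted maximal function estimate yields $\nabla u\in L^p(Q_1^+,\mu)$ and the stated inequality.

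I expect the main obstacle to be the boundary and corner analysis in the comparison step: establishing the global Lipschitz bound for the frozen-coefficient solution $v$ up to the flat face $T_2\times(0,2)$, where the $y$-degeneracy of $\mu(y)$ interacts with the Dirichlet condition in the $x_n$-variable, and handling the set where $T_2\times(0,2)$ meets $\{y=0\}$, on which the Dirichlet condition and the degeneracy/singularity of $\A$ are active simultaneously. Once the appropriate boundary Lipschitz estimate for the class $\mathcal{B}(Q_2^+,\Lambda,M_0,\mu)$ is in hand — which is exactly what Definition~\ref{class-B} builds in — and an odd/even reflection across $T_2$ is set up to reduce boundary balls to interior ones, the remainder of the scheme runs parallel to the proof of Theorem~\ref{local-grad-estimate-interior}, so only the geometric bookkeeping of the covering near the corner requires genuine extra care.
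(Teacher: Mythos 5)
Your overall scheme—two-step freezing of coefficients, comparison with a frozen-coefficient solution whose Lipschitz bound is supplied by Definition~\ref{class-B}, then a weighted good-$\lambda$/density argument over a Vitali cover and the $L^{p/2}$-boundedness of the weighted maximal operator—is exactly the route the paper takes. But the covering step in your third paragraph is too coarse. You write that ``two families of balls appear---interior balls $D_\rho(X_0)\subset Q_2^+$ and boundary balls centered on $T_2\times(0,2)$,'' and you propose handling the latter by odd reflection across $T_2$. This misses two of the four geometric regimes that the domain $Q_2^+$ actually forces on you. The cylinders $D_\rho(X_0)$ also exit $Q_2^+$ through the base $B_2^+\times\{0\}$, where the conormal (Neumann-type) condition with datum $f$ is posed and where $\mu(y)$ degenerates; those cylinders need a separate comparison estimate (Proposition~\ref{bottom-approximation-proposition-sittingCy} / Lemma~\ref{cor-bdry-varpi-half-bottom} in the paper, using item (iv) of Definition~\ref{class-B}), and it is only there that $f$ enters the level-set inequality at all. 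Without this case your density estimate has no mechanism to see $f$, so the final $\|f/\mu\|_{L^p}$ term cannot emerge. The corner $T_1\times\{0\}$, where Dirichlet, Neumann, and the degeneracy of $\mu$ interact simultaneously, also needs its own approximation estimate—you flag this as an ``obstacle'' but leave it as geometric bookkeeping, whereas in fact it is a fourth, fully separate comparison proposition with its own rescaling and trace argument.

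A secondary point: the odd/even reflection across $T_2$ cannot be used to reduce boundary cylinders to interior ones at the level of the abstract class $\mathcal{B}(Q_2^+,\Lambda,M_0,\mu)$, because reflection preserves the equation only for coefficient matrices with the special block-diagonal/$x$-independent structure of \eqref{Ave-class-B}; the class $\mathcal{B}$ requires only the Lipschitz bounds of Definition~\ref{class-B}(i)--(iv). In the paper, reflection is used downstream (Lemmas~\ref{Ex-1-half} and \ref{reg-v-q-half-cyll}) to \emph{verify} that particular coefficients belong to $\mathcal{B}$, not to collapse the four-case covering into fewer cases. Your plan should instead invoke items (iii) and (iv) of Definition~\ref{class-B} directly in the side and corner comparison steps, run four separate density lemmas, and then combine them, as in Proposition~\ref{contra-interior-half}, with an enlargement factor for each cylinder type ($D_r(Z)\subset D_{3r}(Z_1)$, $\subset D_{12r}(Z_2)$, etc.) and a corresponding adjustment of $\delta$ by the doubling constant of $\hmu$ so that the covering hypotheses of Lemma~\ref{Vitali} (restricted to $Q_1^+$) still hold.
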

Again in addition to uniformly elliptic matrices (i.e. $\mu = 1$) with small mean  oscillation in the $x$-variable, we will show in Section \ref{Lipchits-est-section} that coefficient matrices of the form
\[
\mathbb{A}(x,y) = |y|^{\alpha} \begin{bmatrix}
\mathbb{B}(x)&0\\
0&b(x)
\end{bmatrix},\quad \alpha\in(-1, 1)
\]
where $\begin{bmatrix}
\mathbb{B}(x)&0\\
0&b(x)
\end{bmatrix}$ is uniformly elliptic in $\mathbb{R}^{n+1}$ and has small mean oscillation in the $x$-variable satisfy all conditions of Theorem \ref{local-grad-estimate-half-cylinder}. 

An interesting feature of Theorem \ref{local-grad-estimate-half-cylinder} is that it establishes (a local) regularity estimate up to the boundary of domains with right angle corner, in this case the half cylinder  $Q_1^+$. This is a Lipschitz domain, but it is not a type with small Lipschitz constant domains nor is it a Reifenberg flat domain as considered in many papers, \cite{Byun-P, B1, BW1, BW2, LTT, MP, MP-1} to cite a few.  Therefore, even in the uniformly elliptic case, i.e. $\mu =1$, Theorem \ref{local-grad-estimate-half-cylinder} appears to be new. Also, as we already mentioned, Theorem \ref{local-grad-estimate-interior} and Theorem \ref{local-grad-estimate-half-cylinder} yield global Sobolev regularity estimate for weak solutions of some fractional elliptic equations with measurable coefficients. Due to the significance of this result, and its independent interest, we describe its details in a separate section, Section \ref{Fractional-elliptic-Sec}. 

Finally, we should point out that  we use a perturbation approach introduced in \cite{CP} to prove Theorem \ref{local-grad-estimate-interior} and Theorem \ref{local-grad-estimate-half-cylinder}. Due to the structure of the domain over which the equation is posed, four types of approximation estimates are required:  interior approximation estimates,  approximation up to the base of the domain cylinder, approximation estimates up to the flat side the domain cylinder and  approximation up to the corner of the domain cylinder. To overcome the difficulty arising from the degeneracy and singularity of the coefficients, in each approximation estimates, we use a two step approximation procedure. Gehring's type self-improved regularity estimates in weighted spaces are established in each of the four types of the approximations. The reverse H\"{o}lder's inequality, and doubling property of Muckenhoupt weights established by R. Coifman and C. Fefferman in \cite{Coif-Feffer} are also used appropriately in the approximation and density estimates. To implement perturbation method \cite{CP}, we also establish some results on uniformly Lipchitz estimates for the freezed coefficient equations in Section \ref{Lipchits-est-section}. These results seem to be new, and also of independent interest.

We now briefly outline the organization of the paper. Results on Sobolev regularity theory for spectral fractional elliptic equations are described in the next section, Section \ref{Fractional-elliptic-Sec}. The proof of these results will be given in Section \ref{proof-extension-problem-sec}. 
Section \ref{statement-coefficients} will define notations and reviews results on weights and weighted inequalities. Section \ref{weak-solution-coefficients} defines weak solutions for a variety of boundary conditions and also provides relevant class of coefficients. Section \ref{Appro-Section} provides approximation estimates, an important intermediate step required for the proof of Theorem \ref{local-grad-estimate-interior} and Theorem \ref{local-grad-estimate-half-cylinder}. The proofs of the main theorems is given in Section \ref{density-est-sec} and Section \ref{half-cyl-proof}. 
Lipchtitz regularity estimates are proved in Section \ref{Lipchits-est-section}. 

\section{Global Sobolev regularity estimate for solutions of spectral fractional elliptic equations}  \label{Fractional-elliptic-Sec}
Let $\Omega \subset \R^n$ be open, bounded domain with $C^1$-boundary $\partial \Omega$. We study the following problem with a special class of coefficients motivated by the realization of fractional elliptic operators as Dirichlet-to-Neumann maps of degenerate elliptic equations: 
\begin{equation} \label{extension-problem}
\left\{
\begin{array}{cccl}
\textup{div}[\A(X) \nabla u] & =& \textup{div}[\F], & \quad \text{in} \quad \Omega_2: = \Omega \times (0,2),  \\
u  & = & 0, & \quad \text{on} \quad \partial \Omega \times (0,2), \\
\lim_{y \rightarrow 0^+} \wei{\A(x,y) \nabla u(x,y) - \F(x,y), e_{n+1}} & = & f(x), & \quad \text{for a. e.} \quad x \in \Omega, 
\end{array} \right.
\end{equation}
where our coefficient $\A(X)$ is defined as
\begin{equation} \label{extension-coefficients}
\A(X) = \mu(y)\left(\begin{matrix} \B(x) & 0 \\ 0 & 1 \end{matrix} \right), \quad  \text{for a.e.} \ X = (x,y)  \in \Omega \times (0,2),
\end{equation}
where $\B : \Omega  \rightarrow \R^{n\times n}$ is a symmetric, measurable matrix, and  $\mu(y) = |y|^\alpha$ with $\alpha \in (-1,1)$. We assume that there is $\Lambda >0$ such that
\begin{equation} \label{B-ellitpticity}
\Lambda^{-1} |\xi|^2 \leq \wei{\B(x) \xi, \xi} \leq \Lambda |\xi|^2, \quad \forall \ \xi \in \R^n, \quad \text{for a.e.} \quad x \in \Omega.
\end{equation}
A direct consequence of Theorem \ref{local-grad-estimate-interior} and Theorem \ref{local-grad-estimate-half-cylinder} is the following 
$W^{1,p}$-regularity estimates for weak solutions of \eqref{extension-problem}. 
 \begin{theorem} \label{extension-theorem} Let $p \geq 2$, $\Lambda >0,  \alpha \in (-1,1)$ be fixed, and let $\mu(y) = |y|^{\alpha}$ for $y \in \R$. There exists $\delta = \delta (\Lambda, \alpha, p, n) >0$ sufficiently small such that the following statement holds. Assume that  \eqref{extension-coefficients}-\eqref{B-ellitpticity} hold, $\partial \Omega \in C^1$, and for some fixed $r_{0} > 0$
\begin{equation} \label{B-BMO-mu}
[\B]_{\textup{BMO}(\Omega)}: =\sup_{ 0 < \rho < r_0} \sup_{x_0 \in \overline{\Omega}} \frac{1}{|B_\rho(x_0)|}\int_{B_\rho(x_0) \cap  \Omega} |\B(x) - \wei{\B}_{B_\rho(x_0) \cap \Omega} |^2  dx \leq \delta.
\end{equation}
 Then for every vector field $\F: \Omega_2 \rightarrow \R^{n+1}$, and function $f :\Omega \rightarrow \R$ such that $|\F/\mu|$ and $f/\mu$ are in $L^p(\Omega_2, \mu)$, if $u \in W^{1,2}(\Omega_2, \mu)$ is a weak solution of \eqref{extension-problem}, it holds that
\[
\|\nabla u\|_{L^{p}(\Omega_{1}, \mu)} \leq C \left( \hmu(B \times (0,2))^{\frac{1}{p} - \frac{1}{2}} \|\nabla u\|_{L^{2}(\Omega_{2}, \mu)} + \left\|\F/\mu\right\|_{L^{p}(\Omega_{2}, \mu)} + \left\|f/\mu\right\|_{L^{p}(\Omega_{2}, \mu)}\right),
\]
for some constant $C = C(\Lambda, \alpha, p, \Omega, n)>0$ and some ball $B \subset \R^n$ sufficiently large such that $\Omega \subset B$.
\end{theorem}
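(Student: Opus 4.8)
The plan is to deduce Theorem \ref{extension-theorem} from Theorem \ref{local-grad-estimate-interior}, Theorem \ref{local-grad-estimate-half-cylinder}, and — in the region where $y$ is bounded away from $0$, where \eqref{extension-problem} is uniformly elliptic — the classical interior and boundary $W^{1,p}$ estimates for divergence-form uniformly elliptic equations with coefficients of small $\textup{BMO}$ in $C^1$ domains (see e.g. \cite{Byun-P, BW2}), via a localization / flattening / covering argument. First I would record that $\mu(y)=|y|^{\alpha}$ with $\alpha\in(-1,1)$ is an $A_2(\R)$ weight with $[\mu]_{A_2}\le M_0$ for some $M_0=M_0(\alpha)$, and that the coefficient $\A$ in \eqref{extension-coefficients}, as well as every matrix produced below by flattening $\partial\Omega$ — which, because the change of variables acts only in the $x$-variable, is again of the form $|y|^{\alpha}\,\mathrm{diag}(\widetilde{\B}(x),b(x))$ with $\mathrm{diag}(\widetilde{\B},b)$ uniformly elliptic and of small mean oscillation in $x$ — belongs to the class $\mathcal{A}$ (resp. $\mathcal{B}$) with constants controlled by $\Lambda,\alpha$, and satisfies the weighted smallness \eqref{PBMO} (resp. \eqref{PBMO-Q-2-plus}); this is exactly the content announced for such matrices in Section \ref{Lipchits-est-section}, combined with the smallness of $[\B]_{\textup{BMO}(\Omega)}$ in \eqref{B-BMO-mu}. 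This fixes $\delta=\delta(\Lambda,\alpha,p,n)$ so that the relevant mean oscillations lie below the thresholds of Theorems \ref{local-grad-estimate-interior}--\ref{local-grad-estimate-half-cylinder}.

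Next, the covering. The isotropic dilation $(x,y)\mapsto(\lambda x,\lambda y)$ preserves the class of equations \eqref{extension-problem} up to harmless constant factors (from $\mu(\lambda y)=\lambda^{\alpha}\mu(y)$, absorbed into $\F$ and $f$), since the degeneracy lives only in $y$. I would then fix a small radius $\rho_0>0$, depending only on $\Omega$ and $\alpha$, and treat three types of points $X_0=(x_0,y_0)\in\overline{\Omega_1}$: \emph{(i)} $\dist(x_0,\partial\Omega)>2\rho_0$ and $y_0\le\rho_0$ — then $B_{2\rho_0}(x_0)\times(0,2\rho_0)\subset\Omega\times(0,2)$, and after rescaling by $\rho_0$ the solution satisfies the hypotheses of Theorem \ref{local-grad-estimate-interior} (its conormal datum at $\{y=0\}$ being the rescaled bottom condition of \eqref{extension-problem}), giving control of $\nabla u$ on $B_{\rho_0}(x_0)\times(0,\rho_0)$; \emph{(ii)} $\dist(x_0,\partial\Omega)\le 2\rho_0$ and $y_0\le\rho_0$ — here I would flatten $\partial\Omega$ near $x_0$ by a $C^1$-diffeomorphism $\Phi$ normalized so $D\Phi(x_0)=I$, lifted to $\widehat\Phi(x,y)=(\Phi(x),y)$; the pushed-forward problem lives on a half-cylinder with Dirichlet data on the flattened portion of $\partial\Omega\times(0,2)$ and conormal data on the base, and has a coefficient of the admissible form $|y|^{\alpha}\,\mathrm{diag}(\widetilde{\B},b)$, so Theorem \ref{local-grad-estimate-half-cylinder} — which handles precisely the right-angle corner $\partial\Omega\times\{0\}$ — applies and, pulled back, controls $\nabla u$ near $(x_0,0)$; \emph{(iii)} $y_0\ge\rho_0$ — then $\mu$ is bounded above and below by positive constants on a fixed-size neighborhood, \eqref{extension-problem} is uniformly elliptic with coefficients that, after dividing by a constant, are uniformly elliptic of small $\textup{BMO}$, and classical interior/boundary $W^{1,p}$ estimates in $C^1$ domains (using $u=0$ on $\partial\Omega$ where relevant) apply. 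By compactness, finitely many such neighborhoods, $N=N(\Omega,\alpha,p,n)$ of them, cover $\overline{\Omega_1}$.

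Then I would raise each local estimate to the $p$-th power and sum over the finite cover. Because $\rho_0$ and $N$ depend only on $\Omega,\alpha,p,n$, every local weighted measure $\hmu(\cdot)$ that occurs is bounded below by a positive constant, so the local factors $\hmu(\cdot)^{1/p-1/2}$ are bounded; each local $\|\nabla u\|_{L^2(\cdot,\mu)}$ is at most $\|\nabla u\|_{L^2(\Omega_2,\mu)}$, each local $\|\F/\mu\|_{L^p(\cdot,\mu)}$ and $\|f/\mu\|_{L^p(\cdot,\mu)}$ is at most its global counterpart, and the cover has bounded overlap. This gives
\[
\|\nabla u\|_{L^p(\Omega_1,\mu)}\le C\Big(\|\nabla u\|_{L^2(\Omega_2,\mu)}+\|\F/\mu\|_{L^p(\Omega_2,\mu)}+\|f/\mu\|_{L^p(\Omega_2,\mu)}\Big),
\]
with $C=C(\Lambda,\alpha,p,\Omega,n)$. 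Finally, since $\alpha>-1$ and $B\supset\Omega$ is a fixed bounded ball, $\hmu(B\times(0,2))=\int_{B\times(0,2)}|y|^{\alpha}\,dX$ is a fixed positive number and $1/p-1/2\le 0$, so $C$ may be rewritten as $C'\,\hmu(B\times(0,2))^{1/p-1/2}$ with $C'$ of the same dependence, which is the asserted inequality.

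The step I expect to be the main obstacle is case \emph{(ii)}: flattening $\partial\Omega$ while keeping the transformed coefficient inside the class $\mathcal{B}$ and its weighted mean oscillation below $\delta$. This is feasible precisely because the degeneracy factor $|y|^{\alpha}$ is untouched by a change of variables in $x$ alone, so the new matrix is still $|y|^{\alpha}$ times an $x$-dependent uniformly elliptic block, while the $C^1$ regularity of $\partial\Omega$ is exactly what allows one to choose the localization radius and the normalization $D\Phi(x_0)=I$ so that the extra mean oscillation introduced by $D\Phi$ is as small as desired, letting $\mathrm{diag}(\widetilde{\B},b)$ inherit the smallness of $[\B]_{\textup{BMO}(\Omega)}$. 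The remaining work is routine bookkeeping: verifying that the rescaling matching the unit cylinders $Q_2,Q_2^+$ genuinely preserves \eqref{extension-problem}, that the weighted $W^{1,2}$ regularity of $u$ transfers to each local model problem, and that the cover is finite with uniform constants.
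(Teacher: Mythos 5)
Your proposal is correct and uses the same two central ingredients as the paper — Theorems \ref{local-grad-estimate-interior} and \ref{local-grad-estimate-half-cylinder}, applied after a $C^{1}$ boundary-flattening $\widehat\Phi(x,y)=(\Phi(x),y)$ normalized so $D\Phi(x_0)=I$, with the membership of the transformed coefficient in $\mathcal{A}$, resp.\ $\mathcal{B}$, and its weighted small oscillation checked through the Lipschitz estimates of Section \ref{Lipchits-est-section} — but the covering decomposition is genuinely different. You rescale isotropically to cylinders of height $\rho_0$ near $\{y=0\}$, and because such cylinders control only $\{0<y<\rho_0\}$, you are forced to introduce a third region $\{y\ge\rho_0\}$ where $\mu$ is pinched away from $0$ and $\infty$ and you invoke the classical uniformly elliptic interior and $C^{1}$-boundary $W^{1,p}$ theory (noting that since $|y|^{\alpha}$ is smooth there, the coefficient has small BMO once the local radius is taken $\ll\rho_0$, a caveat your ``divide by a constant'' phrasing glosses over). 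The paper instead covers only in the $x$-variable: it keeps the $y$-interval equal to $(0,2)$ and applies the two weighted theorems directly on wide-and-short cylinders $B_{2r}(x_0)\times(0,2)$, dilating in $x$ alone; since the weight and the degeneracy live purely in $y$, this anisotropic rescaling is legitimate (the approximation machinery in Propositions \ref{bottom-approximation-proposition}--\ref{bottom-approximation-proposition-sittingCy} is already formulated on cylinders $Q_\tau(Z)$, $D_\tau(Z)$ of arbitrary radius), so the whole range $y\in(0,1)$ is handled inside each local estimate and no third case is needed. The trade-off is that your isotropic route uses only the most standard scaling and transparently preserves both the ellipticity ratio and $[\mu]_{A_2}$, but at the price of importing a separate classical boundary regularity theorem and of checking, as you note, that the cover of $\overline{\Omega}\times[0,1]$ stays away from $\{y=2\}$; the paper's route is leaner and self-contained, using nothing beyond its own machinery. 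Both are valid proofs of the theorem.
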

\noindent
We remark that if $\B$ is in $\textup{VMO}(\Omega)$, the space of functions with vanishing mean oscillation, then \eqref{B-BMO-mu} always holds. 
The proof of Theorem \ref{extension-theorem} is given in Section \ref{proof-extension-problem-sec}.

As a corollary of Theorem \ref{extension-theorem}, we obtain an important result on Calder\'{o}n-Zygmund regularity estimates for solution of fractional elliptic equations with measurable coefficients.  Given $0<s<1$, we are interested in
the boundary value problem 
\begin{equation}\label{FEE}
\left\{\begin{array}{cccl}
L^{s}u &= & f & \quad \text{in $\Omega$}\\
u &= & 0 & \quad \text{on $\partial \Omega$}
\end{array}\right.
\end{equation}
where the operator $L = \textrm{div}(\mathbb{B}(x)\nabla \cdot)$ and $\mathbb{B}(x)$ is a symmetric, elliptic and measurable coefficient matrix. 
The elliptic operator $L^{s}$, called the spectral fractional operator, and equation \eqref{FEE} are understood 
 via the spectral decomposition of the operator $L$ as follows.
 Let $\{\phi_{k}\}\subset W^{1, 2}_{0}(\Omega)$ be an orthonormal basis of $L^{2}(\Omega)$  consisting of eigenfunctions  of $L$ with eigenvalues $\lambda_{k}$: $0 < \lambda_{0} < \lambda_{1}\leq \lambda_{2}\leq \cdots \to \infty$.  It is well known that the first eigenvalue is simple. 
For $0 < s < 1$, define the space of functions 
 \[
\mathfrak{H}^{s}(\Omega) = \left\{u= \sum_{k=1}^{\infty} u_{k}\phi_{k} \in L^{2}(\Omega): \quad \sum_{k=1}^{\infty}\lambda_{k}^{s}u_{k}^{2} < \infty \right\}.
\]
Then $\mathfrak{H}^{s}(\Omega)$ is a Hilbert space with inner product 
$
\langle u, v\rangle_{\mathfrak{H}^{s}} = \sum_{k=0}^{\infty} \lambda^{s}_{k} u_{k} v_{k},$ where  $u = \sum_{k=1}^{\infty} u_{k}\phi_{k}$, and $v= \sum_{k=1}^{\infty} v_{k}\phi_{k}.
$
As shown in  \cite{Cabre, Capella, Caffa-Stinga} the space $\mathfrak{H}^{s}(\Omega)$ coincides with $H^{s}_{0}(\Omega)$ for $s\neq 1/2$ and with $H^{1/2}_{00}(\Omega)$ when  $s=1/2$. 
Here  $H^{1/2}_{00}$ is the Magenes-Lions space and  $H^{s}_{0}(\Omega)$ are completion of $C_{c}^{\infty}(\Omega)$ under the norm 
\[
\|u\|_{H^{s}(\Omega)}^{2} = \|u\|_{L^{2}}^{2} + \int_{\Omega} \int_{\Omega} \frac{(u(x) - u(z))^{2}}{|x-z|^{n + 2s}} dx dx,\quad 0 < s< 1.
\]
The dual space $\mathfrak{H}^{-s}(\Omega)$ is  identified with 
$\left\{f = \sum_{k=0}^{\infty}f_{k} \phi_{k}:  \sum_{k=0}^{\infty}\lambda_{k}^{-s}f_{k}^{2}  < \infty\right\}$
  with the duality pairing $
\langle f, u \rangle_{\mathfrak{H}^{-s}\to\mathfrak{H}^{s} } = \sum_{k=0}^{\infty} f_{k}u_{k}.
$
We can now define the Dirichlet spectral fractional elliptic operator $L^{s}: \mathfrak{H}^{s}(\Omega) \to \mathfrak{H}^{-s}(\Omega)$  
\[
\langle L^{s}u, v \rangle _{\mathfrak{H}^{-s}\to\mathfrak{H}^{s} }:= \langle u, v \rangle_{\mathfrak{H}^{s}}, \quad \forall u, v\in \mathfrak{H}^{s}(\Omega).
\]
With the above identification, we may write $L^{s}u = \sum_{k=0}^{\infty} \lambda_{k}^{s} u_{k}\phi_{k}$, in $\mathfrak{H}^{-s}(\Omega)$. 
It is now clear that given $f = \sum_{k=0}^{\infty}f_{k}\phi_{k}$ in $\mathfrak{H}^{-s }(\Omega)$ the equation \eqref{FEE}, understood in the above sense, 
has a unique solution $u = \sum_{k=0}^{\infty} u_{k}\phi_{k}\in \mathfrak{H}^{s}(\Omega)$ where $u_{k} = \lambda_{k}^{-s}f_{k}$.  Our main result for fractional elliptic equations of the above type is the following. 
\begin{theorem} \label{fractional-theorem} Let $s \in (0,1)$ be fixed, and let $p \geq 2$ such that $(1-2s)(1-p) +1 >0$. Then there exists a constant $\delta > 0$ such that if $\mathbb{B}$ satisfies \eqref{B-BMO-mu}, then for a given $f\in L^{p}(\Omega)$, the unique solution to the boundary value problem \eqref{FEE} is in $ W^{\alpha, p}(\Omega)$, for $\alpha = s  + \frac{(p-2)(1-s)}{p}$. Moreover, there exists a constant $C = C(\Omega, \Lambda, s, p) > 0$ such that 
\[
\|u\|_{W^{\alpha, p}(\Omega)}  \leq C \|f\|_{L^{p}(\Omega)}.
\] 
\end{theorem}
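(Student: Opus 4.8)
\emph{Sketch of the argument.} The plan is to realize the solution $u$ of \eqref{FEE} as the trace on $\{y=0\}$ of a solution of the degenerate extension problem \eqref{extension-problem}, apply Theorem \ref{extension-theorem}, and then invoke a weighted trace embedding. Write $a = 1-2s \in (-1,1)$ and $\mu(y) = |y|^{a}$, so that $\mu \in A_2(\R)$. By the Caffarelli--Silvestre / Stinga--Torrea extension for spectral fractional operators (\cite{Caffa-Sil, T-Stinga2010, Caffa-Stinga, Cabre, Capella}), the unique solution $u = \sum_{k} \lambda_k^{-s} f_k \phi_k$ of \eqref{FEE} is $U(\cdot,0)$, where $U \in W^{1,2}(\Omega \times (0,\infty), \mu)$ is the finite-energy solution of $\div[\A(X)\nabla U] = 0$ in $\Omega \times (0,\infty)$, $U = 0$ on $\partial\Omega \times (0,\infty)$, with $\A$ exactly the matrix \eqref{extension-coefficients} built from $\B$, and $-\lim_{y\to 0^+}\mu(y)\partial_y U(x,y) = c_s f(x)$ for an explicit constant $c_s = c_s(s) \neq 0$. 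In particular, the restriction of $U$ to $\Omega_2 = \Omega \times (0,2)$ is a weak solution of \eqref{extension-problem} with $\F \equiv 0$ and, up to the constant $c_s$, right-hand side $f$ (the weak formulation there imposes no condition at $y=2$, consistent with the interior-in-$y$ character of the estimate). Moreover the extension energy identity gives $\norm{\nabla U}_{L^2(\Omega\times(0,\infty),\mu)} \le C\norm{u}_{\mathfrak{H}^s} = C\norm{f}_{\mathfrak{H}^{-s}} \le C\lambda_0^{-s/2}\norm{f}_{L^2(\Omega)} \le C(\Omega,p)\norm{f}_{L^p(\Omega)}$, since $\Omega$ is bounded and $p \ge 2$.

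Next I would verify the hypotheses of Theorem \ref{extension-theorem} for $U$. The ellipticity \eqref{B-ellitpticity} is assumed; the smallness \eqref{B-BMO-mu} holds once $\delta$ is chosen to be the threshold $\delta(\Lambda, 1-2s, p, n)$ provided by Theorem \ref{extension-theorem}, which fixes $\delta$ in the present statement. The data are admissible: $\F = 0$, and since $f$ is independent of $y$,
\[
\norm{f/\mu}_{L^p(\Omega_2,\mu)}^p = \norm{f}_{L^p(\Omega)}^p \int_0^2 y^{(1-2s)(1-p)}\,dy < \infty,
\]
the integral being finite exactly because $(1-2s)(1-p) + 1 > 0$ --- this is where that hypothesis of Theorem \ref{fractional-theorem} enters. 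Theorem \ref{extension-theorem} then yields $\nabla U \in L^p(\Omega_1,\mu)$, $\Omega_1 = \Omega\times(0,1)$, with $\norm{\nabla U}_{L^p(\Omega_1,\mu)} \le C\norm{f}_{L^p(\Omega)}$. Combining with the Poincar\'e inequality on $\Omega$ applied to $U(\cdot,y)$ (which has zero trace on $\partial\Omega$ for a.e.\ $y$) and integrating in $y$ against $\mu(y)\,dy$, we also get $\norm{U}_{W^{1,p}(\Omega_1,\mu)} \le C\norm{f}_{L^p(\Omega)}$.

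It remains to pass from this weighted $W^{1,p}$-bound for $U$ to a $W^{\al,p}(\Omega)$-bound for its trace $u = U(\cdot,0)$. For this I would use the trace theorem for the weighted Sobolev space with power weight $|y|^{a}$: the trace $V \mapsto V(\cdot,0)$ maps $W^{1,p}(\Omega\times(0,1),|y|^{a})$ boundedly into $W^{\al,p}(\Omega)$ with $\al = 1 - \tfrac{a+1}{p} = 1 - \tfrac{2(1-s)}{p} = s + \tfrac{(p-2)(1-s)}{p}$, which indeed lies in $(0,1)$ for $s\in(0,1)$, $p\ge 2$. On a half-space this is the classical $L^p$ weighted trace embedding for $A_p$ power weights; the transfer to the bounded $C^1$ domain $\Omega$ is by a finite partition of unity and $C^1$-flattening of $\partial\Omega$, using that $C^1$-diffeomorphisms preserve $W^{\al,p}$ for $0<\al<1$ and that the trace in $y$ is local, so that working over $\Omega\times(0,1)$ suffices. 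Since $u = U(\cdot,0)$, this gives $u \in W^{\al,p}(\Omega)$ with $\norm{u}_{W^{\al,p}(\Omega)} \le C\norm{U}_{W^{1,p}(\Omega_1,\mu)} \le C\norm{f}_{L^p(\Omega)}$, as claimed.

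The main obstacle is this last step: a precise statement and proof of the weighted trace theorem on the $C^1$ domain $\Omega$ with the sharp exponent $\al$, compatible with the intrinsic Gagliardo definition of $W^{\al,p}(\Omega)$ in the statement. One needs to bound the Gagliardo seminorm of the trace by the weighted Dirichlet energy of the extension; away from $\partial\Omega\times\{0\}$ this reduces to the model half-space estimate, whereas near the edge $\partial\Omega\times\{0\}$ of the cylinder it requires flattening $\partial\Omega$ and a uniform trace estimate on model wedges, together with the (standard for $C^1$ domains) equivalence of the restriction and intrinsic norms on $W^{\al,p}(\Omega)$. A secondary, routine point is to confirm that the restriction of $U$ to $\Omega_2$ satisfies the weak formulation of \eqref{extension-problem} employed in Theorem \ref{extension-theorem}, i.e.\ that that formulation carries no hidden condition at $y=2$.
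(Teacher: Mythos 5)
Your proposal follows the paper's proof essentially verbatim: realize $u$ as the trace of the extension $U$, verify the hypotheses of Theorem~\ref{extension-theorem} (with $\F\equiv0$, using $(1-2s)(1-p)+1>0$ to make $f/\mu\in L^p(\Omega_2,\mu)$), apply that theorem together with Poincar\'e to bound $\|U\|_{W^{1,p}(\Omega_1,\mu)}$, and then invoke a weighted Sobolev trace embedding to land in $W^{\alpha,p}(\Omega)$ with $\alpha=1-\tfrac{2-2s}{p}$. The only points of departure are minor: you bound $\|\nabla U\|_{L^2(\Omega\times(0,\infty),\mu)}$ directly via the extension energy identity and the spectral inequality $\|f\|_{\mathfrak{H}^{-s}}\le\lambda_0^{-s/2}\|f\|_{L^2}$, whereas the paper runs a cutoff/test-function computation and appeals to an $L^2$ estimate from \cite{Caffa-Stinga}; and where you sketch a partition-of-unity/flattening proof of the weighted trace embedding on the $C^1$ domain, the paper simply cites the characterization in \cite[Theorem~2.8]{Nekvinda}, which you could use to close what you flag as the main obstacle.
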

Some remarks are in order.  
Under the assumption of the theorem, not only $u$ has a higher integrability but also has an improved  smoothness. Indeed, the smoothness parameter $\alpha = s  + \frac{(p-2)(1-s)}{p} > s$, demonstrating a "self-improvement" property  that has been observed by the so called divergence form fractional elliptic equations \cite{Kuusi-M-S,Schikorra}.  The divergence form fractional elliptic operator and the spectral fractional operator are in general different. As it has been shown in  \cite{Caffa-Stinga}, for the spectral fractional operator, there exists a kernel $K_{s}(x, y)$ and a nonnegative function $g_{s}(x)$ such that 
\[
\langle L^{s}u, v \rangle _{\mathfrak{H}^{-s}\to\mathfrak{H}^{s}} = \int_{\Omega}\int_{\Omega} K_{s}(x, z)(u(z)-u(x))(v(z)-v(x))dxdz + \int_{\Omega} g_{s}(x) u(x)v(x)dx,  
\]
where for a constant $C$ that only depends on $n$ and $s$,  
\[
0\leq K_{s}(x, z) \leq \frac{C_{n,s}}{|x-z|^{n + 2s}}, \,\,\text{for all $x\neq z$.}
\]
It is also interesting to note that Theorem \ref{fractional-theorem} is applicable for the case $s < 1/2$ with $2\leq p < 2 + \frac{2s}{1-2s}$, while the results  \cite[Theorem 1.2, Theorem 1.3]{Caffa-Stinga} require $s > 1/2$ for the H\"{o}lder's regularity estimate of $\nabla u$ to hold. 

\section{Notations, Weights, Weighted Sobolev spaces, Weighted inequalities} 
 \label{statement-coefficients}
\subsection{Balls and cylinders} 
For every $r>0$, and $x_0 = (x_0', x^0_n) \in \R^n$, we denote $B_r(x_0)$ the ball in  $\R^n$ centered at  $x_0$ and has radius $r$. Moreover, the upper half balls in $\R^n$ and its flat boundary part is denoted by 
\[
B_r^+ (x_0) = \{x = (x', x_n) \in B_r(x_0): x_n > x_n^0 \}, \quad T_r (x_0) = \{ x = (x', x_n) \in B_r (x_0) : x_n =x_n^0 \}.
\]
When $x_0 =0$, the origin of $\R^n$, we always write
\[
B_r = B_r(0), \quad B_r^+ = B_r^+(0), \quad T_r = T_r(0).
\] 
Two types of cylinders in $\mathbb{R}^{n+1}$ are needed and then denoted differently in the paper.  For $r>0$, $\Gamma_r$ denotes the open interval $(0, r)$, and   for  $y \in \R$, we write $\Gamma_{r}(y)  = (y, y +r)$.  A cylinder  with bottom center point $X= (x, y)$, with $x = (x', x_n) \in \R^n$, and base a ball $B_r(x)$ and half ball $B_{r}^{+}(x_{0})$ are  denoted by, respectively, 
\[
Q_{r}(X) = B_r(x) \times \Gamma_r(y), \quad  \text{and }\, Q_{r}^+(X) = B_r^+(x) \times \Gamma_r(y),
\]
When $X = (0,0)$, we use the notation 
$
 Q_{r} = Q_r(0, 0), \quad T_r = T_r(0), \quad \text{and} \quad Q_{r}^+ = Q_{r}^+(0, 0).
$
We will denote the interval $(y-r, y+ r)$ by $I_{r} (y)$, and if $y =0$, we write $I_{r}  = I_{r} (0)$. Cylinders with middle center point $(x, y)$ with base a ball $B_{r}(x)\times \{y-r\}$ and half ball $B_{r}^{+}(x)\times \{y-r\}$ are denoted by, respectively, 
\[
D_{r}(x, y) = B_{r}(x) \times I_{r} (y) \quad \text{and}\quad  D_{r}^{+}(x, y) = B_{r}^{+}(x) \times I_{r} (y). 
\]
In a similar fashion as above, we denote 
$
D_{r} = D_{r}(0, 0),\quad D_{r}^{+} = D_{r}^{+}(0, 0). 
$
\subsection{Muckenhoupt weights}
For $1 \leq p < \infty$, a non-negative, locally integrable function $\mu :\R^n \rightarrow [0, \infty)$ is said to be in the class $A_p(\R^n)$ of Muckenhoupt weights if 
\[
\begin{split}
[\mu]_{A_p} & :=  \sup_{\textup{balls} \ B \subset \R^n} \left(\fint_{B} \mu (x) dx \right) \left(\fint_{B} \mu (x)^{\frac{1}{1-p}} dx \right)^{p-1} < \infty, \quad \textup{if} \quad p > 1, \\
[\mu]_{A_1} &: =  \sup_{\textup{balls} \ B \subset \R^n} \left(\fint_{B} \mu (x) dx \right)  \norm{\mu^{-1}}_{L^\infty(B)}  < \infty \quad \textup{if} \quad p  =1.
\end{split}
\]
It is well known that the class of $A_p$-weights satisfies the reverse H\"{o}lder's inequality and the doubling properties, \cite{Coif-Feffer}. In particular, a measure with an $A_p$-weight density is, in some sense, comparable with the Lebesgue measure. 
\begin{lemma}[\cite{Coif-Feffer}] \label{doubling} For $1 < p < \infty$, the following statements hold true
\begin{itemize}
\item[\textup{(i)}] If $\mu \in A_{p}(\R^n)$,  then for every ball $B \subset \R^n$ and every measurable set $E\subset B$, 
$
\mu(B) \leq [\mu]_{A_{p}} \left(\frac{|B|}{|E|}\right)^{p} \mu(E).
$
\item[\textup{(ii)}] If $\mu \in A_p(\R^n)$ with $[\mu]_{A_p} \leq M$ for some given $M \geq 1$, then there is $C = C(M, n)$ and $\beta = \beta(M, n) >0$ such that
$
\mu(E) \leq C \left(\frac{|E|}{|B|} \right)^\beta \mu(B),
$
 for every ball $B \subset \R^n$ and every measurable set $E\subset B$.
\end{itemize}
\end{lemma}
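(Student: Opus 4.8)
The plan is to prove both statements directly from the definition of the $A_p$ constant, using H\"older's inequality for the first part and the reverse H\"older inequality of Coifman--Fefferman for the second. For part (i), fix a ball $B$ and a measurable set $E \subset B$. Write $\mathbf{1}_E$ for the indicator of $E$, and apply H\"older's inequality with exponents $p$ and $p/(p-1)$ to the integral $|E| = \int_B \mathbf{1}_E\, dx = \int_B \mathbf{1}_E\, \mu^{1/p}\mu^{-1/p}\, dx$. This gives
\[
|E| \le \left( \int_B \mathbf{1}_E\, \mu\, dx \right)^{1/p} \left( \int_B \mu^{-1/(p-1)}\, dx \right)^{(p-1)/p} = \mu(E)^{1/p} \left( \int_B \mu^{\frac{1}{1-p}}\, dx \right)^{(p-1)/p}.
\]
Raising to the $p$-th power and rearranging,
\[
\frac{|E|^p}{\mu(E)} \le \left( \int_B \mu^{\frac{1}{1-p}}\, dx \right)^{p-1} = |B|^{p-1} \left( \fint_B \mu^{\frac{1}{1-p}}\, dx \right)^{p-1} \le |B|^{p-1} \cdot \frac{[\mu]_{A_p}}{\fint_B \mu\, dx} = \frac{[\mu]_{A_p}\, |B|^p}{\mu(B)},
\]
which is exactly the asserted inequality $\mu(B) \le [\mu]_{A_p} (|B|/|E|)^p \mu(E)$ after clearing denominators. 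This step is entirely routine.

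For part (ii), the key input is the reverse H\"older inequality: since $\mu \in A_p(\R^n)$ with $[\mu]_{A_p} \le M$, there exist $q = q(M,n) > 1$ and $C_1 = C_1(M,n) > 0$ such that $\left( \fint_B \mu^q \right)^{1/q} \le C_1 \fint_B \mu$ for every ball $B$; this is the Coifman--Fefferman result quoted as \cite{Coif-Feffer}. Given a ball $B$ and $E \subset B$, apply H\"older's inequality with exponents $q$ and $q/(q-1)$ to $\mu(E) = \int_B \mathbf{1}_E\, \mu\, dx$:
\[
\mu(E) \le \left( \int_B \mu^q\, dx \right)^{1/q} |E|^{(q-1)/q} = |B|^{1/q} \left( \fint_B \mu^q\, dx \right)^{1/q} |E|^{1-1/q} \le C_1\, |B|^{1/q} \left( \fint_B \mu\, dx \right) |E|^{1-1/q}.
\]
Since $|B|^{1/q} \left( \fint_B \mu \right) = |B|^{1/q - 1} \mu(B)$, collecting the powers of $|B|$ and $|E|$ gives
\[
\mu(E) \le C_1 \left( \frac{|E|}{|B|} \right)^{1 - 1/q} \mu(B),
\]
so the claim holds with $\beta = 1 - 1/q > 0$ and $C = C_1$, both depending only on $M$ and $n$.

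The only genuine obstacle is part (ii), and it is not really an obstacle at the level of this exposition: everything reduces to the reverse H\"older inequality for $A_p$ weights, whose proof (via a Calder\'on--Zygmund stopping-time argument exploiting that $\log \mu \in \mathrm{BMO}$ with norm controlled by $[\mu]_{A_p}$) is exactly the content of \cite{Coif-Feffer} and is cited rather than reproved here. Part (i) needs no external input beyond H\"older. One should take care to track that the exponent $q$ (equivalently $\beta$) and the constant $C_1$ produced by the reverse H\"older inequality depend only on $n$ and on the upper bound $M$ for $[\mu]_{A_p}$, not on $\mu$ itself — this uniformity is what makes the statement useful in the iteration arguments later in the paper, and it is built into the Coifman--Fefferman theorem.
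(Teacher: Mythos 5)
Your proof is correct. The paper does not actually supply a proof of this lemma; it is stated as a cited result from Coifman--Fefferman, so there is no in-paper argument to compare against. Your part~(i) is the standard H\"older computation from the definition of $[\mu]_{A_p}$, and your part~(ii) correctly reduces the claim to the reverse H\"older inequality (with exponent $q>1$ and constant $C_1$ depending only on $n$ and the $A_p$ upper bound), yielding $\beta = 1 - 1/q$ and $C = C_1$ with the right parameter dependence. Both steps are exactly what the cited reference establishes, and your closing remark about the uniformity of $q$ and $C_1$ in $M$ is the one thing genuinely worth flagging, since the later density arguments in the paper rely on it.
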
 \noindent
Our next lemma is also a direct consequence of the reverse H\"{o}lder's inequality for the Muckenhoupt weights.  The proof of this lemma can be found in \cite[Lemma 3.4]{CMP}
\begin{lemma} \label{Reverse-H} Let  $M_{0} > 0$, then, there exists $\beta = \beta(M_0, n) >0$ such that if $\mu\in A_{p}(\R^n)$, with some $1 < p < \infty$, satisfying $[\mu]_{A_{p}} \leq M_{0}$, then for every balls $B \subset \R^n$, if $u \in L^2(B, \mu)$, then  
$u \in L^{1+\beta}(B)$, and
 \[
\left (\fint_{B} |u|^{1+\beta} dx \right)^{\frac{1}{1+\beta}} \leq C(n, M_0) \left( \fint_{B} |u|^2 d\mu  \right)^{1/2}. 
\]
\end{lemma}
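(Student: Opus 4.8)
\textbf{Proof proposal for Lemma \ref{Reverse-H}.}

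The plan is to derive the stated $L^{1+\beta}$--$L^2$ comparison by combining the reverse H\"older inequality enjoyed by $A_p$ weights with two applications of H\"older's inequality, one to pull the weight into the integral and one to take it back out. First I would recall the basic fact that every $\mu \in A_p(\R^n)$ with $[\mu]_{A_p} \leq M_0$ satisfies a reverse H\"older inequality: there exist $\gamma = \gamma(M_0, n) > 0$ and $C_0 = C_0(M_0,n)$ such that for every ball $B$,
\[
\left( \fint_B \mu^{1+\gamma} \, dx \right)^{\frac{1}{1+\gamma}} \leq C_0 \fint_B \mu \, dx .
\]
This is the classical Coifman--Fefferman result cited in the excerpt; the key point is that $\gamma$ and $C_0$ depend on $\mu$ only through $M_0$ and $n$, which is what makes the final constant uniform.

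Next I would choose the exponent $\beta$. Set $q = 1+\gamma$ and let $q'$ be its conjugate, so $q' = \frac{1+\gamma}{\gamma}$. Given the target exponent $2$ for the weighted norm, I want a H\"older pairing of the form $\int_B |u|^{1+\beta} dx = \int_B |u|^{1+\beta} \mu^{\frac{1+\beta}{2}} \mu^{-\frac{1+\beta}{2}} dx$, applying H\"older with exponents $\frac{2}{1+\beta}$ and its conjugate $\frac{2}{1-\beta}$ to the pair $\big(|u|^{1+\beta}\mu^{\frac{1+\beta}{2}}, \mu^{-\frac{1+\beta}{2}}\big)$. The first factor then contributes exactly $\big(\int_B |u|^2 \mu\, dx\big)^{\frac{1+\beta}{2}}$, and the second factor contributes $\big(\int_B \mu^{-\frac{(1+\beta)}{(1-\beta)}}\, dx\big)^{\frac{1-\beta}{2}}$. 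To control the negative power of $\mu$, I use the $A_p$ condition, which bounds $\fint_B \mu^{\frac{1}{1-p}}$, together with the reverse H\"older inequality for $\mu^{\frac{1}{1-p}}$ (which is an $A_{p'}$ weight with controlled characteristic) to gain a little extra integrability; concretely one picks $\beta = \beta(M_0,n) > 0$ small enough that $\frac{1+\beta}{1-\beta} \leq \frac{1}{p-1}(1+\gamma')$ for the relevant reverse-H\"older exponent $\gamma'$ of $\mu^{\frac{1}{1-p}}$. Then $\fint_B \mu^{-\frac{1+\beta}{1-\beta}}\, dx \leq C(M_0,n)\,\big(\fint_B \mu^{\frac{1}{1-p}}\, dx\big)^{\text{(power)}} \leq C(M_0,n)\,\big(\fint_B \mu\, dx\big)^{-\text{(power)}}$ by the $A_p$ inequality again. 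Assembling these estimates, dividing through by $|B|$ in the right places to turn integrals into averages, the powers of $\fint_B \mu\, dx$ cancel and one is left with
\[
\left( \fint_B |u|^{1+\beta}\, dx \right)^{\frac{1}{1+\beta}} \leq C(n, M_0) \left( \fint_B |u|^2 \, d\mu \right)^{1/2},
\]
as desired.

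The main obstacle is the bookkeeping of exponents: one must verify that $\beta$ can be chosen positive (not merely nonnegative) while simultaneously respecting the constraint $1+\beta < 2$ needed for the conjugate exponent $\frac{2}{1-\beta}$ to be finite, and the constraint coming from the reverse H\"older range of $\mu^{\frac{1}{1-p}}$, and that after all cancellations no residual power of $\fint_B\mu$ survives. Since the reverse H\"older exponents for both $\mu$ and $\mu^{\frac{1}{1-p}}$ depend only on $M_0$ and $n$ (this is the quantitative form of the Coifman--Fefferman theorem), the resulting $\beta$ and constant $C$ indeed depend only on $M_0$ and $n$, uniformly over the class of admissible weights. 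I expect no genuine analytic difficulty beyond this exponent chase; the whole argument is a standard two-sided H\"older sandwich around the self-improving integrability of Muckenhoupt weights, and the reference \cite[Lemma 3.4]{CMP} presumably carries out exactly this computation.
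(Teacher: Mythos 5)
Your two-sided H\"older sandwich is exactly the standard route to this kind of weighted-to-unweighted comparison, and the bookkeeping you sketch does close up. For the record, the paper gives no proof of its own here (it cites \cite[Lemma 3.4]{CMP}), so there is no in-paper argument to compare against; I assess the argument on its own terms. After the H\"older step with exponents $(2/(1+\beta),\,2/(1-\beta))$ and dividing by $|B|$, the powers of $|B|$ cancel (not the powers of $\fint_B\mu\,dx$, which instead combine with the negative-power average via the $A_p$ condition), and one arrives at
\[
\left(\fint_B |u|^{1+\beta}\,dx\right)^{\frac{1}{1+\beta}} \leq \left(\fint_B |u|^2\,d\mu\right)^{1/2}\left[\left(\fint_B \mu\,dx\right)\left(\fint_B \mu^{-s}\,dx\right)^{1/s}\right]^{1/2},
\qquad s=\frac{1+\beta}{1-\beta},
\]
so the whole proof reduces to bounding the bracket by $C(n,M_0)$. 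As you say, this is done by writing $\mu^{-s}=\sigma^{s(p-1)}$ with $\sigma=\mu^{1/(1-p)}\in A_{p'}$, using the reverse H\"older inequality of $\sigma$ to reach exponent $s(p-1)$ provided $s(p-1)\le 1+\gamma'$, and then closing with the $A_p$ inequality for $\mu$.

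The one thing you underplay is that the constraint $s(p-1)\le 1+\gamma'$ is not mere exponent chasing: a positive $\beta$ satisfying it exists only when $\frac{1+\gamma'}{p-1}>1$, i.e.\ $p<2+\gamma'$. For larger $p$ no admissible $\beta>0$ exists, and indeed the statement itself fails there. Take $\mu(x)=|x|^n$, which is in $A_p$ exactly for $p>2$, and $u=\chi_{B_\epsilon}$ on $B=B_1$. Then $\big(\fint_B|u|^{1+\beta}dx\big)^{1/(1+\beta)}=\epsilon^{n/(1+\beta)}$ while $\big(\fint_B|u|^2\,d\mu\big)^{1/2}\sim\epsilon^n$, so the inequality fails as $\epsilon\to 0$ for every $\beta>0$; equivalently, $\mu^{-1}\notin L^1_{loc}$, so $\mu^{-s}$ cannot be integrable for any $s\ge 1$ and your H\"older step is dead on arrival. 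The lemma's clause ``$1<p<\infty$'' is therefore too generous, and the correct hypothesis is $1<p\le 2$. In that range one can also streamline your argument: since $A_p\subset A_2$ with $[\mu]_{A_2}\le[\mu]_{A_p}\le M_0$, pass directly to $\sigma=\mu^{-1}\in A_2$, whose reverse H\"older exponent $\gamma'=\gamma'(M_0,n)$ is manifestly $p$-free, so the claimed dependence of $\beta$ and $C$ on $(M_0,n)$ alone becomes transparent. This is harmless for the present paper, which invokes the lemma only with $\mu\in A_2$, but your write-up should state the restriction on $p$ explicitly rather than file it under routine bookkeeping.
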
 \noindent
The following remark is used frequently in the paper, and it follows directly from the definition of $A_p$-weights
\begin{remark} For $\mu \in A_p(\R)$, let $\hat{\mu}(X) = \mu(y)$ for all a.e. $X = (x,y) \in \R^n \times \R$. Then, $\hmu$ is in $A_p(\R^{n+1})$. Moreover, $[\mu]_{A_p(\R)} = [\hmu]_{A_p(\R^{n+1})}$.
\end{remark} \noindent
Now, we discuss about the Hardy-Littlewood maximal operator  with the measure $\mu(y) dxdy$, and its boundedness in weighted spaces. For a given locally integrable function $f$ on $\R^{n+1}$ and a weight $\mu$ defined on $\R$, we define the weighted Hardy-Littlewood maximal function as 
\[
\mathcal{M}_{\mu}f(X) = \sup_{\rho > 0}\fint_{D_{\rho}(X)}|f(Z)| d(\hmu(Z)). 
\]
For functions $f$ that are defined on a bounded domain $E \subset \R^{n+1}$, we define 
\[
\mathcal{M}_{\mu, E}f(X) = \mathcal{M}_{\mu}(f\chi_{E})(X).
\]
Since a weight $ \mu \in A_p(\R)$- Muckenhoupt class is a doubling measure, the Hardy-Littlewood maximal operator with respect to this weight is a bounded operator $L^q(\R^{n+1}, \hmu) \rightarrow L^q(\R^{n+1}, \hmu)$, with $q \in (1, \infty)$. The following lemma is classical, and its proof can be found in \cite[Lemma $7.1.9$ - eqn (7.1.28) ]{Grafakos}.
\begin{lemma} \label{Hardy-Max-p-p} Assume that $\mu \in A_s(\R)$ for some $1 < s < \infty$ with $[\mu]_{A_s}\leq M_0$. Then, the followings hold.
\begin{itemize}
\item[(i)] Strong $(p,p)$: Let $1 < p < \infty$, then there exists a constant $C = C(M_0, n,p)$ such that  
\[ \|\mathcal{M}_{\mu}\|_{L^{p}(\R^{n+1}, \hmu) \to L^{p}(\R^{n+1}, \hmu)} \leq C. \] 
\item[(ii)] Weak $(1,1)$: 
There exists a constant $C=C(M_0, p, n)$ such that for any $\lambda >0$, we have 
\[
\hmu(x\in \mathbb{R}^{n+1}: \mathcal{M}_{\mu} (f) > \lambda) \leq \frac{C}{\lambda} \int_{\mathbb{R}^{n+1}}|f|d\hmu.
\]  
\end{itemize}
\end{lemma}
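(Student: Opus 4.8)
The plan is to deduce both statements from the classical weighted maximal function theory by first reducing the cylinder-based operator $\mathcal{M}_\mu$ to the centered Hardy--Littlewood maximal operator $M_{\hmu}$ over Euclidean balls in $\R^{n+1}$ with respect to the measure $d\hmu$. To set this up I would invoke the Remark preceding the statement: since $\mu \in A_s(\R)$ with $[\mu]_{A_s(\R)}\le M_0$, the lifted weight $\hmu(X)=\mu(y)$ lies in $A_s(\R^{n+1})$ with $[\hmu]_{A_s(\R^{n+1})}\le M_0$, and hence, by the reverse H\"older inequality for Muckenhoupt weights \cite{Coif-Feffer}, $\hmu$ is doubling on $\R^{n+1}$ with a doubling constant depending only on $M_0$ and $n$; equivalently $\hmu(D_{2\rho}(X))\le C(M_0,n)\hmu(D_\rho(X))$ for all $X\in\R^{n+1}$, $\rho>0$, which one can also read off from the product identity $\hmu(D_\rho(x,y))=|B_\rho(x)|\int_{I_\rho(y)}\mu(t)\,dt$ together with one-dimensional doubling of $\mu$. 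Next, from $|X'-X|^2=|x'-x|^2+|y'-y|^2$ one has the purely dimensional inclusions $B_{\rho/\sqrt2}(X)\subset D_\rho(X)\subset B_{\sqrt2\,\rho}(X)$; combining these with doubling gives the pointwise comparison $\mathcal{M}_\mu f(X)\le C(M_0,n)\,M_{\hmu}f(X)$ for every locally integrable $f$. It therefore suffices to prove (i) and (ii) for $M_{\hmu}$.

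For the weak $(1,1)$ estimate I would run the standard Vitali covering argument, which is available precisely because $\hmu$ is doubling. Fixing $\lambda>0$, let $K$ be a compact subset of $\{X:M_{\hmu}f(X)>\lambda\}$; cover $K$ by balls $B_{r_X}(X)$ with $\hmu(B_{r_X}(X))<\lambda^{-1}\int_{B_{r_X}(X)}|f|\,d\hmu$, use the $5r$-covering lemma to extract a finite pairwise disjoint subfamily $\{B_{r_j}(X_j)\}$ with $K\subset\bigcup_j B_{5r_j}(X_j)$, and estimate
\[
\hmu(K)\le\sum_j\hmu(B_{5r_j}(X_j))\le C(M_0,n)\sum_j\hmu(B_{r_j}(X_j))\le\frac{C(M_0,n)}{\lambda}\sum_j\int_{B_{r_j}(X_j)}|f|\,d\hmu\le\frac{C(M_0,n)}{\lambda}\int_{\R^{n+1}}|f|\,d\hmu,
\]
using doubling $\lceil\log_2 5\rceil$ times and disjointness at the last step; letting $K$ exhaust the superlevel set gives (ii). The endpoint bound $\|M_{\hmu}f\|_{L^\infty(\R^{n+1},\hmu)}\le\|f\|_{L^\infty(\R^{n+1},\hmu)}$ is immediate. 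Applying the Marcinkiewicz interpolation theorem on the measure space $(\R^{n+1},\hmu)$ between the weak $(1,1)$ and the $L^\infty\to L^\infty$ endpoints then produces the strong $(p,p)$ bound for every $1<p<\infty$ with constant depending only on $M_0,n,p$, which is (i).

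All of the above is routine once the reductions are in place; the only points that need genuine care are the uniformity of the doubling constant of $\hmu$ in terms of $M_0$ and $n$ alone (so that the final constants do not depend on the particular weight or on $s$), for which the sharp form of the reverse H\"older inequality in \cite{Coif-Feffer} is used, and the elementary comparison between the cylinders $D_\rho$ defining $\mathcal{M}_\mu$ and concentric Euclidean balls. With these two observations recorded, the remainder is exactly the argument of \cite[Lemma~7.1.9]{Grafakos}, to which I would refer for the details.
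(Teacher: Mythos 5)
The paper does not prove this lemma; it simply cites \cite[Lemma 7.1.9]{Grafakos}, and your argument is precisely the standard one that reference gives: pass to the centered maximal function over Euclidean balls via the cylinder--ball sandwich and doubling, prove the weak $(1,1)$ bound by Vitali covering, and interpolate with the trivial $L^\infty$ endpoint. That reduction and covering argument is correct.

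One small point worth flagging. You assert that the doubling constant of $\hmu$ depends only on $M_0$ and $n$ "so that the final constants do not depend on $s$," appealing to the reverse H\"older inequality. This is not obviously true: the natural bound from the $A_s$ condition (or from Lemma \ref{doubling}(i)) gives $\mu(I_{\lambda r}(y)) \le [\mu]_{A_s}\,\lambda^{s}\,\mu(I_r(y))$, hence $\hmu(D_{\lambda r}(X)) \le [\mu]_{A_s}\,\lambda^{n+s}\,\hmu(D_r(X))$, so the doubling constant — and therefore the final constants in the weak $(1,1)$ and strong $(p,p)$ bounds — carries an $s$-dependence. The reverse H\"older inequality controls the degree of higher integrability, not a doubling constant uniform in $s$. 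The lemma's stated dependence $C=C(M_0,n,p)$ (and the typographical $C=C(M_0,p,n)$ in (ii), which presumably means $s$) is itself a bit loose on this; since the paper only ever applies the result with $s=2$, this has no downstream effect, but I would not claim $s$-independence without an argument.
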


\subsection{Weighted Sobolev spaces, and weighted trace estimates} \label{weighted-spaces}
Let $\Omega \subset \R^n$ be a bounded domain, $1 < p < \infty$, $\mu : \R \rightarrow [0, \infty)$ be some weight function, for $R >0$ and $\Omega_R = \Omega \times (0,R)$, we denote $L^p(\Omega_R, \mu)$ the weighted Lebesgue space consisting of all measurable function $f : \Omega_R\rightarrow \R$ such that
\[
\norm{f}_{L^p(\Omega_R, \mu)} = \left(\int_{\Omega_R} |f(x,y)|^p \mu(y) dxdy \right)^{1/p} < \infty.
\]
We denote $W^{1,p}(\Omega_R, \mu)$ be the set of all measurable functions $u: \Omega_R \rightarrow \mathbb{R}$ such that $u \in L^p(\Omega_R, \mu)$ and all of the weak derivatives 
$\partial_{x_k}u, \partial_y u$ exist and belong to $L^p(\Omega_R, \mu)$. The norm of $W^{1,p}(\Omega_R, \mu)$ is denoted by
\[
\norm{u}_{W^{1,p}(\Omega_R, \mu)} = \left[\int_{\Omega_R} |u(X)|^p \mu(y) dX + \int_{\Omega_R}|\nabla u|^p \mu(y) dX \right]^{1/p}, \quad dX = dxdy.
\]
Note that from \cite{Fabes},  if $\mu \in A_p$, a class of Muckenhoupt weights, then the space $W^{1,p}(\Omega_R, \mu)$ is indeed the closure of $C^\infty(\overline{\Omega}_R)$ with norm $\norm{\cdot}_{W^{1,p}(\Omega_R, \mu)}$.  We also will use the following function spaces in the sequel. 
\begin{enumerate}
\item $W_{0}^{1, p}(\Omega_R, \mu)$  = the closure in $W^{1,p}(\Omega_R, \mu)$ of $C_{0}^{\infty}(\Omega_R)$, the space of smooth functions compactly supported in $\Omega_R$. 
\item $\overset{*}{W}^{1,p}(\Omega_R, \mu)$= the closure in $W^{1,p}(\Omega_R, \mu)$ of the space
\[
\Big\{ \phi \in C^\infty(\overline{\Omega_R}): \phi =0 \quad \text{on}  \quad \Big(\partial \Omega \times (0, R)\Big) \cup \Big({\Omega\times \{R\}} \Big) \Big \}.
\]
\item $\overset{o}{W}^{1,p}(\Omega_R, \mu)$ = the closure in $W^{1,p}(\Omega_R, \mu)$ of the set $
\Big \{ \phi \in C^\infty(\overline{\Omega_R}): \phi =0 \quad \text{on} \quad \partial \Omega \times (0, R)\Big\}$.
\end{enumerate}
We observe that 
\[
W_{0}^{1, p}(\Omega_R, \mu) \subset \overset{*}{W}^{1,p}(\Omega_R, \mu) \subset 
\overset{o}{W}^{1,p}(\Omega_R, \mu) \subset W^{1,2}(\Omega_R, \mu).
\]
The following results of weighted trace inequalities will be needed in the paper.
\begin{lemma} \label{trace-zero}  Suppose that $M_{0}>0$ and $R>0$. Assume that $\mu \in A_2(\mathbb{R})$ with $[\mu]_{A_2} \leq M_0$. Then, there exists a trace map $T : \overset{o}{W}^{1,2}(\Omega_R, \mu) \rightarrow L^2(\Omega)$ such that
\[
\norm{T[u]}_{L^2(\Omega)} \leq C(n, M_0)\left[ \frac{\textup{diam} (\Omega)^2 + R^2}{\mu((0,R)} \int_{\Omega_R} |\nabla u|^2\mu(y) dX \right]^{1/2},   
\]
for all $u \in \overset{o}{W}^{1,2}(\Omega_R, \mu)  $.
\end{lemma}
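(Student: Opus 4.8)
The plan is to reduce the weighted trace inequality to the classical (unweighted) one-dimensional trace identity, applied slice-by-slice in the $y$-variable, and then exploit the $A_2$-condition on $\mu$ to convert the resulting weighted-in-$y$ quantities into the norms appearing in the statement. By density it suffices to prove the estimate for $\phi \in C^\infty(\overline{\Omega_R})$ with $\phi = 0$ on $\partial\Omega \times (0,R)$; the trace map $T$ is then defined as the continuous extension of $\phi \mapsto \phi(\cdot,0)$. Fix $x \in \Omega$ and write, using the fundamental theorem of calculus along the segment $\{x\}\times(0,R)$,
\[
\phi(x,0) = \phi(x,y) - \int_0^y \partial_t \phi(x,t)\, dt, \qquad y \in (0,R).
\]

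The key step is to average this identity against the measure $\mu(y)\,dy$ over $(0,R)$. Dividing by $\mu((0,R)) = \int_0^R \mu(t)\,dt$ and integrating in $y$ gives
\[
\phi(x,0) = \frac{1}{\mu((0,R))}\int_0^R \phi(x,y)\mu(y)\,dy \;-\; \frac{1}{\mu((0,R))}\int_0^R\!\!\int_0^y \partial_t\phi(x,t)\,dt\,\mu(y)\,dy.
\]
Squaring, using $(a+b)^2 \le 2a^2 + 2b^2$, and applying Cauchy--Schwarz with the weight $\mu$ to each term controls $\phi(x,0)^2$ by
\[
\frac{C}{\mu((0,R))}\int_0^R \phi(x,y)^2\mu(y)\,dy \;+\; \frac{C R}{\mu((0,R))}\int_0^R |\partial_y\phi(x,y)|^2\mu(y)\,dy \cdot \frac{1}{\mu((0,R))}\int_0^R\!\!\int_0^y \mu(t)^{-1}\,dt\,\mu(y)\,dy,
\]
where in the last factor the inner integral $\int_0^y \mu(t)^{-1}dt$ is finite a.e. since $\mu \in A_2(\R)$ forces $\mu^{-1}$ to be locally integrable. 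The double integral $\iint_{0<t<y<R}\mu(t)^{-1}\mu(y)\,dt\,dy \le \big(\int_0^R\mu(t)^{-1}dt\big)\big(\int_0^R\mu(y)\,dy\big)$, and the $A_2$-bound $\big(\fint_{(0,R)}\mu\big)\big(\fint_{(0,R)}\mu^{-1}\big) \le M_0$ yields $\int_0^R\mu^{-1}\,dt \le M_0\, R^2 / \mu((0,R))$. Combining these estimates bounds $\phi(x,0)^2$ by $\tfrac{C}{\mu((0,R))}\int_0^R\phi(x,y)^2\mu(y)\,dy + \tfrac{C M_0 R^2}{\mu((0,R))}\fint_0^R|\partial_y\phi(x,y)|^2\mu(y)\,dy\cdot\frac{1}{\mu((0,R))}$; more carefully, after tracking all constants one gets a clean bound of the form $\tfrac{C(n,M_0)}{\mu((0,R))}\int_0^R\big(\phi^2 + (R^2+\diam(\Omega)^2)|\nabla\phi|^2\big)\mu(y)\,dy$ once the $\phi^2$ term is itself absorbed.

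To absorb the zeroth-order term, I would handle the horizontal variable: because $\phi = 0$ on $\partial\Omega\times(0,R)$, for each fixed $y$ the slice $\phi(\cdot,y) \in W^{1,2}_0(\Omega)$ satisfies the Poincar\'e inequality $\int_\Omega \phi(x,y)^2\,dx \le C\,\diam(\Omega)^2 \int_\Omega |\nabla_x\phi(x,y)|^2\,dx$. Multiplying the pointwise-in-$x$ bound for $\phi(x,0)^2$ above by $\mu$-free $dx$, integrating over $\Omega$, then applying this Poincar\'e estimate to the resulting $\int_\Omega\int_0^R\phi^2\mu\,dy\,dx$ term and using $|\nabla_x\phi| \le |\nabla\phi|$ converts everything into $\int_{\Omega_R}|\nabla\phi|^2\mu\,dX$, producing exactly
\[
\int_\Omega \phi(x,0)^2\,dx \le C(n,M_0)\,\frac{\diam(\Omega)^2 + R^2}{\mu((0,R))}\int_{\Omega_R}|\nabla\phi|^2\mu(y)\,dX.
\]
Finally, a standard density/Cauchy-sequence argument shows that $\phi\mapsto\phi(\cdot,0)$ extends continuously from the dense subspace to all of $\overset{o}{W}^{1,2}(\Omega_R,\mu)$, giving the trace map $T$ with the stated bound.

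The main obstacle I anticipate is purely bookkeeping rather than conceptual: making sure the weight-free Poincar\'e inequality in $x$ interacts correctly with the weighted-in-$y$ averaging, and that no factor of $\mu$ is needed in the horizontal direction (it is not, since $\mu$ depends only on $y$ and Fubini lets one integrate in $x$ at fixed $y$). One should also be slightly careful that $\int_0^y\mu(t)^{-1}dt$ need not be bounded uniformly in $y$, which is why the Cauchy--Schwarz step must pair it against $\mu(y)\,dy$ before integrating, rather than estimating pointwise; the $A_2$-inequality is exactly what makes the resulting double integral finite with the right power of $R$ and the right dependence on $[\mu]_{A_2}$.
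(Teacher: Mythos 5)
Your argument is correct and is essentially the same as the paper's: both proofs start from the fundamental-theorem-of-calculus identity $u(x,0)=u(x,y)-\int_0^y\partial_t u(x,t)\,dt$, apply Cauchy--Schwarz to pair $|\partial_y u|^2\mu$ against $\mu^{-1}$, invoke the $A_2$ condition to control $\int_0^R\mu^{-1}$, and use Poincar\'e's inequality in $x$ (valid since the slices vanish on $\partial\Omega$) to absorb the zeroth-order term. The only cosmetic difference is that you average the FTC identity against $\mu(y)\,dy/\mu((0,R))$ before squaring, whereas the paper squares first and multiplies by $\mu(y)$ at the end; this reordering changes nothing substantive.
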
 
\begin{proof}
The proof is elementary and follows from standard arguments. We present  it here for completeness. We only need to prove that 
\[
\fint_{\Omega} |u(x,0)|^2 dx \leq  \frac{C(n, M_0) \Big[\text{diam}(\Omega)^2 + R^2\Big] }{|\Omega| \mu((0,R))}\int_{\Omega_R} |\nabla u(X)|^2 \mu(y) dX,
\]
for all $u \in C^\infty(\overline{\Omega}_R)$ with $u(\cdot, y) =0$ on $\partial \Omega$ for all $y \in (0,R)$.  For such $u$, observe that
\[
u(x,y) - u(x,0) = \int_0^y \partial_y u(x,s) ds.
\]
Therefore,
\[
\begin{split}
|u(x,0)|^2 & \leq 2\left[ |u(x,y)|^2 +  \left ( \int_0^R |u_y(x,s)| \mu(s)^{1/2} \mu(s)^{-1/2} ds \right )^2\right] \\
& \leq 2\left[ |u(x,y)|^2 +  \left ( \int_0^R |u_y(x,s)|^2 \mu(s) ds \right) \left( \int_0^R \mu(s)^{-1} ds \right )\right].
\end{split}
\]
Then, by integrating this last inequality in $x$, we obtain
\[
\int_{\Omega} |u(x,0)|^2 dx \leq 2\left[ \int_{\Omega} |u(x,y)|^2 dx  +  \left ( \int_{\Omega_R} |u_y(x,s)|^2 \mu(s) dx ds \right) \left( \int_0^R \mu(s)^{-1} ds \right )\right]
\]
Now, observe that for each $y \in (0,R)$, the function $u(\cdot, y) =0$ on $\partial \Omega$. Therefore, we can apply the Poincar\'{e}'s inequality  to the first term in the right hand side of the last inequality to further derive 
\[
\int_{\Omega} |u(x,0)|^2 dx \leq C(n)\left[ \text{diam}(\Omega)^2 \int_{\Omega} |\nabla_x u(x,y)|^2 dx  +  \left ( \int_{\Omega_R} |u_y(x,s)|^2 \mu(s) dx ds \right) \left( \int_0^R \mu(s)^{-1} ds \right )\right]
\]
Then, multiplying this last inequality with $\mu$ and then integrating the result in $y$ on $(0,r)$, 
we obtain
\[
\begin{split}
\mu((0,R) \int_{\Omega} |u(x,0)|^2 dx & \leq C(n)\left[ \text{diam}(\Omega)^2 \int_{\Omega_R} |\nabla_x u(x,y)|^2 \mu(y)dx dy  \right. \\
& \quad +  \left. \left ( R^2\int_{\Omega_R} |u_y(x,y)|^2 \mu(y) dx dy \right) \left( \fint_0^R \mu(s) ds \right) \left( \fint_0^R \mu(s)^{-1} ds \right )\right].
 \\
\end{split}
\]
This last estimate implies
\[
\fint_{\Omega} |u(x,0)|^2 dx \leq C(n, M_0) \frac{\text{diam}(\Omega)^2 + R^2 }{|\Omega| \mu((0,R))}\int_{\Omega_R} |\nabla u(X)|^2 \mu(y) dX.
\]
The proof is then complete.
\end{proof}
The following lemma can be proved similarly. 
\begin{lemma} \label{trace}  Assume that $\mu \in A_2$ with $[\mu]_{A_2} \leq M_0$. Then, there exists a trace map $T : W^{1,2}(\Omega_R, \mu) \rightarrow L^2(\Omega)$ such that
\[
\norm{T[u]}_{L^2(\Omega)} \leq C(n, M_0)\left[ \left( \frac{1}{\mu((0,R))} \int_{\Omega_R} |u(X)|^2 \mu(y) dX \right)^{1/2} + \left( \frac{R^2}{\mu((0,R))} \int_{\Omega_R} |\nabla u|^2\mu(y) dX \right)^{1/2} \right],   
\]
for all $u \in W^{1,2}(\Omega_R, \mu) $.
\end{lemma}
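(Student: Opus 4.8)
\textbf{Proof proposal for Lemma \ref{trace}.}
The plan is to mimic the argument for Lemma \ref{trace-zero} almost verbatim, the single difference being that, since $u$ need not vanish on $\partial\Omega\times(0,R)$, we are no longer entitled to use Poincar\'e's inequality to absorb the term $\int_\Omega |u(x,y)|^2\,dx$ into the gradient; instead we simply retain it and turn it into the weighted $L^2$-norm of $u$ by averaging in $y$ against the weight $\mu$. First I would reduce, by the density of $C^\infty(\overline{\Omega}_R)$ in $W^{1,2}(\Omega_R,\mu)$ (valid since $\mu\in A_2$, as recalled from \cite{Fabes}), to proving the estimate
\[
\fint_{\Omega}|u(x,0)|^2\,dx \le \frac{C(n,M_0)}{\mu((0,R))}\int_{\Omega_R}|u(X)|^2\mu(y)\,dX + \frac{C(n,M_0)R^2}{\mu((0,R))}\int_{\Omega_R}|\nabla u|^2\mu(y)\,dX
\]
for all $u\in C^\infty(\overline{\Omega}_R)$, from which the continuous extension $T:W^{1,2}(\Omega_R,\mu)\to L^2(\Omega)$ is defined in the usual way and is independent of the approximating sequence.

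For the pointwise estimate, I would start from $u(x,0)=u(x,y)-\int_0^y \partial_y u(x,s)\,ds$, so that for every $y\in(0,R)$,
\[
|u(x,0)|^2 \le 2|u(x,y)|^2 + 2\left(\int_0^R |\partial_y u(x,s)|\mu(s)^{1/2}\mu(s)^{-1/2}\,ds\right)^2 \le 2|u(x,y)|^2 + 2\left(\int_0^R |\partial_y u(x,s)|^2\mu(s)\,ds\right)\left(\int_0^R \mu(s)^{-1}\,ds\right),
\]
using Cauchy--Schwarz with the splitting $\mu^{1/2}\mu^{-1/2}$. Integrating in $x$ over $\Omega$, then multiplying by $\mu(y)$ and integrating in $y$ over $(0,R)$, the first term becomes $2\,\mu((0,R))^{-1}$ times $\|u\|_{L^2(\Omega_R,\mu)}^2$ after dividing through, and the second term produces the factor $\mu((0,R))\int_0^R\mu(s)^{-1}\,ds = R^2\big(\fint_0^R\mu\big)\big(\fint_0^R\mu^{-1}\big)$, which is at most $R^2[\mu]_{A_2}\le R^2 M_0$ because $(0,R)$ is an interval (a ball in $\R$). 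Dividing by $\mu((0,R))$ and using $\sqrt{a+b}\le\sqrt a+\sqrt b$ yields the claimed inequality with a constant depending only on $n$ and $M_0$.

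I do not anticipate a genuine obstacle here; the proof is routine once Lemma \ref{trace-zero} is in place. The only points requiring a little care are: (i) that all constants are scale- and domain-independent, which is guaranteed because the only place the geometry of $\Omega$ or the size $R$ enters is through the explicitly displayed factors $R^2$ and $\mu((0,R))$, together with the scale-invariant $A_2$ quantity $\big(\fint_0^R\mu\big)\big(\fint_0^R\mu^{-1}\big)$; and (ii) the density step, where one checks that if $u_k\to u$ in $W^{1,2}(\Omega_R,\mu)$ with $u_k\in C^\infty(\overline{\Omega}_R)$, then $\{u_k(\cdot,0)\}$ is Cauchy in $L^2(\Omega)$ by the estimate just proved, so its limit $T[u]$ is well defined and satisfies the asserted bound.
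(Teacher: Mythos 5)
Your proof is correct and takes essentially the approach the paper intends: the paper states Lemma \ref{trace} with only the remark that it ``can be proved similarly'' to Lemma \ref{trace-zero}, which is precisely what you do --- drop the Poincar\'e step, retain the $\int_\Omega|u(x,y)|^2\,dx$ term, and convert it into the weighted $L^2$-norm of $u$ by averaging against $\mu$. One small notational slip: your displayed reduction target has $\fint_\Omega$ on the left but is missing the compensating factor $1/|\Omega|$ on the right (compare the analogous display in the proof of Lemma \ref{trace-zero}, where it appears); however, your actual chain of inequalities produces $\int_\Omega|u(x,0)|^2\,dx$ directly bounded as the lemma requires, so the argument itself is unaffected.
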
 
\section{Definition of weak solutions and relevant classes of coefficients}\label{weak-solution-coefficients}
In this section we define the standard notion of weak solution for degenerate elliptic equations. The approximation procedure we will be implementing in the next section uses various boundary conditions whose corresponding notion of weak solution need to be defined. We will also define two relevant classes of degenerate coefficients. 
\subsection{Definition of weak solutions}
In what follows we will assume that the coefficient matrix $\A(X)$ satisfies the degeneracy condition 
\begin{equation}\label{ELLIPTICITY}
\Lambda^{-1}\mu(y)|\xi|^2 \leq \wei{\A(X)\xi, \xi} \leq \Lambda \mu(y)|\xi|^2, \quad  \forall \ \xi \in \R^{n+1},
\end{equation}
for almost every $X = (x,y)$ in the appropriate domain of $\A(X)$, for a given $\mu \in A_2(\R)$ and  $\Lambda >0$.
\begin{definition} \label{grounding-weak-solution-Q-R} Let $\Omega \subset \R^n$ be open and bounded, $R>0$, and $\A : \Omega_R \rightarrow \mathbb{\R}^{(n+1) \times (n+1)}$ be a symmetric, measurable matrix satisfying \eqref{ELLIPTICITY}. 
For a given $f \in L^2(B_R)$ and a vector field $\F$ such that $|\F| \in L^2(\Omega_R, \mu^{-1})$,  $u \in W^{1,2}(\Omega_R, \mu)$ is a weak solution of
\begin{equation} \label{grounding-Q-R-eqn-u}
\left\{
\begin{array}{cccl}
\textup{div}[\A(X) \nabla u]  &= & \textup{div}({\bf F}), & \quad \Omega_R, \\
\lim_{y \rightarrow 0^+}\Big[\wei{\A(x,y) \nabla u(x,y) - {\bf F}(x,y), e_{n+1}}\Big]&= & f(x), & \quad  x \in \Omega_R.
\end{array}
\right.
\end{equation}
if
\[
\int_{\Omega_R} \wei{\A(X) \nabla u(X), \nabla \psi(X)} dX  = \int_{\Omega_R} \wei{\F, \nabla \psi(X)} dX + \int_{\Omega} \psi(x,0) f(x) dx, \quad \forall \ \psi \in \overset{*}W^{1,2}(\Omega_R, \mu).
\]
\end{definition}
\noindent
We observe that since $\mu\in A_2$, in this definition,  by the weighted trace inequalities stated and proved in Lemma \ref{trace-zero} and Lemma \ref{trace}, $\psi(x,0) \in L^2(\Omega)$ in sense of trace, for all $\psi \in \overset{*}W^{1,2}(\Omega_R, \mu)$. From this, and since $f \in L^2(\Omega)$, the last integration in the above definition is well-defined. If $f =0$, the requirement $\mu \in A_2(\R)$ could be replaced by some other classes of weights. Next, we give the definition of weak solutions for equation with mixed boundary condition. 
\begin{definition} \label{weak-solution-Q-R-plus}  For $R>0$, let $\A : Q_R^+ \rightarrow \mathbb{\R}^{(n+1) \times (n+1)}$ be a symmetric, measurable matrix satisfying \eqref{ELLIPTICITY}. 
Given $f \in L^2(B_R^+)$ and a vector field $\F$ such that $|\F| \in L^2(Q_R^+, \mu^{-1})$, $u \in W^{1,2}(Q_R^+, \mu)$ is  a weak solution of
\begin{equation} \label{Q-R-plus-eqn}
\left\{
\begin{array}{cccl}
\textup{div}[\A(X) \nabla u]  &= & \textup{div}({\bf F}), & \quad \text{in} \quad Q_R^+, \\
u & =& 0, & \quad \text{on}  \quad T_R \times (0, R), \\
\lim_{y \rightarrow 0^+}\Big[\wei{\A(x,y) \nabla u(x,y) - {\bf F}(x,y), e_{n+1}}\Big]&= & f(x), & \quad  x \in B_R^+.
\end{array}
\right.
\end{equation}
if $u$ is in the closure of $\Big\{\phi \in C^\infty(\overline{Q}_R^+) : \phi_{| T_R \times (0,R)} =0 \Big\}$ in $W^{1,2}(Q_R^+, \mu)$, and
\[
\int_{\Omega_R} \wei{\A(X) \nabla u(X), \nabla \psi(X)} dX  = \int_{\Omega_R} \wei{\F, \nabla \psi(X)} dX + \int_{\Omega} \psi(x,0) f(x) dx, \quad \forall \ \psi \in \overset{*}W^{1,2}(\Omega_R, \mu).
\]
\end{definition}
\noindent
The following two definitions of weak solutions are also required right away in the next subsection. In these two definitions, it is not necessary to assume $\mu \in A_2$.
\begin{definition} \label{weak-solution-D-R} For each $R>0$,  let $\F \in L^2(D_R, \mu^{-1})$, and $\A : D_R \rightarrow \R^{(n+1)\times (n+1)}$ be a measurable, symmetric  matrix satisfying \eqref{ELLIPTICITY}. 
Then  $u\in W^{1, 2}_{\textup{loc}}(D_R, \mu)$ is  a weak solution of
\[
\textup{div}[\A(X)\nabla u] = \textup{div}[\F], \quad \text{in} \quad D_R
\]
if
\[
\int_{D_{R}} \langle \A(X) \nabla u(X), \nabla \phi (X)\rangle dX =  \int_{D_{R}} \langle {\bf F}(X),\nabla \phi(X)\rangle dX,  \quad \forall \phi \in C_{0}^{\infty}(D_{R}).
\]
\end{definition} \noindent
\begin{definition} \label{weak-solution-half-D-R} For $R>0$, let $\F \in L^2(D_R^+, \mu^{-1})$, 
 and $\A : D_R^+ \rightarrow \R^{(n+1)\times (n+1)}$ be a measurable symmetric matrix satisfying \eqref{ELLIPTICITY}. 
Then $u\in W^{1, 2}_{\textup{loc}}(D_R^+, \mu)$ is  a weak solution of
\[
\left\{
\begin{array}{cccl}
\textup{div}[\A(X)\nabla u] & = & \textup{div}[\F],  & \quad \text{in} \quad D_R^+, \\
u & = & 0, & \quad \text{on} \quad T_{R} \times I_R,
\end{array} \right.
\]
if $u$ is in the closure of $\Big\{\phi \in C^\infty(\overline{D}_R): \phi_{| T_R\times I_R} =0 \Big\}$ in $W^{1,2}(D_R^+, \mu)$ and
\[
\int_{D_{R}^+} \langle \A(X) \nabla u(X), \nabla \phi (X)\rangle dX =  \int_{D_{R}^+} \langle {\bf F}(X),\nabla \phi(X)\rangle dX,  \quad \forall \phi \in C_{0}^{\infty}(D_{R}^+).
\]
\end{definition} \noindent

\subsection{Two classes of relevant degenerate/singular coefficients}

Recall that given a matrix $\A$, we write
$$
\wei{\A}_{B_{r(x_0)}} (y) = \fint_{B_r(x_0)} \A(x,y) dx,
$$
where in above the averaging is only in the $x$-variable. 
We now can state a class of coefficient matrices $\A$ that we use in the Theorem \ref{local-grad-estimate-interior}.
\begin{definition} \label{class-A} Let $\Lambda >0, M_0\geq 1$, and let $\mu \in A_2$ with $[\mu]_{A_2} \leq M_0$. 
A measurable, symmetric matrix $\A : Q_2 \rightarrow \R^{(n+1) \times (n+1)}$ is said to be in $\mathcal{A}(Q_2, \Lambda, M_0, \mu)$
 if \eqref{ELLIPTICITY} holds, and there is a constant $C = C(\Lambda, M_0, n)>0$ such that the following conditions hold. 
\begin{itemize}
\item[\textup{(i)}] For all $r \in (0,1)$, all $x_0 \in \overline{B}_1$, 
 and for every weak solution $v \in W^{1,2}(Q_{3r/2}(x_{0}, 0), \mu)$ of 
\[
\left\{
\begin{array}{cccl}
\textup{div}[\wei{\A}_{B_r(x_0)}(y) \nabla v(X)] & = & 0,  & \quad X= (x,y) \in Q_{3r/2}(x_{0}, 0), \\
\lim_{y \rightarrow 0^+ } \wei{\wei{\A}_{B_r(x_0)}(y) \nabla v(x,y), {\bf e}_{n+1}} & = &0, & \quad x \in   B_{3r/2}(x_{0}), 
\end{array}
\right.
\]
it holds that 
\[
\norm{\nabla v}_{L^\infty(Q_{5r/4}(x_{0}, 0))} \leq C \left\{\frac{1}{\hmu(Q_{3r/2}(x_{0}, 0))} \int_{Q_{3r/2}(x_{0}, 0)} |\nabla v(x,y)|^2 \mu(y) dxdy \right\}^{1/2}.
\]
\item[\textup{(ii)}] For every $X_0 = (x_0, y_0) \in \overline{Q}_1$, any  $r \in (0,1)$ so that $D_{2r}(X_0) \subset Q_2$, 
and for every weak solution $v \in W^{1,2}(D_{3r/2}(X_0), \mu)$ of 
\[
\textup{div}[\wei{\A}_{B_{3r/2}(x_0)}(y) \nabla v(X)]  =  0,   \quad X= (x,y) \in D_{3r/2}(X_0),
\]
it holds
\[
\norm{\nabla v}_{L^\infty(D_{5r/4}(X_0))} \leq C \left\{\frac{1}{\hmu(D_{3r/2}(X_0))} \int_{D_{3r/2}(X_0)} |\nabla v(x,y)|^2 \mu(y) dxdy \right\}^{1/2}.
\]
\end{itemize}
\end{definition}
As we already pointed out in the introduction, we will show in Section \ref{Lipchits-est-section} that coefficient matrices of the form
\begin{equation}\label{Example-class-A}
\mathbb{A}(x,y) = |y|^{\alpha} \begin{bmatrix}
\mathbb{B}(x, y)&0\\
0&b(x,y)
\end{bmatrix},\quad \alpha\in(-1, 1)
\end{equation}
where $\begin{bmatrix}
\mathbb{B}(x, y)&0\\
0&b(x,y)
\end{bmatrix}$ is uniformly elliptic in $\mathbb{R}^{n+1}$, belongs to the class $\mathcal{A}(Q_{2}, \Lambda, M_{0}, \mu)$.  
\noindent
The following class of coefficient matrices $\A$ is used in Theorem \ref{local-grad-estimate-half-cylinder}. 
\begin{definition} \label{class-B} Let $\Lambda >0, M_0 \geq 1$, and let $\mu \in A_2$ with $[\mu]_{A_2} \leq M_0$.  A measurable, symmetric matrix $\A : Q_2^+ \rightarrow \R^{(n+1) \times (n+1)}$ is said to be in $\mathcal{B}(Q_2^+, \Lambda, M_0, \mu)$ if \eqref{ELLIPTICITY} holds, and there is a constant $C = C(\Lambda, M_0, n) > 0$  such that the following conditions hold:
\begin{itemize}
\item[\textup{(i)}] The item \textup{(i)} of Definition \ref{class-A} holds for all $r \in (0,1)$ and for a.e  $x_0 \in B_1^+$ so that $B_{2r}(x_0) \subset B_1^+$. 
\item[\textup{(ii)}] The item \textup{(ii)} of Definition \ref{class-A} holds for a.e. $X_0 = (x_0, y_0) \in \overline{Q}_1^+$ and every $r \in (0,1)$ so that $D_{2r}(X_0) \subset Q_2^+$. 
\item[\textup{(iii)}] For a.e. $X_0 = (x_0, y_0) \in \overline{T}_1 \times (0,1)$,  and $r \in (0,1)$ such that 
$y_0 - 2r >0$, 
and for every weak solution $v \in W^{1,2}(D_{3r/2}^+(X_0), \mu)$ of 
\[
\left\{
\begin{array}{cccl}
\textup{div}[\wei{\A}_{B_{3r/2}^+(x_0)}(y) \nabla v(X)]  &= &  0, &   \quad X= (x,y) \in D_{3r/2}^+(X_0),\\
u & =& 0, & \quad \text{on} \quad T_{3r/2}(x_0) \times I_{3r/2},
\end{array} \right. 
\]
it holds
\[
\norm{\nabla v}_{L^\infty(D_{5r/4}^+(X_0))} \leq C \left\{\frac{1}{\hmu(D_{3r/2}(X_0))} \int_{D_{3r/2}^+(X_0)} |\nabla v(x,y)|^2 \mu(y) dxdy \right\}^{1/2}.
\]

\item[\textup{(iv)}]  For a.e. $X_0 = (x_0, 0) \in \overline{T}_1 \times \{0\}$,  $r \in (0,1)$, 
and for every weak solution $v \in W^{1,2}(Q_{3r/2}^+(x_{0}, 0), \mu)$ of 
\[
\left\{
\begin{array}{cccl}
\textup{div}[\wei{\A}_{B_{3r/2}^+(x_0)}(y) \nabla v(X)] & = & 0,  & \quad X= (x,y) \in Q_{3r/2}^+(x_{0}, 0), \\
 u & = & 0, & \quad \text{on} \quad T_{3r/2} \times (0, 3r/2), \\
\lim_{y \rightarrow 0^+ } \wei{\wei{\A}_{B_{3r/2}^+(x_0)}(y)\nabla v(x,y), {\bf e}_{n+1}} & = &0, & \quad x \in   B_{3r/2}^+(x_{0}), 
\end{array}
\right.
\]
it holds that 
\[
\norm{\nabla v}_{L^\infty(Q_{5r/4}^+(x_{0}, 0))} \leq C \left\{\frac{1}{\hmu(Q_{3r/2}(x_{0}, 0))} \int_{Q_{3r/2}^+(x_{0}, 0)} |\nabla v(x,y)|^2 \mu(y) dxdy \right\}^{1/2}.
\]
\end{itemize}
\end{definition}
\noindent
Again, we will show in Section \ref{Lipchits-est-section} that coefficient matrices of the form
\begin{equation}\label{Example-class-B}
\mathbb{A}(x,y) = |y|^{\alpha} \begin{bmatrix}
\mathbb{B}(x)&0\\
0&b(x)
\end{bmatrix},\quad \alpha\in(-1, 1)
\end{equation}
where $\begin{bmatrix}
\mathbb{B}(x)&0\\
0&b(x)
\end{bmatrix}$ is uniformly elliptic in $\mathbb{R}^{n+1}$,  belong to the class $\mathcal{B}(Q_{2}^{+}, \Lambda, M_{0}, \mu)$

\section{Four types of approximation estimates} \label{Appro-Section}

This section provides intermediate steps to prove Theorem \ref{local-grad-estimate-interior} and Theorem \ref{local-grad-estimate-half-cylinder}.  Our approach is based on the perturbation technique introduced by Caffarelli-Peral \cite{CP}. 
The approach in the paper is also influenced by the work \cite{BW1, BW2, CMP, LTT, MP, Wang}. Essentially, in each small cylinder, we approximate $\nabla u$ by some uniformly bounded vector field. Due to the structure of our domain, we need four types of approximation estimates. Though the statements of these estimates are very similar, the algebraic, and analysis details are quite different. We therefore represent each of these approximation estimates in one separated subsection. As before we work on the underlying assumption that $\mathbb{A}$  symmetric, measurable matrix $\A$ that satisfies the degeneracy condition 
\begin{equation} \label{ELLIPTICITY-5}
\Lambda^{-1}\mu(y)|\xi|^2 \leq \wei{\A(X)\xi, \xi} \leq \Lambda \mu(y)|\xi|^2,  \quad \forall \ \xi \in \R^{n+1},
\end{equation}
for all $ X = (x,y)$ in the appropriate domain of $\mathbb{A}$, with some $\mu \in A_2(\R)$ and $\Lambda >0$. 

\subsection{Approximation estimates up to the base of the  cylinder domain } In this subsection we obtain an approximation estimate for $\nabla u$ over cylinders whose base touches the base of $Q_{2}$. 
The main result of the section now can be stated in the following proposition.
\begin{proposition} \label{bottom-approximation-proposition} Let $\Lambda >0, M_0 >0$ be fixed. Then, for every $\epsilon >0$, there exists $\delta = \delta(\epsilon, \Lambda, M_0, n) >0$ with the following property. 
For every $X_0 = (x_0, 0) \in \overline{B_1} \times \{0\}$, $0 < r <1$, $\mu \in A_2(\R)$ with $[\mu]_{A_2} \leq M_0$ and $\mathbb{A}\in \mathcal{A}(Q_{2},\Lambda, M_{0},\mu)$ satisfying \eqref{ELLIPTICITY-5}, if 
$\F \in L^2(B_{2r}(x_0), \mu^{-1}), f \in L^2(B_{2r}(x_0))$, and
\[
\begin{split}
& \frac{1}{\hat{\mu}(Q_{3r/2}(X_0))} \int_{Q_{3r/2}(X_0)} |\A(X) - \langle{\A}\rangle_{B_{3/2}(x_0)}(y)|^2 \mu^{-1}(y) dX  \leq \delta^{2}, \\
& \frac{1}{\hat{\mu}(Q_{2r}(X_0))}\left[ \int_{Q_{2r}(X_0)} \Big|\F(X)/\mu(y)\Big|^2 \mu(y)dX  + \int_{Q_{2r}(X_0)} \Big|f(x)/\mu(y)\Big|^2 \mu(y) dX \right] \leq \delta^{2}.
\end{split}
\]
then  for every weak solution $u \in W^{1,2}(Q_{2r}(X_0), \mu)$ of 
\begin{equation} \label{u-Q-6-bottom}
\left\{
\begin{array}{cccl}
\textup{div}[\A(X) \nabla u]  &= & \textup{div}({\bf F}), & \quad Q_{2r}(X_{0}), \\
\lim_{y \rightarrow 0^+} \wei{\A(x,y) \nabla u(x,y) - {\bf F}(x,y), e_{n+1}}&= & f(x), & \quad  x \in B_{2r}(x_{0}),
\end{array}
\right.\end{equation}
satisfying
\[
\frac{1}{\hat{\mu}(Q_{2r}(X_0))} \int_{Q_{2r}(X_0)} |\nabla u(X)|^2 \mu(y) dX \leq 1,
\]
there is some $v \in W^{1,2}(Q_{3r/2}(X_0), \mu)$  such that
\begin{equation} \label{u-v-approximation-bottom}
\frac{1}{\hat{\mu}(Q_{3r/2}(X_0))} \int_{Q_{3r/2}(X_0)} |\nabla u(X) - \nabla v(X)|^2 \mu(y) dX \leq \epsilon^{2}.
\end{equation}
 Moreover, 
\begin{equation} \label{v-L-infty-bottom}
 \norm{\nabla v}_{L^\infty(Q_{5r/4}(X_0))}  \leq C(n, \Lambda, M_0).
\end{equation}
\end{proposition}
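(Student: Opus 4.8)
The plan is to run a two-step approximation, following the perturbation philosophy of Caffarelli--Peral but adapted to the degenerate weight $\mu$. First I would compare $u$, the weak solution of the inhomogeneous equation with variable coefficient $\A$, against $w$, the weak solution of the homogeneous equation with the \emph{same} coefficient $\A$ but zero data, on $Q_{7r/4}(X_0)$ with $w = u$ on the lateral and top boundary and with the Neumann-type condition at $y=0$ replaced by the homogeneous one. Subtracting the weak formulations and testing with $\psi = u - w \in \overset{*}{W}^{1,2}$, the degeneracy bound \eqref{ELLIPTICITY-5} gives
\[
\Lambda^{-1}\int_{Q_{7r/4}(X_0)} |\nabla(u-w)|^2 \mu \, dX \le \int \wei{\F, \nabla(u-w)} dX + \int (u-w)(x,0) f(x)\, dx .
\]
The volume term is controlled by Cauchy--Schwarz against $\|\F/\mu\|_{L^2(\cdot,\mu)} \le \delta\, \hmu(Q_{2r})^{1/2}$; the boundary term is controlled by the weighted trace inequality of Lemma \ref{trace} (or Lemma \ref{trace-zero}) together with the $f$-smallness, producing the same power of $\hmu$. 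Dividing through by $\hmu(Q_{3r/2}(X_0))$ and using the doubling property (Lemma \ref{doubling}) to compare $\hmu$ of the nested cylinders, this yields $\fint_{Q_{3r/2}} |\nabla(u-w)|^2 \mu\, dX \lesssim \delta$.

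The second step compares $w$ against $v$, the weak solution of the \emph{frozen-coefficient} homogeneous equation with coefficient $\wei{\A}_{B_{3r/2}(x_0)}(y)$ on $Q_{3r/2}(X_0)$, matching $w$ on the appropriate part of the boundary. Subtracting the two weak formulations and testing with $w-v$, the error is driven by $\int \wei{(\A - \wei{\A}_{B_{3r/2}(x_0)}(y))\nabla w, \nabla(w-v)}\, dX$; by Cauchy--Schwarz this is bounded by $\big(\int |\A - \wei{\A}_{B_{3r/2}}|^2 \mu^{-1} |\nabla w|^2 \mu\big)^{1/2}$ times $\big(\int|\nabla(w-v)|^2\mu\big)^{1/2}$. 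Here I would need a higher-integrability (reverse-Hölder / Gehring) bound for $\nabla w$ in the weighted space so that $\int_{Q_{3r/2}} |\A-\wei{\A}|^2\mu^{-1}|\nabla w|^2 \mu$ can be split via Hölder into the small oscillation factor $\big(\fint |\A-\wei{\A}|^{2\gamma'}\mu^{-1}\cdots\big)$ — or rather, combine the $L^\infty$-type control one gets from the class $\mathcal{A}$ away from nothing here; more carefully one uses self-improved integrability of $\nabla w$ to absorb the unbounded weight factor. This is the step where the energy bound $\fint_{Q_{2r}}|\nabla u|^2\mu \le 1$ (propagated to $w$) enters, and $\delta$ is chosen small against $\epsilon$ at the very end. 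Finally, $v$ inherits $\|\nabla v\|_{L^\infty(Q_{5r/4}(X_0))} \le C(n,\Lambda,M_0)$ directly from item (i) of Definition \ref{class-A} (the defining Lipschitz estimate of the class $\mathcal{A}$), since $v$ solves exactly the frozen homogeneous Neumann problem, together with the energy bound $\fint_{Q_{3r/2}}|\nabla v|^2\mu \lesssim 1$ obtained by the usual Caccioppoli/energy comparison through $w$ and $u$.

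The main obstacle I anticipate is the second comparison: controlling $\int |\A - \wei{\A}_{B_{3r/2}}|^2 \mu^{-1}|\nabla w|^2 \mu$ when $\mu^{-1}$ is a genuine singular weight. The smallness hypothesis only gives $\fint |\A - \wei{\A}|^2 \mu^{-1}\, dX \le \delta^2$ against the $\hmu$-measure, so to decouple it from $|\nabla w|^2$ one must establish a weighted reverse-Hölder inequality for $\nabla w$ (a Gehring-type self-improvement for the degenerate equation $\div[\A\nabla w]=0$), then apply Hölder with a conjugate exponent close enough to $1$ that the gain in integrability of $|\nabla w|$ balances the loss incurred by raising $|\A-\wei{\A}|^2\mu^{-1}$ to a slightly higher power — which in turn requires the Muckenhoupt reverse-Hölder property of $\mu$ (Lemma \ref{Reverse-H}) to keep $\mu^{-1}$ integrable to that power. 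Assembling these exponents consistently, and checking that all implied constants depend only on $n,\Lambda,M_0$, is the delicate bookkeeping; once it is done, a standard contradiction/compactness or direct-estimate argument closes the proposition by choosing $\delta$ small.
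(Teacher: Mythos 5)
Your two-step decomposition is exactly the paper's: first compare $u$ with $\tilde u$, the $\A$-harmonic replacement with homogeneous data on $Q_{7r/4}(X_0)$ (your $w$), then compare $\tilde u$ with $v$, the $\langle\A\rangle_{B_{3r/2}(x_0)}(y)$-harmonic replacement on $Q_{3r/2}(X_0)$; the trace inequality of Lemma~\ref{trace-zero} handles the $f$-term in step one, a weighted Gehring estimate (Lemma~\ref{Gerling-interior-bottom}) supplies the extra integrability of $\nabla\tilde u$ needed in step two, and the $L^\infty$ bound on $\nabla v$ comes from Definition~\ref{class-A}(i) together with an energy bound on $v$ propagated from $u$. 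This is all correct and matches the proof of Lemmas~\ref{energy-u-tilde-u}--\ref{energy-u-v}.

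The one place your sketch does not quite close is the decoupling you flag as the main obstacle. After applying H\"older with conjugate exponents $\tfrac{2+\varrho}{2}$ and $\tfrac{2+\varrho}{\varrho}$ to
$\int_{Q_{3r/2}}|\A-\langle\A\rangle_{B_{3r/2}}(y)|^2\mu^{-1}|\nabla\tilde u|^2\mu\,dX$,
you are left with
$\bigl(\fint_{Q_{3r/2}}|(\A-\langle\A\rangle)\mu^{-1}|^{\frac{2(2+\varrho)}{\varrho}}\mu\,dX\bigr)^{\frac{\varrho}{2+\varrho}}$,
and you propose to tame this via a reverse-H\"older estimate on $\mu^{-1}$. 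That route is unnecessary (and it is unclear it would deliver the requisite smallness in $\delta$ at all, since merely dominating $|\A-\langle\A\rangle|$ by $\mu$ turns the integrand into a power of $\mu$ and loses the oscillation hypothesis). The observation that makes the step elementary is the pointwise consequence of the degeneracy bound \eqref{ELLIPTICITY-5}: $|\A(X)-\langle\A\rangle_{B_{3r/2}}(y)|\,\mu(y)^{-1}\le 2\Lambda$ a.e. Using this, one writes $|(\A-\langle\A\rangle)\mu^{-1}|^{\frac{2(2+\varrho)}{\varrho}}\mu = |(\A-\langle\A\rangle)\mu^{-1}|^{\frac{2(2+\varrho)}{\varrho}-2}\cdot|\A-\langle\A\rangle|^2\mu^{-1}\le (2\Lambda)^{\frac{2(2+\varrho)}{\varrho}-2}|\A-\langle\A\rangle|^2\mu^{-1}$, so the factor reduces directly to $\bigl(\fint_{Q_{3r/2}}|\A-\langle\A\rangle|^2\mu^{-1}\,dX\bigr)^{\frac{\varrho}{2+\varrho}}\le C\delta^{\frac{2\varrho}{2+\varrho}}$, which is precisely the smallness you need. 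No second reverse-H\"older bookkeeping on $\mu$ enters at this stage (Lemma~\ref{Reverse-H} is used elsewhere, e.g. inside the Gehring argument, but not here).
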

\noindent

The rest of the subsection is devoted to the proof Proposition \ref{bottom-approximation-proposition}.  We also note that the definition for weak solutions of \eqref{u-Q-6-bottom} is given in Definition \ref{grounding-weak-solution-Q-R}.
Without loss of generality, we can assume from now on that that $x_0 =0 \in \R^n$, and $r =1$.  We need several Lemmas in order to prove Proposition \ref{bottom-approximation-proposition}. 
The first  is a Gehring's type self-improving regularity estimates for weak solutions of degenerate equations.
\begin{lemma}  \label{Gerling-interior-bottom}  Let $\Lambda >0, M_0 \geq 1$ be fixed and $\theta \in (0,1)$. Then, there exists $\varrho = \varrho(\Lambda, M_0, n)>0$ sufficiently small such that for any $0 < R<2$, if \eqref{ELLIPTICITY-5} holds with some $\mu \in A_2(\R)$ and $[\mu]_{A_2} \leq M_0$, and if $g \in W^{1,2}(Q_R, \mu)$ is a weak solution of 
\begin{equation} \label{v-Q-r-bottom}
\left\{
\begin{array}{cccl}
\textup{div}[\A(X) \nabla g] & =& 0 & \quad \text{in} \quad Q_R, \\
\lim_{y\rightarrow 0^+}\wei{\A(X) \nabla g, e_{n+1}} & = & 0 &\quad \text{on} \quad B_R,
\end{array}
\right.
\end{equation}
then  there is $C =C(M_0, \Lambda, n, \theta) >0$ such that 
\[
\left(\frac{1}{\hat{\mu}(Q_{\theta R})}\int_{Q_{\theta R}} |\nabla g|^{2+\varrho} \mu(y) dX \right)^{\frac{1}{2+\varrho}} \leq C \left( \frac{1}{\hat{\mu}(Q_R)}\int_{Q_R} |\nabla g|^2 \mu(y) dX\right)^{1/2}.
\]
\end{lemma}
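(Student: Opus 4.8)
The plan is to establish the reverse Hölder inequality $\left(\fint_{Q_{\theta R}} |\nabla g|^{2+\varrho} d\hmu\right)^{1/(2+\varrho)} \le C \left(\fint_{Q_R} |\nabla g|^2 d\hmu\right)^{1/2}$ from a Caccioppoli-type estimate combined with a weighted Sobolev--Poincar\'e inequality, followed by a Gehring self-improvement (equivalently, a weighted version of the Giaquinta--Modica lemma). Concretely, I would first prove that for any cylinder $D = B_\rho(z) \times I_\rho(s)$ (or a half-cylinder touching the base $\{y=0\}$) with suitable dilate $2D \subset Q_R$, the solution $g$ of \eqref{v-Q-r-bottom} satisfies
\[
\fint_{D} |\nabla g|^2 \, d\hmu \le C \fint_{2D} \left| \frac{g - c}{\rho} \right|^2 d\hmu
\]
for an arbitrary constant $c$ (chosen later as a weighted average of $g$); this uses the ellipticity \eqref{ELLIPTICITY-5}, the equation tested against $\eta^2(g-c)$ with $\eta$ a cutoff, and the conormal boundary condition on $B_R$ which makes the boundary term vanish — exactly so that no boundary integral of $g$ appears. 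The Neumann-type condition at $y=0$ is what allows the estimate to hold for cylinders whose base sits on $\{y=0\}$.

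Next I would invoke a weighted Sobolev--Poincar\'e inequality: since $\hmu(X) = \mu(y)$ with $\mu \in A_2(\R)$, hence $\hmu \in A_2(\R^{n+1})$, there is an exponent $q = q(M_0,n) < 2$ (a $(q,2)$-Poincar\'e inequality holds for $A_2$ weights) so that
\[
\left( \fint_{2D} \left| \frac{g - \wei{g}_{2D,\hmu}}{\rho} \right|^2 d\hmu \right)^{1/2} \le C \left( \fint_{2D} |\nabla g|^q \, d\hmu \right)^{1/q}.
\]
Combining the two displays yields the reverse-Hölder-with-increasing-supports inequality
\[
\left( \fint_{D} |\nabla g|^2 \, d\hmu \right)^{1/2} \le C \left( \fint_{2D} |\nabla g|^q \, d\hmu \right)^{1/q}, \qquad q < 2,
\]
valid uniformly over all such cylinders $D$ (interior ones and ones resting on the base), with constants depending only on $\Lambda, M_0, n$. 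At this point the desired higher integrability is a consequence of the Gehring--Giaquinta--Modica lemma in the space of homogeneous type $(\R^{n+1}, |\cdot|, \hmu\, dX)$ — here I would cite or adapt the weighted version already used in \cite{CMP} — which upgrades the exponent from $q$ on the right to $2 + \varrho$ on the left for some $\varrho = \varrho(\Lambda, M_0, n) > 0$, with the passage from balls in $Q_R$ to the fixed smaller cylinder $Q_{\theta R}$ handled by the standard covering/localization built into that lemma (this is where the dependence on $\theta$ in the constant enters, and why $R < 2$ suffices).

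The main obstacle is the careful treatment of the cylinders that sit on the base $\{y = 0\}$, where the weight $\mu(y) = |y|^\alpha$-type degeneracy is concentrated: one must verify that the Caccioppoli estimate and the weighted Poincar\'e inequality hold with constants independent of how close the cylinder is to $\{y=0\}$, which relies precisely on the $A_2$ condition for $\hmu$ on $\R^{n+1}$ (uniform doubling and the uniform $(q,2)$-Poincar\'e inequality for $A_2$ weights) together with the conormal condition killing the boundary term. A secondary technical point is ensuring the Gehring lemma is applied in a form adapted to the family of cylinders $D_\rho$ and $Q_\rho$ (mixed geometry near the base) rather than Euclidean balls; since the relevant sets are mutually comparable and doubling, this is routine but must be stated. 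Once these two points are in place, the conclusion follows by chaining the estimates and reindexing exponents.
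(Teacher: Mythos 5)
Your proposal is correct and takes essentially the same route as the paper: a Caccioppoli estimate obtained by testing against a cutoff times $g$ minus a constant (with the conormal condition killing the boundary term on $\{y=0\}$), followed by the weighted Sobolev--Poincar\'e inequality of Fabes--Kenig--Serapioni for $A_2$ weights to obtain a reverse H\"older inequality with decreasing exponent, and finally a Gehring-type self-improvement lemma in the doubling metric measure space $(\R^{n+1},\hmu\,dX)$. The paper cites Kinnunen and Stredulinsky for the Gehring step where you cite the version in \cite{CMP}, but that is a cosmetic difference; you also correctly identify the two technical points the paper handles implicitly, namely uniformity of the constants for cylinders resting on the base and applying Gehring over a family of cylinders rather than balls.
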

\begin{proof} Without loss of generality, we can assume $R =2$ and $\theta = 1/2$. We claim that there is some $ p \in (1,2)$ such that for every $0 < r <1$ and every $Z \in \overline{Q}_1$, 
\begin{equation} \label{Re-Holder-Gehring}
\left(\frac{1}{\hat{\mu}(D_{r/2} (Z))}\int_{D_{r/2} (Z) \cap Q_2} |\nabla g|^{2}\mu(y) dX \right)^{1/2} \leq  C(\Lambda, M_{0})\left(\frac{1}{\hat{\mu}(D_{r} (Z))} \int_{Q_{r} (Z)\cap Q_2}|\nabla g|^{p} \mu(y) dX \right)^{1/p}. 
\end{equation}
Once this is proved, our Lemma follows from the weighted reverse H\"{o}lder's inequality, see for instance \cite[Theorem 1.5]{Kinnunen}, or \cite[Theorem 2.3.3]{Stredulinsky}. We only give the proof of \eqref{Re-Holder-Gehring} for cylinders centered at $Z =  (z, 0)\in \overline{B_1} \times\{0\}$ since for other types of cylinders, the proof is essentially the same. In this case, note that $Q_r(Z) = D_r(Z) \cap Q_2$ and $Q_{r/2}(Z) = D_{r/2}(Z) \cap Q_2$.  Let $\phi \in C_0^\infty(B_r(z))$ be non-negative such that $\phi = 1$ in $B_{r/2}(z)$, $\phi =0$ on $B_{r}(z)\setminus B_{3r/4}(z)$, and $0\leq \phi \leq 1$. Also, let $\chi \in C^\infty(\mathbb{R})$ such that $\chi (y) =1$ for $y \leq r/2$ and $\chi(y) = 0$ for $y \geq r$. We write $\psi(X) = \phi(x) \chi(y)$. Use $(g-\bar{g}_{Q_{r} (Z)})\psi^2 \in \overset{*}W^{1,2}(Q_{r}, \mu)$ as a test function for the equation of \eqref{v-Q-r-bottom}, we obtain
\[
\int_{Q_{r} (Z)} \wei{\A \nabla g, \nabla g} \psi^2(X) dX = - \int_{Q_{r} (Z)} \wei{\A \nabla v, \nabla (\psi^2)} (g-\bar{g}_r) dX. 
\]
Then, by the ellipticity condition \eqref{ELLIPTICITY-5}, we see that
\[
 \Lambda^{-1} \int_{Q_{r} (Z)} |\nabla g|^2  \psi^2 \mu(y) dX \leq  \Lambda \int_{Q_{r} (Z)} |\nabla v| |\nabla \psi| |\psi||g-\bar{g}_{Q_{r} (Z)}| \mu(y) dX.
\]
Then, it follows from H\"{o}lder's inequality and Young's inequality that
\[
\begin{split}
\Lambda^{-1} \int_{Q_{r} (Z)} |\nabla g|^2  \psi^2 \mu(y)dX & \leq \frac{\Lambda^{-1}}{2}\int_{Q_{r} (Z)} |\nabla g|^2 \psi^2 \mu(y)dX + C(\Lambda) \int_{Q_{r} (Z)} |g -\bar{g}_{Q_{r} (Z)}|^2 |\nabla \psi|^2 \mu(y) dX \\
&  \leq \frac{\Lambda^{-1}}{2}\int_{Q_{r} (Z)} |\nabla v|^2 \psi^2 \mu(y) dX + \frac{C(\Lambda)}{r^2} \int_{Q_{r} (Z)} |g -\bar{g}_{Q_{r} (Z)}|^2  \mu(y) dX.
\end{split}
\]
Hence,
\begin{equation}\label{Poincare-Gehring}
\frac{1}{\hat{\mu}(D_{r} (Z))} \int_{Q_{r} (Z)} |\nabla g|^2  \psi(X)^2 \mu dX \leq  \frac{C(\Lambda)}{r^2 \hat{\mu}(D_{r} (Z))} \int_{Q_{r} (Z)} |g -\bar{g}_{Q_{r} (Z)}|^2  \mu(y) dX.
\end{equation}
Then, by the weighted Sobolev-Poincar\'{e}'s inequality for $A_2$-weights \cite[Theorem 1.3]{Fabes},  we can find some $1< p < 2$ such that
\begin{equation}\label{S-P-Gehring}
\left(\frac{1}{\hat{\mu}(D_{r} (Z))} \int_{Q_{r} (Z)} |g -\bar{g}_{Q_{r} (Z)}|^2  \mu(y)dX\right)^{1/2} \leq  r C(\Lambda, M_0)\left(\frac{1}{\hat{\mu}(D_{r} (Z))} \int_{Q_{r} (Z)} | \nabla g |^{p}  \mu(y) dX\right)^{1/p}.
\end{equation}
Combining  inequalities \eqref{Poincare-Gehring}, \eqref{S-P-Gehring} and using the doubling property of $\mu$ we obtain \eqref{Re-Holder-Gehring}.
\end{proof}
\noindent
We will use the above Gehring's type higher integrability result to set up a two-step approximation procedure. 
\begin{lemma} \label{energy-u-tilde-u} Let $\Lambda, M_0, \A, \mu$ be as Proposition \ref{bottom-approximation-proposition}. If  $u \in W^{1,2}(Q_2, \mu)$ be a weak solution of \eqref{u-Q-6-bottom}, then there is a weak solution $\tilde{u} \in W^{1,2}(Q_{7/4})$ of
\begin{equation} \label{v-1-Q-5-bottom}
\left\{
\begin{array}{cccl}
\textup{div} [\A(X) \nabla \tilde{u}] & =  & 0, & \quad \text{in} \quad Q_{7/4}, \\
 \tilde{u} & = &u, & \quad \text{on} \quad \partial Q_{7/4} \setminus B_{7/4} \times \{0\}\\
\lim_{y\rightarrow 0^+} \wei{\A(x, y) \nabla \tilde{u}(x, y), e_{n+1}} & = & 0, & \quad \text{on \quad $B_{7/4}$},
\end{array} 
\right.
\end{equation}
satisfying 
\begin{equation} \label{u-tilde-u}
\int_{Q_{7/4}} |\nabla u -\nabla \tilde{u}|^2 \mu(y) dX \leq C(\Lambda, M_0, n) \left[ \int_{Q_{7/4}} \Big| \F/\mu \Big|^2 \mu(y) dX +  \int_{Q_{7/4}} \Big|f(x)/\mu(y) \Big|^2 \mu(y)dX \right].
\end{equation}
Moreover, there is $\rho = \rho(\Lambda, M_0,n) >0$ such that
\begin{equation} \label{higher-reg-bottom-tilde-u}
\begin{split}
& \left( \frac{1}{\hat{\mu}(Q_{3/2})} \int_{Q_{3/2}} |\nabla \tilde{u}|^{2+\rho} \right)^{\frac{1}{2+\rho}} \\
& \leq C(n, M_0, \Lambda) \left[ \frac{1}{\hat{\mu}(Q_{7/4})} \left\{ \int_{Q_{7/4}} |\nabla u|^2 \mu(y) dX + \int_{Q_{7/4}} \Big| \F/\mu \Big|^2 \mu(y) dX +  \int_{Q_{7/4}} \Big| f(x)/\mu(y) \Big|^2 \mu(y)dX \right\}  \right]^{1/2}.
\end{split}
\end{equation}
\end{lemma}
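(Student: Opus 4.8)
The plan is to obtain $\tilde{u}$ as the Lax--Milgram solution of a degenerate boundary value problem on $Q_{7/4}$, then to derive \eqref{u-tilde-u} by the standard energy comparison with test function $\tilde{u}-u$, and finally to get \eqref{higher-reg-bottom-tilde-u} by feeding $\tilde{u}$ into the Gehring-type estimate of Lemma~\ref{Gerling-interior-bottom}. Throughout I reduce, as allowed, to $x_0=0$, $r=1$.

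\textbf{Step 1 (existence of $\tilde{u}$).} Since $\mu\in A_2(\R)$ with $[\mu]_{A_2}\le M_0$, the weighted Poincar\'e inequality of \cite{Fabes} holds on $Q_{7/4}$ for functions in $\overset{*}{W}^{1,2}(Q_{7/4},\mu)$, i.e. functions vanishing on $(\partial B_{7/4}\times(0,7/4))\cup(B_{7/4}\times\{7/4\})$ (this already follows from the $x$-slicewise Poincar\'e inequality because $\mu=\mu(y)$). Hence, by the ellipticity \eqref{ELLIPTICITY-5}, the bilinear form $a(v,\psi)=\int_{Q_{7/4}}\wei{\A\nabla v,\nabla\psi}\,dX$ is bounded and coercive on $\overset{*}{W}^{1,2}(Q_{7/4},\mu)$, and the functional $\psi\mapsto-\int_{Q_{7/4}}\wei{\A\nabla u,\nabla\psi}\,dX$ is bounded on that space because $u\in W^{1,2}(Q_2,\mu)$ and $|\A\nabla u|\le\Lambda\mu|\nabla u|$. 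The Lax--Milgram theorem then gives a unique $w\in\overset{*}{W}^{1,2}(Q_{7/4},\mu)$ with $a(w,\psi)=-\int_{Q_{7/4}}\wei{\A\nabla u,\nabla\psi}\,dX$ for all $\psi\in\overset{*}{W}^{1,2}(Q_{7/4},\mu)$; setting $\tilde{u}=u+w\in W^{1,2}(Q_{7/4},\mu)$ we have $\tilde{u}-u\in\overset{*}{W}^{1,2}(Q_{7/4},\mu)$ and $\int_{Q_{7/4}}\wei{\A\nabla\tilde{u},\nabla\psi}\,dX=0$ for all such $\psi$, which is exactly the weak formulation of \eqref{v-1-Q-5-bottom} (the freedom of $\psi$ at $y=0$ encodes both $\textup{div}[\A\nabla\tilde{u}]=0$ and the homogeneous conormal condition on $B_{7/4}$, while $\tilde{u}-u\in\overset{*}{W}^{1,2}$ encodes the Dirichlet data on the lateral and top faces of $\partial Q_{7/4}$).

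\textbf{Step 2 (energy estimate).} Extending any $\psi\in\overset{*}{W}^{1,2}(Q_{7/4},\mu)$ by zero across $\partial B_{7/4}\times(0,7/4)$ and $B_{7/4}\times\{7/4\}$ produces an admissible test function in $\overset{*}{W}^{1,2}(Q_2,\mu)$ all of whose contributions over $Q_2\setminus Q_{7/4}$ vanish; hence Definition~\ref{grounding-weak-solution-Q-R} applied to $u$ on $Q_2$ yields, for every $\psi\in\overset{*}{W}^{1,2}(Q_{7/4},\mu)$,
\[
\int_{Q_{7/4}}\wei{\A\nabla u,\nabla\psi}\,dX=\int_{Q_{7/4}}\wei{\F,\nabla\psi}\,dX+\int_{B_{7/4}}\psi(x,0)f(x)\,dx.
\]
Subtracting from the weak formulation of $\tilde{u}$ and taking $\psi=w$ gives $\int_{Q_{7/4}}\wei{\A\nabla w,\nabla w}\,dX=-\int_{Q_{7/4}}\wei{\F,\nabla w}\,dX-\int_{B_{7/4}}w(x,0)f(x)\,dx$. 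The left side is $\ge\Lambda^{-1}\int_{Q_{7/4}}|\nabla w|^2\mu\,dX$, the $\F$-term is absorbed by Young's inequality applied to $|\F/\mu|\,|\nabla w|\,\mu$, and for the boundary term Lemma~\ref{trace-zero} (applicable since $w\in\overset{*}{W}^{1,2}\subset\overset{o}{W}^{1,2}$ vanishes on $\partial B_{7/4}\times(0,7/4)$) bounds $\int_{B_{7/4}}|w(x,0)|^2\,dx$ by $C(n,M_0)\mu((0,7/4))^{-1}\int_{Q_{7/4}}|\nabla w|^2\mu\,dX$; combining with the identity $\int_{Q_{7/4}}|f(x)/\mu(y)|^2\mu(y)\,dX=\big(\int_0^{7/4}\mu^{-1}\big)\int_{B_{7/4}}|f|^2\,dx$ and the elementary $A_2$-bound $\mu((0,7/4))\int_0^{7/4}\mu^{-1}\ge(7/4)^2$, Cauchy--Schwarz gives $\big|\int_{B_{7/4}}w(x,0)f(x)\,dx\big|\le C(n,M_0)\big(\int_{Q_{7/4}}|\nabla w|^2\mu\big)^{1/2}\big(\int_{Q_{7/4}}|f/\mu|^2\mu\big)^{1/2}$. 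Absorbing the $\int|\nabla w|^2\mu$ terms into the left side gives \eqref{u-tilde-u}.

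\textbf{Step 3 (higher integrability) and the main difficulty.} Since $\tilde{u}\in W^{1,2}(Q_{7/4},\mu)$ solves $\textup{div}[\A\nabla\tilde{u}]=0$ in $Q_{7/4}$ with $\lim_{y\to0^+}\wei{\A\nabla\tilde{u},e_{n+1}}=0$ on $B_{7/4}$, Lemma~\ref{Gerling-interior-bottom} applied with $R=7/4$ and $\theta=6/7$ (so $Q_{3/2}=Q_{\theta R}$) furnishes $\rho=\varrho(\Lambda,M_0,n)>0$ with
\[
\Big(\frac{1}{\hat{\mu}(Q_{3/2})}\int_{Q_{3/2}}|\nabla\tilde{u}|^{2+\rho}\mu\,dX\Big)^{\frac{1}{2+\rho}}\le C\Big(\frac{1}{\hat{\mu}(Q_{7/4})}\int_{Q_{7/4}}|\nabla\tilde{u}|^2\mu\,dX\Big)^{1/2},
\]
and the bound $|\nabla\tilde{u}|^2\le2|\nabla u|^2+2|\nabla w|^2$ together with \eqref{u-tilde-u} for the $|\nabla w|^2$ part then yields \eqref{higher-reg-bottom-tilde-u}. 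Steps~1 and~3 are routine; the one place that demands genuine care is the boundary term $\int_{B_{7/4}}w(x,0)f(x)\,dx$ in Step~2, where one must correctly marry the weighted trace inequality of Lemma~\ref{trace-zero} with the $A_2$-relation between $\mu((0,7/4))$ and $\int_0^{7/4}\mu^{-1}$ so as to express the bound through $\|f/\mu\|_{L^2(Q_{7/4},\mu)}$ with a constant depending only on $n,M_0,\Lambda$ — together with the preliminary (but routine) justification that zero-extension of test functions licenses passing the weak formulation of $u$ from $Q_2$ down to the subcylinder $Q_{7/4}$.
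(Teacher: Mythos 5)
Your proposal is correct and follows essentially the same route as the paper: obtain $\tilde u$ by a Lax--Milgram/Fabes-style existence argument on $Q_{7/4}$, test the equation for $w=\tilde u-u$ against itself, absorb the $\F$-term by Young, control the boundary term via the weighted trace inequality (Lemma~\ref{trace-zero}) combined with the Cauchy--Schwarz bound $\mu((0,7/4))\int_0^{7/4}\mu^{-1}\ge(7/4)^2$ so that the constant depends only on $n,M_0,\Lambda$, and finally feed $\tilde u$ into the Gehring-type Lemma~\ref{Gerling-interior-bottom}. The one cosmetic slip is calling the bound $\mu((0,7/4))\int_0^{7/4}\mu^{-1}\ge(7/4)^2$ an ``$A_2$-bound'' --- it is just Cauchy--Schwarz (the paper calls it H\"older's inequality), but the inequality you use is correct.
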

\begin{proof} 
We begin by noting first that by weak solution to \eqref{v-1-Q-5-bottom} we mean 
if $\tilde{u} -u \in \overset{*}{W}^{1,2}(Q_{7/4}, \mu)$ and
\[
\int_{Q_{7/4}} \wei{\A\nabla \tilde{u}, \nabla \psi} dX = 0, \quad \forall \psi \in \overset{*}{W}^{1,2}(Q_{7/4}, \mu).
\]
Observe that from the  trace inequaly, Lemma \ref{trace-zero}, and for a given $u$ as in the lemma, we can follow \cite[Theorem 2.2]{Fabes} to prove the existence and uniqueness of weak solution $\tilde{u}$ for the equation \eqref{v-1-Q-5-bottom}. Now, we write $g = u -\tilde{u}$. By the definition, $g \in \overset{*}{W}^{1,2}(Q_{7/4}, \mu)$. Moreover, $g$ is a weak solution of
\begin{equation} \label{g-Q-5-bottom}
\left\{
\begin{array}{cccl}
\text{div} [\A(X) \nabla g] & = & \text{div}[{\bf F}], & \quad \text{in} \quad Q_{7/4}, \\
 g & =& 0, & \quad \text{on} \quad \partial Q_{7/4} \setminus (B_{7/4}\times \{0\}), \\
\lim_{y \rightarrow 0^+}\wei{\A(x, y) \nabla g(x, y) -{\bf F}(x, y), e_{n+1}} & = & f(x), & \quad x \in B_{7/4}.
\end{array}
\right.
\end{equation}
Then, use $g$ as a test function of its equation, we obtain that 
\[
\int_{Q_{7/4}} \wei{\A\nabla g, \nabla g} dX = \int_{Q_{7/4}} \wei{ {\bf F}, \nabla g} dX + \int_{B_{7/4}} g(x,0) f(x) dx.
\]
From the  ellipticity condition \eqref{ELLIPTICITY-5}, it follows that 
\[
 \int_{Q_{7/4}} |\nabla g|^2 \mu(y) dX \leq 
C(\Lambda) \left[ \int_{Q_{7/4}} \Big | \F/\mu \Big| |\nabla g| \mu(y) dX + \int_{B_{7/4}} |g(x,0)| |f(x)| dx \right]. 
\]
Then, applying the H\"{o}lder's inequality and Young's inequality, we see that
\[
\begin{split}
\int_{Q_{7/4}} |\nabla g|^2 \mu(y) dX  & \leq \beta
\left[ \int_{Q_{7/4}} |\nabla g|^2\mu(y) dX + \frac{\mu(\Gamma_{7/4})}{2 (7/4)^{2}}\int_{B_{7/4}} |g(x,0)|^2 dx \right]  \\
&+ \frac{C(\Lambda)}{4\beta} \left[ \int_{Q_{7/4}} \Big| \F/\mu \Big|^2 \mu(y) dX + \frac{2 (7/4)^{2}}{\mu(\Gamma_{7/4})}\int_{B_{7/4}} |f(x)|^2 dx \right],
\end{split}
\]
with any $\beta >0$. Then, it follows from the trace inequality, Lemma \ref{trace-zero}, that
\[
\Lambda \int_{Q_{7/4}} |\nabla g|^2 \mu(y) dX  \leq \beta C(n, M_0)
\int_{Q_{7/4}} |\nabla g|^2\mu(y) dX + \frac{C(\Lambda, M_0)}{4\beta} \left[ \int_{Q_{7/4}} \Big| \F/\mu \Big|^2 \mu(y) dX + \frac{ (7/4)^{2}}{\mu(\Gamma_{7/4})}\int_{B_{7/4}} |f(x)|^2 dx \right].
\]
Then, with $\beta$ sufficiently such that  $\beta  C(n, M_0) \leq 1/2$, 
we obtain
\begin{equation} \label{g-energy-last-step}
\int_{Q_{7/4}} |\nabla g|^2 \mu(y) dX \leq C(\Lambda, n, M_0) \left[ \int_{Q_{7/4}} \Big| \F/\mu \Big|^2 \mu(y) dX +  
(7/4)^{2}\left(\int_{0}^{7/4} \mu (y)dy\right)^{-1} \int_{B_{7/4}} |f(x)|^2 dx \right].
\end{equation}
Observe that  by H\"{o}lder's inequality, 
\[
7/4 = \int_0^{7/4} dy  \leq  \left(\int_0^{7/4} \mu^{-1} (y) dy \right)^{1/2} \left(\int_0^{7/4} \mu(y)dy\right)^{1/2}.
\]
Therefore,
\[
\frac{(7/4)^2}{\mu(\Gamma_{7/4})} \int_{B_{7/4}} |f(x)|^2 dx \leq \int_{Q_{7/4}} \Big |f(x)/\mu(y) \Big|^2 \mu(y) dX.
\]
This together with \eqref{g-energy-last-step} prove the estimate \eqref{u-tilde-u} in the lemma. The estimate \eqref{higher-reg-bottom-tilde-u} follows directly from \eqref{u-tilde-u}, Lemma \ref{Gerling-interior-bottom}, and Lemma \ref{doubling}. 
\end{proof}
\begin{lemma} \label{energy-u-v} Let $\tilde{u}$ and $\varrho$ be  as in Lemma \ref{energy-u-tilde-u}. Then, there is $v \in W^{1,2}(B_{3/2}, \mu)$  a weak solution of
\begin{equation} \label{v-Q-4-bottom}
\left\{
\begin{array}{cccl}
\textup{div}[\langle {\A}\rangle_{B_{3/2}}(y) \nabla v] & = & 0, & \quad \text{in} \quad Q_{3/2}, \\
v & =  &\tilde{u},  & \quad \text{on} \quad \partial Q_{3/2}\setminus B_{3/2} \times \{0\}, \\
\lim_{y\rightarrow 0^+} \wei{\langle{\A}\rangle_{B_{3/2}}(y) \nabla v(x, y), e_{n+1}} & =  &0, & \quad \text{on $B_{3/2}$},
\end{array} \right.
\end{equation}
such that 
\[
\begin{split}
 &\left (\frac{1}{\hat{\mu}(Q_{3/2})} \int_{Q_{3/2}} |\nabla \tilde{u} -\nabla v|^2 \mu(y) dX\right)^{1/2}  \\
& \leq C(\Lambda) \left(\frac{1}{\hat{\mu}(Q_{3/2})} \int_{Q_{3/2}}  |\nabla \tilde{u}|^{2+\varrho} \mu(y)\right)^{\frac{1}{2+\varrho}} \left(\frac{1}{\hat{\mu}(Q_{3/2})} \int_{Q_{3/2}}\Big |\A - \langle{\A}\rangle_{B_{3/2}}(y)\Big|^2\mu(y)^{-1} dX  \right)^{\frac{\varrho}{2(2+\varrho)}}.
 \end{split}
 \]
\end{lemma}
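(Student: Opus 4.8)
The plan is to produce $v$ as the solution of the frozen--coefficient Dirichlet--Neumann problem \eqref{v-Q-4-bottom} with boundary data $\tilde u$, and then to bound $\nabla\tilde u-\nabla v$ by the standard energy comparison argument, the only delicate point being the bookkeeping of the weight $\mu$. First I would note that $\wei{\A}_{B_{3/2}}(y)=\fint_{B_{3/2}}\A(x,y)\,dx$, being an $x$--average of $\A$, is symmetric and still obeys the degeneracy bound \eqref{ELLIPTICITY-5}. Since $\mu$ depends only on $y$, every $\phi\in\overset{*}{W}^{1,2}(Q_{3/2},\mu)$ vanishes on the lateral boundary of $Q_{3/2}$ and hence satisfies the weighted Poincar\'{e} inequality $\norm{\phi}_{L^2(Q_{3/2},\mu)}\le C\,\norm{\nabla\phi}_{L^2(Q_{3/2},\mu)}$ (for fixed $y$ the factor $\mu(y)$ is a constant and one applies the ordinary Poincar\'{e} inequality in $x$). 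Thus the bilinear form $(\phi,\psi)\mapsto\int_{Q_{3/2}}\wei{\wei{\A}_{B_{3/2}}(y)\nabla\phi,\nabla\psi}\,dX$ is bounded and coercive on $\overset{*}{W}^{1,2}(Q_{3/2},\mu)$, and arguing as in \cite[Theorem 2.2]{Fabes} there is a unique $v\in W^{1,2}(Q_{3/2},\mu)$ with $v-\tilde u\in\overset{*}{W}^{1,2}(Q_{3/2},\mu)$ solving \eqref{v-Q-4-bottom}.

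Next I would set $g=\tilde u-v\in\overset{*}{W}^{1,2}(Q_{3/2},\mu)$. Subtracting the weak formulation of \eqref{v-Q-4-bottom} from that of \eqref{v-1-Q-5-bottom} restricted to $Q_{3/2}$ (recall the right-hand side and the Neumann datum of \eqref{v-1-Q-5-bottom} both vanish), and then adding and subtracting $\A(X)\nabla\tilde u$, one obtains that for every $\psi\in\overset{*}{W}^{1,2}(Q_{3/2},\mu)$
\[
\int_{Q_{3/2}}\wei{\wei{\A}_{B_{3/2}}(y)\nabla g,\nabla\psi}\,dX=\int_{Q_{3/2}}\wei{\big(\wei{\A}_{B_{3/2}}(y)-\A(X)\big)\nabla\tilde u,\nabla\psi}\,dX.
\]
Taking $\psi=g$, invoking the lower bound in \eqref{ELLIPTICITY-5} for $\wei{\A}_{B_{3/2}}(y)$, and applying the Cauchy--Schwarz inequality after factoring out $|\nabla g|\,\mu(y)^{1/2}$, this reduces to
\[
\Lambda^{-1}\left(\int_{Q_{3/2}}|\nabla g|^2\mu(y)\,dX\right)^{1/2}\le\left(\int_{Q_{3/2}}\big|\wei{\A}_{B_{3/2}}(y)-\A(X)\big|^2|\nabla\tilde u|^2\mu(y)^{-1}\,dX\right)^{1/2}.
\]

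To estimate the last integral I would apply H\"{o}lder's inequality with exponents $\tfrac{2+\varrho}{2}$ and $\tfrac{2+\varrho}{\varrho}$, splitting the integrand as $\big(|\nabla\tilde u|^2\mu^{2/(2+\varrho)}\big)\cdot\big(|\wei{\A}_{B_{3/2}}(y)-\A|^2\mu^{-1-2/(2+\varrho)}\big)$, so that the first factor contributes a power of $\int_{Q_{3/2}}|\nabla\tilde u|^{2+\varrho}\mu\,dX$, which is finite by \eqref{higher-reg-bottom-tilde-u}. In the second factor I would use the pointwise bound $|\wei{\A}_{B_{3/2}}(y)-\A(X)|\le 2\Lambda\mu(y)$, which follows from \eqref{ELLIPTICITY-5}, to absorb the residual powers of $\mu$ and of $|\wei{\A}_{B_{3/2}}(y)-\A|$ into a constant depending only on $\Lambda$ and $\varrho$, leaving a power of $\int_{Q_{3/2}}|\A-\wei{\A}_{B_{3/2}}(y)|^2\mu^{-1}\,dX$; altogether
\[
\int_{Q_{3/2}}\big|\wei{\A}_{B_{3/2}}(y)-\A\big|^2|\nabla\tilde u|^2\mu^{-1}\,dX\le C(\Lambda)\left(\int_{Q_{3/2}}|\nabla\tilde u|^{2+\varrho}\mu\,dX\right)^{\frac{2}{2+\varrho}}\left(\int_{Q_{3/2}}|\A-\wei{\A}_{B_{3/2}}(y)|^2\mu^{-1}\,dX\right)^{\frac{\varrho}{2+\varrho}}.
\]

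Finally, combining the two displayed estimates, taking square roots, and dividing through by $\hmu(Q_{3/2})^{1/2}$, I would use that the exponents $\tfrac{1}{2+\varrho}$ and $\tfrac{\varrho}{2(2+\varrho)}$ attached to the two averaged quantities add up to $\tfrac12$, so the normalizing factor $\hmu(Q_{3/2})$ distributes consistently across both sides; this yields exactly the asserted inequality, with $C$ depending only on $\Lambda$ and $\varrho$, hence ultimately on $\Lambda,M_0,n$. The step I expect to require the only genuine care is the weighted H\"{o}lder bookkeeping in the third paragraph: one must arrange the powers of $\mu$ so that $|\nabla\tilde u|$ pairs with the weight $\mu$ while the oscillation factor ends up carrying $\mu^{-1}$, and the pointwise comparability $|\A-\wei{\A}_{B_{3/2}}(y)|\le 2\Lambda\mu$ is precisely what makes the leftover powers of $\mu$ harmless; everything else is routine.
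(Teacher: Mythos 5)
Your proof is correct and follows essentially the same route as the paper: existence of $v$ via \cite[Theorem 2.2]{Fabes}, energy comparison using $\tilde u - v \in \overset{*}{W}^{1,2}(Q_{3/2},\mu)$ as a test function, then weighted H\"{o}lder with exponents $\tfrac{2+\varrho}{2}$, $\tfrac{2+\varrho}{\varrho}$ followed by the pointwise bound $|\A-\wei{\A}_{B_{3/2}}(y)|\le 2\Lambda\mu(y)$ to close. The only cosmetic difference is that you apply H\"{o}lder to the unweighted integrand while the paper applies it to the normalized measure $\mu\,dX/\hmu(Q_{3/2})$; the bookkeeping is identical.
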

\begin{proof} Observe that by the trace inequality Lemma \ref{trace}, and for a given $u, \tilde{u}$ as in the lemma, we can follow \cite[Theorem 2.2]{Fabes} to prove the existence and uniqueness of weak solution $v$ of \eqref{v-Q-4-bottom}. Now, let $w = v - \tilde{u}$. We note that $w \in \overset{*}{W}^{1,2}(Q_{3/2}, \mu)$. Therefore, by using ${w}$ as a test function for the equation of $v$, and $\tilde{u}$, we obtain the following
\[
\int_{Q_{3/2}} \wei{\langle{\A}\rangle_{B_{3/2}}(y) \nabla w, \nabla w} dX = -\int_{Q_{3/2}} \wei{[\A -\langle{\A}\rangle_{B_{3/2}}(y)]\nabla \tilde{u}, \nabla {w}} dX.
\]
This, the ellipticity condition \eqref{ELLIPTICITY-5}, and H\"{o}lder's inequality imply that
\[
\begin{split}
 \int_{Q_{3/2}} |\nabla w|^2 \mu(y) dX & \leq C(\Lambda) \int_{Q_{3/2}} \Big |\A - \langle{\A}\rangle_{B_{3/2}}(y)\Big| |\nabla \tilde{u}| |\nabla w| dX \\
 &\leq C(\Lambda) \left(\int_{Q_{3/2}} |\nabla w|^2 \mu(y) dX \right)^{1/2} \left( \int_{Q_{3/2}} \Big |\A - \langle{\A}\rangle_{B_{3/2}}(y)\Big|^2 |\nabla \tilde{u}|^2 \mu(y)^{-1} dX\right)^{1/2}.
\end{split}
\]
Hence,
\[
\frac{1}{\hat{\mu}(Q_{3/2})}\int_{Q_{3/2}} |\nabla w|^2 \mu(y) dX  \leq \frac{C(\Lambda)}{\hat{\mu}(Q_{3/2})}\int_{Q_{3/2}} \Big (|\A - \langle{\A}\rangle_{B_{3/2}}(y)| \mu(y)^{-1}\Big)^2 |\nabla \tilde{u}|^2 \mu(y) dX.
\]
Then, applying the  H\"{o}lder's inequality with the exponents $\frac{2+\varrho}{2}$ and $\frac{2+\varrho}{\varrho}$, we obtain
\[
\begin{split}
 &\left (\frac{1}{\hat{\mu}(Q_{3/2})} \int_{Q_{3/2}} |\nabla w|^2 \mu(y) dX\right)^{1/2}  \\
 & \leq C(\Lambda) \left(\frac{1}{\hat{\mu}(Q_{3/2})} \int_{Q_{3/2}}  |\nabla \tilde{u}|^{2+\varrho} \mu(y)dX\right)^{\frac{1}{2+\varrho}} \left(\frac{1}{\hat{\mu}(Q_{3/2})} \int_{Q_{3/2}}\Big |(\A - \langle{\A}\rangle_{B_{3/2}}(y)) \mu(y)^{-1}\Big|^{\frac{2(2+\varrho)}{\varrho}} \mu(y) dX  \right)^{\frac{\varrho}{2(2+\varrho)}}.
 \end{split}
\]
Observe that it follows from the ellipticity condition \eqref{ELLIPTICITY-5} that
\[
|\A(X) - \langle{\A}\rangle_{B_{3/2}}(y)| \mu(y)^{-1} \leq \Lambda^{-1}. 
\]
As a consequence,  the right hand side of the previous inequality can be simplified to  
\[
\begin{split}
 &\left (\frac{1}{\hat{\mu}(Q_{3/2})} \int_{Q_{3/2}} |\nabla w|^2 \mu(y) dX\right)^{1/2}  \\
& \leq C(\Lambda) \left(\frac{1}{\hat{\mu}(Q_{3/2})} \int_{Q_{3/2}}  |\nabla \tilde{u}|^{2+\varrho} \mu(y)\right)^{\frac{1}{2+\varrho}} \left(\frac{1}{\hat{\mu}(Q_{3/2})} \int_{Q_{3/2}}\Big |\A - \langle{\A}\rangle_{B_{3/2}}(y)\Big|^2\mu(y)^{-1} dX  \right)^{\frac{\varrho}{2(2+\varrho)}}, 
 \end{split}
\]
which  completes the proof.
\end{proof}
We now are ready to prove Proposition \ref{bottom-approximation-proposition}.
\begin{proof}[Proof of Proposition \ref{bottom-approximation-proposition}] It follows from estimate \eqref{higher-reg-bottom-tilde-u} in Lemma \ref{energy-u-tilde-u}, the assumptions, and Lemma \ref{doubling} that
\[
\left(\frac{1}{\hat{\mu}(Q_{3/2})} \int_{Q_{3/2}}  |\nabla \tilde{u}|^{2+\varrho} \mu(y)\right)^{\frac{1}{2+\varrho}} \leq C(n, \Lambda, M_0).
\]
Observe also that
\[
\begin{split}
& \frac{1}{\hat{\mu}(Q_{3/2})} \int_{Q_{3/2}} |\nabla u - \nabla v|^2 \mu(y) dX \\
& \leq \frac{1}{\hat{\mu}(Q_{3/2})} \int_{Q_{3/2}} |\nabla u - \nabla \tilde{u}|^2 \mu(y) dX + \frac{1}{\hat{\mu}(Q_{3/2})} \int_{Q_{3/2}} |\nabla \tilde{u} - \nabla v|^2 \mu(y) dX \\
 & \leq \frac{\hat{\mu}(Q_{7/4})}{ \hat{\mu}(Q_{3/2})} \frac{1}{\hat{\mu}(Q_{7/4})}\int_{Q_{7/4}} |\nabla u - \nabla \tilde{u}|^2 \mu(y) dX + \frac{1}{\hat{\mu}(Q_{3/2})} \int_{Q_{3/2}} |\nabla \tilde{u} - \nabla v|^2 \mu(y) dX\\
& \leq \frac{C(n, M_0)}{\hat{\mu}(Q_{7/4})}\int_{Q_{7/4}} |\nabla u - \nabla \tilde{u}|^2 \mu(y) dX + \frac{1}{\hat{\mu}(Q_{3/2})} \int_{Q_{3/2}} |\nabla \tilde{u} - \nabla v|^2 \mu(y) dX.
\end{split}
\]
Hence, it follows from \eqref{u-tilde-u} of Lemma \ref{energy-u-tilde-u}, Lemma \ref{doubling}, and Lemma \ref{energy-u-v} that
\[
\begin{split}
& \frac{1}{\hat{\mu}(Q_{3/2})} \int_{Q_{3/2}} |\nabla u - \nabla v|^2 \mu(y) dX\\
&\leq  C(n, \Lambda, M_0)\left[\frac{1}{\hat{\mu}(D_{7/4})} \int_{D_{7/4}} \Big| \F(X)/\mu(y) \Big|^2 \mu(y) dX +  \frac{1}{\hat{\mu}(D_{7/4})}\int_{D_{7/4}} \Big| f(x)/\mu(y) \Big|^2 dX\right.\\
&\quad\quad\quad\quad\quad\left. + \left(\frac{1}{\hat{\mu}(D_{3/2})} \int_{D_{3/2}}\Big |\A - \langle{\A}\rangle_{B_{3/2}}(y)\Big|^2\mu(y)^{-1} dX  \right)^{\frac{\varrho}{(2+\varrho)}}\right]  \leq C(n, \Lambda, M_0) \delta^{2}.
\end{split}
\]
Hence, if we chose $\delta$ sufficiently small so that $\delta^{2}\, C(n, \Lambda, M_0) < \epsilon$, we obtain
\[
 \frac{1}{\hat{\mu}(Q_{3/2})} \int_{Q_{3/2}} |\nabla u - \nabla v|^2 \mu(y) dX \leq \epsilon.
\]
This proves \eqref{u-v-approximation-bottom}. It remains to prove \eqref{v-L-infty-bottom}. From the last estimate, and the assumptions in the Proposition \ref{bottom-approximation-proposition}, it follows that 
\[
\begin{split}
 \frac{1}{\hat{\mu}(Q_{3/2})} \int_{Q_{3/2}} |\nabla v|^2 \mu(y) dX  & \leq \epsilon +  \frac{1}{\hat{\mu}(Q_{3/2})} \int_{Q_{3/2}} |\nabla u|^2 \mu(y) dX \\
 & \leq \epsilon + \frac{C(n, M_0) }{\hat{\mu}(Q_{7/4})} \int_{Q_{7/4}} |\nabla u|^2 \mu(y) dX  \leq \epsilon + C(n, M_0).
 \end{split}
\]
From this, and the assumption that $\A \in \mathcal{A}(Q_2, \Lambda, M_0, \mu)$, we derive
\[
\norm{\nabla v}_{L^\infty(Q_{5/4})} \leq C(n, \Lambda, M_0) \left[ \frac{1}{\hat{\mu}(Q_{3/2})} \int_{Q_{3/2}} |\nabla v|^2 \mu(y) dX \right]^{1/2} \leq C(n, \Lambda, M_0).
\]
The proof of Proposition \ref{bottom-approximation-proposition} is the complete.
\end{proof}

\subsection{Interior approximation estimates} 
In this subsection we obtain an approximation estimate for $\nabla u$ over cylinders which are completely contained in $Q_{2}$. 
The main result of the section now can be stated in the following proposition.

\begin{proposition} \label{interior-approximation-proposition} 
Let $\Lambda >0, M_0 >0$ be fixed. Then, for every $\epsilon >0$, there exists $\delta = \delta(\epsilon, \Lambda, M_0, n) >0$ with the following property. 
 Let $\mu \in A_2(\R)$ with $[\mu]_{A_2} \leq M_0$, and let $\A \in \mathcal{A}(Q_{2}, \Lambda, M_{0}, \mu)$ satisfy \eqref{ELLIPTICITY-5}. Then for every $X_0 = (x_0, y_0) \in \overline{B_1} \times [0,1]$, $0 < r <1$, such that $D_{2r}(X_{0})\subset Q_{2}$, if  
\[
\begin{split}
& \frac{1}{\hat{\mu}(D_{3r/2}(X_0))} \int_{D_{3r/2}(X_0)} \Big |\A(X) - \langle{\A}\rangle_{B_{3r/2}(x_0)}(y)\Big|^2 \mu^{-1}(y) dX  \leq \delta^{2}, \\
& \frac{1}{\hat{\mu}(D_{2r}(X_0))} \int_{D_{2r}(X_0)} \Big| \F/\mu\Big|^2 \mu(y) dX   \leq \delta^{2}.
\end{split}
\]
then for every weak solution $u \in W^{1,2}(D_{2r}(X_0), \mu)$  of 
\begin{equation} \label{loc-interior-maineqn}
\textup{div}[\A(X) \nabla u(X)] = \textup{div}[\F], \quad \text{in} \quad D_{2r}(X_0),
\end{equation}
satisfying 
\[
\frac{1}{\hat{\mu}(D_{2r}(X_0))} \int_{D_{2r}(X_0)} |\nabla u|^2 \mu(y) dX \leq 1,
\]
 there is  some $v \in W^{1,2}(D_{3r/2}(X_0), \mu)$  
such that
\begin{equation} \label{u-v-approximation-interior}
\frac{1}{\hat{\mu}(D_{3r/2}(X_0))} \int_{D_{3r/2}(X_0)} |\nabla u - \nabla v|^2 \mu(y) dX \leq \epsilon^{2}.
\end{equation}
Moreover, there is a constant $C(n,\Lambda, M_0)>0$ such that
\begin{equation} \label{v-L-infty-interior}
 \norm{\nabla v}_{L^\infty(D_{5r/4}(X_0))}  \leq C(n, \Lambda, M_0).
\end{equation}

\end{proposition}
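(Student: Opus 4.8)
The plan is to mirror, with simplifications, the two-step perturbation scheme of Proposition~\ref{bottom-approximation-proposition}: here there is no boundary datum $f$, so the trace inequality plays no role and the argument is cleaner. After translating $X_0$ to the origin and rescaling so that $r=1$ (which preserves $[\mu]_{A_2}$ and the ellipticity \eqref{ELLIPTICITY-5}), everything reduces to constructing the approximating field $v$ on $D_{3/2}$ under the normalization $\fint_{D_2}|\nabla u|^2\mu\le 1$, the coefficient smallness $\fint_{D_{3/2}}|\A-\wei{\A}_{B_{3/2}}(y)|^2\mu^{-1}\le\delta^2$, and $\fint_{D_2}|\F/\mu|^2\mu\le\delta^2$.

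\textbf{Step 1 (removing $\F$).} I would first let $\tilde u\in W^{1,2}(D_{7/4},\mu)$ be the unique weak solution of $\textup{div}[\A(X)\nabla\tilde u]=0$ in $D_{7/4}$ with $\tilde u-u\in W_0^{1,2}(D_{7/4},\mu)$ (existence and uniqueness by the Fabes--Kenig--Serapioni theory, cf.\ \cite[Theorem 2.2]{Fabes}). Testing the equation for $g:=u-\tilde u$ against $g$ itself and using \eqref{ELLIPTICITY-5}, H\"older and Young gives the energy bound
\[
\int_{D_{7/4}}|\nabla u-\nabla\tilde u|^2\mu(y)\,dX\le C(\Lambda)\int_{D_{7/4}}\Big|\F/\mu\Big|^2\mu(y)\,dX,
\]
with no trace term to estimate, unlike in Lemma~\ref{energy-u-tilde-u}. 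I then need the \emph{interior} analogue of the Gehring-type estimate of Lemma~\ref{Gerling-interior-bottom}: for weak solutions of $\textup{div}[\A\nabla g]=0$ in $D_R$ one proves, via an interior Caccioppoli inequality on cylinders $D_r(Z)$ against a cutoff, the weighted Sobolev--Poincar\'e inequality \cite[Theorem 1.3]{Fabes}, and the weighted reverse H\"older inequality \cite[Theorem 1.5]{Kinnunen} (or \cite[Theorem 2.3.3]{Stredulinsky}), a self-improving exponent $\rho=\rho(\Lambda,M_0,n)>0$. Combined with the energy bound, the hypotheses, and the doubling property Lemma~\ref{doubling}, this yields $\big(\fint_{D_{3/2}}|\nabla\tilde u|^{2+\rho}\mu\big)^{1/(2+\rho)}\le C(n,\Lambda,M_0)$.

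\textbf{Step 2 (freezing) and conclusion.} Next I let $v\in W^{1,2}(D_{3/2},\mu)$ solve $\textup{div}[\wei{\A}_{B_{3/2}}(y)\nabla v]=0$ in $D_{3/2}$ with $v-\tilde u\in W_0^{1,2}(D_{3/2},\mu)$. Testing $w:=v-\tilde u$ against itself in the two equations, using $|\A-\wei{\A}_{B_{3/2}}(y)|\mu(y)^{-1}\le\Lambda^{-1}$ from \eqref{ELLIPTICITY-5} and H\"older with exponents $\tfrac{2+\rho}{2},\tfrac{2+\rho}{\rho}$, exactly as in Lemma~\ref{energy-u-v}, gives
\[
\left(\fint_{D_{3/2}}|\nabla\tilde u-\nabla v|^2\mu\right)^{1/2}\le C(\Lambda)\left(\fint_{D_{3/2}}|\nabla\tilde u|^{2+\rho}\mu\right)^{\frac{1}{2+\rho}}\left(\fint_{D_{3/2}}\big|\A-\wei{\A}_{B_{3/2}}(y)\big|^2\mu^{-1}\right)^{\frac{\rho}{2(2+\rho)}}.
\]
Adding the two steps via the triangle inequality and Lemma~\ref{doubling}, and using the two smallness hypotheses, bounds $\fint_{D_{3/2}}|\nabla u-\nabla v|^2\mu$ by $C(n,\Lambda,M_0)\delta^2$, so choosing $\delta$ with $C(n,\Lambda,M_0)\delta^2<\epsilon^2$ proves \eqref{u-v-approximation-interior}. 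For \eqref{v-L-infty-interior}, this bound together with $\fint_{D_2}|\nabla u|^2\mu\le 1$ and doubling gives $\fint_{D_{3/2}}|\nabla v|^2\mu\le\epsilon^2+C(n,M_0)$; since $D_{2}(X_0)\subset Q_2$, item (ii) of Definition~\ref{class-A} applies to $v$ and yields $\|\nabla v\|_{L^\infty(D_{5/4})}\le C(n,\Lambda,M_0)\big(\fint_{D_{3/2}}|\nabla v|^2\mu\big)^{1/2}\le C(n,\Lambda,M_0)$. Undoing the scaling recovers the statement for general $r$.

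\textbf{Main obstacle.} The only genuinely new ingredient is the interior Gehring estimate of Step~1; the rest is a transcription of the base-case arguments with the $f$-terms deleted. The point to watch is the same as in \eqref{Re-Holder-Gehring}--\eqref{S-P-Gehring}: one must work with the intersections $D_r(Z)\cap D_{2}$ and invoke the doubling property of $\hmu$ uniformly in $Z$, and one must freeze the coefficients at exactly the radius $3r/2$ for which Definition~\ref{class-A}(ii) is assumed — which is why the hypothesis $D_{2r}(X_0)\subset Q_2$ is needed.
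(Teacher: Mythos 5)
Your proposal follows essentially the same two-step perturbation argument as the paper's proof: first solving the homogeneous equation with the original coefficients on $D_{7/4}$ to remove $\F$, then freezing to $\wei{\A}_{B_{3/2}}(y)$ on $D_{3/2}$, with the interior Gehring estimate (Lemma~\ref{Gerling-interior}) and H\"older with exponents $\tfrac{2+\rho}{2}$, $\tfrac{2+\rho}{\rho}$ bridging the two, and the Lipschitz bound coming from Definition~\ref{class-A}(ii). This is the same route as the paper, correctly executed.
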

As in the previous section, the proof relies on self-improving regularity estimate of Gehring's type whose proof can be done in a similar way as Lemma \ref{Gerling-interior-bottom}. 
\begin{lemma}  \label{Gerling-interior} Let $M_0 \geq 1$, and suppose that $\mu \in A_2(\R)$ with $[\mu]_{A_2} \leq M_0$ and \eqref{ELLIPTICITY-5} holds. There exists $\varrho = \varrho(\Lambda, M_0, n)>0$ sufficiently small such that for any $0< R \leq 2$, if $u \in W^{1,2}(D_R, \mu)$ is a weak solution of
\[
\begin{array}{cccl}
\textup{div} [\A(X) \nabla u] & = & 0, & \quad \text{in} \quad D_{R},
\end{array}
\]
then for every $\theta \in (0,1)$, we have $\nabla u \in L^{2 + \varrho}(D_{\theta R}, \mu)$ and  
\[
\left(\frac{1}{\hat{\mu}(D_{\theta R})}\int_{D_{\theta R}} |\nabla u|^{2+\varrho} \mu(y) dX \right)^{\frac{1}{2+\varrho}} \leq C \left( \frac{1}{\hat{\mu}(D_R)}\int_{D_R} |\nabla u|^2 \mu(y) dX\right)^{1/2},
\]
for some $C =C(\Lambda, M_0, n, \theta) >0$. 
\end{lemma}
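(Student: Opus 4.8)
The plan is to reproduce, in the present purely interior setting, the two ingredients used in the proof of Lemma~\ref{Gerling-interior-bottom}: a reverse H\"older inequality for $|\nabla u|$ on the family of cylinders $D_\rho(Z)$, obtained from a weighted Caccioppoli estimate together with the weighted Sobolev--Poincar\'e inequality for $A_2$ weights, and then the self-improving (Gehring-type) lemma for reverse H\"older inequalities with respect to the doubling measure $\hmu$. By the dilation $X\mapsto X/R$, which preserves the structure of the equation $\textup{div}[\A\nabla u]=0$ and the value $[\mu]_{A_2}$, I would assume $R=2$; it then suffices to prove the estimate on a fixed concentric sub-cylinder, say $D_1$, with the case of general $\theta\in(0,1)$ following by a routine covering of $\overline{D_{\theta R}}$ by boundedly many cylinders of radius comparable to $(1-\theta)R$, which affects only the final constant (and not $\varrho$).

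First I would establish the reverse H\"older inequality: there exist $p=p(M_0,n)\in(1,2)$ and $C=C(\Lambda,M_0,n)$ such that for every $Z\in\overline{D_1}$ and every $0<\rho<1/2$ (so that $D_{2\rho}(Z)\subset D_2$),
\[
\left(\frac{1}{\hmu(D_\rho(Z))}\int_{D_\rho(Z)}|\nabla u|^2\mu(y)\,dX\right)^{1/2}\leq C\left(\frac{1}{\hmu(D_{2\rho}(Z))}\int_{D_{2\rho}(Z)}|\nabla u|^{p}\mu(y)\,dX\right)^{1/p}.
\]
To prove this, fix $\psi(X)=\phi(x)\chi(y)$ with $\phi\in C_0^\infty(B_{2\rho}(z))$, $\chi\in C_0^\infty(I_{2\rho}(w))$ for $Z=(z,w)$, $\psi\equiv 1$ on $D_\rho(Z)$ and $|\nabla\psi|\leq C/\rho$, and test the equation against $(u-\wei{u}_{D_{2\rho}(Z)})\psi^2$; using the ellipticity condition \eqref{ELLIPTICITY-5} together with H\"older's and Young's inequalities, exactly as in the derivation of \eqref{Poincare-Gehring}, yields the weighted Caccioppoli bound with $\hmu(D_{2\rho}(Z))^{-1}\rho^{-2}\int_{D_{2\rho}(Z)}|u-\wei{u}_{D_{2\rho}(Z)}|^2\mu$ on the right. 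Applying the weighted Sobolev--Poincar\'e inequality of Fabes--Kenig--Serapioni (\cite[Theorem~1.3]{Fabes}), as in \eqref{S-P-Gehring}, produces the exponent $p<2$, and the doubling property of $\hmu$ (Lemma~\ref{doubling}) allows replacing $\hmu(D_{2\rho}(Z))$ by $\hmu(D_\rho(Z))$ on the left-hand side, giving the displayed inequality; note that here $\F\equiv 0$, so this step is strictly simpler than its counterpart in Lemma~\ref{Gerling-interior-bottom} and no boundary term appears.

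With the reverse H\"older inequality established on the whole family $\{D_\rho(Z):D_{2\rho}(Z)\subset D_2\}$, the weighted version of Gehring's lemma for reverse H\"older inequalities with respect to a doubling measure---for instance \cite[Theorem~1.5]{Kinnunen} or \cite[Theorem~2.3.3]{Stredulinsky}, exactly as invoked in Lemma~\ref{Gerling-interior-bottom}---produces $\varrho=\varrho(\Lambda,M_0,n)>0$ and the higher-integrability bound
\[
\left(\frac{1}{\hmu(D_1)}\int_{D_1}|\nabla u|^{2+\varrho}\mu(y)\,dX\right)^{\frac{1}{2+\varrho}}\leq C(\Lambda,M_0,n)\left(\frac{1}{\hmu(D_2)}\int_{D_2}|\nabla u|^2\mu(y)\,dX\right)^{1/2};
\]
undoing the scaling and the covering argument for general $\theta$ then gives the assertion. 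The one point that needs care---the analogue of the only genuine technical issue here---is that the weighted Sobolev--Poincar\'e inequality and the abstract Gehring lemma must be invoked on the product cylinders $D_\rho(Z)$ rather than on Euclidean balls: this is legitimate because $\hmu\in A_2(\R^{n+1})$ with $[\hmu]_{A_2}=[\mu]_{A_2}\leq M_0$, so $\hmu$ is doubling on the cylinders (which are comparable to balls for the $\ell^\infty$-type metric on $\R^{n+1}$) and supports the $(p,2)$ Sobolev--Poincar\'e inequality on this equivalent family of ``balls'', with all constants controlled by $\Lambda$, $M_0$, $n$ alone. I expect no real obstacle beyond carefully transcribing the bookkeeping for this geometry.
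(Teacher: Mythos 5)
Your proof is correct and follows essentially the same route the paper takes: the paper's own justification for Lemma~\ref{Gerling-interior} is that ``its proof can be done in a similar way as Lemma~\ref{Gerling-interior-bottom},'' and the proof of that lemma is precisely the Caccioppoli--(weighted Sobolev--Poincar\'e)--Gehring chain you spell out, including the same reverse H\"older inequality on the family of cylinders $D_\rho(Z)$ and the same appeal to \cite[Theorem~1.5]{Kinnunen} or \cite[Theorem~2.3.3]{Stredulinsky}. (One trivial misstatement: you remark that ``here $\F\equiv 0$'' makes this case simpler than Lemma~\ref{Gerling-interior-bottom}, but $\F\equiv 0$ there too; the genuine simplification in the interior case is only the absence of the bottom boundary portion in the test-function construction, which you also correctly note.)
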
\noindent
\begin{proof}[Proof of Proposition \ref{interior-approximation-proposition}] We can assume without loss of generality that $X_0 = (0,0) \in \R^{n+1}$, and $r =1$. As before, we use a two step approximation procedure. Our first step is to approximate $u$ by the 
weak solution $\tilde{u} \in W^{1,2}(D_{7/4}, \mu)$ of
\begin{equation} \label{interior-step1-approx-eqn}
\left\{
\begin{array}{cccl} 
\text{div}(\A(X) \nabla \tilde{u})  & =  &0, & \quad \text{in} \quad  D_{7/4}, \\
\tilde{u}  & = & u, & \quad \text{on} \quad \partial D_{7/4}.
\end{array} \right.
\end{equation}
Then, we approximate $\tilde{u}$ by a weak solution $v \in W^{1, 2}(D_{3/2}, \mu)$
of the equation
\begin{equation}
\left\{
\begin{array}{cccl}
\text{div}(\langle\A \rangle_{B_{3/2}}(y ) \nabla v) & = & 0,  & \quad \text{on} \quad  D_{3/2}, \\
v & = &\tilde{u}, & \quad \text{on}\quad \partial D_{3/2}.
\end{array} \right.
\end{equation}
We note that the existence and uniqueness of solutions $\tilde{u}$ and $v$ are attained by 
\cite[Theorem 2.2]{Fabes}. Also, by using $\tilde{u} -u$ as a test function for \eqref{interior-step1-approx-eqn} and Lemma \ref{doubling}, we obtain
\begin{equation} \label{tilde-u-gradien-L2-inte}
\frac{1}{\hat{\mu}(D_{7/4})}\int_{D_{7/4}} |\nabla \tilde{u}|^2 \mu(y) dX \leq \frac{C(\Lambda, M_0)}{\hat{\mu}(D_{2})} \int_{D_{2}} |\nabla u|^2 \mu dX \leq C(\Lambda, M_0).
\end{equation}
Moreover, it is simple to check that $u - \tilde{u}\in W^{1,2}_0(\mu)$ 
is a weak solution of
\[
\left\{
\begin{split}
\text{div}(\A(X) \nabla (\tilde{u} - u)) & = \text{div} {\bf F}, &\quad  \text{in} \,\, D_{7/4}, \\
\tilde{u} &= u, & \quad \text{on} \,\, \partial D_{7/4}.
\end{split} \right.  
\] 
Therefore, from the energy estimate and Lemma \ref{doubling}, there is constant $C = C(\Lambda)$ such that 
\begin{equation} \label{interior-energy-u-tilde-u}
\frac{1}{\hmu(D_{7/4})}\int_{D_{7/4}} |\nabla u (X) - \nabla \tilde{u} (X) |^2 \mu(y)d X  \leq  \frac{C}{\hmu(D_{2})} \int_{D_2} \left|\F/\mu \right|^2 \mu (y)d X\leq C \delta^2.
\end{equation}
Similarly, $v-\tilde{u} \in W^{1, 2}_{0}(D_{3/2}, \mu)$ is a weak solution of  
\[
\left\{
\begin{array}{cccl}
\text{div}[\langle\A \rangle_{B_{3/2}}(y ) \nabla (v-\tilde{u})] &= &\text{div}[(\A(X) - \langle\A \rangle_{B_{3/2}}(y)) \nabla \tilde{u}],  &\quad  \text{in}\quad D_{3/2},  \\
v & = & \tilde{u},  & \quad \text{on} \quad \partial D_{3/2}.
\end{array} \right.
\]
Hence, 
\begin{equation} \label{interior-energy-v-tilde-u}
\frac{1}{\hmu(D_{3/2})}\int_{D_{3/2}} |\nabla v - \nabla \tilde{u} |^2 \mu(y) d X  \leq \frac{C}{\hmu(D_{3/2})}\int_{D_{3/2}} \left|(\A(X) - \langle \A \rangle_{B_{3/2}}(y )) \nabla \tilde{u} \right|^2 \mu^{-1}(y) d X. 
\end{equation}
By Lemma \ref{Gerling-interior}, and \eqref{tilde-u-gradien-L2-inte}, we obtain
\[
\left(\frac{1}{\hmu(D_{3/2})}\int_{D_{3/2}} |\nabla \tilde{u}|^{2 + \varrho} \mu (y) d X \right)^{1/(2 + \varrho)} \leq  C\left(\frac{1}{\hmu(D_{7/4})} \int_{D_{7/4}} |\nabla \tilde{u}|^{2} \mu(y) dX\right)^{1/2} \leq  C(\Lambda, M_0).
\]
From this, we now apply H\"{o}lder's inequality for \eqref{interior-energy-v-tilde-u} with the pair $\frac{2 + \varrho}{2}$ and $\frac{2 + \varrho}{\varrho}$ to obtain that 
\[
\begin{split}
\frac{1}{\hmu(D_{3/2})}&\int_{D_{3/2}} |\nabla v - \nabla \tilde{u} |^2 \mu(y) dX  \leq  \frac{C}{\hmu(D_{3/2})}\int_{D_{3/2}} \left|(\A(X) - \langle \A \rangle_{B_{3/2}}(y)) \nabla \tilde{u} \right|^2 \mu^{-1}(y) dX\\
& \leq  \frac{C}{\hmu(D_{3/2})}\int_{D_{3/2}} \left|(\A(X) - \langle \A \rangle_{B_{3/2}}(y))\mu^{-1}|^{2} |\nabla \tilde{u}(X) \right|^2 \mu(y) dX  \\
& \leq C \left(\frac{1}{\hmu(D_{3/2})} \int_{D_{3/2}} |(\A(X) - \langle \A \rangle_{B_{3/2}}(y))\mu^{-1}(y)|^{\frac{2(2 + \varrho)}{\varrho}}\mu(y) dX \right)^{\frac{\varrho}{2 + \varrho}}.
\end{split}
\] 
We now use a fact that follows from the ellipticity \eqref{ELLIPTICITY-5}
\[
|\A(X) - \langle\A\rangle_{B_{3/2}}(y)| \leq 2\Lambda \mu(y),\quad \text{for a.e.\ $X = (x, y) \in D_{2}$},
\]
and the above higher integrability estimate to write the right hand side as 
\[
\begin{split}
\frac{1}{\hmu(D_{3/2})}&\int_{D_{3/2}} |\nabla v - \nabla \tilde{u} |^2 \mu(y)dX\leq C  \left(\frac{1}{\hmu(D_{3/2})}\int_{D_{3/2}} |(\A(X) - \langle \A \rangle_{B_{3/2}}(y))|^{2} \mu^{-1}(y) dX  \right)^{\frac{\varrho}{2 + \varrho}}. 
\end{split}
\]
From this estimate, and \eqref{interior-energy-u-tilde-u}, we conclude that given $\epsilon > 0$, we can choose $\delta > 0$ such that 
\begin{equation} \label{interior-u-v.est}
\begin{split}
& \frac{1}{\hmu(D_{3/2})}\int_{D_{3/2}} |\nabla v  -\nabla u | ^{2} \mu(y) dX \\
&\leq \frac{1}{\hmu(D_{3/2})}\int_{D_{3/2}} |\nabla v  -\nabla \tilde{u} | ^{2} \mu(y) dX + \frac{1}{\hmu(D_{3/2})}\int_{D_{3/2}} |\nabla \tilde{u}  -\nabla u | ^{2} \mu(y) dX\\
 &\leq C (\delta^2 + \delta ^{\frac{2}{2 + \varrho}}) \leq \epsilon^{2}. 
\end{split}
\end{equation}
To prove \eqref{v-L-infty-interior}, we first observe that we can assume $\epsilon <1$. Then, it follows from  \eqref{interior-u-v.est}, our assumption, and Lemma \ref{doubling} that 
\[
\begin{split}
\frac{1}{\hmu(D_{3/2})} \int_{D_{3/2}} |\nabla v(X)|^2 \mu(y) dX & \leq \epsilon^2 + \frac{1}{\hmu(D_{3/2})} \int_{D_{3/2}} |\nabla u (X)|^{2} \mu(y) dX \\
& \leq 1 + \frac{C(M_0)}{\hmu(D_{2})} \int_{D_{2}} |\nabla u(X)|^{2} \mu(y) dX \leq C(M_0).
\end{split}
\]
Then, the estimate \eqref{v-L-infty-interior} follows from the assumption that $\A\in \mathcal{A}(Q_2, \Lambda, M_{0}, \mu)$
and the inequality that 
\[
\|\nabla v\|_{L^{\infty} (D_{5/4})} \leq C(\Lambda, M_0, n) \left(\frac{1}{\hmu(D_{3/2})} \int_{D_{3/2}} |\nabla v|^{2}(X) \mu(y) dX\right)^{1/2} \leq C (\Lambda, M_0,n). 
\]
\end{proof}

\subsection{Approximation estimates up to the flat side of the half-cylinder domain } 
In this subsection we obtain an approximation estimate for $\nabla u$ over half cylinders whose flat side overlaps with the flat side of $Q^{+}_{2}$. 
The main result of the section now can be stated in the following proposition.

\begin{proposition} \label{interior-approximation-proposition-halfCyl} 
Let $\Lambda >0, M_0 >0$ be fixed. Then, for every $\epsilon >0$, there exists $\delta = \delta(\epsilon, \Lambda, M_0, n) >0$ with the following property. 
 Let $\mu \in A_2(\R)$ such that $[\mu]_{A_2} \leq M_0$, and let $\A \in \mathcal{B}(Q_{2}^{+}, \Lambda, M_{0}, \mu)$ satisfy \eqref{ELLIPTICITY-5}. 
Then for every $ X_0 = (x_0, y_0) \in \overline{B}_1^+ \times [0, 1]$, where $x_0 = (x_0', 0) \in \overline{B}^+_1$ , $0 < r <1$ such that $D^{+}_{2r}(X_0)\subset Q_{2}^{+}$, if 
\[
\begin{split}
& \frac{1}{\hat{\mu}(D_{3r/2}(X_0))} \int_{D^{+}_{3r/2}(X_0)} |\A(X) - \langle{\A}\rangle_{B_{3r/2}^+}(y)|^2 \mu^{-1}(y) dX  \leq \delta^{2},\quad \frac{1}{\hat{\mu}(D_{2r}(X_0))} \int_{D^{+}_{2r}(X_0)} \Big| \F/\mu\Big|^2 \mu(y) dX   \leq \delta^{2}, 
\end{split}
\]
and    $u \in W^{1,2}(D^{+}_{2r}(X_0), \mu)$ is a weak solution of
\begin{equation*}
\left\{
\begin{array}{cccl}
\emph{div}[\A(X) \nabla u]  & = & \emph{div}({\bf F}), &\quad \text{ in} \quad  D^{+}_{2r}(X_{0}),\\
u  &= & 0, & \quad \text{on} \quad  T_{2r}(x'_{0})\times I_{2r}(y_{0}),
\end{array} \right.
\end{equation*}
satisfying 
\[
\frac{1}{\hat{\mu}(D_{2r}(X_0))} \int_{D^{+}_{2r}(X_0)} |\nabla u|^2 \mu(y) dX \leq 1,
\]
then there exists $v \in W^{1,2}(D^{+}_{3r/2}(X_0), \mu)$ such that 
\begin{equation} \label{u-v-approximation-interior-halfCyl}
\frac{1}{\hat{\mu}(D_{3r/2}(X_0))} \int_{D^{+}_{3r/2}(X_0)} |\nabla u - \nabla v|^2 \mu(y) dX \leq \epsilon^{2}.
\end{equation}
Moreover, there is a constant $C(\Lambda, M_0,n)>0$ such that
\begin{equation} \label{v-L-infty-interior-halfCyl}
 \norm{\nabla v}_{L^\infty(D^{+}_{5r/4}(X_0))}  \leq C(\Lambda, M_0,n).
\end{equation}

\end{proposition}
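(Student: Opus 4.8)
The plan is to transcribe the two-step approximation scheme already carried out for Proposition~\ref{bottom-approximation-proposition} and Proposition~\ref{interior-approximation-proposition}, now adapted to the flat side $\{x_n = 0\}$ of the half-cylinder. After translating so that $x_0 = 0$ and rescaling so that $r = 1$ (the rescaled coefficient still satisfies the Lipschitz estimates defining $\mathcal{B}$, with the same structural constants), I would first replace $u$ by the weak solution $\tilde u \in W^{1,2}(D_{7/4}^+(X_0), \mu)$ of
\[
\textup{div}[\A(X)\nabla \tilde u] = 0 \ \text{ in } D_{7/4}^+(X_0), \quad \tilde u = u \ \text{ on } \partial D_{7/4}^+(X_0)\setminus\big(T_{7/4}\times I_{7/4}(y_0)\big), \quad \tilde u = 0 \ \text{ on } T_{7/4}\times I_{7/4}(y_0),
\]
whose existence and uniqueness follow from \cite[Theorem 2.2]{Fabes}. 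Since $u$ already vanishes on $T_{2}\times I_{2}(y_0)$, the difference $g := u - \tilde u$ lies in $W_0^{1,2}(D_{7/4}^+(X_0), \mu)$; using $g$ as a test function in its own equation, the ellipticity \eqref{ELLIPTICITY-5}, H\"{o}lder's and Young's inequalities (no trace term enters here, unlike in Lemma~\ref{energy-u-tilde-u}, because $f$ is absent), together with Lemma~\ref{doubling}, yields
\[
\frac{1}{\hat{\mu}(D_{7/4}(X_0))}\int_{D_{7/4}^+(X_0)} |\nabla u - \nabla \tilde u|^2\,\mu(y)\,dX \le \frac{C}{\hat{\mu}(D_2(X_0))}\int_{D_2^+(X_0)} \big|\F/\mu\big|^2\,\mu(y)\,dX \le C\,\delta^2,
\]
and likewise $\hat{\mu}(D_{7/4}(X_0))^{-1}\int_{D_{7/4}^+(X_0)}|\nabla \tilde u|^2\,\mu\,dX \le C(\Lambda, M_0)$ thanks to the normalization hypothesis on $\nabla u$.

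The second ingredient is a Gehring-type self-improving estimate up to the flat side: there should be $\varrho = \varrho(\Lambda, M_0, n) > 0$ such that
\[
\Big(\frac{1}{\hat{\mu}(D_{3/2}(X_0))}\int_{D_{3/2}^+(X_0)} |\nabla \tilde u|^{2+\varrho}\,\mu(y)\,dX\Big)^{\frac{1}{2+\varrho}} \le C\,\Big(\frac{1}{\hat{\mu}(D_{7/4}(X_0))}\int_{D_{7/4}^+(X_0)} |\nabla \tilde u|^2\,\mu(y)\,dX\Big)^{1/2} \le C(\Lambda, M_0).
\]
This is the flat-boundary analogue of Lemma~\ref{Gerling-interior}. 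I would prove it by establishing a weighted reverse-H\"{o}lder inequality on small cylinders $D_\sigma^+(Z)$ centered at $Z \in \overline{T}_{3/2}\times I_{3/2}(y_0)$: a Caccioppoli estimate obtained by testing the equation of $\tilde u$ against $\tilde u\,\psi^2$ with a cutoff $\psi$ that need not vanish on $T$ (admissible because $\tilde u = 0$ on $T_\sigma(z)$), followed by the \emph{zero-boundary} weighted Sobolev--Poincar\'{e} inequality for $\tilde u$ — which can be obtained by odd reflection across $T$ combined with the $A_2$-weighted interior inequality of \cite[Theorem 1.3]{Fabes}, the weight $\hat{\mu}$ being independent of $x_n$; for cylinders $D_\sigma(Z)$ that do not meet $T$ one reverts to the interior Caccioppoli with the mean subtracted exactly as in the proof of Lemma~\ref{Gerling-interior-bottom}. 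The weighted reverse-H\"{o}lder lemma (\cite[Theorem 1.5]{Kinnunen} or \cite[Theorem 2.3.3]{Stredulinsky}) then upgrades this to the displayed higher integrability.

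Finally I would freeze the coefficient: let $v \in W^{1,2}(D_{3/2}^+(X_0), \mu)$ solve $\textup{div}[\langle\A\rangle_{B_{3/2}^+}(y)\nabla v] = 0$ in $D_{3/2}^+(X_0)$ with $v = \tilde u$ on $\partial D_{3/2}^+(X_0)\setminus(T_{3/2}\times I_{3/2}(y_0))$ and $v = 0$ on $T_{3/2}\times I_{3/2}(y_0)$. Testing the equations of $v$ and $\tilde u$ against $w := v - \tilde u \in W_0^{1,2}(D_{3/2}^+(X_0), \mu)$, using \eqref{ELLIPTICITY-5}, H\"{o}lder's inequality with exponents $\tfrac{2+\varrho}{2}$ and $\tfrac{2+\varrho}{\varrho}$, the pointwise bound $|\A(X) - \langle\A\rangle_{B_{3/2}^+}(y)| \le 2\Lambda\,\mu(y)$ (a consequence of \eqref{ELLIPTICITY-5}, exactly as in the proof of Proposition~\ref{interior-approximation-proposition}), and the higher-integrability bound just obtained, gives
\[
\frac{1}{\hat{\mu}(D_{3/2}(X_0))}\int_{D_{3/2}^+(X_0)} |\nabla \tilde u - \nabla v|^2\,\mu(y)\,dX \le C\,\Big(\frac{1}{\hat{\mu}(D_{3/2}(X_0))}\int_{D_{3/2}^+(X_0)} \big|\A - \langle\A\rangle_{B_{3/2}^+}(y)\big|^2\,\mu^{-1}(y)\,dX\Big)^{\frac{\varrho}{2+\varrho}} \le C\,\delta^{\frac{2\varrho}{2+\varrho}}.
\]
Combining with the step-one energy estimate by the triangle inequality and choosing $\delta$ small in terms of $\epsilon$ proves \eqref{u-v-approximation-interior-halfCyl}. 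The normalization on $\nabla u$, the estimate \eqref{u-v-approximation-interior-halfCyl} and Lemma~\ref{doubling} then give $\hat{\mu}(D_{3/2}(X_0))^{-1}\int_{D_{3/2}^+(X_0)}|\nabla v|^2\,\mu\,dX \le C(n, M_0)$; and since $v$ solves the frozen equation with zero Dirichlet data on $T_{3/2}\times I_{3/2}(y_0)$, item~(iii) of Definition~\ref{class-B} (the defining Lipschitz property of $\mathcal{B}(Q_2^+, \Lambda, M_0, \mu)$) yields
\[
\norm{\nabla v}_{L^\infty(D_{5/4}^+(X_0))} \le C(\Lambda, M_0, n)\,\Big(\frac{1}{\hat{\mu}(D_{3/2}(X_0))}\int_{D_{3/2}^+(X_0)} |\nabla v|^2\,\mu(y)\,dX\Big)^{1/2} \le C(\Lambda, M_0, n),
\]
which is \eqref{v-L-infty-interior-halfCyl}.

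The step I expect to be the main obstacle is the Gehring-type higher integrability up to the flat side: one must set up a Caccioppoli inequality that simultaneously accommodates the homogeneous Dirichlet data on the part of $\partial D_\sigma^+(Z)$ lying in $T$ and free data on the remaining (interior) part, and pair it with the right Poincar\'{e} inequality — with the mean subtracted for cylinders away from $T$ and without it for cylinders touching $T$ — so that the reverse-H\"{o}lder exponent improves uniformly in $Z$ and $\sigma$. Once that lemma is available, the rest of the argument is a routine transcription of the interior case treated in Proposition~\ref{interior-approximation-proposition}.
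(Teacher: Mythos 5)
Your proposal is correct and follows essentially the same route as the paper: translate and rescale so that $x_0 = 0$, $r = 1$; solve the homogeneous problem for $\tilde u$ in $D_{7/4}^+$ with $\tilde u = u$ on the boundary (equivalently, since $u$ already vanishes on the flat part, matching data on the rest of $\partial D_{7/4}^+$ and zero Dirichlet data on $T_{7/4}\times I_{7/4}(y_0)$); obtain the energy estimate $\hmu(D_{7/4})^{-1}\int_{D_{7/4}^+}|\nabla u - \nabla\tilde u|^2\mu \le C\delta^2$; invoke the up-to-the-flat-side Gehring estimate (Lemma~\ref{Gerling-interior-half}) to get $L^{2+\varrho}$ integrability of $\nabla\tilde u$; solve the frozen-coefficient problem for $v$ in $D_{3/2}^+$; combine via H\"older with exponents $\tfrac{2+\varrho}{2}$, $\tfrac{2+\varrho}{\varrho}$ and the pointwise bound $|\A - \langle\A\rangle_{B_{3/2}^+}(y)| \le 2\Lambda\mu(y)$; and finish with item~(iii) of Definition~\ref{class-B} for the $L^\infty$ bound. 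The only place you go beyond what the paper records is your sketch of the proof of Lemma~\ref{Gerling-interior-half} (the paper merely declares it "similar to Lemma~\ref{Gerling-interior-bottom}"); your suggestion of combining a boundary Caccioppoli inequality with a zero-boundary Sobolev--Poincar\'e inequality obtained by odd reflection across $T$ — valid because $\hmu$ is independent of $x_n$ — is a reasonable and correct way to fill that unstated step.
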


We begin by stating a result on self-improving regularity estimates of Gehring's type, whose proof is similar to that of Lemma \ref{Gerling-interior-bottom}. 
\begin{lemma}  \label{Gerling-interior-half} Suppose that \eqref{ELLIPTICITY-5} holds with $[\mu]_{A_2} \leq M_0$. There exists $\varrho = \varrho(\Lambda, M_0,n)>0$ sufficiently small such that for any $R>0$, $\theta \in (0,1)$, if $\tilde{u} \in W^{1,2}(D^{+}_R, \mu)$ is a weak solution of 
\[
\left\{
\begin{array}{cccl}
\textup{div} [\A(X) \nabla \tilde{u}] & = & 0, & \quad \text{in} \quad D^{+}_{R},\\
\tilde{u}&=&0&\quad \text{on }\quad T_{R}\times I_{R},
\end{array} \right.
\]
then  we have $\nabla \tilde{u} \in L^{2 + \varrho}(D^{+}_{\theta R}, \mu)$ and there is $C =C(\Lambda, M_0, n, \theta) >0$ such that
\[
\left(\frac{1}{\hat{\mu}(D^+_{\theta R})}\int_{D^{+}_{\theta R}} |\nabla \tilde{u}|^{2+\varrho} \mu(y) dX \right)^{\frac{1}{2+\varrho}} \leq C \left( \frac{1}{\hat{\mu}(D^{+}_R)}\int_{D^{+}_R} |\nabla \tilde{u}|^2 \mu(y) dX\right)^{1/2}.
\]
 \end{lemma}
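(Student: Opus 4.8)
The plan is to mirror the proof of Lemma~\ref{Gerling-interior-bottom}: first normalize the geometry by scaling, then establish a weighted reverse H\"older inequality on all subcylinders of $D_R^+$ that are relevant for a point in $D_{\theta R}^+$, and finally invoke the self-improving property of reverse H\"older inequalities in the weighted setting (\cite[Theorem 1.5]{Kinnunen} or \cite[Theorem 2.3.3]{Stredulinsky}). Since $[\mu]_{A_2}$ and $\Lambda$ are invariant under the rescaling $X \mapsto RX$, we may assume $R = 1$ and $\theta = 1/2$. It then suffices to produce $p \in (1,2)$ and $C = C(\Lambda, M_0, n)$ so that for every $Z \in \overline{D_{1/2}^+}$ and every $0 < r < 1/4$,
\[
\left( \frac{1}{\hmu(D_{r/2}(Z))} \int_{D_{r/2}(Z) \cap D_1^+} |\nabla \tilde u|^2 \mu(y)\, dX \right)^{1/2} \le C \left( \frac{1}{\hmu(D_{r}(Z))} \int_{D_{r}(Z) \cap D_1^+} |\nabla \tilde u|^p \mu(y)\, dX \right)^{1/p}.
\]
For $Z \in \overline{D_{1/2}^+}$ and $r$ small, the set $D_r(Z) \cap D_1^+$ is either a full cylinder contained in $D_1^+$ or a half-cylinder $D_r^+(Z)$ meeting the flat face $T_1 \times I_1$; the curved part $\partial B_1^+ \setminus T_1$ is never reached, so only these two cases occur. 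The first is exactly the interior estimate of Lemma~\ref{Gerling-interior}, obtained by testing with $(\tilde u - \langle \tilde u\rangle_{D_r(Z)})\psi^2$ for a product cutoff $\psi$ and combining the Caccioppoli inequality with the weighted Sobolev--Poincar\'e inequality for $A_2$-weights \cite[Theorem 1.3]{Fabes}.

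The new content is the case $Z = (z',0,y_0) \in \overline{T_1} \times \overline{I_1}$. Here one argues as in Lemma~\ref{Gerling-interior-bottom}, with the crucial simplification that, because $\tilde u$ vanishes on $T_r(z') \times I_r(y_0)$, the function $\tilde u\,\psi^2$ (with $\psi = \phi(x)\chi(y)$, $\phi \in C_0^\infty(B_r((z',0)))$, $\phi \equiv 1$ on $B_{r/2}$, $|\nabla\phi|\le C/r$, and $\chi \in C_0^\infty(I_r(y_0))$ analogous in $y$) is \emph{itself} an admissible test function and one does not subtract any average. Testing the equation, using \eqref{ELLIPTICITY-5}, H\"older's and Young's inequalities yields the Caccioppoli-type bound
\[
\frac{1}{\hmu(D_r(Z))} \int_{D_r^+(Z)} |\nabla \tilde u|^2 \psi^2 \mu(y)\, dX \le \frac{C(\Lambda)}{r^2\,\hmu(D_r(Z))} \int_{D_r^+(Z)} |\tilde u|^2 \mu(y)\, dX.
\]
One then needs the zero--boundary--value weighted Sobolev--Poincar\'e inequality
\[
\left( \frac{1}{\hmu(D_r(Z))} \int_{D_r^+(Z)} |\tilde u|^2 \mu(y)\, dX \right)^{1/2} \le r\,C(\Lambda, M_0) \left( \frac{1}{\hmu(D_r(Z))} \int_{D_r^+(Z)} |\nabla \tilde u|^p \mu(y)\, dX \right)^{1/p}
\]
for some $p \in (1,2)$; combining this with the previous display and the doubling property of $\hmu$ (Lemma~\ref{doubling}) gives the desired reverse H\"older inequality.

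To justify this zero--boundary--value Sobolev--Poincar\'e inequality, I would extend $\tilde u$ by odd reflection across the hyperplane $\{x_n = 0\}$; since $\hmu(X) = \mu(y)$ is independent of $x_n$, the reflected weight is the same $A_2(\R^{n+1})$ weight, and the extended function lies in $W^{1,p}$ of the full cylinder $D_r((z',0,y_0))$ with vanishing (weighted) average by oddness, so \cite[Theorem 1.3]{Fabes} applies directly and then restricts back to $D_r^+(Z)$; alternatively one cites the zero-trace weighted Poincar\'e estimate used in the proof of Lemma~\ref{trace-zero}. This reflection step, together with the verification that it is compatible with the degeneracy of $\mu$ in the untouched transverse variable $y$, is the only genuinely new point. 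Once the uniform reverse H\"older inequality is in hand, a standard covering and iteration between $D_{1/2}^+$ and $D_1^+$, via Gehring's lemma in weighted spaces, produces the exponent $2 + \varrho$ with $\varrho = \varrho(\Lambda, M_0, n) > 0$ and the stated estimate with $C = C(\Lambda, M_0, n, \theta)$. The main obstacle is thus entirely localized in the boundary Caccioppoli/Sobolev--Poincar\'e step near the flat face; note that no corner difficulty arises here because $I_R$ is symmetric about the center and $\mu$ is $A_2$ across $\{y=0\}$, the singular corner geometry being treated separately in the next subsection.
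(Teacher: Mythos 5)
Your proof is correct and follows the route the paper implicitly prescribes (the paper only says the proof ``can be done in a similar way as Lemma \ref{Gerling-interior-bottom}''). You normalize, prove a uniform weighted reverse H\"older inequality on subcylinders, and invoke the weighted Gehring lemma; the only new ingredient is the boundary Caccioppoli/Sobolev--Poincar\'e step near the flat face, where you correctly exploit the Dirichlet condition to test with $\tilde u\psi^2$ (no mean subtraction) and obtain the zero-trace weighted Poincar\'e inequality by odd reflection in $x_n$, which is compatible with the weight because $\hmu(X)=\mu(y)$ is independent of $x_n$. Two minor points worth acknowledging in a polished write-up: (i) for $Z$ with $0<z_n<r$ the set $D_r(Z)\cap D_1^+$ is neither a full cylinder nor a centered half-cylinder, but as in the proof of Lemma \ref{Gerling-interior-bottom} one reduces to the centered case by enlarging to a comparable cylinder centered on $T_1$; and (ii) the odd reflection is admissible in $W^{1,p}$ precisely because $\tilde u$ has zero trace on $T_1\times I_1$, so it is worth stating this explicitly. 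Your observation that there is no corner issue here --- since $I_R$ straddles $y=0$ and $\mu\in A_2(\R)$ --- is also correct and is the reason this lemma is simpler than Lemma \ref{Gerling-side-mixed}.
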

\noindent

\begin{proof}[Proof of Proposition \ref{interior-approximation-proposition-halfCyl}] We again assume without loss of generality that $x_0 =0, y_0 =0$ and $r =1$.
We use a two step approximation process. Let $\tilde{u} \in W^{1, 2}(D^{+}_{7/4}, \mu)$ be a weak solution of  
\begin{equation*}
\left\{\begin{array}{cccl}
\text{div}(\A(X) \nabla \tilde{u}) &= &0, & \quad  \text{in $D^{+}_{7/4}$}, \\
\tilde{u} &= & u, &\quad \text{on $\partial D^{+}_{7/4}$},
\end{array}
\right. 
\end{equation*}
and $v \in W^{1, 2}(D_{3/2}^{+}, \mu)$ be a weak solution of
\begin{equation}\label{siding-v-eqn}
\left\{\begin{array}{cccl}
\text{div}(\langle\A \rangle_{B_{3/2}^+}(y ) \nabla v) &= & 0, & \quad  \text{in $D^{+}_{3/2}$}\\
v&= & \tilde{u}, & \quad \text{on $\partial D^{+}_{3/2}$}. 
\end{array}
\right. 
\end{equation}
We note that the existence and uniqueness of $\tilde{u}, v$ are established in \cite[Theorem 2.2]{Fabes}. By the energy estimates for $\tilde{u}$, and Lemma \ref{doubling}, we see that
\[
\begin{split}
& \frac{1}{\hmu(D_{7/4})}\int_{D^{+}_{7/4}} |\nabla \tilde{u}|^{2}dX \leq \frac{C(\Lambda, M_0}{\mu(D_2)} \int_{D_2^+} |\nabla u|^2 dX \leq C(\Lambda, M_0), \\
& \frac{1}{\hmu(D_{7/4})}\int_{D^{+}_{7/4}} |\nabla u (X) - \nabla \tilde{u} (X) |^2 \mu(y)d X  \leq  \frac{C(\Lambda, M_0)}{\hmu(D_{2})} \int_{D^{+}_2} \left|\frac{{\bf F}}{\mu}\right|^2 \mu (y)d X\leq C \delta^2,
\end{split}
\]
and similarly,
\begin{equation} \label{siding-energy-v-tilde-u}
\frac{1}{\hmu(D_{3/2})}\int_{D^{+}_{3/2}} |\nabla v - \nabla \tilde{u} |^2 \mu(y) d X  \leq  \frac{C}{\hmu(D_{3/2})}\int_{D^{+}_{3/2}} \left|(\A(X) - \langle \A \rangle_{B_{3/2}^+}(y )) \nabla \tilde{u} \right|^2 \mu^{-1}(y) d X. 
\end{equation}
Then, by Lemma \ref{Gerling-interior-half}, 
\[
\left(\frac{1}{\hmu(D_{3/2})}\int_{D^{+}_{3/2}} |\nabla \tilde{u}|^{2 + \varrho} \mu (y) d X \right)^{1/(2 + \varrho)} \leq \left(\frac{C}{\hmu(D_{7/4})} \int_{D^{+}_{7/4}} |\nabla \tilde{u}|^{2} \mu(y) dX\right)^{1/2} \leq  C (\Lambda, M_0, n). 
\]
We now apply H\"{o}lders inequality with the pair $\frac{2 + \varrho}{2}$ and $\frac{2 + \varrho}{\varrho}$ to obtain from \eqref{siding-energy-v-tilde-u} that 
\[
\begin{split}
\frac{1}{\hmu(D_{3/2})}&\int_{D^{+}_{3/2}} |\nabla v - \nabla \tilde{u} |^2 \mu(y) dX  \leq \frac{C}{\hmu(D_{3/2})}\int_{D^{+}_{3/2}} \left|(\A(X) - \langle \A \rangle_{B_{3/2}^+}(y)) \nabla \tilde{u} \right|^2 \mu^{-1}(y) dX\\
& \leq  \frac{C}{\hmu(D_{3/2})}\int_{D^{+}_{3/2}} \left|(\A(X) - \langle \A \rangle_{B_{3/2}^+}(y))\mu^{-1}|^{2} |\nabla \tilde{u}(X) \right|^2 \mu(y) dX  \\
& \leq C \left(\frac{1}{\hmu(D_{3/2})} \int_{D^{+}_{3/2}} |(\A(X) - \langle \A \rangle_{B_{3/2}^+}(y))\mu^{-1}(y)|^{\frac{2(2 + \varrho)}{\varrho}}\mu(y) dX \right)^{\frac{\varrho}{2 + \varrho}}.
\end{split}
\] 
Then, using the fact that 
\[
|\A(X) - \langle\A\rangle_{B_{3/2}^+}(y)| \leq 2\Lambda \mu(y),\quad \text{for each $X = (x, y) \in D_{2}$},
\]
we infer
\[
\begin{split}
\frac{1}{\hmu(D_{3/2})}&\int_{D^{+}_{3/2}} |\nabla v - \nabla \tilde{u} |^2 \mu(y)dX\leq C  \left(\frac{1}{\hmu(D_{3/2})}\int_{D^{+}_{3/2}} |(\A(X) - \langle \A \rangle_{B_{3/2}}(y))|^{2} \mu^{-1}(y) dX  \right)^{\frac{\varrho}{2 + \varrho}}. 
\end{split}
\]
We conclude that given $\epsilon > 0$, we can choose $\delta > 0$ such that 
\[
\begin{split}
\frac{1}{\hmu(D_{3/2})}\int_{D^{+}_{3/2}} |\nabla v  -\nabla u | ^{2} \mu(y) dX &\leq \frac{1}{\hmu(D_{3/2})}\int_{D^{+}_{3/2}} |\nabla v  -\nabla \tilde{u} | ^{2} \mu(y) dX + \frac{1}{\hmu(D_{3/2})}\int_{D^{+}_{3/2}} |\nabla \tilde{u}  -\nabla u | ^{2} \mu(y) dX\\
 &\leq C (\delta^2 + \delta ^{\frac{2}{2 + \varrho}}) \leq \epsilon^{2}. 
\end{split}
\]
Estimates \eqref{v-L-infty-interior-halfCyl} follows from  the assumption that $\mathbb{A}\in \mathcal{B}(Q_{2}^{+}, \Lambda, M_{0}, \mu)$, Lemma \ref{doubling}, 
and the inequality that 
\[
\begin{split}
\|\nabla v\|_{L^{\infty} (D^{+}_{5/4})}  & \leq C \left(\frac{1}{\hmu(D_{3/2})} \int_{D^{+}_{3/2}} |\nabla v|^{2}(X) \mu(y) dX\right)^{1/2}  \leq C \left(\frac{1}{\hmu(D_{2})} \int_{D^{+}_{2}} |\nabla u|^{2}(X) \mu(y) dX\right)^{1/2}  +\epsilon \leq C (\Lambda, M_0, n),
\end{split}
\]
assuming $\epsilon \leq 1$. The proof is complete.
\end{proof}
\subsection{Approximation estimates up to the corner of the half-cylinder domain} 
In this subsection we obtain an approximation estimate for $\nabla u$ over half cylinders whose flat side and base overlaps with the flat side and base of $Q^{+}_{2}$. 
The main result of the section now can be stated in the following proposition.
\begin{proposition} \label{bottom-approximation-proposition-sittingCy} Let $\Lambda >0, M_0 >0$ be fixed. Then, for every $\epsilon >0$, there exists $\delta = \delta(\epsilon, \Lambda, M_0, n) >0$ with the following property.  Let  $\mu \in A_2(\R)$ so that $[\mu]_{A_2(\R)} \leq M_0$. Let also $\A \in \mathcal{B}(Q_{2}^{+}, \Lambda, M_0, \mu)$ satisfy \eqref{ELLIPTICITY-5}. 
 Then for every $X_0 = (x_0, 0)$, where $x_0 = (x_0', 0) \in T_1$,  and $0 < r <1$ such that $Q_{2r}^{+}(X_0) \subset Q_2^{+}$, 
if \[
\begin{split}
& \frac{1}{\hat{\mu}(Q_{3r/2}(X_0))} \int_{Q^{+}_{3r/2}(X_0)} |\A(X) - \langle{\A}\rangle_{B^{+}_{3r/2}(x_0)}(y)|^2 \mu^{-1}(y) dX  \leq \delta^{2}, \\
&\frac{1}{\hat{\mu}(Q_{2r}(X_0))}\left[ \int_{Q^{+}_{2r}(X_0)} \Big| \F(X)/\mu(y)\Big|^2 \mu(y)dX  + \int_{Q^{+}_{2r}(X_0)} \Big|f(x)/\mu(y)\Big|^2 \mu(y) dX \right] \leq \delta^{2},
\end{split}
\]
and 
 $u \in W^{1,2}(Q^{+}_{2r}(X_0), \mu)$ is a weak solution of 
\[
\left\{
\begin{array}{cccl}
\textup{div}[\A(X) \nabla u]  &= & \textup{div}({\bf F}), & \quad Q^{+}_{2r}(X_{0}), \\
u &=& 0&\quad  T_{2r}(x_{0}) \times \Gamma_{2r}, \\
\lim_{y\rightarrow 0^+}\wei{\A(x,y) \nabla u(x,y) - {\bf F}(x,y), e_{n+1}}&= & f(x), & \quad  x \in B^{+}_{2r}(x_{0}),
\end{array}
\right.
 \]
satisfying
\[
\frac{1}{\hat{\mu}(Q_{2r}(X_0))} \int_{Q^{+}_{2r}(X_0)} |\nabla u(X)|^2 \mu(y) dX \leq 1,
\]
then there exists $v \in W^{1,2}(Q^{+}_{3r/2}(X_0), \mu)$ such that 
\begin{equation} \label{u-v-approximation-bottom-sittingCy}
\frac{1}{\hat{\mu}(Q_{3r/2}(X_0))} \int_{Q^{+}_{3r/2}(X_0)} |\nabla u(X) - \nabla v(X)|^2 \mu(y) dX \leq \epsilon^{2}. 
\end{equation}
Moreover, there exists a constant $C=C(n, \Lambda, M_0)>0$ such that 
\begin{equation} \label{v-L-infty-bottom-sittingCy}
 \norm{\nabla v}_{L^\infty(Q^{+}_{5r/4}(X_0))}  \leq C(n, \Lambda, M_0).
\end{equation}
\end{proposition}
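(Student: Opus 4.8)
The plan is to carry out, at the corner, the same two-step perturbation argument used in Proposition \ref{bottom-approximation-proposition} and Proposition \ref{interior-approximation-proposition-halfCyl}. After translating and rescaling we may assume $x_0 = 0$ and $r = 1$, so the corner of $Q^+_2$ sits at the origin. First I would introduce the intermediate function $\tilde u \in W^{1,2}(Q^+_{7/4},\mu)$ solving $\textup{div}[\A(X)\nabla\tilde u] = 0$ in $Q^+_{7/4}$, with $\tilde u = u$ on $\partial Q^+_{7/4}\setminus(B^+_{7/4}\times\{0\})$ (so the homogeneous Dirichlet condition on $T_{7/4}\times\Gamma_{7/4}$ is inherited from $u$) and the homogeneous conormal condition $\lim_{y\to 0^+}\wei{\A(x,y)\nabla\tilde u(x,y),e_{n+1}} = 0$ on $B^+_{7/4}$; existence and uniqueness follow as before from \cite[Theorem 2.2]{Fabes} once one has the version of the weighted trace inequality of Lemma \ref{trace-zero} adapted to $\Omega = B^+$. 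Using $g = u - \tilde u$ as a test function in its own equation and arguing exactly as in Lemma \ref{energy-u-tilde-u} --- ellipticity \eqref{ELLIPTICITY-5}, the weighted Poincar\'e inequality (legitimate because $g$ vanishes on all of $\partial Q^+_{7/4}$ except the base), the weighted trace inequality to absorb $\int_{B^+_{7/4}}g(x,0)f(x)\,dx$, Young's inequality, and the elementary bound $7/4 \le \big(\int_0^{7/4}\mu^{-1}\big)^{1/2}\big(\int_0^{7/4}\mu\big)^{1/2}$ to rewrite the $f$-term --- I would obtain
\[
\frac{1}{\hmu(Q_{7/4})}\int_{Q^+_{7/4}}|\nabla u - \nabla\tilde u|^2\mu(y)\,dX \le C(n,\Lambda,M_0)\,\delta^2,
\]
and, combining with the normalization and Lemma \ref{doubling}, a uniform bound for $\frac{1}{\hmu(Q_{7/4})}\int_{Q^+_{7/4}}|\nabla\tilde u|^2\mu\,dX$.

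In the second step I would freeze the coefficient in the $x$-variable and let $v \in W^{1,2}(Q^+_{3/2},\mu)$ solve $\textup{div}[\wei{\A}_{B^+_{3/2}}(y)\nabla v] = 0$ in $Q^+_{3/2}$, with $v = \tilde u$ on $\partial Q^+_{3/2}\setminus(B^+_{3/2}\times\{0\})$, $v = 0$ on $T_{3/2}\times\Gamma_{3/2}$, and the homogeneous conormal condition on $B^+_{3/2}$. Testing the equations for $v$ and $\tilde u$ against $w = v - \tilde u$ and using ellipticity and H\"older's inequality, exactly as in Lemma \ref{energy-u-v}, reduces the estimate of $\frac{1}{\hmu(Q_{3/2})}\int_{Q^+_{3/2}}|\nabla w|^2\mu$ to a bound for $\frac{1}{\hmu(Q_{3/2})}\int_{Q^+_{3/2}}\big(|\A - \wei{\A}_{B^+_{3/2}}(y)|\mu^{-1}(y)\big)^2|\nabla\tilde u|^2\mu$. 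Here I need a Gehring-type self-improvement for $\tilde u$ valid up to the corner, i.e.\ the analogue of Lemma \ref{Gerling-interior-bottom} --- its proof must cover interior cylinders, base-touching cylinders, Dirichlet-face-touching cylinders (where one tests with $v\psi^2$ directly, exploiting that $v$ vanishes on the Dirichlet face, together with the corresponding weighted Sobolev--Poincar\'e inequality of \cite[Theorem 1.3]{Fabes}), and corner cylinders --- which produces $\varrho = \varrho(\Lambda,M_0,n) > 0$ with $\big(\frac{1}{\hmu(Q_{3/2})}\int_{Q^+_{3/2}}|\nabla\tilde u|^{2+\varrho}\mu\big)^{\frac{1}{2+\varrho}} \le C(n,\Lambda,M_0)$. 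Applying H\"older with exponents $\frac{2+\varrho}{2}$ and $\frac{2+\varrho}{\varrho}$, using the pointwise bound $|\A(X) - \wei{\A}_{B^+_{3/2}}(y)|\mu^{-1}(y) \le 2\Lambda$ from \eqref{ELLIPTICITY-5}, and the smallness hypothesis on the oscillation of $\A$, gives $\frac{1}{\hmu(Q_{3/2})}\int_{Q^+_{3/2}}|\nabla v - \nabla\tilde u|^2\mu \le C(n,\Lambda,M_0)\,\delta^{\frac{2\varrho}{2+\varrho}}$.

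Combining the two steps via the triangle inequality and Lemma \ref{doubling} (to pass from $\hmu(Q_{7/4})$ to $\hmu(Q_{3/2})$), I would get $\frac{1}{\hmu(Q_{3/2})}\int_{Q^+_{3/2}}|\nabla u - \nabla v|^2\mu \le C(n,\Lambda,M_0)(\delta^2 + \delta^{\frac{2\varrho}{2+\varrho}})$, which is $\le \epsilon^2$ once $\delta = \delta(\epsilon,\Lambda,M_0,n)$ is chosen small; this is \eqref{u-v-approximation-bottom-sittingCy}. For \eqref{v-L-infty-bottom-sittingCy}, the same estimate together with the normalization and Lemma \ref{doubling} gives $\frac{1}{\hmu(Q_{3/2})}\int_{Q^+_{3/2}}|\nabla v|^2\mu \le C(n,M_0)$, and then item \textup{(iv)} of Definition \ref{class-B} --- the corner Lipschitz estimate for the frozen-coefficient mixed problem, available because $\A \in \mathcal{B}(Q^+_2,\Lambda,M_0,\mu)$ --- yields $\norm{\nabla v}_{L^\infty(Q^+_{5/4})} \le C(n,\Lambda,M_0)$. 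I expect the main obstacle to be geometric rather than algebraic: one must make sure that the solvability of the two auxiliary mixed Dirichlet--conormal problems, the weighted trace inequality on $B^+\times\Gamma$, and especially the Gehring self-improvement all remain valid on the half-cylinder with a right-angle corner where the Dirichlet face and the conormal face meet; once these ingredients are established, the remaining computation is identical to the base-cylinder case.
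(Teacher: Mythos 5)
Your proposal is correct and follows essentially the same two-step perturbation argument as the paper: first replacing $u$ by the homogeneous solution $\tilde u$ with the same boundary data and controlling $\nabla u - \nabla\tilde u$ via the weighted trace inequality (Lemma \ref{trace-zero}) and the $f$-term manipulation, then replacing $\tilde u$ by the frozen-coefficient solution $v$ and using the corner Gehring estimate (Lemma \ref{Gerling-side-mixed}) together with H\"older's inequality and the pointwise bound on the oscillation, and finally invoking item (iv) of Definition \ref{class-B} for the Lipschitz bound \eqref{v-L-infty-bottom-sittingCy}. The only cosmetic difference is the constant in the pointwise bound on $|\A - \wei{\A}_{B^+_{3/2}}(y)|\mu^{-1}$ (you write $2\Lambda$, the paper writes $\Lambda^{-1}$), but this is immaterial since both are constants depending only on $\Lambda$.
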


We now state the self-improving regularity estimates of Gehring's type for weak solutions of mixed boundary value problems. Its proof is very similar to that of Lemma \ref{Gerling-interior-bottom}.
\begin{lemma}  \label{Gerling-side-mixed}Suppose that \eqref{ELLIPTICITY-5} holds and $[\mu]_{A_2} \leq M_0$. Then,  there exists $\varrho = \varrho(\Lambda, M_0, n)>0$ sufficiently small such that for every $\theta \in (0,1)$,  such that for any $0 < R <2$, if $\tilde{u} \in W^{1,2}(Q^{+}_R, \mu)$ is a weak solution of
\begin{equation} \label{v-Q-r-bottom-half-R}
\left\{
\begin{array}{cccl}
\textup{div} [\A(X) \nabla \tilde{u}] & = & 0, & \quad \text{in} \quad Q^{+}_{R}, \\
\tilde{u}&=&0,&\quad \text{on} \quad T_{R}\times \Gamma_{R}\\
\wei{\A(x, 0) \nabla \tilde{u}(x, 0) , e_{n+1}} & = & 0, & \quad x \in B^{+}_R, 
\end{array}
\right.
\end{equation}
then there is some $C =C(M_0, n, \theta) >0$ such that
\[
\left(\frac{1}{\hat{\mu}(Q_{\theta R})}\int_{Q^{+}_{\theta R}} |\nabla \tilde{u}|^{2+\varrho} \mu(y) dX \right)^{\frac{1}{2+\varrho}} \leq C \left( \frac{1}{\hat{\mu}(Q_R)}\int_{Q^{+}_R} |\nabla \tilde{u}|^2 \mu(y) dX\right)^{1/2}.
\]
\end{lemma}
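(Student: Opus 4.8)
The plan is to transcribe the proof of Lemma~\ref{Gerling-interior-bottom} almost verbatim, the only new ingredient being that the half-cylinder $Q_R^+$ carries both a Dirichlet face $T_R\times\Gamma_R$ and a conormal (Neumann) face $B_R^+\times\{0\}$, so the local reverse Hölder inequality must be proved in four geometric positions of the center rather than two. After rescaling we may assume $R=2$ and $\theta=1/2$, and by the weighted Gehring lemma (\cite[Theorem 1.5]{Kinnunen} or \cite[Theorem 2.3.3]{Stredulinsky}, exactly as in Lemma~\ref{Gerling-interior-bottom}) together with the doubling property (Lemma~\ref{doubling}) it suffices to produce an exponent $p=p(\Lambda,M_0,n)\in(1,2)$ such that for every $Z\in\overline{Q_1^+}$ and every $0<r<1$,
\[
\left(\frac{1}{\hmu(D_{r/2}(Z))}\int_{D_{r/2}(Z)\cap Q_2^+}|\nabla\tilde u|^2\,\mu(y)\,dX\right)^{1/2}\le C(\Lambda,M_0)\left(\frac{1}{\hmu(D_{r}(Z))}\int_{D_{r}(Z)\cap Q_2^+}|\nabla\tilde u|^p\,\mu(y)\,dX\right)^{1/p}.
\]

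To establish this I would split according to the position of $Z=(z,w)$ with $z=(z',z_n)$: (i) if $D_{2r}(Z)\subset Q_2^+$ the estimate is exactly the interior one, proved as in Lemma~\ref{Gerling-interior}; (ii) if $z_n>2r$ but $0\le w<2r$ the set $D_r(Z)\cap Q_2^+$ is a base-touching cylinder $Q_r(z,0)$ and one repeats the Caccioppoli computation of Lemma~\ref{Gerling-interior-bottom} with the test function $(\tilde u-\langle\tilde u\rangle)\psi^2$, $\psi(x,y)=\phi(x)\chi(y)$ with $\chi$ not vanishing at $y=0$, using the conormal condition to discard the boundary integral, followed by the weighted Sobolev–Poincaré inequality for $A_2$-weights (\cite[Theorem 1.3]{Fabes}), as in \eqref{S-P-Gehring}; (iii) if $z_n<2r$ but $w>2r$ the function $\tilde u$ vanishes on $T_r(z')\times I_r(w)$, so choosing $\psi$ supported away from the Dirichlet face one gets the Caccioppoli estimate $\hmu(D_r(Z))^{-1}\int|\nabla\tilde u|^2\psi^2\mu\,dX\le C r^{-2}\hmu(D_r(Z))^{-1}\int|\tilde u|^2\mu\,dX$ \emph{without subtracting a mean}, and the zero Dirichlet trace then lets one bound $\int|\tilde u|^2\mu$ by $r^2(\fint|\nabla\tilde u|^p\mu)^{2/p}$ through the weighted Poincaré inequality for functions vanishing on part of the boundary; (iv) the corner case $z_n<2r$, $0\le w<2r$ combines (ii) and (iii). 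In every case Lemma~\ref{doubling} is used to replace $\hmu(D_{r/2})$, $\hmu(D_{r})$ and to pass between $D_\cdot(Z)\cap Q_2^+$ and honest (half-)cylinders comparably, and the ellipticity bound \eqref{ELLIPTICITY-5} together with Young's inequality is used exactly as in Lemma~\ref{Gerling-interior-bottom}.

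The one point requiring genuine care — and the expected main obstacle — is the corner configuration (iv), where the Neumann structure at $\{y=0\}$ and the Dirichlet structure at $\{x_n=0\}$ must be exploited simultaneously. The clean route is to extend $\tilde u$ by zero across $\{x_n=0\}$: since $\tilde u\in W^{1,2}(Q_r^+(z',0,0),\mu)$ has zero trace on $T_r(z')\times\Gamma_r$, the extension lies in $W^{1,2}$ on the full base-touching cylinder $Q_r((z',0),0)$ and solves a mixed problem of type \eqref{v-Q-r-bottom-half-R} with the $x_n$-even reflection of $\A$, which still satisfies \eqref{ELLIPTICITY-5} and keeps the weight $\hmu(X)=\mu(y)$ unchanged (hence in $A_2(\R^{n+1})$) because $\mu$ does not depend on $x_n$. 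Case (ii) then applies to the extended solution, while the original zero Dirichlet trace guarantees, again via the weighted Poincaré inequality, that the subtracted average $\langle\tilde u\rangle$ is itself controlled by $r(\fint|\nabla\tilde u|^p\mu)^{1/p}$, so no loss is incurred. Checking that the reflected coefficient still lies in the relevant ellipticity class, that the weak formulation genuinely passes across the interface $\{x_n=0\}$, and that all Poincaré constants depend only on $M_0$ and $n$ is the delicate part; once this is in place, the weighted Gehring lemma yields the claimed self-improvement and the proof is complete.
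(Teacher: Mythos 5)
Your overall plan --- rescale, reduce via the weighted Gehring lemma (\cite[Theorem 1.5]{Kinnunen} or \cite[Theorem 2.3.3]{Stredulinsky}) and the doubling property to a local reverse H\"{o}lder inequality, and then establish that inequality by a Caccioppoli estimate followed by a weighted Sobolev--Poincar\'{e} inequality in each of four geometric positions of the center --- is precisely the route the paper intends; the paper simply points the reader to the proof of Lemma~\ref{Gerling-interior-bottom}, so your fleshing out of the case distinction supplies detail the paper elides.

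The ``clean route'' you propose for the corner case (iv), however, contains a genuine error. Extension of $\tilde u$ by zero across the Dirichlet face $\{x_n=0\}$ does produce a $W^{1,2}(\cdot,\mu)$ function, but it is \emph{not} a weak solution of an equation of the form \eqref{v-Q-r-bottom-half-R} on the full base-touching cylinder: if you test the extension against a $\phi$ that is nonzero at $\{x_n=0\}$, integration by parts on the half leaves the boundary term carrying the conormal derivative of $\tilde u$ on the Dirichlet face, which is neither controlled nor zero in general. Replacing zero-extension by odd reflection does not repair this, since odd reflection converts $\tilde u$ into a weak solution only when the coefficient matrix decouples the $x_n$ direction ($\A_{nj}=\A_{jn}=0$ for $j\neq n$), a structural restriction the lemma does not impose: here $\A$ is an arbitrary symmetric measurable matrix obeying only \eqref{ELLIPTICITY-5}. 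The verification you flag as ``the delicate part'' is therefore not merely delicate; it is false in general.

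The repair is to avoid reflection altogether and argue in the corner exactly as you already do in case (iii). Take $\psi(x,y)=\phi(x)\chi(y)$ vanishing only on the outer lateral boundary $\partial B_r(z)\times\Gamma_r$ and near $\{y=r\}$, but $\equiv 1$ near both $\{x_n=0\}$ and $\{y=0\}$; then $\tilde u\psi^2$ is an admissible test function, because $\tilde u$ itself vanishes on the Dirichlet face and the conormal condition imposes no restriction on the test function at the base. The Caccioppoli estimate with no subtracted mean, combined with a weighted Poincar\'{e}--Friedrichs inequality for functions vanishing on a non-degenerate portion of the boundary (here zero-extension across $\{x_n=0\}$ \emph{is} harmless, since that step invokes only the Sobolev embedding of \cite[Theorem 1.3]{Fabes} and not the equation), gives the reverse H\"{o}lder estimate. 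Finally, your phrase in case (iii), ``choosing $\psi$ supported away from the Dirichlet face,'' is backwards: the cut-off must be $\equiv 1$ near that face, or the estimate will not cover $D_{r/2}(Z)\cap Q_2^+$; it is $\tilde u\psi^2$, not $\psi$, that should vanish there.
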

\begin{proof}[Proof of Proposition \ref{bottom-approximation-proposition-sittingCy}]
By scaling and translation, without loss of generality, we only need to prove the proposition when $X_0 = 0$ and $r =1$.  In this case, $u$ is given to be a weak solution of 
\begin{equation} \label{u-Q-6-bottom-sittingCy}
\left\{
\begin{array}{cccl}
\text{div}[\A(X) \nabla u]  &= & \text{div}({\bf F}), &  \quad \text{in} \quad Q^{+}_2, \\
u &= & 0,& \quad \text{in} \quad T_{2} \times \Gamma_{2} \\
\lim_{y\rightarrow 0^+}\wei{\A(x,y) \nabla u(x,y) - {\bf F}(x,y), e_{n+1}}&=  &f(x), & \quad  x \in B^{+}_2.
\end{array}
\right.
\end{equation}
Note that the definition of weak solutions of \eqref{u-Q-6-bottom-sittingCy} is given in Definition \ref{weak-solution-Q-R-plus}. 
As before, we use a two step approximation procedure. The first step is to approximate $\nabla u$ it by $\nabla \tilde{u}$, where $\tilde{u}$ is a weak solution of the corresponding homogeneous equation of \eqref{u-Q-6-bottom-sittingCy}.

\noindent 
{\bf Step 1}. We claim that  there is a constant $C = C(n, M_0) >0$ such that if $u \in W^{1,2}(Q^{+}_2, \mu)$ is a weak solution of \eqref{u-Q-6-bottom-sittingCy}, then there is a weak solution  $\tilde{u} \in W^{1,2}(Q_{7/4},\mu)$ of 
\begin{equation} \label{v-1-Q-5-bottom-sittingCy}
\left\{
\begin{array}{cccl}
\textup{div} [\A(X) \nabla \tilde{u}] & =  & 0, & \quad \text{in} \quad Q^{+}_{7/4}, \\
 \tilde{u} & = &u, &  \quad \text{on} \quad \partial Q^{+}_{7/4} \setminus (B^{+}_{7/4}\times \{0\}), \\
 \lim_{y\rightarrow 0^+ }\wei{\A(x, y) \nabla \tilde{u}(x, y), e_{n+1}} & = & 0, &  \quad x \in B^{+}_{7/4}.
\end{array}
\right.
\end{equation}
satisfying 
\begin{equation} \label{u-tilde-u-sittingCy}
\int_{Q^{+}_{7/4}} |\nabla u -\nabla \tilde{u}|^2 \mu(y) dX \leq C(\Lambda, n, M_0) \left[ \int_{Q^{+}_{7/4}} \Big|\F/\mu \Big|^2 \mu(y) dX +  \int_{Q^{+}_{7/4}} \Big|f(x)/\mu(y) \Big|^2 \mu(y)dX \right].
\end{equation}
Moreover, there is $\varrho = \varrho(n, M_0) >0$ such that
\begin{equation} \label{higher-reg-bottom-tilde-u-sittingCy}
\begin{split}
& \left( \frac{1}{\hat{\mu}(Q_{3/2})} \int_{Q^{+}_{3/2}} |\nabla \tilde{u}|^{2+\varrho} \right)^{\frac{1}{2+\varrho}} \\
& \leq C(n, M_0, \Lambda) \left[ \frac{1}{\hat{\mu}(Q_{7/4})} \left\{ \int_{Q^{+}_{7/4}} |\nabla u|^2 \mu(y) dX + \int_{Q^{+}_{7/4}} \Big|\F/\mu \Big|^2 \mu(y) dX +  \int_{Q^{+}_{7/4}} \Big| \frac{f(x)}{\mu(y)} \Big|^2 \mu(y)dX \right\}  \right]^{1/2}.
\end{split}
\end{equation}
To prove the claim we first note that by a weak solution $\tilde{u}\in W^{1,2}(Q_{7/4}, \mu)$ of \eqref{v-1-Q-5-bottom-sittingCy} we mean
 if $\tilde{u} -u \in \overset{*}{W}^{1,2}(Q^{+}_{7/2}, \mu)$ and
\[
\int_{Q^{+}_{7/2}} \wei{\A\nabla \tilde{u}, \nabla \psi} dX = 0, \quad \forall \psi \in \overset{*}{W}^{1,2}(Q^{+}_{7/4}, \mu).
\]
From the above and the trace inequality, Lemmas \ref{trace-zero}, we observe that for  each weak solution $u \in W^{1,2}(Q_2^+, \mu)$ of \eqref{u-Q-6-bottom-sittingCy}, the existence and uniqueness weak solution of \eqref{v-1-Q-5-bottom-sittingCy} can be proved as in \cite[Theorem 2.2]{Fabes}. We now write $g = u -\tilde{u}$. By the definition, $g \in \overset{*}{W}^{1,2}(Q^{+}_{7/4}, \mu)$ and is a weak solution of
\begin{equation} \label{g-Q-5-bottom-sittingCy}
\left\{
\begin{array}{cccl}
\text{div} [\A(X) \nabla g] & = & \text{div}[{\bf F}], & \quad \text{in} \quad Q^{+}_{7/4}, \\
 g & =& 0, & \quad \text{on} \quad \partial Q^{+}_{7/4} \setminus (B^{+}_{7/4}\times \{0\}), \\
 \lim_{y\rightarrow 0^+}\wei{\A(x, y) \nabla g (x, y) -{\bf F}(x, y), e_{n+1}} & = & f(x), & \quad x \in B^{+}_{7/4}.
\end{array}
\right.
\end{equation}
Then, use $g$ as a test function of its equation, we obtain that 
\[
\int_{Q^{+}_{7/4}} \wei{\A\nabla g, \nabla g} dX = \int_{Q^{+}_{7/4}} {\bf F}\cdot\nabla g dX + \int_{B^{+}_{7/4}} g(x,0) f(x) dx.
\]
From the  ellipticity condition \eqref{ELLIPTICITY-5} it follows that 
\[
 \int_{Q^{+}_{7/4}} |\nabla g|^2 \mu(y) dX \leq C(\Lambda) \left[
\int_{Q^{+}_{7/4}} \Big | \frac{{\bf F}}{\mu} \Big| |\nabla g| \mu(y) dX + \int_{B^{+}_{7/4}} |g(x,0)| |f(x)| dx \right]. 
\]
Then, applying the H\"{o}lder's inequality and Young's inequality, we see that with an5 $\beta>0$
\[
\begin{split}
\Lambda \int_{Q^{+}_{7/4}} |\nabla g|^2 \mu(y) dX  & \leq \beta 
\left[ \int_{Q^{+}_{7/4}} |\nabla g|^2\mu(y) dX + \frac{\mu(\Gamma_{7/4})}{2 (7/4)^{2}}\int_{B^{+}_{7/4}} |g(x,0)|^2 dx \right]  \\
&+ \frac{C(\Lambda)}{4\beta} \left[ \int_{Q^{+}_{7/4}} \Big| \frac{{\bf F}}{\mu} \Big|^2 \mu(y) dX + \frac{2 (7/4)^{2}}{\mu(\Gamma_{7/4})}\int_{B^{+}_{7/4}} |f(x)|^2 dx \right]
\end{split}
\]
Then, it follows from the trace inequality, Lemma \ref{trace-zero}, that
\[
\int_{Q^{+}_{7/4}} |\nabla g|^2 \mu(y) dX  \leq \beta C(n, M_0)
\int_{Q^{+}_{7/4}} |\nabla g|^2\mu(y) dX + \frac{C(\Lambda, M_0)}{4\beta} \left[ \int_{Q^{+}_{7/4}} \Big| \frac{{\bf F}}{\mu} \Big|^2 \mu(y) dX + \frac{ (7/4)^{2}}{\mu(\Gamma_{7/4})}\int_{B^{+}_{7/4}} |f(x)|^2 dx \right].
\]
Then, with $\beta$ sufficiently such that  $\beta  C(n, M_0)] \leq 1/2$, 
we obtain
\begin{equation} \label{g-energy-last-step-sittingCy}
\begin{split}
\int_{Q^{+}_{7/4}} |\nabla g|^2 \mu(y) dX &\leq C(\Lambda, n, M_0) \left[ \int_{Q^{+}_{7/4}} \Big| \frac{{\bf F}}{\mu} \Big|^2 \mu(y) dX +  
(7/4)^{2}\left(\int_{0}^{7/4} \mu (y)dy\right)^{-1} \int_{B^{+}_{7/4}} |f(x)|^2 dx \right]\\
&\leq C(\Lambda, n, M_0) \left[ \int_{Q^{+}_{7/4}} \Big| \frac{{\bf F}}{\mu} \Big|^2 \mu(y) dX +  
\int_{Q^{+}_{7/4}} \Big |\frac{f(x)}{\mu(y)} \Big|^2 \mu(y) dX \right].
\end{split}
\end{equation}
This together with \eqref{g-energy-last-step-sittingCy} proves estimate \eqref{u-tilde-u-sittingCy}, while estimate \eqref{higher-reg-bottom-tilde-u-sittingCy} follows directly from \eqref{u-tilde-u-sittingCy} and Lemma \ref{Gerling-side-mixed}. \\
\noindent
{\bf Step 2.}  Let $\tilde{u}$ and $\varrho$ be  as in {\bf Step 1.} Then, we claim that  there exists $v \in W^{1,2}(Q_{3/2}^+, \mu)$  a weak solution 
\begin{equation} \label{v-Q-4-bottom-sittingCy}
\left\{
\begin{array}{cccl}
\textup{div}[\langle {\A}\rangle_{B_{3/2}^+}(y) \nabla v] & = &0, &  \quad \text{in} \quad Q^{+}_{3/2}, \\
v & = &\tilde{u}, &  \quad \text{on} \quad \partial Q^{+}_{3/2} \setminus (B^{+}_{3/2} \times \{0\} ), \\
\lim_{y\rightarrow 0^+} \wei{\langle{\A}\rangle_{B_{3/2}^+}( y) \nabla v(x, y), e_{n+1}} & = & 0, & \quad x \in B^{+}_{3/2},
\end{array}
\right.
\end{equation}
satisfying 
\[
\begin{split}
 &\left (\frac{1}{\hat{\mu}(Q^{+}_{3/2})} \int_{Q^{+}_{3/2}} |\nabla \tilde{u} -\nabla v|^2 \mu(y) dX\right)^{1/2}  \\
& \leq C(\Lambda) \left(\frac{1}{\hat{\mu}(Q_{3/2})} \int_{Q^{+}_{3/2}}  |\nabla \tilde{u}|^{2+\varrho} \mu(y)\right)^{\frac{1}{2+\varrho}} \left(\frac{1}{\hat{\mu}(Q_{3/2})} \int_{Q^{+}_{3/2}}\Big |\A - \langle{\A}\rangle_{B_{3/2}^+}(y)\Big|^2\mu(y)^{-1} dX  \right)^{\frac{\varrho}{2(2+\varrho)}}.
 \end{split}
 \]
To prove this claim, first the existence of $v$ can be deduced in a similar way as in {\bf Step 1}.  
Let $w = v - \tilde{u}$. We note that $w \in \overset{*}{W}^{1,2}(Q^{+}_{3/2}, \mu)$. Therefore, by using ${w}$ as a test function for the equation of $v$, and $\tilde{u}$, we obtain the following
\[
\int_{Q^{+}_{3/2}} \wei{\langle{\A}\rangle_{B_{3/2}^+}(y) \nabla w, \nabla w} dX = -\int_{Q^{+}_{3/2}} \wei{[\A -\langle{\A}\rangle_{B_{3/2}^+}(y)]\nabla \tilde{u}, \nabla {w}} dX.
\]
Together with the ellipticity condition \eqref{ELLIPTICITY-5}, and H\"{o}lder's inequality, this implies that 
\[
\begin{split}
 \int_{Q^{+}_{3/2}} |\nabla w|^2 \mu(y) dX & \leq C(\Lambda) \int_{Q^{+}_{3/2}} \Big |\A - \langle{\A}\rangle_{B_{3/2}^+}(y)\Big| |\nabla \tilde{u}| |\nabla w| dX \\
 &\leq C(\Lambda) \left(\int_{Q^{+}_{3/2}} |\nabla w|^2 \mu(y) dX \right)^{1/2} \left( \int_{Q^{+}_{3/2}} \Big |\A - \langle{\A}\rangle_{B_{3/2}^+}(y)\Big|^2 |\nabla \tilde{u}|^2 \mu(y)^{-1} dX\right)^{1/2}.
\end{split}
\]
Hence,
\[
\frac{1}{\hat{\mu}(Q_{3/2})}\int_{Q^{+}_{3/2}} |\nabla w|^2 \mu(y) dX  \leq \frac{C(\Lambda)}{\hat{\mu}(Q_{3/2})}\int_{Q^{+}_{3/2}} \Big (|\A - \langle{\A}\rangle_{B_{3/2}^+}(y)| \mu(y)^{-1}\Big)^2 |\nabla \tilde{u}|^2 \mu(y) dX.
\]
Then, applying the  H\"{o}lder's inequality with the exponents $\frac{2+\varrho}{2}$ and $\frac{2+\varrho}{\varrho}$, we obtain
\[
\begin{split}
 &\left (\frac{1}{\hat{\mu}(Q_{3/2})} \int_{Q^{+}_{3/2}} |\nabla w|^2 \mu(y) dX\right)^{1/2}  \\
 & \leq C(\Lambda, M_{0}) \left(\frac{1}{\hat{\mu}(Q_{3/2})} \int_{Q^{+}_{3/2}}  |\nabla \tilde{u}|^{2+\varrho} \mu(y)dX\right)^{\frac{1}{2+\varrho}} \left(\frac{1}{\hat{\mu}(Q_{3/2})} \int_{Q^{+}_{3/2}}\Big |(\A - \langle{\A}\rangle_{B_{3/2}^+}(y)) \mu(y)^{-1}\Big|^{\frac{2(2+\varrho)}{\varrho}} \mu(y) dX  \right)^{\frac{\varrho}{2(2+\varrho)}}.
 \end{split}
\]
Again from the ellipticity condition \eqref{ELLIPTICITY-5} that
\[
|\A(X) - \langle{\A}\rangle_{B_{3/2}^+}(y)| \mu(y)^{-1} \leq \Lambda^{-1}. 
\]
As a consequence,  the right hand side of the previous inequality can be simplified to  
\[
\begin{split}
 &\left (\frac{1}{\hat{\mu}(Q_{3/2})} \int_{Q^{+}_{3/2}} |\nabla w|^2 \mu(y) dX\right)^{1/2}  \\
& \leq C(\Lambda, M_{0}) \left(\frac{1}{\hat{\mu}(Q_{3/2})} \int_{Q^{+}_{3/2}}  |\nabla \tilde{u}|^{2+\varrho} \mu(y)\right)^{\frac{1}{2+\varrho}} \left(\frac{1}{\hat{\mu}(Q_{3/2})} \int_{Q^{+}_{3/2}}\Big |\A - \langle{\A}\rangle_{B_{3/2}^+}(y)\Big|^2\mu(y)^{-1} dX  \right)^{\frac{\varrho}{2(2+\varrho)}}, 
 \end{split}
\]
proving the claim.

\noindent 
{\bf Step 3}. This is the final step where we put together the above two steps to prove Proposition \ref{bottom-approximation-proposition-sittingCy}.
To that end, it follows from estimate \eqref{higher-reg-bottom-tilde-u-sittingCy} in {\bf Step 1} and assumptions that
\[
\left(\frac{1}{\hat{\mu}(Q_{3/2})} \int_{Q^{+}_{3/2}}  |\nabla \tilde{u}|^{2+\varrho} \mu(y)\right)^{\frac{1}{2+\varrho}} \leq C(n, \Lambda, M_0).
\]
Observe also that
\[
\begin{split}
 \frac{1}{\hat{\mu}(Q_{3/2})} \int_{Q^{+}_{3/2}} |\nabla u - \nabla v|^2 \mu(y) dX & \leq \frac{1}{\hat{\mu}(Q_{3/2})} \int_{Q^{+}_{3/2}} |\nabla u - \nabla \tilde{u}|^2 \mu(y) dX + \frac{1}{\hat{\mu}(Q_{3/2})} \int_{Q^{+}_{3/2}} |\nabla \tilde{u} - \nabla v|^2 \mu(y) dX \\
 & \leq \frac{\hat{\mu}(Q_{7/4})}{ \hat{\mu}(Q_{3/2})} \frac{1}{\hat{\mu}(Q_{7/4})}\int_{Q^{+}_{7/4}} |\nabla u - \nabla \tilde{u}|^2 \mu(y) dX + \frac{1}{\hat{\mu}(Q_{3/2})} \int_{Q^{+}_{3/2}} |\nabla \tilde{u} - \nabla v|^2 \mu(y) dX\\
& \leq \frac{C(n, M_0)}{\hat{\mu}(Q_{7/4})}\int_{Q^{+}_{7/4}} |\nabla u - \nabla \tilde{u}|^2 \mu(y) dX + \frac{1}{\hat{\mu}(Q_{3/2})} \int_{Q^{+}_{3/2}} |\nabla \tilde{u} - \nabla v|^2 \mu(y) dX.
\end{split}
\]
Hence, it follows from \eqref{u-tilde-u-sittingCy} of {\bf Step 1} and {\bf Step 2} that
\[
\begin{split}
& \frac{1}{\hat{\mu}(Q_{3/2})} \int_{Q^{+}_{3/2}} |\nabla u - \nabla v|^2 \mu(y) dX \\
&\leq  C(n, \Lambda, M_0)\left[\frac{1}{\hat{\mu}(Q_{7/4})} \int_{Q^{+}_{7/4}} \Big| \frac{{\bf F}}{\mu} \Big|^2 \mu(y) dX +  \frac{1}{\hat{\mu}(Q_{7/4})}\int_{Q^{+}_{7/4}} \Big| \frac{f(x)}{\mu(y)} \Big|^2 dX\right.\\
&\quad\quad\quad\quad\quad\left. + \left(\frac{1}{\hat{\mu}(Q_{3/2})} \int_{Q^{+}_{3/2}}\Big |\A - \langle{\A}\rangle_{B_{3/2}^+}(y)\Big|^2\mu(y)^{-1} dX  \right)^{\frac{\rho}{(2+\rho)}}\right]  \leq C(n, \Lambda, M_0) \delta^{2}.
\end{split}
\]
Hence, if we chose $\delta$ sufficiently small so that $\delta C(n, \Lambda, M_0) < \epsilon$, we obtain
\[
 \frac{1}{\hat{\mu}(Q_{3/2})} \int_{Q^{+}_{3/2}} |\nabla u - \nabla v|^2 \mu(y) dX \leq \epsilon.
\]
This proves \eqref{u-v-approximation-bottom-sittingCy}. It remains to prove \eqref{v-L-infty-bottom-sittingCy}. From the last estimate, and the assumptions in the Proposition \ref{bottom-approximation-proposition-sittingCy}, it follows that 
\[
\begin{split}
 \frac{1}{\hat{\mu}(Q_{3/2})} \int_{Q^{+}_{3/2}} |\nabla v|^2 \mu(y) dX  & \leq \epsilon +  \frac{1}{\hat{\mu}(Q_{3/2})} \int_{Q^{+}_{3/2}} |\nabla u|^2 \mu(y) dX \\
 & \leq \epsilon + \frac{C(n, M_0) }{\hat{\mu}(Q_{7/4})} \int_{Q^{+}_{7/4}} |\nabla u|^2 \mu(y) dX  \leq \epsilon + C(n, M_0).
 \end{split}
\]
From this, the equation \eqref{v-Q-4-bottom-sittingCy}, and the fact that $v = 0$ on $T_{3/2} \times (0, 3/2)$, and $\mathbb{A} \in \mathcal{B}(Q_{2}^{+}, \Lambda, M_{0}, \mu)$, it follows from  the Definition \ref{class-B} that
\[
\norm{\nabla v}_{L^\infty(Q^{+}_{5/4})} \leq C(n, \Lambda, M_0) \left[ \frac{1}{\hat{\mu}(Q_{3/2})} \int_{Q^{+}_{3/2}} |\nabla v|^2 \mu(y) dX \right]^{1/2} \leq C(n, \Lambda, M_0).
\]
The proof of Proposition \ref{bottom-approximation-proposition-sittingCy} is the complete.
\end{proof}
\section{Local weighted $W^{1,p}$-regularity estimates up to the base of the cylinder domain } \label{density-est-sec}
In this section, we focus on the following equation to prove Theorem \ref{local-grad-estimate-interior}: 
\begin{equation}\label{interior-MAIN}
\left\{
\begin{array}{cccl}
\textup{div}[\mathbb{A}(X) \nabla u] & = & \textup{div}({\bf F}), & \quad \text{in} \quad  Q_{2},\\
\lim_{y\rightarrow 0^+ }\langle \mathbb{A}(x, y) \nabla u (x, y) - {\bf F}(x, y), e_{n + 1} \rangle & = & f(x), &\quad  x \ \in \ B_2,
\end{array}
\right.
\end{equation}
where $\A : Q_2 \rightarrow \R^{(n+1) \times (n+1)}$ is a symmetric and measurable matrix.  Several lemmas are needed for proving Theorem \ref{local-grad-estimate-interior}. Let us obtain some density estimates for the interior first. 
\begin{lemma} \label{cor-varpi}
Suppose that $M_{0}>0$ and $\mu \in A_{2}(\R)$ such that $[\mu]_{A_{2}} \leq M_{0}$. 
There exists a constant $\varpi_1(n, \Lambda, M_0)> 1$ such that for  any $\epsilon > 0 $, there exists a small constant $\delta_1(\epsilon, \Lambda, M_0, n)>0$ with the property that for  every $\delta \in (0, \delta_1], \varpi \geq \varpi_1$, and every $\A \in \mathcal{A}(Q_2, \Lambda, M_0, \mu)$ such that \eqref{Q-2-ellipticity-condition}-\eqref{PBMO} hold,  if $u\in W^{1, 2}(Q_2, \mu)$ is any  weak solution to 
\[
\textup{div}[\mathbb{A} \nabla u] = \textup{div}({\bf F})\quad \text{in} \quad Q_{2},
\]
and if $Z= (z, z_{n+1})\in Q_1$, $r>0$ so that $D_{2r}(Z) \subset Q_2$, 
\begin{equation}\label{max-f-small-forany-r-int-D}
D_{r}(Z)\cap \{X\in Q_1: \M_{\mu, Q_{2}}(|\nabla u|^{2}) \leq 1 \}\cap \{X\in Q_1: \M_{\mu, Q_2}\left(\left|\F/\mu\right|^{2}\right) \leq \delta^{2} \} \neq \emptyset, 
\end{equation}
then 
\[
\hmu\Big (\{X\in Q_1: \M_{\mu, Q_{2}}(|\nabla u|^{2})  > \varpi^{2}\}\cap D_{r}(Z) \Big ) < \epsilon \hmu\Big(D_{r}(Z)\Big). 
\]
\end{lemma}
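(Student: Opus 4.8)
The plan is to obtain this density estimate from the interior approximation result, Proposition \ref{interior-approximation-proposition}, combined with the weak $(1,1)$ bound for the weighted maximal operator (Lemma \ref{Hardy-Max-p-p}(ii)) and the doubling property of $\hmu$ (Lemma \ref{doubling}). A preliminary remark: the inclusion $D_{2r}(Z)\subset Q_2$ forces $2r<z_{n+1}$, so $D_{2r}(Z)$ stays away from the base $\{y=0\}$ and, restricted to this cylinder, $u$ is a weak solution of $\div[\A\nabla u]=\div\,\F$ in $D_{2r}(Z)$ in the sense of Definition \ref{weak-solution-D-R}; thus it is Proposition \ref{interior-approximation-proposition} (not the bottom/side/corner estimates) that applies here.

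\textbf{Normalization and approximation.} By hypothesis \eqref{max-f-small-forany-r-int-D} there is $X_1\in D_r(Z)\cap Q_1$ with $\M_{\mu,Q_2}(|\nabla u|^2)(X_1)\le 1$ and $\M_{\mu,Q_2}(|\F/\mu|^2)(X_1)\le \delta^2$. Since $D_{2r}(Z)\subset D_{4r}(X_1)\cap Q_2$ and $\hmu(D_{4r}(X_1))\le C(n,M_0)\hmu(D_{2r}(Z))$ by doubling, one gets
\[
\frac{1}{\hmu(D_{2r}(Z))}\int_{D_{2r}(Z)}|\nabla u|^2\,\mu\,dX\le N_0,\qquad \frac{1}{\hmu(D_{2r}(Z))}\int_{D_{2r}(Z)}|\F/\mu|^2\,\mu\,dX\le N_0\,\delta^2,
\]
with $N_0=N_0(n,M_0)\ge1$. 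Replacing $u,\F$ by $\bar u=u/\sqrt{N_0}$, $\bar\F=\F/\sqrt{N_0}$ (still solving $\div[\A\nabla\bar u]=\div\,\bar\F$), we may assume the averaged $L^2$ norm of $\nabla\bar u$ over $D_{2r}(Z)$ is $\le1$ and that of $\bar\F/\mu$ is $\le\delta^2$. Taking $X_0=Z\in Q_1$ and $\rho=3r/2<1$ (recall $r<1/2$), condition \eqref{PBMO} gives exactly the oscillation hypothesis of Proposition \ref{interior-approximation-proposition} on $D_{3r/2}(Z)$. Hence, given $\epsilon_1>0$ there is $\delta_1(\epsilon_1,\Lambda,M_0,n)$ so that for $\delta\le\delta_1$ the proposition yields $v\in W^{1,2}(D_{3r/2}(Z),\mu)$ with $\fint_{D_{3r/2}(Z)}|\nabla\bar u-\nabla v|^2\,d\hmu\le\epsilon_1^2$ and $\|\nabla v\|_{L^\infty(D_{5r/4}(Z))}\le C_1(n,\Lambda,M_0)$. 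Undoing the scaling, $w:=\sqrt{N_0}\,v$ obeys $\fint_{D_{3r/2}(Z)}|\nabla u-\nabla w|^2\,d\hmu\le N_0\epsilon_1^2$ and $\|\nabla w\|_{L^\infty(D_{5r/4}(Z))}\le N_1:=\sqrt{N_0}\,C_1=N_1(n,\Lambda,M_0)$.

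\textbf{Localizing the maximal function; choice of $\varpi_1$.} For $X\in D_r(Z)$ and $\rho\ge r/4$ one has $D_\rho(X)\subset D_{9\rho}(X_1)\subset D_{17\rho}(X)$, so by doubling and $\M_{\mu,Q_2}(|\nabla u|^2)(X_1)\le1$ we get $\fint_{D_\rho(X)}|\nabla u\,\chi_{Q_2}|^2\,d\hmu\le N_2(n,M_0)$. Set $\varpi_1^2:=\max\{2N_2,\,16N_1^2\}$, depending only on $n,\Lambda,M_0$. Then for any $\varpi\ge\varpi_1$ and any $X\in D_r(Z)$ with $\M_{\mu,Q_2}(|\nabla u|^2)(X)>\varpi^2$, the supremum must be realized by some radius $\rho<r/4$, whence $D_\rho(X)\subset D_{5r/4}(Z)\subset Q_2$ and $\M_\mu\big(|\nabla u|^2\chi_{D_{5r/4}(Z)}\big)(X)>\varpi^2$. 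Using $|\nabla u|^2\le 2|\nabla u-\nabla w|^2+2N_1^2$ on $D_{5r/4}(Z)$ together with $\varpi^2\ge 16N_1^2$, this forces $\M_\mu\big(|\nabla u-\nabla w|^2\chi_{D_{5r/4}(Z)}\big)(X)>\varpi^2/4$.

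\textbf{Weak-type bound and conclusion.} By Lemma \ref{Hardy-Max-p-p}(ii), the approximation bound, doubling, and $\varpi\ge\varpi_1>1$,
\[
\hmu\Big(\{X\in Q_1:\ \M_{\mu,Q_2}(|\nabla u|^2)>\varpi^2\}\cap D_r(Z)\Big)\le \frac{C(n,M_0)}{\varpi^2}\int_{D_{5r/4}(Z)}|\nabla u-\nabla w|^2\,d\hmu\le C(n,\Lambda,M_0)\,\epsilon_1^2\,\hmu(D_r(Z)).
\]
Choosing $\epsilon_1=\epsilon_1(\epsilon,n,\Lambda,M_0)$ small enough that $C(n,\Lambda,M_0)\,\epsilon_1^2<\epsilon$, and then $\delta_1$ as the constant supplied by Proposition \ref{interior-approximation-proposition} for this $\epsilon_1$, finishes the proof. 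I expect the main difficulty to be purely organizational: tracking the chain of normalizing constants $N_0,N_1,N_2$ and respecting the quantifier order ($\varpi_1$ is fixed before $\delta_1$, while $\delta_1$ depends on $\epsilon$ through $\epsilon_1$), along with the routine but repeated cylinder-inclusion and doubling comparisons used to pass between $\hmu$-measures of comparable $D$-cylinders; the analytic input (approximation plus weak $(1,1)$) is standard once this bookkeeping is in place.
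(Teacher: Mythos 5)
Your proof is correct and follows essentially the same route as the paper's: pull the averaged $L^2$ bounds for $\nabla u$ and $\F/\mu$ over $D_{2r}(Z)$ from the chosen good point via doubling, invoke Proposition \ref{interior-approximation-proposition} to produce a bounded comparison vector field, use the small-scale/large-scale case split on the maximal radius to localize to $D_{5r/4}(Z)$, and finish with the weak $(1,1)$ bound and doubling. The only cosmetic difference is that you make the normalization of $u$ and $\F$ by $\sqrt{N_0}$ explicit (and phrase the case split directly rather than by contrapositive), whereas the paper carries the constant $M_0(3/2)^{n+2}$ implicitly; the constants and quantifier order come out the same either way.
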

\begin{proof} For given $\epsilon > 0$, let $\eta > 0$ to be determined, depending only on $\epsilon, \Lambda, M_0, n$.  Let $\delta_1 = \delta(\eta, n, M_0, \Lambda) > 0$ be defined as in Proposition \ref{interior-approximation-proposition}. By the assumption \eqref{max-f-small-forany-r-int-D}, we can find $X_0 =(x_0, y_0) \in D_{r}(Z)\cap Q_1$ such that 
\[
\M_{\mu, Q_{2}}(|\nabla u|^{2})(X_0) \leq 1\quad \text{and}\quad \mathcal{M}_{\mu, Q_2}\left(\left|\F/\mu \right|^{2}\chi_{Q_{2}}\right)(X_0) \leq \delta^{2}. 
\]
 From these inequalities, it follows that for any $\tau > 0$, 
\begin{equation} \label{X-zero-D-tau}
\frac{1}{\hmu(D_{\tau}(X_0))}\int_{D_{\tau}(X_0) \cap Q_2}  |\nabla u|^{2} \mu (y) dX \leq 1 ,\quad \text{and\, }\frac{1}{\hmu(D_{\tau}(X_0))}\int_{D_{\tau}(X_0) \cap Q_2}  \left|\F(X)/\mu(y)\right|^{2} \mu (y) dX \leq \delta^{2}.
\end{equation}
Note that $D_{2r}(Z)\subset D_{3r}(X_{0})$ and it follows from Lemma \ref{doubling} that
\[
\frac{\hmu(D_{3r}(X_0))}{\hmu(D_{2r}(Z))} = \frac{|B_{3r}(x_0)| \mu(-3r, 3r) }{|B_{2r}(z)|\mu(-2r,  2r ) } \leq M_{0} \left(\frac{3}{2}\right)^{n + 2}. 
\]
Moreover, since $D_{2r}(Z)\subset D_{3r}(X_{0}) \cap Q_2$, it then follows from \eqref{X-zero-D-tau} that 
\[
\frac{1}{\hmu(D_{2r}(Z))}\int_{D_{2r}(Z)} |\nabla u|^{2} \mu (y) dX \leq  \frac{\hmu(D_{3r}(X_0))}{\hmu(D_{2r}(Z))}  \frac{1}{\hmu(D_{3r}(X_0))}\int_{D_{3r}(X_0) \cap Q_2}  |\nabla u|^{2} \mu (y) dX \leq M_{0} \left(\frac{3}{2}\right)^{n + 2}. 
\]
Similarly, we have the inequality 
\[
\begin{split}
\frac{1}{\hmu(D_{2r}(Z))}\int_{D_{2r}(Z)} \left|\frac{{\bf F}(X)}{\mu(y)}\right|^{2} \mu (y) dX & \leq  \frac{\hmu(D_{3r}(X_0))}{\hmu(D_{2r}(Z))}  \frac{1}{\hmu(D_{3r}(X_0))}\int_{D_{3r}(X_0)  \cap Q_2} \left|\frac{{\bf F}(X)}{\mu(y)}\right|^{2} \mu (y) dX \\
&\leq \delta^{2}M_{0} \left(\frac{3}{2}\right)^{n + 2}. 
\end{split}
\]
From \eqref{PBMO}, and the easy assertion $D_{3r/2}(Z) \subset Q_2$, it follows
\[
\frac{1}{\hmu(D_{3r/2}(Z) )}\int_{D_{3r/2}(Z) }|\A(X) - \langle \A\rangle_{B_{3r/2}(z)}(y)|^{2}\mu^{-1}(y) dX  \leq \delta^{2}. 
\]
From these estimates, our assumption that $\A \in \mathcal{A}(Q_2, \Lambda, M_0, \mu)$, and Proposition \ref{interior-approximation-proposition}, there exists a $v \in W^{1,2}(D_{3r/2}(Z), \mu)$ such that  
\[ \|\nabla v\|_{L^{\infty}(D_{5r/4}(Z)} \leq C_{0} = C(n,M_{0}, \Lambda),
\]
and 
\begin{equation} \label{conclusion-lemma-int-1}
\frac{1}{\hmu(D_{3r/2}(Z) )}\int_{D_{3r/2}(Z) } |\nabla u - \nabla v|^{2} \mu (y) dX  < \eta^{2} M_{0}\left(\frac{3}{2}\right)^{n+2}.
\end{equation}
\noindent
Now, let $\varpi_1 =[ \max\{M_{0} 9^{n + 2}, 4C^{2}_{0} \}]^{1/2}$. Then with $\varpi \geq \varpi_1$, we  claim that  
\[
\Big \{
X\in Q_{1}: \M_{\mu, Q_{2}}(|\nabla u|^{2}) > \varpi^{2}
\Big  \}\cap D_{r} (Z) \subset\Big  \{X\in Q_1:  \M_{\mu, D_{3r/2}(Z)}(|\nabla u -\nabla v|^{2})>C_{0}^{2} \Big \}\cap D_{r}(Z). \]
To prove the claim, let $H\in D_{r}(Z)\cap Q_1$, such that
\[ 
\mathcal{M}_{\mu,  D_{3r/2}(Z)} (|\nabla u -\nabla v|^{2})(H)\leq C_{0}^{2}.
\]
For $\rho < r/4$, we see that $D_{\rho}(H) \subset D_{5r/4}(Z) \subset D_{3r/2}(Z) \subset Q_2$, and therefore
\[
\begin{split}
& \frac{1}{\hmu({D_{\rho}(H)})}\int_{D_{\rho}(H) \cap Q_2} |\nabla u |^{2} \mu(y) dX \\
&\leq \frac{2}{\hmu({D_{\rho}(H)})} \left\{\int_{D_{\rho}(H)} |\nabla u - \nabla v|^{2} \mu(y) dX + \int_{D_{\rho}(H)} |\nabla v|^{2} \mu(y) dX\right\}\\
&\leq 2 C_{0}^{2} +  2 \|\nabla v\|_{L^{\infty}(D_{5r/4}(Z))}^{2}  \leq  4 C_{0}^{2} \leq \varpi^2.
\end{split}
\]
If $\rho \geq r/4$, we use the fact that $D_{\rho}(H) \subset D_{9\rho}(X_{0})$ to estimate 
\[
\frac{1}{\hmu({D_{\rho}(H)})}\int_{D_{\rho}(H)\cap Q_2}  |\nabla u |^{2} \mu(y) dX   \leq \frac{\hmu({D_{9\rho}(X_0)})}{\hmu({D_{\rho}(H)})} \frac{1}{\hmu({Q_{9\rho}(X_0)})}\int_{D_{9\rho}(X_0) \cap Q_2} |\nabla u|^{2} \mu(y) dX \leq M_{0} 9^{n + 2}.
\]
Combining the above estimates we obtain 
\[
\frac{1}{\hmu({D_{\rho}(H)})}\int_{D_{\rho}(H)\cap Q_2} |\nabla u|^{2} \mu(y) dX \leq \varpi^2, \quad \forall \ \rho >0.
\]
Therefore, $\M_{\mu, Q_2} (|\nabla u_{\kappa}|^{2}) (H) \leq \varpi^{2}$ as desired. From the claim, it follows that
\[
\begin{split}
&\hmu \Big( \{ X\in Q_{1}: \M_{\mu, Q_{2}}(|\nabla u|^{2}) > \varpi^{2} \} \cap D_{r} (Z) \Big) \\
& \leq \hmu \Big (\big \{X\in Q_{1}: \M_{\mu, D_{3r/2}(Z)}(|\nabla u -\nabla v|^{2})>C_{0}^{2}\big \} \cap D_{r} (Z) \Big)\\
& \leq \frac{C(n, M_{0})}{C_{0}^{2}}\ \hmu(D_{3r/2}(Z)) \frac{1}{\hmu(D_{3r/2}(Z))}\int_{D_{3r/2}(Z)} |\nabla u - \nabla v|^{2} \mu(y)dX \\
&\leq C(M_0, n)\eta^{2}\ \hmu(D_{r}(Z)),
\end{split}
\]
where $C(n, M_{0})$ comes from the weak $1-1$ estimate, Lemma \ref{Hardy-Max-p-p}, and we have used \eqref{conclusion-lemma-int-1}. 
From the last estimate, we observe that if we choose $\eta > 0$ sufficiently small such that $ C\eta^{2} < \epsilon$, Lemma \ref{cor-varpi} follows. 
\end{proof}
\begin{lemma} \label{cor-bdry-varpi}
Suppose that $M_{0}>0$ and $\mu \in A_{2}$ such that $[\mu]_{A_{2}} \leq M_{0}$. 
There exists a constant $\varpi_2(n, \Lambda, M_0)> 1$ such that for  any $\epsilon > 0 $, there exists a small constant $\delta_2(\epsilon, \Lambda, M_0, n)>0$  with the property that for every $\delta \in (0, \delta_2]$,  $\varpi \geq \varpi_2$, and for $\mathbb{A} \in \mathcal{A}(Q_2, \Lambda, M_0, \mu)$ such that \eqref{Q-2-ellipticity-condition}-\eqref{PBMO} hold, if $u\in W^{1, 2}(Q_2, \mu)$ is any  weak solution to 
\[
\left\{\begin{array}{cccl}
\textup{div}[\mathbb{A} \nabla u] & = &\textup{div}({\bf F}), & \quad \text{in} \quad Q_{2},\\
\lim_{y\to 0^{+}}\langle \mathbb{A}(x, y) \nabla u (x, y) - \F, e_{n + 1} \rangle & = & f(x),& \quad x\in B_2
\end{array}
\right.
\]
and if $Z_0= (z, 0)\in B_{1}\times\{0\}$ and some $r>0$ such that $Q_{2r}(Z_0)\subset Q_2$, and 
\begin{equation}\label{max-f-small-forany-r-bdry-1}
D_{r}(Z_0)\cap \{X\in Q_1: \mathcal{M}_{\mu, Q_{2}} (|\nabla u|^{2}) \leq 1 \}\cap \{X\in Q_1: \mathcal{M}_{\mu, Q_2}\left(\left|\F/\mu\right|^{2} \right) +\mathcal{M}_{\mu,Q_2}\left(\left|f/\mu\right|^{2}\right)\leq \delta^{2} \} \neq \emptyset, 
\end{equation}
then 
\[
\hmu(\{X\in Q_1: \mathcal{M}_{\mu, Q_{2}}(|\nabla u|^{2})  > \varpi^{2}\}\cap D_{r}(Z_0) ) < \epsilon \hmu(D_{r}(Z_0)). 
\]
\end{lemma}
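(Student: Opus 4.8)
The plan is to mimic the proof of Lemma~\ref{cor-varpi} essentially line by line, making only two adjustments: since $Z_0=(z,0)$ lies on the base $B_1\times\{0\}$ of $Q_2$, the comparison function must now be produced by the approximation estimate up to the base, Proposition~\ref{bottom-approximation-proposition}, rather than by the interior estimate Proposition~\ref{interior-approximation-proposition}; and the extra Neumann datum $f/\mu$ must be carried through every averaged bound. Throughout I will use that near $Z_0$ the base-centered half cylinders satisfy $Q_\rho(Z_0)=D_\rho(Z_0)\cap Q_2$ for $0<\rho\le 3r/2$, together with the nesting $Q_{5r/4}(Z_0)\subset Q_{3r/2}(Z_0)\subset Q_{2r}(Z_0)\subset Q_2$.

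Given $\epsilon>0$, I would fix $\eta>0$ (to be chosen at the end, depending only on $\epsilon,\Lambda,M_0,n$), set $\delta_2=\delta(\eta,\Lambda,M_0,n)$ with $\delta(\cdot)$ the constant of Proposition~\ref{bottom-approximation-proposition}, and let $\delta\in(0,\delta_2]$. By the hypothesis \eqref{max-f-small-forany-r-bdry-1} there is $X_0=(x_0,y_0)\in D_r(Z_0)\cap Q_1$ with
\[
\mathcal{M}_{\mu,Q_2}(|\nabla u|^2)(X_0)\le 1,\qquad
\mathcal{M}_{\mu,Q_2}\big(|\F/\mu|^2\big)(X_0)+\mathcal{M}_{\mu,Q_2}\big(|f/\mu|^2\big)(X_0)\le\delta^2.
\]
Evaluating these maximal functions on $D_{3r}(X_0)$ and using $Q_{2r}(Z_0)\subset D_{3r}(X_0)\cap Q_2$ together with the doubling property, Lemma~\ref{doubling}, one obtains a constant $C_1=C_1(n,M_0)\ge1$ with
\[
\frac{1}{\hmu(Q_{2r}(Z_0))}\int_{Q_{2r}(Z_0)}|\nabla u|^2\mu\,dX\le C_1,\qquad
\frac{1}{\hmu(Q_{2r}(Z_0))}\int_{Q_{2r}(Z_0)}\Big(\big|\F/\mu\big|^2+\big|f/\mu\big|^2\Big)\mu\,dX\le C_1\delta^2,
\]
while the mean oscillation smallness \eqref{PBMO} (applied on $Q_{3r/2}(Z_0)=D_{3r/2}(Z_0)\cap Q_2$, by a limiting argument in the center variable) bounds the oscillation of $\A$ around $\langle\A\rangle_{B_{3r/2}(z)}(\cdot)$ on $Q_{3r/2}(Z_0)$ by $\delta^2$. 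Replacing $(u,\F,f)$ by $(u,\F,f)/\sqrt{C_1}$ — which normalizes $\fint_{Q_{2r}(Z_0)}|\nabla u|^2\mu\le1$ and, since $C_1\ge1$, turns the $\F,f$ bounds into bounds by $\delta^2$ — I apply Proposition~\ref{bottom-approximation-proposition} on $Q_{2r}(Z_0)$ (the notion of weak solution being that of Definition~\ref{grounding-weak-solution-Q-R}) to obtain $v\in W^{1,2}(Q_{3r/2}(Z_0),\mu)$ with $\|\nabla v\|_{L^\infty(Q_{5r/4}(Z_0))}\le C_0=C_0(n,\Lambda,M_0)$ and $\frac1{\hmu(Q_{3r/2}(Z_0))}\int_{Q_{3r/2}(Z_0)}|\nabla u-\nabla v|^2\mu\,dX\le C_1\eta^2$.

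I would then set $\varpi_2=\big[\max\{M_0\,9^{n+2},\,4C_0^2\}\big]^{1/2}$ and, for $\varpi\ge\varpi_2$, establish the inclusion
\[
\{X\in Q_1:\mathcal{M}_{\mu,Q_2}(|\nabla u|^2)>\varpi^2\}\cap D_r(Z_0)\ \subset\ \{X\in Q_1:\mathcal{M}_{\mu,Q_{3r/2}(Z_0)}(|\nabla u-\nabla v|^2)>C_0^2\}\cap D_r(Z_0).
\]
Indeed, for $H\in D_r(Z_0)\cap Q_1$ with $\mathcal{M}_{\mu,Q_{3r/2}(Z_0)}(|\nabla u-\nabla v|^2)(H)\le C_0^2$ one checks that for $\rho<r/4$ one has $D_\rho(H)\cap Q_2\subset Q_{5r/4}(Z_0)$, so splitting $\nabla u=(\nabla u-\nabla v)+\nabla v$ and using the $L^\infty$ bound on $\nabla v$ shows the average of $|\nabla u|^2$ over $D_\rho(H)\cap Q_2$ is at most $2C_0^2+2\|\nabla v\|^2_{L^\infty}\le4C_0^2\le\varpi^2$; while for $\rho\ge r/4$ one has $D_\rho(H)\subset D_{9\rho}(X_0)$, and $\mathcal{M}_{\mu,Q_2}(|\nabla u|^2)(X_0)\le1$ together with Lemma~\ref{doubling} bounds the same average by $M_0\,9^{n+2}\le\varpi^2$; hence $\mathcal{M}_{\mu,Q_2}(|\nabla u|^2)(H)\le\varpi^2$. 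Applying the weak $(1,1)$ bound of Lemma~\ref{Hardy-Max-p-p} to $|\nabla u-\nabla v|^2$ on $Q_{3r/2}(Z_0)$, and then $\hmu(Q_{3r/2}(Z_0))\le C(n,M_0)\hmu(D_r(Z_0))$, yields
\[
\hmu\big(\{\mathcal{M}_{\mu,Q_2}(|\nabla u|^2)>\varpi^2\}\cap D_r(Z_0)\big)\le\frac{C(n,M_0)}{C_0^2}\int_{Q_{3r/2}(Z_0)}|\nabla u-\nabla v|^2\mu\,dX\le C(n,\Lambda,M_0)\,\eta^2\,\hmu(D_r(Z_0)),
\]
and choosing $\eta$ so small that $C(n,\Lambda,M_0)\eta^2<\epsilon$ (which fixes $\delta_2$) finishes the proof.

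The only point that is genuinely new relative to the interior Lemma~\ref{cor-varpi} is the geometric bookkeeping at the base: one must carefully verify the nesting of the half cylinders $Q_\rho(Z_0)$ inside $Q_2$, the identity $D_\rho(Z_0)\cap Q_2=Q_\rho(Z_0)$ for the radii in play, and — most delicately — that for small $\rho$ the set $D_\rho(H)\cap Q_2$ lies in $Q_{3r/2}(Z_0)$, so that the restricted maximal function $\mathcal{M}_{\mu,Q_{3r/2}(Z_0)}(|\nabla u-\nabla v|^2)$, which is the only one available since $v$ is defined only on the half cylinder, actually controls the local averages of $|\nabla u-\nabla v|^2$. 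Once these inclusions are recorded, every estimate proceeds exactly as in the interior case; there is no genuinely hard analytic step here, the substance having been absorbed into Proposition~\ref{bottom-approximation-proposition}.
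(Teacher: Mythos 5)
Your proposal is correct and mirrors the paper's own proof step by step: locate a point $X_0$ satisfying \eqref{max-f-small-forany-r-bdry-1}, transfer the pointwise maximal-function bounds into averaged bounds on $Q_{2r}(Z_0)$ via Lemma \ref{doubling}, rescale the data so that Proposition \ref{bottom-approximation-proposition} produces a comparison function $v$ with $\|\nabla v\|_{L^\infty(Q_{5r/4}(Z_0))}\le C_0$, prove the set inclusion by the $\rho<r/4$ versus $\rho\ge r/4$ dichotomy, and close with the weak $(1,1)$ bound. The only bookkeeping slip is your claim that \eqref{PBMO} yields an oscillation bound of $\delta^2$ on $Q_{3r/2}(Z_0)$: since \eqref{PBMO} normalizes by $\hmu(D_{3r/2}(Z_0))$ rather than $\hmu(Q_{3r/2}(Z_0))$, the conversion costs a doubling factor $\le 4M_0$ (the paper records $4M_0\delta^2$), which should be absorbed in the choice of $\delta_2$; this is a purely cosmetic constant, so there is no genuine gap.
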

\begin{proof} For given $\epsilon > 0$, let $\eta > 0$ to be determined depending only on $\epsilon, \Lambda, M_0, n$.  Choose $\delta_2 = \delta(\eta, n, M_0, \Lambda)$ defined in Proposition \ref{bottom-approximation-proposition}. Now, for $\delta \in (0, \delta_2]$, by using this $\delta$ in the assumption \eqref{max-f-small-forany-r-bdry-1}, we can find $X_0 =(x_0, y_0) \in D_{r}(Z_0)\cap Q_1 = Q_{r}(Z_0)\cap Q_1$ such that 
\[
\mathcal{M}_{\mu, Q_{2}}(|\nabla u|^{2})(X_0) \leq 1\quad \text{and}\quad \mathcal{M}_{\mu, Q_2}\left(\left|\F/\mu\right|^{2} \right)(X_0) +\mathcal{M}_{\mu, Q_2}\left(\left|f/\mu\right| \right)\leq \delta^{2}. 
\]
From these inequalities it follows that  
\begin{equation} \label{max-Z-zero-bottom}
\begin{split} 
& \frac{1}{\hmu(D_{\tau}(X_0))}\int_{D_{\tau}(X_0)\cap Q_2} |\nabla u|^{2} \mu (y) dX \leq 1, \quad \text{and}
\\
& \frac{1}{\hmu(D_{\tau}(X_0))}\int_{D_{\tau}(X_0) \cap Q_2} \left(|\F(X)/\mu(y)|^{2} + |f(x)/\mu(y)|^{2}\right) \mu (y) dX \leq \delta^{2}, \quad \forall \ \tau >0.
\end{split}
\end{equation}
We notice that  $Q_{2r}(Z_0) = D_{2r}(Z_0) \cap Q_2  \subset D_{3r}(X_0) $, and therefore it follows from Lemma \ref{doubling} that
\[
\frac{\hmu(D_{3r}(X_0))}{\hmu(Q_{2r}(Z_0))} = 
 \frac{|B_{3r}(x_0)| \mu(-3r, 3r) }{|B_{2r}(z)|\mu(0,  2r )}  \leq 4M_{0} \left(\frac{3}{2}\right)^{n + 2}. 
\]
Moreover, since $Q_{2r}(Z_0) = D_{2r}(Z_0) \cap Q_2 \subset D_{3r}(X_0) \cap Q_2,  
$, it then follows that 
\[
\frac{1}{\hmu(Q_{2r}(Z_0))}\int_{Q_{2r}(Z_0) } |\nabla u|^{2} \mu (y) dX \leq  \frac{\hmu(D_{3r}(X_0))}{\hmu(Q_{2r}(Z_0))} \frac{1}{\hmu(D_{3r}(X_0))}\int_{D_{3r}(X_0) \cap Q_2}  |\nabla u|^{2} \mu (y) dX \leq 4M_{0} \left(\frac{3}{2}\right)^{n + 2}. 
\]
Similarly,
\[
\begin{split}
& \frac{1}{\hmu(D_{2r}(Z_0))}\int_{D_{2r}(Z_0)\cap Q_2} \left(\left|\frac{{\bf F}(X)}{\mu(y)}\right|^{2} +  \left(\frac{|f|}{\mu(y)}\right)^{2} \right) \mu (y) dX \\ 
&\leq  \frac{\hmu(D_{3r}(X_0))}{\hmu(D_{2r}(Z_0))} \frac{1}{\hmu(D_{3r}(X_0))}\int_{D_{3r}(X_0)\cap Q_2} \left(\left|\frac{{\bf F}(X)}{\mu(y)}\right|^{2} +  \left(\frac{|f|}{\mu(y)}\right)^{2}  \right)\mu (y) dX\\
& \leq 4 \delta^{2}M_{0} \left(\frac{3}{2}\right)^{n + 2}. 
\end{split}
\]
Moreover, from \eqref{PBMO} and Definition \ref{class-A}, we notice that 
\[
\frac{1}{\hmu(Q_{3r/2}(Z_0) )}\int_{Q_{3r/2}(Z_0)}|\A(X) - \langle \A\rangle_{B_{3r/2}(z)}(y)|^{2}\mu^{-1}(y) dX  \leq 4M_0\delta^{2}. 
\]
Therefore,  all of the assumptions of Proposition \ref{bottom-approximation-proposition} are satisfied with $u$ replace by $u/[4M_0 (3/2)^{n+2}]^{1/2}$ and ${\F}$ replaced by $\F/[4M_0 (3/2)^{n+2}]^{1/2}$.  As a consequence there exists a $v \in W^{1,2}(Q_{3r/2}(Z_0), \mu)$ such that  
\begin{equation} \label{conclusion-lemma-int}
\left\{
\begin{split}
& \frac{1}{\hmu(Q_{3r/2}(Z_0) )}\int_{Q_{3r/2}(Z_0) } |\nabla u- \nabla v|^{2} \mu (y) dX  < \eta^{2} [4M_0 (3/2)^{n+2}], \quad \text{and} \\
&\|\nabla v\|_{L^{\infty}(Q_{5r/4}(Z_0))} \leq C_{0} := C(n,M_{0}, \Lambda).
\end{split} \right.
\end{equation}
Now, chose $\varpi_2 = [\max\{M_{0} 9^{n + 2}, 4C^{2}_{0} \}]^{1/2}$. Then, with $\varpi \geq \varpi_2$, we claim that  
\[
\{
X\in Q_{1}: \mathcal{M}_{\mu, Q_{2}}(|\nabla u|^{2}) > \varpi^{2}
\}\cap D_{r} (Z_0) \subset \{X\in Q_1:  \mathcal{M}_{\mu, Q_{3r/2}(Z_0)}(|\nabla u -\nabla v|^{2})>C_{0}^{2}\}\cap D_{r}(Z_0). \]
To prove the claim, let $H\in D_{r}(Z_0)\cap Q_1$, such that
\[ 
\mathcal{M}_{\mu, Q_{3r/2}(Z_0)}(|\nabla u -\nabla v|^{2})(H)\leq C_{0}^{2}.
\]
For $\rho < r/4$, we see that $D_{\rho}(H) \subset D_{5r/4}(Z_0) \subset D_{3r/2}(Z_0)$, and therefore 
\[
\begin{split}
& \frac{1}{\hmu({D_{\rho}(H)})}\int_{D_{\rho}(H)} \chi_{Q_2}(X)|\nabla u |^{2} \mu(y) dX \\ &\leq \frac{2}{\hmu({D_{\rho}(H)})} \left\{\int_{D_{\rho}(H)} \chi_{Q_{3r/2}(Z_0)}|\nabla u - \nabla v|^{2} \mu(y) dX + \int_{D_{\rho}(H)} \chi_{Q_{3r/2}(Z_0)}|\nabla v|^{2} \mu(y) dX\right\}\\
&\leq 2 C_{0}^{2} +  2 \|\nabla v\|_{L^{\infty}(Q_{5r/4}(Z))}^{2}  \leq  4 C_{0}^{2}.
\end{split}
\]
If $\rho \geq r/4$, we use the fact that $D_{\rho}(H) \subset D_{9\rho}(X_{0})$  and \eqref{max-Z-zero-bottom} to estimate 
\[
\frac{1}{\hmu({D_{\rho}(H)})}\int_{D_{\rho}(H)} \chi_{Q_{2}} |\nabla u |^{2} \mu(y) dX   \leq \frac{\hmu({D_{9\rho}(X_0)})}{\hmu({D_{\rho}(H)})} \frac{1}{\hmu({D_{9\rho}(X_0)})}\int_{D_{9\rho}(X_0)}\chi_{Q_{2}} |\nabla u |^{2} \mu(y) dX \leq M_{0} 9^{n + 2}.
\]
Combining the above estimates we obtain that for any $\rho > 0$
\[
\frac{1}{\hmu({D_{\rho}(H)})}\int_{D_{\rho}(H)} \chi_{Q_{2}} |\nabla u_{\kappa}|^{2} \mu(y) dX \leq \varpi^2.
\]
That is, $\mathcal{M}_{\mu, Q_2}(|\nabla u_{\kappa}|^{2}) (H) \leq \varpi^{2}$ as desired, and the claim follows. From this claim, the  the weak $(1,1)$ estimate of the Hardy-Littlewood maximal operator, i.e. Lemma \ref{Hardy-Max-p-p}, and \eqref{conclusion-lemma-int},  we infer that 
\[
\begin{split}
& \hmu \Big ( \{ X\in Q_{1}: \mathcal{M}_{\mu,  Q_{2}} (|\nabla u|^{2}) > \varpi^{2} \} \cap D_{r} (Z)\Big) \\
 & \leq \hmu \Big (\{X\in Q_{1}: \mathcal{M}_{\mu,  Q_{3r/2}(Z)}(|\nabla u_{\kappa} -\nabla v|^{2})>C_{0}^{2}\} \cap D_{r} (Z) \Big)\\
& \leq \frac{C(n, M_{0})}{C_{0}^{2}}\ \hmu(Q_{3r/2}(Z)) \frac{1}{\hmu(Q_{3r/2}(Z))}\int_{Q_{3r/2}(Z)} |\nabla u_{\kappa} - \nabla v|^{2} \mu(y)dX \\
&
\leq C \eta^{2}\ \hmu(D_{r}(Z)),
\end{split}
\]
where $C(n, M_{0})$ is some universal constant. From the last estimate, we observe that if we choose $\eta > 0$ sufficiently small such that $ C \eta^{2} < \epsilon$, Lemma \ref{cor-bdry-varpi} follows. 
\end{proof}

\begin{proposition}\label{contra-interior}
Suppose that $M_{0}>0$ and $\mu \in A_{2}$ such that $[\mu]_{A_{2}} \leq M_{0}$. 
There exists a constant $\varpi = \varpi(n, \Lambda, M_0)> 1$ such that for  any $\epsilon > 0 $, there exists a small constant $\delta= \delta(\epsilon, \Lambda, M_0, n)>0$ satisfying that the following holds: For a matrix $\A \in \mathcal{A}(Q_2, \Lambda, M_0, \mu)$ such that \eqref{Q-2-ellipticity-condition}-\eqref{PBMO} hold, if  $u\in W^{1, 2}(Q_2, \mu)$ is any  weak solution of \eqref{interior-MAIN}, and  for any $Z = (z, z_{n+1}) \in \overline{Q}_1$ and $r\in (0, 1/6)$ such that 
\begin{equation}\label{upper-level-estimate}
\mu( \{x\in Q_1: \mathcal{M}_{\mu,  Q_{2}}(|\nabla u|^{2})  > \varpi^{2}\}\cap D_{r}(Z)) \geq \epsilon \mu(D_{r}(Z)), 
\end{equation}
then 
\begin{equation}\label{max-f-small-contra}
D_{r}(Z)\cap Q_1\subset  \{x\in Q_{1}: \mathcal{M}_{\mu,  Q_{2}} |\nabla u|^{2}) > 1 \}\cup \{x\in Q_1: \mathcal{M}_{\mu, Q_2}\left(\left|\F/\mu\right|^{2} \right) + \mathcal{M}_{\mu, Q_2}\left(\left|f/\mu \right|^{2} \right) >\delta^{2} \}. 
\end{equation}
\end{proposition}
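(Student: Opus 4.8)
The plan is to argue by contraposition: assuming \eqref{max-f-small-contra} fails, I will deduce that \eqref{upper-level-estimate} fails as well. Negating \eqref{max-f-small-contra} produces a point $X_1 = (x_1, y_1) \in D_r(Z) \cap Q_1$ at which $\mathcal{M}_{\mu, Q_2}(|\nabla u|^2) \le 1$ and $\mathcal{M}_{\mu, Q_2}(|\F/\mu|^2) + \mathcal{M}_{\mu, Q_2}(|f/\mu|^2) \le \delta^2$. I would take $\varpi := \max\{\varpi_1, \varpi_2\}$ for the constants $\varpi_1,\varpi_2$ of Lemma~\ref{cor-varpi} and Lemma~\ref{cor-bdry-varpi}, and fix $\delta$ as the minimum of the $\delta_1,\delta_2$ furnished by those two lemmas, the second one being applied with a slightly smaller parameter $\epsilon'$ in order to absorb a doubling constant, as explained below.

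I would then split into two geometric cases according to the distance of $Z = (z, z_{n+1})$ from the degeneracy hyperplane $\{y=0\}$. Since $|z| \le 1$, $z_{n+1} \in [0,1]$ and $r < 1/6$, the ball $B_{2r}(z)$ always lies in $B_2$ and $z_{n+1} + 2r < 2$ always holds, so $D_{2r}(Z) \subset Q_2$ exactly when $z_{n+1} \ge 2r$. In the first case ($z_{n+1} \ge 2r$), the point $X_1$ witnesses the nonemptiness hypothesis \eqref{max-f-small-forany-r-int-D} of Lemma~\ref{cor-varpi} (only the bound on $\mathcal{M}_{\mu,Q_2}(|\F/\mu|^2)$ at $X_1$ is needed there), and Lemma~\ref{cor-varpi} — whose proof works verbatim for $Z \in \overline{Q}_1$ — gives directly $\hmu(\{\mathcal{M}_{\mu,Q_2}(|\nabla u|^2) > \varpi^2\} \cap D_r(Z)) < \epsilon\, \hmu(D_r(Z))$, i.e. the negation of \eqref{upper-level-estimate}.

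In the second case ($z_{n+1} < 2r$) I would replace $Z$ by its vertical projection $Z_0 = (z,0) \in \overline{B}_1 \times \{0\}$ onto the bottom. Because $0 \le z_{n+1} < 2r$, one checks the inclusion $D_r(Z) \subset D_{3r}(Z_0)$, while $Q_{6r}(Z_0) \subset Q_2$ since $6r < 1$; hence $X_1 \in D_{3r}(Z_0)$ verifies the nonemptiness hypothesis \eqref{max-f-small-forany-r-bdry-1} of Lemma~\ref{cor-bdry-varpi} with center $Z_0$ and radius $3r$. Applying that lemma and then using the one–dimensional $A_2$-doubling of $\mu$ (Lemma~\ref{doubling}(i) applied to $(z_{n+1}-r, z_{n+1}+r) \subset (-3r,3r)$, which yields $\hmu(D_{3r}(Z_0)) = 3^n|B_r|\,\mu((-3r,3r)) \le C(M_0,n)\,\hmu(D_r(Z))$), together with $D_r(Z)\subset D_{3r}(Z_0)$ and the choice $\epsilon' = \epsilon/C(M_0,n)$, I obtain
\[
\hmu\big(\{\mathcal{M}_{\mu,Q_2}(|\nabla u|^2) > \varpi^2\} \cap D_r(Z)\big) \le \epsilon'\, \hmu\big(D_{3r}(Z_0)\big) \le \epsilon\, \hmu\big(D_r(Z)\big),
\]
again contradicting \eqref{upper-level-estimate}.

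All the analytic work has already been packaged into Lemma~\ref{cor-varpi} and Lemma~\ref{cor-bdry-varpi} (which in turn rest on the approximation Propositions~\ref{bottom-approximation-proposition} and \ref{interior-approximation-proposition} and on the weak $(1,1)$ bound for the weighted maximal operator in Lemma~\ref{Hardy-Max-p-p}), so I expect the only real obstacle to be the bookkeeping: choosing $\varpi$ and $\delta$ consistently with the two lemmas, keeping the case distinction $z_{n+1}\ge 2r$ versus $z_{n+1}<2r$ straight, verifying the cylinder inclusions $D_r(Z) \subset D_{3r}(Z_0)$ and $Q_{6r}(Z_0)\subset Q_2$, and tracking the doubling constant lost when transferring the estimate from $D_{3r}(Z_0)$ back to $D_r(Z)$ — all of which depend on the hypothesis $r < 1/6$.
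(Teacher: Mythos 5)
Your proposal is correct and follows essentially the same route as the paper: the same choice $\varpi = \max\{\varpi_1,\varpi_2\}$, the same dichotomy between $D_{2r}(Z)\subset Q_2$ (equivalently $z_{n+1}\ge 2r$, handled by Lemma~\ref{cor-varpi}) and the case where $D_{2r}(Z)$ meets the base (handled by projecting to $Z_0=(z,0)$, applying Lemma~\ref{cor-bdry-varpi} at radius $3r$, and absorbing a doubling constant by shrinking the $\epsilon$ fed into $\delta_2$). The paper phrases the second case as a contradiction while you phrase the whole argument as contraposition, but the content is identical.
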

\begin{proof} Let $\varpi = \max\{ \varpi_1, \varpi_2\}$ and
\[ \delta = \min\Big \{\delta_1(\epsilon, \Lambda, M_0, n), \delta_2(\epsilon/(M_{0}3^{n+2}), \Lambda, M_0, n)\Big \},
\]
where $\varpi_1, \delta_1$ are defined in Lemma \ref{cor-varpi}, and $\varpi_2, \delta_2$ are  defined in Lemma \ref{cor-bdry-varpi}.  We claim that Proposition \ref{contra-interior} holds with these choices. Indeed, if $D_{2r}(Z)\subset Q_2$, then \eqref{max-f-small-contra} follows from Lemma \ref{cor-varpi} and our choice of $\delta$.  In case, $D_{2r}(Z)\cap B_2\times\{0\}  \neq \emptyset$, we assume by contradiction  that there is a point $X_{0} \in D_{r}(Z)\cap Q_1$  such that  
\begin{equation} \label{X-zero-max-cond-interior}
\mathcal{M}_{\mu, Q_{2}} ( |\nabla u|^{2})(X_0) \leq 1, \quad 
\text{and} \quad
\mathcal{M}_{\mu, Q_2}\left(\left|\F/\mu\right|^{2} \right)(X_0) + \mathcal{M}_{\mu, Q_2}\left(\left|f/\mu\right|^{2} \right)(X_0) \leq \delta^{2}. 
\end{equation}
Since $D_{2r}(Z)\cap \Big(B_2\times\{0\} \Big) \neq \emptyset$, we see that $Z_0 = (z, 0)\in D_{2r}(Z)\cap \big(B_1\times\{0\}\big)$. Moreover, $z_{n+1} \leq 2r$, and therefore
\[
X_0 \in D_{r}(Z)\cap Q_{1}\subset Q_{3r}(Z_0) \cap Q_1 \subset Q_{6r}(Z_0) \subset Q_2. 
\]
Now $X_0\in Q_{3r}(Z_0) \subset Q_2$ satisfying \eqref{X-zero-max-cond-interior}, $Q_{6r}(Z_0) \subset Q_2$, and all the conditions of Lemma \ref{cor-bdry-varpi}. From our choice of $\delta$, we can applying Lemma \ref{cor-bdry-varpi} to obtain
\[
\begin{split}
\hmu(\{X\in Q_1: \mathcal{M}_{\mu, Q_{2}}(|\nabla u|^{2})  > \varpi^{2}\}\cap D_{r}(Z) )&\leq 
\hmu(\{X\in Q_1: \mathcal{M}^{\mu}(\chi_{Q_{2}}|\nabla u|^{2})  > \varpi^{2}\}\cap D_{3r}(Z_0) )\\
& < \frac{\epsilon}{M_{0}3^{n+2}}  \hmu(D_{3r}(Z_0))\leq \epsilon \hmu(D_{r}(Z)). 
\end{split}
\]
The later contradicts \eqref{upper-level-estimate}, completing the proof. 
\end{proof}
Our next statement, which is the key in obtaining the higher gradient integrability of solution, gives the level set estimate of $\mathcal{M}_{\mu, Q_{2}}(|\nabla u|^{2})$ in terms of that of $\mathcal{M}_{\mu, Q_2}\Big( |\F/\mu|^{2} \Big)$ and $\mathcal{M}_{\mu, Q_2}\Big( |f/\mu|^{2} \Big).$

\begin{lemma} \label{interior-density-est-l}  
Suppose that $\Lambda > 0$, and  $M_{0} >0$ and let $\varpi$ be as in Proposition \ref{contra-interior}. Then, for every  $\epsilon > 0 $, there is $\delta= \delta(\epsilon, \Lambda, M_0, n) >0$ sufficiently small  with the property that  for $\mu \in A_{2}(\mathbb{R})$ such that $[\mu]_{A_{2}} \leq M_{0}$,  $\mathbb{A} \in \mathcal{A}(Q_2, \Lambda, M_0, \mu)$ such that \eqref{Q-2-ellipticity-condition}-\eqref{PBMO} hold, and for ${\bf F}/\mu, f/\mu \in L^2(Q_2, \mu)$, if $u\in W^{1, 2}(Q_{2}, \mu)$ is a weak solution to  \eqref{interior-MAIN}
and  
\[
\hmu(\{X\in Q_{1} : \mathcal{M}_{\mu, Q_{2}}(|\nabla u|^{2})  > \varpi^{2}\}) <\epsilon \hmu(D_{1}(Z)), \quad \forall \ Z \in \overline{Q}_1,
\]
then there exists a constant $\Upsilon = \Upsilon(n, \Lambda, M_{0})>0$ such that for any $k\in \mathbb{N}$ and $\epsilon_{1} = \Upsilon \, \epsilon$ we have that 
\[
\begin{split}
 & \hmu(\{X\in Q_{1}: \mathcal{M}_{\mu, Q_{2}}(|\nabla u|^{2}) > \varpi^{2k} \}) \\
  &\leq \sum_{i=1}^{k} \epsilon_{1}^{i} \hmu\left(\{X\in Q_{1}: \mathcal{M}_{\mu, Q_2}\left(\left|\F/\mu\right|^{2} \right) + \mathcal{M}_{\mu, Q_2}\left(\left|f/\mu\right|^{2} \right)>\delta^{2} \varpi^{2(k-i)} \}\right)\\
&\quad\quad+ \epsilon_{1}^{k}\hmu(\{X\in Q_{1}: \mathcal{M}_{\mu, Q_2}(|\nabla u|^{2}) > 1 \}).
\end{split}
\]
\end{lemma}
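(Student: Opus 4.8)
The plan is to reduce the statement to a single-scale density estimate and then iterate it by scaling. Write, for $j\ge 0$,
\[
\mathbf{A}_j := \{X\in Q_1:\ \mathcal{M}_{\mu,Q_2}(|\nabla u|^2)>\varpi^{2j}\},\qquad \mathbf{B}_j := \{X\in Q_1:\ \mathcal{M}_{\mu,Q_2}(|\F/\mu|^2)+\mathcal{M}_{\mu,Q_2}(|f/\mu|^2)>\delta^2\varpi^{2j}\},
\]
so that the asserted inequality is exactly $\hmu(\mathbf{A}_k)\le\sum_{i=1}^k\epsilon_1^i\,\hmu(\mathbf{B}_{k-i})+\epsilon_1^k\,\hmu(\mathbf{A}_0)$, with $\mathbf A_0=\{\mathcal{M}_{\mu,Q_2}(|\nabla u|^2)>1\}$. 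Everything will follow by summing the one-step recursion $\hmu(\mathbf{A}_{j+1})\le\epsilon_1\big(\hmu(\mathbf{A}_j)+\hmu(\mathbf{B}_j)\big)$ with $\epsilon_1=\Upsilon(n,\Lambda,M_0)\,\epsilon$, applied for $j=k-1,k-2,\dots,0$.

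\textbf{The one-step estimate ($j=0$).} This is a modified Vitali covering argument in the doubling measure $\hmu$, in the spirit of \cite{CP}. Set $C=\mathbf A_1$ and $E=\mathbf A_0\cup\mathbf B_0$; the goal is $\hmu(C)\le\epsilon_1\hmu(E)$. The smallness hypothesis $\hmu(C)<\epsilon\,\hmu(D_1(Z))$ for all $Z\in\overline Q_1$, combined with the doubling property (Lemma \ref{doubling}) and a Lebesgue-differentiation argument, produces for a.e.\ $X\in C$ a stopping radius $r_X$, bounded by a fixed fraction of $1/6$, with $\hmu(C\cap D_{r_X}(X))=\epsilon\,\hmu(D_{r_X}(X))$ and $\hmu(C\cap D_r(X))<\epsilon\,\hmu(D_r(X))$ for all larger admissible $r$. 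Since the density at scale $r_X$ is $\ge\epsilon$, Proposition \ref{contra-interior} yields $D_{r_X}(X)\cap Q_1\subset E$. Selecting a countable disjoint Vitali subfamily $\{D_{r_i}(X_i)\}$ with $C\subset\bigcup_i D_{5r_i}(X_i)$, and using successively the density bound at the (still admissible) scale $5r_i$ to produce the factor $\epsilon$, the doubling of $\hmu$ to pass from $5r_i$ to $r_i$, the $\hmu$-measure-density of $Q_1$ to pass from $D_{r_i}(X_i)$ to $D_{r_i}(X_i)\cap Q_1$, and finally disjointness together with $D_{r_i}(X_i)\cap Q_1\subset E$, one obtains
\[
\hmu(C)\le\sum_i\hmu(C\cap D_{5r_i}(X_i))<\epsilon\sum_i\hmu(D_{5r_i}(X_i))\le\epsilon\,\Upsilon\sum_i\hmu(D_{r_i}(X_i)\cap Q_1)\le\epsilon\,\Upsilon\,\hmu(E),
\]
which is the claim with $\epsilon_1=\Upsilon\epsilon$.

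\textbf{Scaling and induction.} For general $j$ I would apply the $j=0$ case to the rescaled function $v=u/\varpi^{j}$, which is a weak solution of \eqref{interior-MAIN} with the \emph{same} matrix $\mathbb A$ (so \eqref{Q-2-ellipticity-condition}--\eqref{PBMO} and $\mathbb A\in\mathcal A(Q_2,\Lambda,M_0,\mu)$ persist) and with data $\F/\varpi^{j}$, $f/\varpi^{j}$. Because $\mathcal{M}_{\mu,Q_2}(|\nabla v|^2)=\varpi^{-2j}\mathcal{M}_{\mu,Q_2}(|\nabla u|^2)$, the level sets for $v$ at thresholds $\varpi^2$, $1$, $\delta^2$ are precisely $\mathbf A_{j+1}$, $\mathbf A_j$, $\mathbf B_j$; and the smallness hypothesis required for $v$, namely $\hmu(\mathbf A_{j+1})<\epsilon\,\hmu(D_1(Z))$, holds since $\mathbf A_{j+1}\subseteq\mathbf A_1$ and $\hmu(\mathbf A_1)<\epsilon\,\hmu(D_1(Z))$ by assumption. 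This gives $\hmu(\mathbf A_{j+1})\le\epsilon_1\hmu(\mathbf B_j)+\epsilon_1\hmu(\mathbf A_j)$ for every $j\ge0$, and iterating from $j=k-1$ down to $j=0$ produces the stated estimate. The main obstacle is the covering step: running the stopping-time and Vitali selection for the non--translation-invariant measure $\hmu$ and controlling the intersections with the fixed domain $Q_1$; this is exactly where the doubling of $\hmu$ (Lemma \ref{doubling}) and the $\hmu$-measure-density of $Q_1$ at small scales are needed, while the rescaling and summation are routine bookkeeping.
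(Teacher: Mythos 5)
Your proposal is correct and follows the same route as the paper: the one-step density estimate is Proposition \ref{contra-interior} fed into a modified-Vitali (growing ink-spots) covering argument in the doubling measure $\hmu$, which the paper packages as Lemma \ref{Vitali} (cited from [MP-1, Lemma 3.8]) rather than re-deriving, and the iteration is then carried out by rescaling $u\mapsto u/\varpi$ and induction exactly as you do. The only cosmetic difference is that you sketch the Krylov--Safonov stopping-time/Vitali selection by hand and sum the recursion directly, whereas the paper invokes the prepackaged covering lemma and phrases the iteration as a formal induction (both in the $Q_1^+$ analogue, Lemma \ref{interior-density-est-l-half}, to which Lemma \ref{interior-density-est-l} defers).
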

 \begin{proof} Lemma \ref{interior-density-est-l} follows from Proposition \ref{contra-interior}, the consequence of the Vitali's covering lemma, Lemma \ref{Vitali} below, and an iteration process. The proof is the same as that of Lemma \ref{interior-density-est-l-half} in the next section. We therefore skip it.
 \end{proof}
\begin{proof}[Proof of Theorem \ref{local-grad-estimate-interior}] With Lemma \ref{interior-density-est-l}, the proof of Theorem \ref{local-grad-estimate-interior} is standard. We refer its details to the proof of Theorem \ref{local-grad-estimate-half-cylinder} in the next section.
\end{proof}
\section{Weighted $W^{1,p}$-regularity estimates on the half cylinder $Q_1^+$} \label{half-cyl-proof}
This section gives the proof of Theorem \ref{local-grad-estimate-half-cylinder}, and so we focus our study on  degenerate elliptic problem in half-cyllinder $Q_2^+$
\begin{equation} \label{half-cyllinder-eqn}
\left\{
\begin{array}{cccl} 
\textup{div}[\mathbb{A}(X) \nabla u] & = & \textup{div}({\bf F}(X)) & \quad \text{in $Q^{+}_{2}$},\\
u &= & 0 & \quad  \text{on  $T_{2}\times (0, 2)$}, \\
\lim_{y \rightarrow 0^+ }\langle \mathbb{A}(x, y) \nabla u (x, y) - {\bf F}(x,y), e_{n + 1} \rangle &= & f(x), &\quad x\in B^{+}_2,
\end{array}
\right.
\end{equation}
where $\A : Q_2^+ \rightarrow \R^{(n+1) \times (n+1)}$ is symmetric, measurable matrix satisfying the assumptions of Theorem \ref{local-grad-estimate-half-cylinder}. We need several lemmas to prove Theorem \ref{local-grad-estimate-half-cylinder}. 

The following two lemmas are  versions of Lemma \ref{cor-varpi} and Lemma \ref{cor-bdry-varpi} for half cylinders, whose proof can be done in a similar way. 
\begin{lemma} \label{cor-varpi-half-inter} Suppose that $M_{0}>0$ and $\mu \in A_{2}(\R)$ such that $[\mu]_{A_{2}} \leq M_{0}$. 
There exists a constant $\varpi_3(n, \Lambda, M_0)~>~1$ such that for  any $\epsilon > 0 $, there exists a small constant $\delta_3(\epsilon, \Lambda, M_0, n)>0$ with the property that for every $\delta \in (0, \delta_3], \varpi \geq \varpi_3$, and every $\A \in \mathcal{B}(Q_2^+,  \Lambda, M_0, \mu)$ such that \eqref{Q-2-plus-ellipticity-condition}-\eqref{PBMO-Q-2-plus} hold,  if $u\in W^{1, 2}(Q_2^+, \mu)$ is a weak solution to 
\[
\textup{div}[\mathbb{A} \nabla u] = \textup{div}({\bf F})\quad \text{in} \quad Q_{2}^+,
\]
and if $Z= (z, z_{n+1})\in Q_1^+$, $r>0$ so that $D_{2r}(Z) \subset Q_2^+$, 
\begin{equation*}
D_{r}(Z)\cap \{X\in Q_1^+: \M_{\mu, Q_{2}^+}(|\nabla u|^{2}) \leq 1 \}\cap \{X\in Q_1^+: \M_{\mu, Q_2^+}\left(\left|\F/\mu\right|^{2}\right) \leq \delta^{2} \} \neq \emptyset, 
\end{equation*}
then 
\[
\hmu\Big (\{X\in Q_1^+: \M_{\mu, Q_{2}^+}(|\nabla u|^{2})  > \varpi^{2}\}\cap D_{r}(Z) \Big ) < \epsilon \hmu\Big(D_{r}(Z)\Big). 
\]
\end{lemma}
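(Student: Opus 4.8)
The plan is to transcribe, almost line by line, the argument used for Lemma \ref{cor-varpi}, with the interior approximation estimate there replaced by its half-cylinder counterpart. The point that makes the transcription legitimate is the following remark, which I would record first: the hypothesis $D_{2r}(Z)\subset Q_2^+$ forces the cylinder $D_{2r}(Z)$ — hence also the ball $B_{2r}(z)$ — to stay strictly away from the flat face $T_2\times(0,2)$, so on $D_{2r}(Z)$ the equation $\textup{div}[\A\nabla u]=\textup{div}[\F]$ is genuinely interior; and since a matrix in $\mathcal{B}(Q_2^+,\Lambda,M_0,\mu)$ satisfies, by item (ii) of Definition \ref{class-B}, exactly the interior Lipschitz estimate of Definition \ref{class-A}(ii), the statement and proof of Proposition \ref{interior-approximation-proposition} go through verbatim with $Q_2$ replaced by $Q_2^+$ and $\mathcal{A}$ by $\mathcal{B}$. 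This yields the corresponding $\delta_3=\delta(\eta,n,M_0,\Lambda)>0$, with $\eta>0$ to be fixed at the end, depending only on $\epsilon,\Lambda,M_0,n$.

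With that in hand, I would proceed in three steps. First, using the nonempty-intersection hypothesis, pick $X_0=(x_0,y_0)\in D_r(Z)\cap Q_1^+$ with $\M_{\mu,Q_2^+}(|\nabla u|^2)(X_0)\le 1$ and $\M_{\mu,Q_2^+}(|\F/\mu|^2)(X_0)\le\delta^2$, so that for every $\tau>0$ the normalized integrals of $|\nabla u|^2$ and $|\F/\mu|^2$ over $D_\tau(X_0)\cap Q_2^+$ are bounded by $1$ and $\delta^2$ respectively. Since $D_{2r}(Z)\subset D_{3r}(X_0)$ and $D_{2r}(Z)\subset Q_2^+$, the doubling property of $\hmu$ (Lemma \ref{doubling}) gives $\hmu(D_{3r}(X_0))/\hmu(D_{2r}(Z))\le M_0(3/2)^{n+2}$, hence the normalized integrals of $|\nabla u|^2$ and $|\F/\mu|^2$ over $D_{2r}(Z)$ are bounded by $M_0(3/2)^{n+2}$ and $\delta^2 M_0(3/2)^{n+2}$; and from \eqref{PBMO-Q-2-plus}, noting that here $D_{3r/2}(Z)\cap Q_2^+=D_{3r/2}(Z)$ and $\langle\A\rangle_{B_{3r/2}(z)\cap B_2^+}=\langle\A\rangle_{B_{3r/2}(z)}$, the $x$-mean oscillation of $\A$ over $D_{3r/2}(Z)$ (with weight $\mu^{-1}$) is at most $\delta^2$. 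Second, after dividing $u$ and $\F$ by $[M_0(3/2)^{n+2}]^{1/2}$, apply the half-cylinder interior approximation estimate to obtain $v\in W^{1,2}(D_{3r/2}(Z),\mu)$ with $\norm{\nabla v}_{L^\infty(D_{5r/4}(Z))}\le C_0=C(n,M_0,\Lambda)$ and
\[
\frac{1}{\hmu(D_{3r/2}(Z))}\int_{D_{3r/2}(Z)}|\nabla u-\nabla v|^2\,\mu(y)\,dX<\eta^2 M_0(3/2)^{n+2}.
\]

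Third, set $\varpi_3=[\max\{M_0 9^{n+2},4C_0^2\}]^{1/2}$ and, for $\varpi\ge\varpi_3$, prove the inclusion
\[
\{X\in Q_1^+:\M_{\mu,Q_2^+}(|\nabla u|^2)>\varpi^2\}\cap D_r(Z)\subset\{X\in Q_1^+:\M_{\mu,D_{3r/2}(Z)}(|\nabla u-\nabla v|^2)>C_0^2\}\cap D_r(Z),
\]
exactly as in Lemma \ref{cor-varpi}: for $H\in D_r(Z)\cap Q_1^+$ not in the right-hand set, split the scales; for $\rho<r/4$ one has $D_\rho(H)\subset D_{5r/4}(Z)\subset D_{3r/2}(Z)\subset Q_2^+$, and the triangle inequality together with the $L^\infty$ bound on $\nabla v$ gives average $\le 4C_0^2$, while for $\rho\ge r/4$ one has $D_\rho(H)\subset D_{9\rho}(X_0)$ and the Step~1 bound plus doubling gives average $\le M_0 9^{n+2}$; in both cases the average is $\le\varpi^2$, so $\M_{\mu,Q_2^+}(|\nabla u|^2)(H)\le\varpi^2$. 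Finally the weak $(1,1)$ estimate for $\M_\mu$ (Lemma \ref{Hardy-Max-p-p}(ii)) applied to $|\nabla u-\nabla v|^2$ converts this inclusion into $\hmu(\{X\in Q_1^+:\M_{\mu,Q_2^+}(|\nabla u|^2)>\varpi^2\}\cap D_r(Z))\le C(n,M_0)\,\eta^2\,\hmu(D_r(Z))$, and choosing $\eta$ so small that $C(n,M_0)\eta^2<\epsilon$ concludes. The only genuinely new ingredient — and hence the \emph{main obstacle}, such as it is — is the opening remark that the interior machinery built for $\mathcal{A}(Q_2,\cdot)$ transfers to $\mathcal{B}(Q_2^+,\cdot)$ in the regime $D_{2r}(Z)\subset Q_2^+$; once that is checked, the rest is a faithful copy of the proof of Lemma \ref{cor-varpi}.
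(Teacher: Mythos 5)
Your proposal is correct and takes exactly the route the paper itself intends: the paper explicitly omits the proof of Lemma \ref{cor-varpi-half-inter} with the remark that it is a version of Lemma \ref{cor-varpi} for half cylinders ``whose proof can be done in a similar way,'' and your argument is precisely that transcription. The one substantive observation you supply — that $D_{2r}(Z)\subset Q_2^+$ keeps the analysis strictly interior, so item (ii) of Definition \ref{class-B} provides the same Lipschitz estimate as Definition \ref{class-A}(ii) and hence Proposition \ref{interior-approximation-proposition} carries over verbatim with $\mathcal{A}$ replaced by $\mathcal{B}$ and $Q_2$ by $Q_2^+$ — is exactly the check that makes the transcription legitimate, and you are right to flag it as the only new ingredient.
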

\begin{lemma} \label{cor-bdry-varpi-half-bottom}
Suppose that $\Lambda>0, M_{0}>0$. 
There exists a constant $\varpi_4(n, \Lambda, M_0)> 1$ such that for  any $\epsilon > 0 $, there exists a small constant $\delta_4(\epsilon, \Lambda, M_0, n)>0$  with the property that for every $\delta \in (0, \delta_4]$, $\varpi \geq \varpi_4$, $\mu \in A_{2}(\R)$ such that $[\mu]_{A_{2}} \leq M_{0}$, and for $\A \in \mathcal{B}(Q_2^+, \Lambda, M_0, \mu)$ such that \eqref{Q-2-plus-ellipticity-condition}-\eqref{PBMO-Q-2-plus} hold, if $u\in W^{1, 2}(Q_2^+, \mu)$ is any  weak solution of \eqref{half-cyllinder-eqn}, and if $Z_0= (z, 0)\in \overline{B}_{1}^+\times\{0\}$ and some $r>0$ such that $Q_{2r}(Z_0)\subset Q_2^+$, and 
\begin{equation*} 
D_{r}(Z_0)\cap \{X\in Q_1^+: \mathcal{M}_{\mu, Q_{2}^+} (|\nabla u|^{2}) \leq 1 \}\cap \{X\in Q_1^+: \mathcal{M}_{\mu, Q_2^+}\left(\left|\F/\mu\right|^{2} \right) +\mathcal{M}_{\mu,Q_2^+}\left(\left|f/\mu\right|^{2}\right)\leq \delta^{2} \} \neq \emptyset, 
\end{equation*}
then 
\[
\hmu(\{X\in Q_1^+: \mathcal{M}_{\mu, Q_{2}^+}(|\nabla u|^{2})  > \varpi^{2}\}\cap D_{r}(Z_0) ) < \epsilon \hmu(D_{r}(Z_0)). 
\]
\end{lemma}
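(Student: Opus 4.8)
The plan is to transcribe the proof of Lemma~\ref{cor-bdry-varpi} to the half-cylinder setting, replacing the cylinder $Q_2$ by $Q_2^+$ and the class $\mathcal{A}(Q_2,\Lambda,M_0,\mu)$ by $\mathcal{B}(Q_2^+,\Lambda,M_0,\mu)$, and invoking the half-cylinder analog of the base-of-cylinder approximation estimate of Proposition~\ref{bottom-approximation-proposition}. First I would record the geometric observation that the hypothesis $Q_{2r}(Z_0)\subset Q_2^+$ forces $B_{2r}(z)\subset B_2^+$, so that $Q_{2r}(Z_0)$ touches the base $\{y=0\}$ but stays away from the flat side $T_2\times(0,2)$; in particular $z\notin T_1$ and $B_{3r/2}(z)\cap B_2^+=B_{3r/2}(z)$, so on such a region conditions (i)--(ii) of Definition~\ref{class-B} reduce exactly to the hypotheses on the class $\mathcal{A}$ that enter Proposition~\ref{bottom-approximation-proposition}. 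Hence the base approximation estimate is available here with $Q_2,\mathcal{A}$ replaced by $Q_2^+,\mathcal{B}$.

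Next, fixing $\epsilon>0$, I would let $\eta=\eta(\epsilon,\Lambda,M_0,n)>0$ be chosen at the end and set $\delta_4=\delta(\eta,\Lambda,M_0,n)$ as in Proposition~\ref{bottom-approximation-proposition}. For $\delta\in(0,\delta_4]$ the nonemptiness hypothesis produces $X_0=(x_0,y_0)\in D_r(Z_0)\cap Q_1^+$ with $\M_{\mu,Q_2^+}(|\nabla u|^2)(X_0)\le 1$ and $\M_{\mu,Q_2^+}(|\F/\mu|^2)(X_0)+\M_{\mu,Q_2^+}(|f/\mu|^2)(X_0)\le\delta^2$; unwinding the weighted maximal functions gives, for every $\tau>0$, the averages $\fint_{D_\tau(X_0)\cap Q_2^+}|\nabla u|^2\mu\le 1$ and $\fint_{D_\tau(X_0)\cap Q_2^+}(|\F/\mu|^2+|f/\mu|^2)\mu\le\delta^2$. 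Since $Q_{2r}(Z_0)=D_{2r}(Z_0)\cap Q_2^+\subset D_{3r}(X_0)\cap Q_2^+$, Lemma~\ref{doubling}(i) upgrades these to scale-invariant bounds over $Q_{2r}(Z_0)$ with constants $\le 4M_0(3/2)^{n+2}$, while \eqref{PBMO-Q-2-plus} gives $\frac{1}{\hmu(Q_{3r/2}(Z_0))}\int_{Q_{3r/2}(Z_0)}|\A-\langle\A\rangle_{B_{3r/2}(z)}(y)|^2\mu^{-1}\le 4M_0\delta^2$. Applying the (half-cylinder) Proposition~\ref{bottom-approximation-proposition} to $u/[4M_0(3/2)^{n+2}]^{1/2}$ and $\F/[4M_0(3/2)^{n+2}]^{1/2}$ produces $v\in W^{1,2}(Q_{3r/2}(Z_0),\mu)$ with $\|\nabla v\|_{L^\infty(Q_{5r/4}(Z_0))}\le C_0=C(n,\Lambda,M_0)$ and $\fint_{Q_{3r/2}(Z_0)}|\nabla u-\nabla v|^2\mu<\eta^2[4M_0(3/2)^{n+2}]$.

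With $\varpi_4=[\max\{M_0 9^{n+2},4C_0^2\}]^{1/2}$ I would then, exactly as in Lemma~\ref{cor-bdry-varpi}, prove for $\varpi\ge\varpi_4$ the inclusion
\[
\{X\in Q_1^+:\M_{\mu,Q_2^+}(|\nabla u|^2)>\varpi^2\}\cap D_r(Z_0)\ \subset\ \{X\in Q_1^+:\M_{\mu,Q_{3r/2}(Z_0)}(|\nabla u-\nabla v|^2)>C_0^2\}\cap D_r(Z_0),
\]
by testing a point $H$ in the complement of the right-hand set against balls $D_\rho(H)$: for $\rho<r/4$ one has $D_\rho(H)\subset D_{3r/2}(Z_0)\cap Q_2^+$, and splitting $\nabla u=(\nabla u-\nabla v)+\nabla v$ with the $L^\infty$ bound on $\nabla v$ yields $\fint_{D_\rho(H)}|\nabla u|^2\mu\le 4C_0^2\le\varpi^2$; for $\rho\ge r/4$ one uses $D_\rho(H)\subset D_{9\rho}(X_0)$ and the averaged bound at $X_0$ to get $\fint_{D_\rho(H)}\chi_{Q_2^+}|\nabla u|^2\mu\le M_0 9^{n+2}\le\varpi^2$. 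The weak $(1,1)$ estimate of Lemma~\ref{Hardy-Max-p-p}(ii) on $D_{3r/2}(Z_0)$ together with the approximation bound then gives
\[
\hmu\Big(\{X\in Q_1^+:\M_{\mu,Q_2^+}(|\nabla u|^2)>\varpi^2\}\cap D_r(Z_0)\Big)\le \frac{C(n,M_0)}{C_0^2}\int_{Q_{3r/2}(Z_0)}|\nabla u-\nabla v|^2\mu\le C(n,M_0)\eta^2\hmu(D_r(Z_0)),
\]
and choosing $\eta$ with $C(n,M_0)\eta^2<\epsilon$ finishes the argument.

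The step I expect to be the only genuinely new point, hence the one to verify carefully, is the first one: that the base-approximation machinery of Proposition~\ref{bottom-approximation-proposition} really transfers to half-cylinders touching the base of $Q_2^+$ but away from its flat side, which hinges on conditions (i)--(ii) of Definition~\ref{class-B} (and on the observation that on these cylinders the frozen matrix $\langle\A\rangle_{B_{3r/2}(z)\cap B_2^+}(y)$ is simply $\langle\A\rangle_{B_{3r/2}(z)}(y)$). Everything else --- the doubling bookkeeping, the covering/splitting argument for the inclusion, and the final weak-$(1,1)$ step --- is a routine adaptation of the corresponding steps in the proof of Lemma~\ref{cor-bdry-varpi}, so in the write-up I would present those only briefly.
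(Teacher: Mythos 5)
Your proposal is correct and follows exactly the route the paper has in mind: the paper does not prove Lemma~\ref{cor-bdry-varpi-half-bottom} explicitly, stating only that it is the half-cylinder analogue of Lemma~\ref{cor-bdry-varpi} ``whose proof can be done in a similar way,'' and your write-up carries out precisely that adaptation, with the same normalization by $[4M_0(3/2)^{n+2}]^{1/2}$, the same choice $\varpi_4=[\max\{M_0\,9^{n+2},4C_0^2\}]^{1/2}$, the same two-radius case split $\rho<r/4$ and $\rho\ge r/4$, and the same weak $(1,1)$ finish. You also correctly isolated the only genuinely new point, namely that $Q_{2r}(Z_0)\subset Q_2^+$ keeps all the relevant cylinders away from the flat side $T_2\times(0,2)$, so that the base-approximation step (Proposition~\ref{bottom-approximation-proposition}) goes through using only items~(i)--(ii) of Definition~\ref{class-B}.
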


Our next lemma concerns the density of the level sets up to the flat side of $Q_{1}^{+}$ for the weak solutions.
\begin{lemma} \label{cor-varpi-half} Let $\Lambda>0, M_{0} \geq 1$ and $\mu \in A_2$ with $[\mu]_{A_2} \leq M_0$. There exists a constant $\varpi_5(n, \Lambda, M_0)> 1$ such that for  any $\epsilon > 0 $, there exists a small constant $\delta_5(\epsilon, \Lambda, M_0, n)>0$ with the property that for every $\delta~\in~(0, \delta_5], \varpi~\geq~\varpi_5$,   and for every $\mathbb{A} \in \mathcal{B}(Q_2^+, \Lambda, M_0, \mu)$ such that \eqref{Q-2-plus-ellipticity-condition}-\eqref{PBMO-Q-2-plus} hold, let $u\in W^{1, 2}(Q^{+}_2, \mu)$ be any  weak solution of 
\[
\left\{
\begin{array}{cccl}
\textup{div}[\mathbb{A} \nabla u] & = & \textup{div}({\bf F}) & \quad \text{in $Q^{+}_{2}$}\\
u & = & 0 & \quad \text{on $T_{2}\times (0, 2)$}, 
\end{array}
\right.
\]
and let $Z= (z', 0, z_{n+1})\in T_{1}\times (0, 1)$, and $r \in (0,1)$ so that $D^{+}_{2r}(Z) \subset Q_{2}^{+}$ and 
\begin{equation}\label{max-f-small-forany-r-int-half}
D_{r}(Z)\cap \{X\in Q^{+}_1: \mathcal{M}_{\mu,Q^{+}_{2}}(|\nabla u|^{2}) \leq 1 \}\cap \{X\in Q^{+}_1: \mathcal{M}_{\mu, Q_2^+}\left(\left|\F/\mu\right|^{2}\right) \leq \delta^{2} \} \neq \emptyset, 
\end{equation}
then 
\[
\hmu(\{X\in Q^{+}_1: \mathcal{M}_{\mu, Q^{+}_{2}}(|\nabla u|^{2})  > \varpi^{2}\}\cap D_{r}(Z) ) < \epsilon \hmu(D_{r}(Z)). 
\]
\end{lemma}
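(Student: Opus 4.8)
The plan is to follow verbatim the structure of the density estimates at the base and in the interior (Lemma \ref{cor-bdry-varpi} and Lemma \ref{cor-varpi}), but with the base approximation replaced by the flat-side approximation of Proposition \ref{interior-approximation-proposition-halfCyl}. Given $\epsilon>0$, I would fix a small parameter $\eta=\eta(\epsilon,\Lambda,M_0,n)>0$ to be chosen at the end and set $\delta_5=\delta(\eta,\Lambda,M_0,n)$, the constant from Proposition \ref{interior-approximation-proposition-halfCyl}. Using the nonemptiness hypothesis \eqref{max-f-small-forany-r-int-half} I pick $X_0=(x_0,y_0)\in D_r(Z)\cap Q_1^+$ with $\M_{\mu,Q_2^+}(|\nabla u|^2)(X_0)\le 1$ and $\M_{\mu,Q_2^+}(|\F/\mu|^2)(X_0)\le\delta^2$. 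Since $|X_0-Z|$ is controlled by $r$ and, by hypothesis, $D_{2r}^+(Z)\subset Q_2^+$, one has $D_{3r/2}^+(Z)\subset D_{2r}^+(Z)\subset D_{3r}(X_0)\cap Q_2^+$; combining these inclusions with the doubling property of $\hmu$ (Lemma \ref{doubling}) converts the pointwise maximal bounds at $X_0$ into averaged bounds
\[
\frac{1}{\hmu(D_{2r}^+(Z))}\int_{D_{2r}^+(Z)}|\nabla u|^2\mu(y)\,dX\le C(n,M_0),\qquad \frac{1}{\hmu(D_{2r}^+(Z))}\int_{D_{2r}^+(Z)}\Big|\F/\mu\Big|^2\mu(y)\,dX\le C(n,M_0)\delta^2,
\]
while \eqref{PBMO-Q-2-plus} gives $\hmu(D_{3r/2}^+(Z))^{-1}\int_{D_{3r/2}^+(Z)}|\A-\wei{\A}_{B_{3r/2}^+}(y)|^2\mu^{-1}(y)\,dX\le C(M_0)\delta^2$. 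After rescaling $u$ and $\F$ by a universal factor, Proposition \ref{interior-approximation-proposition-halfCyl} (with center $Z\in T_1\times(0,1)$, which is exactly the admissible configuration there) produces $v\in W^{1,2}(D_{3r/2}^+(Z),\mu)$ with $\norm{\nabla v}_{L^\infty(D_{5r/4}^+(Z))}\le C_0=C_0(n,\Lambda,M_0)$ and $\hmu(D_{3r/2}^+(Z))^{-1}\int_{D_{3r/2}^+(Z)}|\nabla u-\nabla v|^2\mu(y)\,dX\le C(n,M_0)\eta^2$.

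Next I would set $\varpi_5=\big[\max\{M_0\,9^{n+2},\,4C_0^2\}\big]^{1/2}$ and, for $\varpi\ge\varpi_5$, establish
\[
\{X\in Q_1^+:\M_{\mu,Q_2^+}(|\nabla u|^2)>\varpi^2\}\cap D_r(Z)\subset \{X\in Q_1^+:\M_{\mu,D_{3r/2}^+(Z)}(|\nabla u-\nabla v|^2)>C_0^2\}\cap D_r(Z),
\]
by proving the contrapositive: if $H\in D_r(Z)\cap Q_1^+$ has $\M_{\mu,D_{3r/2}^+(Z)}(|\nabla u-\nabla v|^2)(H)\le C_0^2$, then $\hmu(D_\rho(H))^{-1}\int_{D_\rho(H)\cap Q_2^+}|\nabla u|^2\mu\le\varpi^2$ for every $\rho>0$. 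For $\rho<r/4$ the triangle inequality yields $D_\rho(H)\cap Q_2^+\subset D_{5r/4}^+(Z)$, so from $|\nabla u|^2\le 2|\nabla u-\nabla v|^2+2|\nabla v|^2$ the average is at most $2C_0^2+2\norm{\nabla v}_{L^\infty(D_{5r/4}^+(Z))}^2\le 4C_0^2$; for $\rho\ge r/4$ the inclusion $D_\rho(H)\subset D_{9\rho}(X_0)$, doubling, and $\M_{\mu,Q_2^+}(|\nabla u|^2)(X_0)\le1$ give an average at most $M_0\,9^{n+2}$. Finally the weak $(1,1)$ bound of Lemma \ref{Hardy-Max-p-p} applied to $|\nabla u-\nabla v|^2\chi_{D_{3r/2}^+(Z)}$, together with the inclusion above and the approximation estimate, gives
\[
\hmu\big(\{X\in Q_1^+:\M_{\mu,Q_2^+}(|\nabla u|^2)>\varpi^2\}\cap D_r(Z)\big)\le\frac{C(n,M_0)}{C_0^2}\int_{D_{3r/2}^+(Z)}|\nabla u-\nabla v|^2\mu(y)\,dX\le C(n,M_0)\eta^2\,\hmu(D_r(Z)),
\]
and choosing $\eta$ so that $C(n,M_0)\eta^2<\epsilon$ completes the proof.

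The main obstacle is the geometric bookkeeping along the flat side. One must check that the half-cylinder $D_{3r/2}^+(Z)$ anchored at $Z\in T_1\times(0,1)$ lies in $Q_2^+$ with its flat face inside $T_2\times(0,2)$, so that the Dirichlet condition $u=0$ is inherited on $T_{3r/2}(z')\times I_{3r/2}(z_{n+1})$ and Proposition \ref{interior-approximation-proposition-halfCyl} genuinely applies; one must also verify that every intersection $D_\rho(H)\cap Q_2^+$ only costs doubling constants depending on $M_0$ and $n$. These are routine but require care because the approximation is anchored at $Z$ whereas the maximal-function information is available only at the nearby point $X_0$, so each estimate must be transported across a distance $O(r)$ using Lemma \ref{doubling}; apart from that, the argument is a direct transcription of the base-case proof.
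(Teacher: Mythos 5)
Your proposal is correct and essentially reproduces the paper's own proof: same choice of $\eta$ and $\delta_5$ from Proposition \ref{interior-approximation-proposition-halfCyl}, same transport of the pointwise maximal-function bounds at $X_0$ to averages over $D_{2r}^+(Z)$ via $D_{2r}(Z)\subset D_{3r}(X_0)$ and the doubling lemma, same definition $\varpi_5=[\max\{M_0\,9^{n+2},4C_0^2\}]^{1/2}$, same level-set inclusion proved by the contrapositive split at $\rho=r/4$ (using the $L^\infty$ bound on $\nabla v$ for small $\rho$ and $D_\rho(H)\subset D_{9\rho}(X_0)$ for large $\rho$), and same conclusion via the weak $(1,1)$ estimate. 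The geometric caveats you flag at the end (that $D_{3r/2}^+(Z)$ sits inside $Q_2^+$ with its flat face in $T_2\times(0,2)$, so the Dirichlet data is inherited and the Proposition applies with center $Z\in T_1\times(0,1)$) are exactly the points the paper's proof implicitly relies on, so nothing is missing.
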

\begin{proof}
The proof is similar to the proof of Lemma \ref{cor-varpi}.  For given $\epsilon > 0$, let $\eta > 0$ to be determined and depending only on $\epsilon, \Lambda, M_0, n$.  Choose $\delta_5 = \delta(\eta, n, M_0, \Lambda)$ to be the number defined Proposition \ref{interior-approximation-proposition-halfCyl}. We with $\delta \in (0,\delta_5]$, from the assumption \eqref{max-f-small-forany-r-int-half}, we find $X_0 =(x_0, y_0) \in D_{r}(Z)\cap Q^{+}_1$ such that 
\[
\mathcal{M}_{\mu, Q^{+}_{2}}(|\nabla u|^{2})(X_0) \leq 1\quad \text{and}\quad \mathcal{M}_{\mu, Q_2^+}\left(\left|\F/\mu\right|^{2}\right)(X_0) \leq \delta^{2}. 
\]
This implies that for any $\tau > 0$, 
\begin{equation} \label{siding-max-X-zero}
\begin{split}
& \frac{1}{\hmu(D_{\tau}(X_0))}\int_{D_{\tau}(X_0) \cap Q_2^+}  |\nabla u|^{2} \mu (y) dX \leq 1 , \quad \text{and\, }\\
& \frac{1}{\hmu(D_{\tau}(X_0))}\int_{D_{\tau}(X_0) \cap Q_2^+} \left|\frac{{\bf F}(X)}{\mu(y)}\right|^{2} \mu (y) dX \leq \delta^{2}.
\end{split}
\end{equation}
Since $D_{2r}(Z) \subset D_{3r}(X_0)$, it follows from Lemma \ref{doubling} that 
\[
\frac{\hmu(D_{3r}(X_0))}{\hmu(D_{2r}(Z))} = \frac{|B_{3r}(x_0)| \mu(y_0-3r, y_0 + 3r) }{|B_{2r}(z)|\mu(z_{n+1}-2r, z_{n+1} + 2r ) } \leq M_{0} \left(\frac{3}{2}\right)^{n + 2}. 
\]
It then follows from \eqref{siding-max-X-zero} that 
\[
\frac{1}{\hmu(D_{2r}(Z))}\int_{D_{2r}(Z) \cap Q_2^+} |\nabla u|^{2} \mu (y) dX \leq  \frac{\hmu(D_{3r}(X_0))}{\hmu(D_{2r}(Z))} \frac{1}{\hmu(D_{3r}(X_0))}\int_{D_{3r}(X_0) \cap  Q_2^+} |\nabla u|^{2} \mu (y) dX \leq M_{0} \left(\frac{3}{2}\right)^{n + 2}. 
\]
Similarly, we have the inequality 
\[
\begin{split}
& \frac{1}{\hmu(D_{2r}(Z))}\int_{D_{2r}(Z) \cap  Q_2^+}\left|\frac{{\bf F}(X)}{\mu(y)}\right|^{2} \mu (y) dX \\
& \leq  \frac{\hmu(D_{3r}(X_0))}{\hmu(D_{2r}(Z))} \frac{1}{\hmu(D_{3r}(X_0))}\int_{D_{3r}(X_0) \cap  Q_2^+}\left|\frac{{\bf F}(X)}{\mu(y)}\right|^{2} \mu (y) dX \leq \delta^{2}M_{0} \left(\frac{3}{2}\right)^{n + 2}. 
\end{split}
\]
From Definition \ref{class-B}, and since $D_{3r/2}^+(Z) \subset Q_2^+$,  we notice that 
\[
\frac{1}{\hmu(D_{3r/2}(Z) )}\int_{D_{3r/2}^+(Z) }|\A(X) - \langle \A\rangle_{B_{3r/2}^+(z)}(y)|^{2}\mu^{-1}(y) dX  \leq \delta^{2}. 
\]
By some suitable scaling, we see that all the assumption of Proposition \ref{interior-approximation-proposition-halfCyl}, are satisfied.  As a consequence there exists a $v \in W^{1,2}(D^{+}_{3r/2}(Z), \mu)$ such that 
\begin{equation} \label{conclusion-lemma-int-half}
\frac{1}{\hmu(D_{3r/2}(Z) )}\int_{D^{+}_{3r/2}(Z) } |\nabla u - \nabla v|^{2} \mu (y) dX  < \eta^{2}M_{0} \left(\frac{3}{2}\right)^{n + 2}, \quad  \|\nabla v\|_{L^{\infty}(D^{+}_{5r/4}(Z))} \leq C_{0} = C(n,M_{0}, \Lambda).
\end{equation}
Now, let $\varpi_5 = [\max\{M_{0} 9^{n + 2}, 4C^{2}_{0} \}]^{1/2}$. Then with $\varpi \geq \varpi_5$, we  claim that  
\[
\{
X\in Q^{+}_{1}: \mathcal{M}_{\mu, Q^{+}_{2}}(|\nabla u_{\kappa}|^{2}) > \varpi^{2}
\}\cap D_{r} (Z) \subset \{X\in Q^{+}_1:  \mathcal{M}_{\mu,  D^{+}_{3r/2}(Z)}(|\nabla u_{\kappa} -\nabla v|^{2})>C_{0}^{2}\}\cap D_{r}(Z). \]
To prove the claim, let $H\in D_{r}(Z)\cap Q^{+}_1$, such that
\[ 
\mathcal{M}_{\mu, Q^{+}_{3r/2}(Z)}(|\nabla u_{\kappa} -\nabla v|^{2})(H)\leq C_{0}^{2}.
\]
For $\rho < r/4$, we see that $D_{\rho}(H) \subset D_{5r/4}(Z) \subset D_{3r/2}(Z) \cap Q_2^+$, and so 
\[
\begin{split}
& \frac{1}{\hmu({D_{\rho}(H)})}\int_{D_{\rho}(H) \cap Q_2^+}|\nabla u_{\kappa}|^{2} \mu(y) dX \\
&\leq \frac{2}{\hmu({D_{\rho}(X)})} \left\{\int_{D_{\rho}(H)} \chi_{D^{+}_{3r/2}(Z)}|\nabla u_{\kappa} - \nabla v|^{2} \mu(y) dX + \int_{D_{\rho}(H)} \chi_{D^{+}_{5r/4}(Z)}|\nabla v|^{2} \mu(y) dX\right\}\\
&\leq 2 C_{0}^{2} +  2 \|\nabla v\|_{L^{\infty}(Q_{5r/4}(Z))}^{2}  \leq  4 C_{0}^{2}.
\end{split}
\]
If $\rho \geq r/4$, we use the fact that $D_{\rho}(H) \subset D_{9\rho}(X_{0})$  to estimate 
\[
\frac{1}{\hmu({D_{\rho}(H)})}\int_{D_{\rho}(H)\cap Q_2^+} |\nabla u_{\kappa}|^{2} \mu(y) dX   \leq \frac{\hmu({D_{9\rho}(X_0)})}{\hmu({D_{\rho}(H)})} \frac{1}{\hmu({D_{9\rho}(X_0)})}\int_{D_{9\rho}(X_0)\cap Q_2^+}|\nabla u_{\kappa}|^{2} \mu(y) dX \leq M_{0} 9^{n + 2}.
\]
Combining the above estimates we obtain
\[
\frac{1}{\hmu({D_{\rho}(H)})}\int_{D_{\rho}(H)} \chi_{Q^{+}_{2}} |\nabla u_{\kappa}|^{2} \mu(y) dX \leq \varpi^2, \quad \forall \rho >0.
\]
That is, $\mathcal{M}_{\mu, Q^{+}_2}(|\nabla u_{\kappa}|^{2}) (H) \leq \varpi^{2}$ as desired. 
Finally, by combining the above inclusion,  the weak $1-1$ estimates for the Hardy-Littlewood maximal operator, Lemma \ref{Hardy-Max-p-p}, and \eqref{conclusion-lemma-int-half}, we have 
\[
\begin{split}
& \hmu \Big( \Big\{ X\in Q^{+}_{1}: \mathcal{M}_{\mu, Q^{+}_{2}}(|\nabla u|^{2}) > \varpi^{2}\Big \} \cap D_{r} (Z)\Big )\\
& \leq \hmu \Big( \Big\{X\in Q^{+}_{1}: \mathcal{M}_{\mu, Q_{3r/2}}(|\nabla u_{\kappa} -\nabla v|^{2})>C_{0}^{2}\Big \} \cap D_{r} (Z) \Big)\\
& \leq \frac{C(n, M_{0})}{C_{0}^{2}}\ \hmu(D_{3r/2}(Z)) \frac{1}{\hmu(D_{3r/2}(Z))}\int_{D^{+}_{3r/2}(Z)} |\nabla u_{\kappa} - \nabla v|^{2} \mu(y)dX \\
&\leq C  \eta^2 \ \hmu(D_{3r/2}(Z)),
\end{split}
\]
where $C= C(n, M_{0}, \Lambda)$.  From the last estimate, we observe that if we choose $\eta > 0$ sufficiently small such that $C \eta^{2} < \epsilon$, Lemma \ref{cor-varpi-half} follows. 
\end{proof}
\noindent
Our next result is about the density of sets near the corner.
\begin{lemma}\label{cor-varpi-bdry-half}
Suppose that $M_{0}\geq 1, \Lambda >0$. There exists a constant $\varpi_6 = \varpi_6(n, \Lambda, M_0)> 1$ such that for  any $\epsilon > 0$, there exists a small constant $\delta_6 =\delta_6(\epsilon, \Lambda, M_0, n)>0$ with the property that for every $\delta \in (0, \delta_6]$, $\varpi \geq \varpi_6$, for every $\mu \in A_2$ with $[\mu]_{A_2} \leq M_0$, and for every $\mathbb{A} \in \mathcal{B}(Q_2^+, \Lambda, M_0, \mu)$ such that \eqref{Q-2-plus-ellipticity-condition}-\eqref{PBMO-Q-2-plus} hold, let $u\in W^{1, 2}(Q^{+}_2, \mu)$ be any  weak solution to \eqref{half-cyllinder-eqn},  if $Z= (z, 0, 0)\in T_{1}\times\{0 \} $, and $r\in (0,1)$ so that $Q_{2r}^{+}(Z) \subset Q_{2}^{+}$ and 
\begin{equation*}
D_{r}(Z)\cap  \{X\in Q^{+}_1: \mathcal{M}_{\mu, Q^{+}_{2}}(|\nabla u|^{2}) \leq 1 \}\cap \{X\in Q^{+}_1: \mathcal{M}_{\mu, Q_2^+}\left(\left|\F/\mu \right|^{2} \right) + \mathcal{M}_{\mu, Q_2^+}\left(\left|f/\mu\right|^{2} \right) \leq \delta^{2} \} \neq \emptyset, 
\end{equation*}
then 
\[
\hmu(\{X\in Q^{+}_1: \mathcal{M}_{\mu, Q^{+}_{2}} |\nabla u|^{2})  > \varpi^{2}\}\cap D_{r}(Z) ) < \epsilon \hmu(D_{r}(Z)). 
\]

\end{lemma}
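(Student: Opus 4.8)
The plan is to follow verbatim the template of the density lemmas already proved in the paper --- Lemma \ref{cor-varpi}, Lemma \ref{cor-bdry-varpi} and Lemma \ref{cor-varpi-half} --- now invoking the \emph{corner} approximation estimate, Proposition \ref{bottom-approximation-proposition-sittingCy}, in place of the interior, base, and flat-side ones. Given $\epsilon>0$, I would first fix a small parameter $\eta>0$, to be chosen at the very end depending only on $\epsilon,\Lambda,M_0,n$, and set $\delta_6=\delta(\eta,n,M_0,\Lambda)$ as produced by Proposition \ref{bottom-approximation-proposition-sittingCy}. Using the non-emptiness hypothesis, pick $X_0=(x_0,y_0)\in D_r(Z)\cap Q_1^+$ with $\M_{\mu,Q_2^+}(|\nabla u|^2)(X_0)\le 1$ and $\M_{\mu,Q_2^+}(|\F/\mu|^2)(X_0)+\M_{\mu,Q_2^+}(|f/\mu|^2)(X_0)\le\delta^2$; these bounds give, for every $\tau>0$, averaged estimates of $|\nabla u|^2\mu$ by $1$ and of $(|\F/\mu|^2+|f/\mu|^2)\mu$ by $\delta^2$ over $D_\tau(X_0)\cap Q_2^+$.

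Next I would transfer these averaged bounds onto the corner cylinder $Q_{3r/2}^+(Z)$. Since $Z\in T_1\times\{0\}$ is a corner point and $X_0\in D_r(Z)$, one has $Q_{2r}^+(Z)=D_{2r}(Z)\cap Q_2^+\subset D_{3r}(X_0)$, and combining the doubling property, Lemma \ref{doubling}(i), with the elementary comparison $|D_{2r}(Z)|\le 4\,|Q_{2r}^+(Z)|$, the ratio $\hmu(D_{3r}(X_0))/\hmu(Q_{2r}^+(Z))$ is bounded by $C(n)M_0$. Hence $\frac{1}{\hmu(Q_{3r/2}(Z))}\int_{Q_{3r/2}^+(Z)}|\nabla u|^2\mu(y)\,dX\le C(n)M_0$, and likewise $\frac{1}{\hmu(Q_{3r/2}(Z))}\int_{Q_{3r/2}^+(Z)}(|\F/\mu|^2+|f/\mu|^2)\mu(y)\,dX\le C(n)M_0\,\delta^2$; the coefficient mean-oscillation hypothesis of Proposition \ref{bottom-approximation-proposition-sittingCy} on $Q_{3r/2}^+(Z)$ follows from \eqref{PBMO-Q-2-plus} together with Definition \ref{class-B} (noting $B_{3r/2}(x_0)\cap B_2^+=B_{3r/2}^+(x_0)$ when $x_0\in T_1$). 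After rescaling $u$ and $\F,f$ by the constant $[C(n)M_0]^{1/2}$, Proposition \ref{bottom-approximation-proposition-sittingCy} yields $v\in W^{1,2}(Q_{3r/2}^+(Z),\mu)$ with $\norm{\nabla v}_{L^\infty(Q_{5r/4}^+(Z))}\le C_0=C(n,\Lambda,M_0)$ and $\frac{1}{\hmu(Q_{3r/2}(Z))}\int_{Q_{3r/2}^+(Z)}|\nabla u-\nabla v|^2\mu(y)\,dX< C(n)M_0\,\eta^2$.

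Now I set $\varpi_6=[\max\{M_0 9^{n+2},\,4C_0^2\}]^{1/2}$, and for $\varpi\ge\varpi_6$ I would prove the pointwise inclusion
\[
\{X\in Q_1^+:\M_{\mu,Q_2^+}(|\nabla u|^2)>\varpi^2\}\cap D_r(Z)\subset\{X\in Q_1^+:\M_{\mu,Q_{3r/2}^+(Z)}(|\nabla u-\nabla v|^2)>C_0^2\}\cap D_r(Z),
\]
exactly as in the earlier lemmas: if $H\in D_r(Z)\cap Q_1^+$ lies in the complement of the right-hand set, then for $\rho<r/4$ one has $D_\rho(H)\subset D_{5r/4}(Z)$ and the splitting $|\nabla u|^2\le 2|\nabla u-\nabla v|^2+2|\nabla v|^2$ gives an average over $D_\rho(H)\cap Q_2^+$ bounded by $4C_0^2$, while for $\rho\ge r/4$ the containment $D_\rho(H)\subset D_{9\rho}(X_0)$ and the maximal bound at $X_0$ give an average bounded by $M_0 9^{n+2}$; in both cases the bound is $\le\varpi^2$, so $\M_{\mu,Q_2^+}(|\nabla u|^2)(H)\le\varpi^2$. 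Combining this inclusion with the weak $(1,1)$ estimate for $\M_\mu$ from Lemma \ref{Hardy-Max-p-p}(ii) and the approximation estimate above yields
\[
\hmu\big(\{X\in Q_1^+:\M_{\mu,Q_2^+}(|\nabla u|^2)>\varpi^2\}\cap D_r(Z)\big)\le\frac{C(n,M_0)}{C_0^2}\int_{Q_{3r/2}^+(Z)}|\nabla u-\nabla v|^2\mu(y)\,dX\le C(n,\Lambda,M_0)\,\eta^2\,\hmu(D_r(Z)),
\]
and choosing $\eta$ small so that $C(n,\Lambda,M_0)\,\eta^2<\epsilon$ completes the proof. The only point that genuinely needs care --- and hence the main obstacle --- is the geometric bookkeeping at the right-angle corner: verifying the inclusions $Q_{2r}^+(Z)\subset D_{3r}(X_0)$, $D_{3r/2}^+(Z)\subset Q_2^+$, the identification of the two kinds of averaging balls on the flat side, and the measure comparisons between the half-cylinders $Q^+$ and the symmetric cylinders $D$, so that the rescaled hypotheses of Proposition \ref{bottom-approximation-proposition-sittingCy} are exactly met; beyond that, the argument is a direct transcription of the earlier density estimates.
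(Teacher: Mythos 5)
Your proposal is correct and follows exactly the paper's own approach: the paper's proof of this lemma is a one-line remark that it is identical to Lemma \ref{cor-varpi-half} with Proposition \ref{bottom-approximation-proposition-sittingCy} in place of Proposition \ref{interior-approximation-proposition-halfCyl}, which is precisely what you have carried out in detail. The geometric bookkeeping you flag at the end (the inclusions near the right-angle corner and the half-cylinder versus full-cylinder measure comparisons) is indeed where the details live, and you handle them in the same way the paper handles the analogous bottom case in Lemma \ref{cor-bdry-varpi}.
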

\begin{proof} The proof is exactly the same as that of Lemma  \ref{cor-varpi-half}, where we use Proposition \ref{bottom-approximation-proposition-sittingCy} instead of Proposition \ref{interior-approximation-proposition-halfCyl}. 
\end{proof}
\noindent
We now combine Lemma \ref{cor-varpi-half-inter}, Lemma \ref{cor-bdry-varpi-half-bottom}, Lemma \ref{cor-varpi-half}, and  Lemma \ref{cor-varpi-bdry-half} to obtain the following  important result.
\begin{proposition}\label{contra-interior-half} Let $\Lambda>0, M_{0}\geq 1$ be fixed.  There exists a constant $\varpi = \varpi(n, \Lambda, M_0)> 1$ such that for  any $\epsilon > 0 $, there exists a small constant $\delta= \delta(\epsilon, \Lambda, M_0, n)>0$ so that the following statement holds:For every $\mu \in A_2(\R)$ with $[\mu]_{A_2} \leq M_0$, and for every $\mathbb{A} \in \mathcal{B}(Q_2^+, \Lambda, M_0, \mu)$ such that \eqref{Q-2-plus-ellipticity-condition}-\eqref{PBMO-Q-2-plus} hold,  if $u\in W^{1, 2}(Q_2, \mu)$ is any  weak solution to \eqref{half-cyllinder-eqn}, and if $Z\in \overline{Q}_1^{+}$ and $0<r< 1/24$ so that 
\begin{equation}\label{uplevel-set-condition}
\mu( \{X\in Q^{+}_1: \mathcal{M}_{\mu, Q^{+}_{2}}(|\nabla u|^{2})  > \varpi^{2}\}\cap D_{r}(Z)) \geq \epsilon \mu(D_{r}(Z)), 
\end{equation}
then 
\begin{equation}\label{max-f-small-contra-half}
D_{r}(Z)\cap Q_{1}^{+}\subset  \{X\in Q^{+}_{1}: \mathcal{M}_{\mu, Q^{+}_{2}} (|\nabla u|^{2}) > 1 \}\cup \{X\in Q^{+}_1: \mathcal{M}_{\mu, Q_2^+}\left(\left|\F/\mu\right|^{2} \right) + \mathcal{M}_{\mu, Q_2^+}\left(\left|f/\mu\right|^{2}\right) >\delta^{2} \}. 
\end{equation}
\end{proposition}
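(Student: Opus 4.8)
The plan is to argue by contraposition, following closely the proof of Proposition \ref{contra-interior}, but now accounting for the four distinct portions of $\partial Q_2^+$: the interior, the base $B_2^+\times\{0\}$, the flat side $T_2\times(0,2)$, and the corner edge $T_2\times\{0\}$. First I would fix $\varpi=\max\{\varpi_3,\varpi_4,\varpi_5,\varpi_6\}$ and take $\delta$ to be the minimum of $\delta_3,\delta_4,\delta_5,\delta_6$, each evaluated at $c_n\epsilon$ for a small dimensional constant $c_n=c_n(M_0,n)$ that absorbs the doubling losses incurred below. Assuming that \eqref{max-f-small-contra-half} fails, there is a point $X_0\in D_r(Z)\cap Q_1^+$ with $\mathcal{M}_{\mu,Q_2^+}(|\nabla u|^2)(X_0)\le 1$ and $\mathcal{M}_{\mu,Q_2^+}(|\F/\mu|^2)(X_0)+\mathcal{M}_{\mu,Q_2^+}(|f/\mu|^2)(X_0)\le\delta^2$, and the goal is to deduce $\hmu(\{X\in Q_1^+:\mathcal{M}_{\mu,Q_2^+}(|\nabla u|^2)>\varpi^2\}\cap D_r(Z))<\epsilon\,\hmu(D_r(Z))$, which contradicts \eqref{uplevel-set-condition}.

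Next I would split into four cases according to whether $D_{2r}(Z)\subset Q_2^+$, or $D_{2r}(Z)$ meets only the base, or only the flat side, or the corner edge. In the interior case Lemma \ref{cor-varpi-half-inter} applies verbatim on $D_r(Z)$ with $X_0$ witnessing the nonempty intersection. If $D_{2r}(Z)$ meets only the base, then writing $Z=(z,z_{n+1})$ we have $z_{n+1}\le 2r$, so $Z_0=(z,0)\in\overline{B}_1^+\times\{0\}$ satisfies $X_0\in D_r(Z)\subset Q_{3r}(Z_0)$ and $Q_{6r}(Z_0)\subset Q_2^+$; then Lemma \ref{cor-bdry-varpi-half-bottom} applied on $D_{3r}(Z_0)$, combined with the doubling inequality $\hmu(D_{3r}(Z_0))\le C(M_0,n)\hmu(D_r(Z))$ of Lemma \ref{doubling}, yields the desired bound provided $c_n C(M_0,n)\le 1$. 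The flat-side case is handled identically, replacing the $x_n$-coordinate of $Z$ by $0$ to obtain $Z_0\in T_1\times(0,1)$ and invoking Lemma \ref{cor-varpi-half}; and the corner case takes $Z_0\in T_1\times\{0\}$ and invokes Lemma \ref{cor-varpi-bdry-half}. In all three boundary cases it is precisely the requirement that the enlarged cylinder still lie inside $Q_2^+$ that forces the hypothesis $r<1/24$, the threshold being smaller than the $1/6$ of Proposition \ref{contra-interior} because near the corner one must enlarge in both the $x_n$- and the $y$-directions.

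The routine ingredient is the doubling bookkeeping that transfers the estimate from the enlarged cylinder back onto $D_r(Z)$; I expect the genuinely delicate point to be the case distinction near the corner, where one must verify that a single half cylinder $Q^+_{cr}(Z_0)$ based on $T_2\times\{0\}$ simultaneously contains $D_r(Z)$, sits inside $Q_2^+$, and carries exactly the mixed (base-plus-flat-side) boundary data to which Lemma \ref{cor-varpi-bdry-half} applies. In particular, when $D_{2r}(Z)$ meets the flat side but \emph{not} the base, one must choose $Z_0$ and the enlarged cylinder so that the latter still does not meet the base, so that the flat-side Lemma \ref{cor-varpi-half} --- and not the corner Lemma \ref{cor-varpi-bdry-half} --- is the one that applies. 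Once these geometric inclusions are pinned down, the required measure estimate in each case is immediate from the corresponding density lemma, Lemma \ref{doubling}, and the choice of $\delta$.
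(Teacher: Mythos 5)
Your overall framework is correct, and the paper follows the same contradiction/covering strategy with the same choice of $\varpi$ and (up to constants absorbed in $\delta$) the same small $\delta$. However, there is a genuine gap in how you handle the cases where $D_{2r}(Z)$ meets exactly one of the two boundary pieces.

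In your ``base only'' case you assert that $Q_{6r}(Z_0)\subset Q_2^+$ for $Z_0=(z,0)$, but this requires $z_n>6r$, whereas the hypothesis ``$D_{2r}(Z)$ meets the base but not the flat side'' only guarantees $z_n>2r$. When $2r<z_n\le 6r$, the enlarged cylinder $Q_{6r}(Z_0)$ protrudes past $T_2\times(0,2)$ and Lemma \ref{cor-bdry-varpi-half-bottom} does not apply. Similarly, in your ``flat side only'' case you say ``one must choose $Z_0$ and the enlarged cylinder so that the latter still does not meet the base,'' but such a choice is not always available: if $2r<z_{n+1}\le 6r$, any enlargement large enough to swallow $D_r(Z)$ with a projected centre on $T_1\times(0,1)$ will cross $B_2^+\times\{0\}$. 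The paper's resolution is not to \emph{avoid} this but to \emph{detect} it and fall back: in case (ii), once $D_{6r}^+(Z_1)$ is seen to meet the base, one projects a second time to the corner point $Z_2=(z',0,0)\in\overline T_1\times\{0\}$, and the inclusion $D_r(Z)\cap Q_1^+\subset Q^+_{12r}(Z_2)\subset Q^+_{24r}(Z_2)\subset Q_2^+$ (this is where $r<1/24$ enters) lets one apply Lemma \ref{cor-varpi-bdry-half} on $D_{12r}(Z_2)$; the analogous fallback to $Z_2$ is used in case (iii) when $Q_{6r}(Z_3)$ meets $T_2\times(0,2)$. Your four-way split by which boundary pieces $D_{2r}(Z)$ itself touches is the right first cut, but within the two ``one-piece-only'' cases you need a further dichotomy (does the enlargement stay clear of the other piece?) with the corner lemma as the catch-all; without that, the proof does not close. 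The corner case (iv) itself is, as you note, the easiest once $Z_2$ has been identified.
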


\begin{proof} Let us choose $\varpi = \max\{\varpi_3, \varpi_4, \varpi_5, \varpi_6\}$, and
\[
\begin{split}
\delta  = & \min\Big \{\delta_3 (\epsilon, \Lambda, M_0, n), \ \delta_4(3^{-n-2}M_0^{-1} \epsilon,  \Lambda, M_0,n),\ \delta_5(3^{-n-2}M_0^{-1} \epsilon, \Lambda, M_0, n), \\
& \quad \quad  \delta_6( 12^{-n-2}M_0^{-1} \epsilon, \Lambda, M_0, n), \delta_6(4^{-n-2} M_0^{-1} \epsilon, \Lambda, M_0, n) \Big\},
\end{split}
\]
where $\varpi_i,$ and $\delta_i$ for $i=3, \cdots, 6$ are positive numbers obtained in Lemmas \ref{cor-varpi-half-inter}-\ref{cor-varpi-bdry-half} respectively.
We prove Proposition \ref{contra-interior-half} with this choice of $\delta$ and $\varpi$. We write $Z = (z', z_{n}, z_{n+1}) \in \overline{Q}_{1}^{+}$ and $z = (z', z_n) \in \overline{B}_1^+$.  Because $Z \in \overline{Q}_1^+$ and $r < 1/24$, $D_{2r}(Z)$ could possibly intersects with $Q_2^+$ either on its flat side, its base, or both. Hence, we only have the following four cases:
\begin{itemize}
\item[\textup{(i)}] $D_{2r}(Z)\cap \Big(T_{2}\times[0, 2] \Big) = \emptyset$, and $D_{2r}(Z)\cap \Big(B_{2}^+\times \{0\} \Big)= \emptyset$,
\item[\textup{(ii)}] $D_{2r}(Z)\cap \Big(T_{2}\times[0, 2] \Big) \neq \emptyset$, and $D_{2r}(Z)\cap \Big(B_{2}^+\times \{0\} \Big)= \emptyset$,
\item[\textup{(iii)}] $D_{2r}(Z)\cap \Big( T_{2}\times[0, 2]\Big) = \emptyset$, and $D_{2r}(Z)\cap \Big( B_{2}^+ \times \{0\} \Big)\neq \emptyset$,
\item[\textup{(iv)}] $D_{2r}(Z)\cap \Big( T_{2}\times[0, 2]\Big) \neq \emptyset$, and $D_{2r}(Z)\cap \Big( B_{2}^+\times \{0\} \Big)\neq \emptyset$.
\end{itemize}
Observe that in case (i), $D_{2r}(Z)\subset Q_{2}^{+}$, then the conclusion \eqref{max-f-small-contra-half} follows from Lemma \ref{cor-varpi-half-inter}, and our choice that $\delta <\delta_3 (\epsilon, \Lambda, M_0, n)$. To see that \eqref{max-f-small-contra-half} still holds in cases of (ii) - (iv), we will argue by contradiction.  Assume  that there is $X_0 \in D_{r}(Z)\cap Q^{+}_1$ but $X_0$ is not in the set on the right hand side of \eqref{max-f-small-contra-half}. Then, it follows that 
\begin{equation} \label{X-zero-half-cyl-def}
\mathcal{M}_{\mu, Q^{+}_{2}} (|\nabla u|^{2})(X_0) \leq 1,\quad \text{and,}\quad \mathcal{M}_{\mu, Q_2^+}\left(\left|\F/\mu\right|^{2} \right)(X_0) + \mathcal{M}_{\mu, Q_2}\left(\left|f/\mu\right|^{2} \right) (X_0) \leq \delta^{2}. 
\end{equation}
Let us first consider the case (ii) when $D_{2r}(Z)\subset Q_{2}$, and $D_{2r}(Z)\cap (T_{2}\times[0, 2]) \neq \emptyset$. It follows that $Z_{1} = (z', 0,  z^{n+1}) \in D_{2r}(Z)\cap (T_{1}\times[0, 2])$.  
It is then clear that 
 \[
X_{0}\in  D_{r}(Z)\cap Q_{1}^{+} \subset D_{3r}(Z_{1})\cap Q_{1}^{+}. 
 \]
 Now if $D_{6r}^{+} (Z_{1})\subset Q_{2}^{+}$, because $\delta \leq \delta_5(3^{-n-2} M_0^{-1}\epsilon, \Lambda, M_0, n)$, and \eqref{X-zero-half-cyl-def}, we see that all  the conditions for Lemma \ref{cor-varpi-half} are satisfied. Hence, we apply this lemma to obtain that 
 \[
 \begin{split}
 \hmu( \{x\in Q^{+}_1: \mathcal{M}_{\mu, Q^{+}_{2}}(|\nabla u|^{2})  > \varpi^{2}\}\cap D_{r}(Z))
 &\leq \hmu( \{x\in Q^{+}_1: \mathcal{M}_{\mu, Q^{+}_{2}}(|\nabla u|^{2})  > \varpi^{2}\}\cap D_{3r}(Z_{1}))\\
  &< \frac{\epsilon}{M_{0}3^{n+2}} \hmu(D_{3r}(Z_{1}))\leq \epsilon \hmu(D_{r}(Z)),
 \end{split}
 \]
 where we have used Lemma \ref{doubling}  in the last inequality. This estimate contradicts condition \eqref{uplevel-set-condition}. On the other hand, if $D_{6r}^{+} (Z_{1})\cap \Big( B_{2}\times\{0\} \Big) \neq \emptyset$, then we  consider the point $Z_{2} = (z', 0, 0)\in \overline{D_{6r}^{+}} (Z_{1})$. It follows from the assumption $r <1/24$ that 
 \begin{equation} \label{Z-2-Q-12r}
 X_{0}\in  D_{r}(Z)\cap Q_{1}^{+}\subset Q_{12r}^{+} (Z_{2}) \subset Q^{+}_{24r}(Z_{2})\subset Q_{2}^{+}. 
 \end{equation}
Now all the conditions of now Lemma \ref{cor-varpi-bdry-half} are satisfied, and $\delta < \delta_6(12^{-n-2} M_0^{-1} \epsilon , \Lambda, M_0, n)$. Then we apply Lemma \ref{cor-varpi-bdry-half}, and Lemma \ref{doubling} to obtain the estimate
\[
 \begin{split}
 \hmu( \{x\in Q^{+}_1: \mathcal{M}_{\mu, Q^{+}_{2}}(|\nabla u|^{2})  > \varpi^{2}\}\cap D_{r}(Z))
 &\leq \hmu( \{x\in Q^{+}_1: \mathcal{M}_{\mu, Q^{+}_{2}}(|\nabla u|^{2})  > \varpi^{2}\}\cap D_{12r}(Z_{2}))\\
  &<\frac{\epsilon}{M_{0}12^{n+2}} \hmu(D_{12r}(Z_{2}))\leq \epsilon \hmu(D_{r}(Z)). 
 \end{split}
 \]
 Again, this also contradicts condition \eqref{uplevel-set-condition}. Therefore,  the proof with case (ii) is completed.  
 
 Let us now consider case (iii). In this case,  we see that $Z_3  = (z, 0) \in D_{2r}(Z)\cap \Big( B_{2}\times \{0\} \Big)$, and
\[
X_{0} \in D_{r}(Z)\cap Q^{+}_{1} \subset Q_{3r}(Z_{3}). 
\] 
As before, we need to consider the cases if $Q_{6r}(Z_3) \cap (T_2 \times [0,2] )$ empty or not. If $Q_{6r}(Z_3) \cap (T_2 \times [0,2] )= \emptyset$, then $Q_{6r}(Z_3) \subset Q_2^+$. As before, since $\delta < \delta_4(3^{-n-2} M_0^{-1} \epsilon , \Lambda, M_0, n)$, we can apply Lemma \ref{cor-bdry-varpi-half-bottom} to obtain a contradiction to \eqref{uplevel-set-condition}.
On the other hand, if $Q_{6r}(Z_3) \cap (T_2 \times [0,2] ) \neq \emptyset$, then 
$Z_2 = (z', 0, 0) \in Q_{6r}(Z_3) \cap (T_2 \times [0,2] )$. Therefore, \eqref{Z-2-Q-12r} holds again, and we can apply Lemma \ref{cor-varpi-bdry-half} to obtain a contradiction.

It remains to consider the case (iv). The proof is very similar to the previous cases but much simpler. In this case,
\[
X_0 \in D_r(Z) \cap Q_1^+ \subset Q_{4r}(Z_2)^+ \subset Q_{8r}(Z_2)^+ \subset Q_2^+.
\]
Since $\delta < \delta_6(4^{-n-2} M_0^{-1}\epsilon, \Lambda, M_0, n)$, we can apply Lemma \ref{cor-varpi-bdry-half} as before to obtain the contradiction to \eqref{uplevel-set-condition}. The proof is now complete.
\end{proof}
Our next statement, which is the key in obtaining the higher gradient integrability of solution, gives the level set estimate of $\mathcal{M}_{\mu, Q_{2}}(|\nabla u|^{2})$ in terms of that of $\mathcal{M}_{\mu, Q_2^+}\left(\left|\F/\mu \right|^{2} \right)$ and $\mathcal{M}_{\mu, Q_2^+}\left(\left|f/\mu\right|^{2}\right).$ The proof relies on what is commonly called the ''growing ink spots lemma'' that has been developed by Krylov-Safonov is based on a Vitali-type covering lemma stated below. 
The proof of this lemma can be found in \cite[Lemma 3.8]{MP-1}, see also \cite[Lemma 5.4]{Byun-P}. 
\begin{lemma}\label{Vitali}
Suppose that $M_{0}>0$ and let $\mu$ be an $A_{p}$ weight for some $p>1$ such that $[\mu]_{A_{p}} \leq M_{0}$. 
Let $0< \rho_{0}<1$ be a fixed number and $F \subset E \subset Q_1$ be measurable sets for which there exists $\epsilon, \rho_{0}\in (0, 1/4)$ such that 
\begin{itemize}
\item[(i)]  $\mu(F) < \epsilon \mu(D_{1}(Z)) $ for all $Z\in \overline{Q}_{1}$, and 
\item[(ii)] for all $X\in Q_{1}$ and $\rho \in (0, \rho_{0}]$, if $\mu (F\cap D_{\rho}(X)) \geq \epsilon \mu(D_{\rho}(X))$, 
then $D_{\rho}(X)\cap Q_{1} \subset E. $
\end{itemize}  
Then there is some constant $\mathcal{K} = \mathcal{K}(n, p,M_{0}) > 0$ so that the following estimate holds 
\[
\mu(F) \leq \epsilon \mathcal{K} \,\mu(E). 
\] 
\end{lemma}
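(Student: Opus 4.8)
The plan is to run the Calder\'on--Zygmund ``stopping--scale'' (growing ink--spots) argument of Krylov--Safonov, adapted to the non-dyadic cylinders $D_\rho$, as in \cite{MP-1, Byun-P}. Throughout, $\mu$ stands for the $(n+1)$-dimensional weight $\hmu$; one uses that $\mu\in A_p(\R^{n+1})$ with $[\mu]_{A_p}\le M_0$ is a doubling measure and that each cylinder $D_\rho(X)$ is contained in, and contains, Euclidean balls of $\R^{n+1}$ of radii comparable to $\rho$, so that Lemma \ref{doubling} applies to the $D_\rho(X)$ with constants depending only on $n,p,M_0$.

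First I would fix a stopping scale. Since $\mu$ is doubling, the Lebesgue differentiation theorem holds, so for $\mu$-a.e.\ $X\in F$ one has $\mu(F\cap D_\rho(X))/\mu(D_\rho(X))\to1$ as $\rho\to0^+$; discard the exceptional null set. As $\mu\,dX\ll dX$, the maps $\rho\mapsto\mu(F\cap D_\rho(X))$ and $\rho\mapsto\mu(D_\rho(X))$ are continuous, hence $g_X(\rho):=\mu(F\cap D_\rho(X))/\mu(D_\rho(X))$ is continuous with $g_X(0^+)=1>\epsilon$. I would then set
\[
\rho_X \;=\; \inf\big\{\rho\in(0,\rho_0]:\; \mu(F\cap D_\rho(X))\le \epsilon\,\mu(D_\rho(X))\big\}
\]
whenever this set is non-empty. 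For such $X$, continuity of $g_X$ forces $g_X(\rho_X)=\epsilon$, which yields \emph{simultaneously} the two-sided density identity $\mu(F\cap D_{\rho_X}(X))=\epsilon\,\mu(D_{\rho_X}(X))$ --- the source of the factor $\epsilon$ in the conclusion --- and, since $g_X(\rho_X)=\epsilon\ge\epsilon$ and $\rho_X\le\rho_0$, the inclusion $D_{\rho_X}(X)\cap Q_1\subset E$ coming from hypothesis (ii). The remaining ``bad'' set $F_\flat$ of points with $g_X(\rho)\ge\epsilon$ for \emph{every} $\rho\in(0,\rho_0]$ satisfies $F_\flat\subset E$ directly, by (ii) at $\rho=\rho_0$; using hypothesis (i), the comparability of the $\mu(D_1(Z))$ for $Z\in\overline{Q}_1$, and Lemma \ref{doubling}(i) at scale $\rho_0$, one checks that $F_\flat$ is $\mu$-negligible once $\rho_0$ lies below a threshold $\rho_*(n,p,M_0)$ (otherwise $\mathcal K$ should be allowed to depend on $\rho_0$ as well, which is harmless since in all applications $\rho_0$ is a fixed absolute constant).

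Next I would cover and sum. The family $\{D_{\rho_X}(X)\}$ over the retained density points $X\in F$ has radii $\le\rho_0$ and covers $\mu$-a.e.\ of $F$; by the Besicovitch covering theorem (valid for the metric $\|(x,y)\|=\max(|x|,|y|)$, which is equivalent to the Euclidean one on $\R^{n+1}$) I extract a countable subfamily $\{D_i:=D_{\rho_{X_i}}(X_i)\}$ with $F\subset\bigcup_i D_i$ up to a $\mu$-null set and bounded overlap $\sum_i\chi_{D_i}\le N(n)$. Then
\[
\mu(F)\;\le\;\sum_i\mu(F\cap D_i)\;=\;\epsilon\sum_i\mu(D_i)\;\le\;\epsilon\,C(n,p,M_0)\sum_i\mu(D_i\cap Q_1)\;\le\;\epsilon\,C(n,p,M_0)\,N(n)\,\mu(E),
\]
where the equality is the two-sided density identity, the third step uses that $D_i\cap Q_1$ occupies a fixed Lebesgue fraction of $D_i$ (convexity of $B_1$, together with $\rho_0<1/4$ to control the $y$-interval) followed by Lemma \ref{doubling}(i), and the last step uses $D_i\cap Q_1\subset E$, the bounded overlap of the $D_i$, and $E\subset Q_1$. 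This gives $\mathcal K=C(n,p,M_0)\,N(n)$.

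The main obstacle is the bookkeeping in the stopping-scale step: guaranteeing that a stopping radius $\rho_X\le\rho_0$ with \emph{exact} density $\epsilon$ exists for $\mu$-a.e.\ $X\in F$ --- equivalently, that $F_\flat$ is negligible --- which is exactly where hypothesis (i), the comparability of the $\mu(D_1(Z))$, and the $A_p$-doubling of $\mu$ are used, and keeping all the cylinder-versus-ball and doubling comparisons uniform in $n,p,M_0$. Once the stopping scale is set up, the covering and summation are routine. (An equivalent route replaces the continuous stopping radius and Besicovitch covering by a Calder\'on--Zygmund decomposition of $Q_1$ into a dyadic grid adapted to the geometry of the $D_\rho$, with the selected cubes playing the role of the $D_i$ above.)
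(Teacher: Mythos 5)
The paper itself does not prove Lemma \ref{Vitali}; it cites \cite[Lemma 3.8]{MP-1} and \cite[Lemma 5.4]{Byun-P}, so the only thing to assess is whether your argument is sound. Your overall architecture (Lebesgue differentiation, a stopping radius $\rho_X$ at which the density $g_X(\rho):=\mu(F\cap D_\rho(X))/\mu(D_\rho(X))$ first hits $\epsilon$, hypothesis (ii) to place the selected cylinder inside $E$, Besicovitch covering with bounded overlap, and the cylinder-versus-$Q_1$ comparison via Lemma \ref{doubling}(i)) is indeed the standard Krylov--Safonov ``growing ink-spots'' route, and the final summation chain is correct granted a uniform bound $\mu(F\cap D_i)\le C\epsilon\,\mu(D_i)$ on the selected cylinders.

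The gap is in the treatment of the bad set $F_\flat=\{X\in F:\ g_X(\rho)>\epsilon\ \text{for all}\ \rho\in(0,\rho_0]\}$. You assert that one can make $F_\flat$ $\mu$-negligible by taking $\rho_0$ below a threshold $\rho_*(n,p,M_0)$; this is false, and the direction is actually reversed. Take $\mu\equiv1$ and $F=D_{\rho_0/2}(X_0)$ for some interior $X_0\in Q_1$. For $X$ within $\rho_0/4$ of $X_0$ one has $D_\rho(X)\supset F$ for $\rho\in[3\rho_0/4,\rho_0]$, and for smaller $\rho$ the density only increases, so $\inf_{\rho\le\rho_0}g_X(\rho)=g_X(\rho_0)\ge c(n)2^{-(n+1)}$; choosing $\epsilon$ in the nonempty window $C\rho_0^{\,n+1}<\epsilon<c(n)2^{-(n+1)}$ makes hypothesis (i) hold while a set of positive measure lies in $F_\flat$. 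Decreasing $\rho_0$ only widens this window: small cylinders $D_{\rho_0}(X)$ have small $\mu$-mass, so high density at scale $\rho_0$ is entirely compatible with $\mu(F)$ being small. Hence the threshold $\rho_*$ does not exist, and the stopping-time construction as you wrote it is incomplete. Moreover, even if $F_\flat$ were absorbed into $E$ (which it is, by (ii) at $\rho=\rho_0$), that gives $F_\flat\subset E$ but no factor $\epsilon$; you would lose the gain you are trying to prove.

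The repair is short and uses exactly the ingredients you name, but pointed the other way. Define for $\mu$-a.e.\ $X\in F$ the \emph{largest} admissible radius, $\rho_X:=\max\{\rho\in(0,\rho_0]:\ g_X(\rho)\ge\epsilon\}$, which exists by continuity and Lebesgue differentiation. Then (ii) gives $D_{\rho_X}(X)\cap Q_1\subset E$, and the needed upper bound on the density holds uniformly: if $\rho_X<\rho_0$, maximality and continuity force $g_X(\rho_X)=\epsilon$, i.e.\ $\mu(F\cap D_{\rho_X}(X))=\epsilon\,\mu(D_{\rho_X}(X))$; if $\rho_X=\rho_0$, then $\mu(F\cap D_{\rho_0}(X))\le\mu(F)<\epsilon\,\mu(D_1(X))\le\epsilon\,M_0\rho_0^{-p(n+1)}\mu(D_{\rho_0}(X))$ by hypothesis (i) and Lemma \ref{doubling}(i). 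In either case $\mu(F\cap D_{\rho_X}(X))\le C(n,p,M_0,\rho_0)\,\epsilon\,\mu(D_{\rho_X}(X))$, and the rest of your covering and summation goes through verbatim, yielding $\mathcal K$ depending on $n,p,M_0$ and the fixed $\rho_0$ (which is harmless, as $\rho_0$ is a universal constant in the applications, e.g.\ $\rho_0=1/24$ in Proposition \ref{contra-interior-half}).
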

\begin{remark} \label{Vitali-Q_1+} Lemma \ref{Vitali} still holds if we replace $Q_1$ by $Q_1^+ = B_1^+ \times (0,1)$.
\end{remark}

\begin{lemma} \label{interior-density-est-l-half}  
Suppose that $\Lambda >0, M_{0} \geq 1$, and let $\varpi$ be as in Proposition \ref{contra-interior-half}. Then, for every  $\epsilon > 0 $, there is a small $\delta= \delta(\epsilon, \Lambda, M_0, n) > 0$  with the property that for every $\mu \in A_2$ with $[\mu]_{A_2} \leq M_0$, and for every $\mathbb{A} \in \mathcal{B}(Q_2^+, \Lambda, M_0, \mu)$ such that \eqref{Q-2-plus-ellipticity-condition}-\eqref{PBMO-Q-2-plus} hold, and for ${\bf F}/\mu, f/\mu \in L^2(Q^{+}_2, \mu)$, if $u\in W^{1, 2}(Q^{+}_{2}, \mu)$ is a weak solution to \eqref{half-cyllinder-eqn}
and  if
\[
\hmu(\{X\in Q^{+}_{1} : \mathcal{M}_{\mu, Q^{+}_{2}} (|\nabla u|^{2})  > \varpi^{2}\}) <\epsilon \hmu(D_{1}(Z)), \quad \forall \ Z \in \overline{Q}^{+}_1,
\]
then there exists a constant $\mathcal{K} = \mathcal{K}(n, M_{0})>0$ such that for any $k\in \mathbb{N}$ and $\epsilon_{1} = \mathcal{K} \, \epsilon$ we have that 
\[
\begin{split}
 \hmu(\{X\in Q^{+}_{1}: \mathcal{M}_{\mu, Q^{+}_{2}}(|\nabla u|^{2}) > \varpi^{2k} \}) &\leq \sum_{i=1}^{k} \epsilon_{1}^{i} \hmu\left(\{X\in Q^{+}_{1}: \mathcal{M}_{\mu, Q_2^+}\left(\left|\F/\mu\right|^{2}\right) + \mathcal{M}_{\mu, Q_2^+}\left(\left|f/\mu\right|^{2} \right)>\delta^{2} \varpi^{2(k-i)} \}\right)\\
&\quad\quad+ \epsilon_{1}^{k}\hmu(\{X\in Q^{+}_{1}: \mathcal{M}_{\mu, Q^{+}_{2}} (|\nabla u|^{2}) > 1 \}).
\end{split}
\]
\end{lemma}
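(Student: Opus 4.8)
The plan is to prove the inequality by induction on $k$, using Proposition \ref{contra-interior-half} together with the Vitali-type covering lemma (Lemma \ref{Vitali} and Remark \ref{Vitali-Q_1+}) at each step. For the base case $k=1$, I would set
\[
F = \{X \in Q_1^+ : \mathcal{M}_{\mu, Q_2^+}(|\nabla u|^2) > \varpi^2\}
\]
and
\[
E = \{X \in Q_1^+ : \mathcal{M}_{\mu, Q_2^+}(|\nabla u|^2) > 1\} \cup \{X \in Q_1^+ : \mathcal{M}_{\mu, Q_2^+}(|\F/\mu|^2) + \mathcal{M}_{\mu, Q_2^+}(|f/\mu|^2) > \delta^2\}.
\]
Clearly $F \subset E$. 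The hypothesis $\hmu(F) < \epsilon\, \hmu(D_1(Z))$ for all $Z \in \overline{Q}_1^+$ gives condition (i) of Lemma \ref{Vitali}. Condition (ii) is precisely the contrapositive of Proposition \ref{contra-interior-half}: if $\hmu(F \cap D_\rho(X)) \geq \epsilon\, \hmu(D_\rho(X))$ for some $X \in Q_1^+$ and $\rho \in (0, \rho_0]$ with $\rho_0 = 1/24$, then $D_\rho(X) \cap Q_1^+ \subset E$. Hence Lemma \ref{Vitali} yields $\hmu(F) \leq \epsilon \mathcal{K}\, \hmu(E)$, which after splitting $\hmu(E)$ into the two pieces above is the $k=1$ statement with $\epsilon_1 = \mathcal{K}\epsilon$.

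For the inductive step, I would pass from $u$ at "level $\varpi^{2k}$" to "level $\varpi^{2(k+1)}$" by a rescaling argument: apply the $k=1$ estimate not to $u$ but to the normalized solution $u/\varpi^k$ with right-hand side $\F/\varpi^k$ and data $f/\varpi^k$, which is again a weak solution of the same type of equation with the same coefficient matrix. Under this rescaling, $\mathcal{M}_{\mu,Q_2^+}(|\nabla(u/\varpi^k)|^2) = \varpi^{-2k}\mathcal{M}_{\mu,Q_2^+}(|\nabla u|^2)$, so the level set $\{\mathcal{M}_{\mu,Q_2^+}(|\nabla u|^2) > \varpi^{2(k+1)}\}$ becomes the level-$\varpi^2$ set for $u/\varpi^k$, and the level-$1$ set for $u/\varpi^k$ is $\{\mathcal{M}_{\mu,Q_2^+}(|\nabla u|^2) > \varpi^{2k}\}$. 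The hypothesis of the lemma, applied at level $\varpi^{2k}$, provides exactly the smallness needed to invoke the $k=1$ case for $u/\varpi^k$; one then obtains
\[
\hmu(\{\mathcal{M}_{\mu,Q_2^+}(|\nabla u|^2) > \varpi^{2(k+1)}\}) \leq \epsilon_1\, \hmu\big(\{\mathcal{M}_{\mu,Q_2^+}(|\F/\mu|^2) + \mathcal{M}_{\mu,Q_2^+}(|f/\mu|^2) > \delta^2 \varpi^{2k}\}\big) + \epsilon_1\, \hmu(\{\mathcal{M}_{\mu,Q_2^+}(|\nabla u|^2) > \varpi^{2k}\}),
\]
and substituting the inductive hypothesis for the last term, re-indexing the sum, gives the claimed formula at $k+1$.

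The main technical point to check carefully is the self-consistency of the rescaling: one must verify that $u/\varpi^k$ still satisfies the normalization $\mathcal{M}_{\mu,Q_2^+}(|\nabla(u/\varpi^k)|^2) < 1$ on the relevant small cylinders wherever Proposition \ref{contra-interior-half} is invoked, and that the smallness conditions \eqref{Q-2-plus-ellipticity-condition}--\eqref{PBMO-Q-2-plus} on $\A$ are unaffected (they are, since $\A$ is unchanged and $\delta$ is chosen once and for all from Proposition \ref{contra-interior-half} depending only on $\epsilon$). The constant $\mathcal{K} = \mathcal{K}(n, M_0)$ is the one produced by Lemma \ref{Vitali}, and since it does not depend on $k$, the same $\epsilon_1 = \mathcal{K}\epsilon$ works at every stage; this is what makes the geometric-type bound $\sum_i \epsilon_1^i(\cdots) + \epsilon_1^k(\cdots)$ close up. I expect the bookkeeping in the re-indexed sum — ensuring the exponent $\varpi^{2(k-i)}$ tracks correctly through the induction — to be the only place demanding genuine care; everything else is a direct citation of Proposition \ref{contra-interior-half} and Lemma \ref{Vitali}.
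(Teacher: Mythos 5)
Your proposal is correct and follows essentially the same approach as the paper: prove the base case $k=1$ by feeding the contrapositive of Proposition \ref{contra-interior-half} into the Vitali-type covering lemma (Lemma \ref{Vitali} with Remark \ref{Vitali-Q_1+}), then obtain the general case by a rescaling-and-induction argument. The only cosmetic difference is in the inductive step, where the paper rescales by $\varpi$ and invokes the inductive ($k$-level) estimate followed by the $k=1$ case, while you rescale by $\varpi^k$ and apply the $k=1$ case first then substitute the inductive hypothesis; both re-indexings produce the same formula.
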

 \begin{proof} Let $\delta = \delta(\epsilon, \Lambda, M_0, n)$ be defined in Proposition \ref{contra-interior-half}. We will use induction to prove the lemma. For the case $k=1$, we are going to apply Lemma \ref{Vitali} with Remark \ref{Vitali-Q_1+}, by taking 
 \[E_{1}= \{X\in Q^{+}_{1}: \mathcal{M}_{\mu, Q^{+}_{2}}(|\nabla u|^{2}) > \varpi^{2} \}
 \] 
and 
\[
E_{2} = \left\{X\in Q^{+}_{1}: \mathcal{M}_{\mu, Q_2^+}\left(\left|\F/\mu\right|^{2} \right) + \mathcal{M}_{\mu, Q_2}\left(\left|f/\mu\right|^{2} \right)>\delta^{2}  \}\cup \{X\in  Q^{+}_1: \mathcal{M}_{\mu, Q^{+}_{2}} (|\nabla u|^{2}) > 1 \right\}. 
\]
Clearly, $E_{1} \subset E_{2} \subset Q^{+}_1$. Moreover, by the assumption,  $\hmu(E_{1}) < \epsilon  \hmu(D_{1}(Z))$, for all $Z \in \overline{Q}^{+}_1$. Also for any $Z\in \overline{Q}^{+}_{1}$ and $\rho\in (0,  1/24) $, if $\hmu (E_{1} \cap D_{\rho}(Z))\geq \epsilon \mu((D_{\rho}(Z))$, then   by Proposition \ref{contra-interior-half} we have that 
\[
D_{\rho}(Z)\cap Q^{+}_{1}\subset E_{2}.
\]
Hence,  all the conditions of  Lemma \ref{Vitali} are satisfied and hence, by Remark \ref{Vitali-Q_1+}, 
\[
\hmu(E_{2}) \leq \epsilon_{1} \hmu(E_{2}).
\]
That proves the case when $k=1$. Assume it is true for $k$. We will show the statement for $k+1$. We normalize by dividing by $\varpi$ as  $u_{\varpi} = u/\varpi$, ${\bf F}_{\varpi} = {\bf F}/\varpi$ and $f_{\varpi}$, and we see that since $\varpi > 1$ we have 
\[
\begin{split}
\hmu(\{X\in Q^{+}_{1}: \mathcal{M}_{\mu, Q^{+}_{2}}(|\nabla u_{\varpi}|^{2}) > \varpi^{2} \}) &= \hmu(\{X\in Q^{+}_{1}: \mathcal{M}_{\mu, Q^{+}_{2}}(|\nabla u|^{2}) > \varpi^{4} \})\\
& \leq \hmu(\{X\in Q^{+}_{1}: \mathcal{M}_{\mu, Q^{+}_{2}}(|\nabla u|^{2}) > \varpi^{2} \}) \leq \epsilon \hmu(D_{1}(Z)), \quad \forall Z \in \overline{Q}^{+}_1.
\end{split}
\]
Therefore, by induction assumption, it follows that 
\[
\begin{split}
 \hmu(\{X\in Q^{+}_{1}: \mathcal{M}_{\mu, Q^{+}_{2}}(|\nabla u|^{2}) > \varpi^{2(k+1)} \})
 &= \hmu(\{X\in Q^{+}_{1}: \mathcal{M}_{\mu, Q^{+}_{2}}(|\nabla u_{\varpi}|^{2}) > \varpi^{2k} \})\quad\quad\\
  &\leq \sum_{i=1}^{k} \epsilon_{1}^{i} \hmu\left(\{X\in Q^{+}_{1}: \mathcal{M}_{\mu, Q_2^+}\left(\left|\F_{\varpi}/\mu\right|^{2} \right) + \mathcal{M}_{\mu, Q_2^+}\left(\left|f_\varpi/\mu\right|^{2} \right)>\delta^{2} \varpi^{2(k-i)} \}\right)\\
&\quad \quad+ \epsilon_{1}^{k}\hmu(\{X\in Q^{+}_{1}: \mathcal{M}^{\mu}(\chi_{Q^{+}_{2}} |\nabla u_{\varpi}|^{2}) > 1 \})\\
& =  \sum_{i=1}^{k} \epsilon_{1}^{i} \hmu\left(\{X\in Q^{+}_{1}: \mathcal{M}_{\mu, Q_2^+}\left(\left|\F/\mu\right|^{2}\right) + \mathcal{M}_{\mu, Q_2^+}\left(\left|f/\mu\right|^{2} \right)>\delta^{2} \varpi^{2(k+1-i)} \}\right)\\
&\quad \quad+ \epsilon_{1}^{k}\hmu(\{X\in Q^{+}_{1}: \mathcal{M}_{\mu, Q^{+}_{2}} (|\nabla u|^{2}) > \varpi^2 \}). 
\end{split}
\]
Applying the case $k=1$ to the last term we obtain that 
\[
\begin{split}
 \hmu \Big(\Big\{X\in Q^{+}_{1}: \mathcal{M}_{\mu, Q^{+}_{2}}(|\nabla u|^{2}) > \varpi^{2(k+1)}\Big \}\Big)
 &\leq \sum_{i=1}^{k + 1} \epsilon_{1}^{i} \hmu \left(\Big \{X \in Q^{+}_{1}: \mathcal{M}_{\mu, Q_2^+}\left(\left|\F/\mu\right|^{2} \right) +  \mathcal{M}_{\mu, Q_2^+}\left( \left |f/\mu \right|^{2}\right) >\delta^{2} \varpi^{2(k+1-i)} \Big\} \right)\\
&\quad\quad+ \epsilon_{1}^{k+1}\hmu\Big( \Big\{X\in Q^{+}_{1}: \mathcal{M}_{\mu, Q^{+}_{2}} (|\nabla u|^{2}) > 1\Big \}\Big), 
\end{split}
\]
as desired. 
 \end{proof}
\noindent
\begin{proof}[Proof of Theorem \ref{local-grad-estimate-half-cylinder}] Let $\varpi> 1$ be as  given in Lemma \ref{contra-interior-half} and $\epsilon_1$ be as in 
Lemma \ref{interior-density-est-l-half}. We choose $\epsilon>0$ sufficiently small and depending on $\Lambda, n, M_0, p$ so that
\[
\varpi^p \epsilon_1 < 1/2.
\]
From this choice of $\epsilon$, let $\delta = \delta (\epsilon, \Lambda, M_0, n)$ be  defined as in Lemma \ref{interior-density-est-l-half}. It is obvious that $\delta$ depends only on $\Lambda, M_0, n$ and $p$. We now prove Theorem \ref{local-grad-estimate-half-cylinder} with this $\delta$. We first show that we can choose $N$ sufficiently large such that for $u_{N} = u/N$
\begin{equation} \label{in-ter-choice-N-half}
\hmu(\{X\in Q^{+}_{1}: \mathcal{M}_{\mu, Q^{+}_{2}} (|\nabla u_{N}|^{2}) > \varpi^{2} \})\leq \epsilon \hmu(D_{1}(Z)), \quad \forall Z \in \overline{Q}^{+}_{1}. 
\end{equation}
Our choice of such an $N$ can be made as below. Note that since $D_{1}(Z) \subset D_2$ for any $Z\in \overline{Q}^{+}_1$, by the doubling property of the $A_2$-weight, Lemma \ref{doubling}, we 
have
\[
\frac{\hmu(D_{2})}{\hmu(D_{1}(Z))} \leq M_0 \left( \frac{|D_2|}{|D_{1}(Z)|} \right)^2 =  M_0 2^{n+1}.
\]
Moreover, it follows from the weak $(1,1)$ estimate for the maximal function, Lemma \ref{Hardy-Max-p-p}, that 
\[
\hmu(\{X\in Q^{+}_{1}: \mathcal{M}_{\mu, Q^{+}_{2}} (|\nabla u_{N}|^{2}) > \varpi^{2} \}) \leq \frac{C (n, M_{0})}{N^{2} \varpi^{2}} \int_{Q^{+}_{2}}|\nabla u|^{2}\mu(y) dX.
\]
Then, by selecting  $N$ large enough that 
\[
\frac{C(n, M_{0})}{N^{2} \varpi^{2}} \int_{Q^{+}_{2}}|\nabla u|^{2}\mu(y) dX  =\epsilon\frac{\hmu(D_2)}{M_0 2^{n+1}},
\]
we obtain \eqref{in-ter-choice-N-half} as desired. Observe that by this choice and the doubling property of $\mu$, Lemma \ref{doubling}, 
\begin{equation} \label{muBzero-half}
N^{2}\hmu(Q^{+}_{1}) \leq \frac{C(n, M_{0})}{\epsilon \varpi^{2}} \|\nabla u\|^{2}_{L^{2}(Q^{+}_{2},\mu)}.  
\end{equation}
Now consider the sum 
\[
S = \sum_{k=1}^{\infty} \varpi^{pk}\hmu(\{Q^{+}_{1}: \mathcal{M}_{\mu, Q^{+}_{2}}(|\nabla u_{N}|^{2} ) (X) \geq \varpi^{2k}\}). 
\]
Applying Lemma \ref{interior-density-est-l-half} we have that 
\[
\begin{split}
S &\leq \sum_{k=1}^{\infty} \varpi^{pk} \left[\sum_{i=1}^{k } \epsilon_{1}^{i} \hmu\left(\{X\in Q^{+}_{1}: \mathcal{M}_{\mu, Q_2^+}\left(\left|\F_{N}/\mu \right|^{2} \right) + \mathcal{M}_{\mu, Q_2^+}\left(\left|f_{N}/\mu \right|^{2} \right) >\delta^{2} \varpi^{2(k-i)} \}
\right)\right]
\\
&\quad  \quad +\sum_{k=1}^{\infty} \varpi^{pk} \epsilon_{1}^{k}\hmu(\{X\in Q^{+}_{1}: \mathcal{M}_{\mu, Q^{+}_{2}}(|\nabla u_{N}|^{2} ) (x) \geq 1\}).
\end{split}. 
\]
From this and by applying the Fubini's theorem, we infer that 
\[
\begin{split}
S &\leq \sum_{j=1}^{\infty}( \varpi^{p} \epsilon_{1})^{j} \left[\sum_{k=j}^{\infty } \varpi^{p(k-j)} \hmu\left(\{X\in Q^{+}_{1}: \mathcal{M}_{\mu. Q_2+}\left(\left|\F_{N}/\mu\right|^{2} \right) + \mathcal{M}_{\mu, Q_2^+}\left(\left|f/\mu \right|^{2} \right)>\delta^{2} \varpi^{2(k-j)} \}
\right)\right]
\\
&\quad  \quad +\sum_{k=1}^{\infty} (\varpi^{p} \epsilon_{1})^{k}\mu(\{X\in Q^{+}_{1}: \mathcal{M}_{\mu, Q_2}(|\nabla u_{N}|^{2} ) (X) \geq 1\})\\
&\leq C\left(\left\| \mathcal{M}_{\mu, Q_2^+}\left( \left|\F_{N}/\mu\right|^{2}\right)\right\|^{p/2}_{L^{p/2}(Q^{+}_{1}, \mu)}  + \left\| \mathcal{M}_{\mu,Q_2^+}\left( \left|f_{N}/\mu \right|^{2}\right)\right\|^{p/2}_{L^{p/2}(Q^{+}_{1}, \mu)} + \|\nabla u_{N}\|^{2}_{L^{2}(Q^{+}_{2}, \mu)}\right) \sum_{k=1}^{\infty} (\varpi^{p} \epsilon_{1})^{k},
\end{split}
\]
where we have applied the weak $(1,1)$ estimate of the Hardy-Littlewood maximal function, Lemma \ref{Hardy-Max-p-p}. From the choice of 
$\epsilon$, we obtain that 
\[
\begin{split}
S &\leq C\left(\left\| \mathcal{M}_{\mu, Q_2^+}\left( \left|\F_{N}/\mu \right|^{2}\right)\right\|^{p/2}_{L^{p/2}(Q^{+}_{1}, \mu)}  + \left\| \mathcal{M}_{\mu, Q_2^+}\left( \left| f_{N}/\mu \right|^{2}\right)\right\|^{p/2}_{L^{p/2}(Q^{+}_{1}, \mu)} +\|\nabla u_{N}\|^{2}_{L^{2}(Q^{+}_{2}, \mu)}\right)\\
& \leq C\left(\left\|\frac{{\bf F}_{N}}{\mu}\right\|^{p}_{L^{p}(Q^{+}_{2}, \mu)}  +  \left\|\frac{f_{N}}{\mu}\right\|^{p}_{L^{p}(Q^{+}_{2}, \mu)} +\|\nabla u_{N}\|^{2}_{L^{2}(Q^{+}_{2}, \mu)}\right),
\end{split}
\]
where we have applied the strong $(p, p)$ estimate for the Hardy-Littlewood maximal  operator $\mathcal{M}_{\mu}$, Lemma \ref{Hardy-Max-p-p}.  Now using Lemma \ref{measuretheory-lp} below, we obtain 
\[
\|\nabla u_{N} \|_{L^{p}(Q^{+}_{1}, \mu)}^{p} \leq C \|\mathcal{M}_{\mu}(\chi_{Q^{+}_{2}}|\nabla u_{N}|^{2})\|^{p/2}_{L^{p/2}(Q^{+}_{1}, \mu)} \leq   C (S + \hmu(Q^{+}_{1})),
\]
and therefore multiplying by $N^{p}$ and applying \eqref{muBzero-half} we obtain that 
\[
\begin{split}
\|\nabla u \|_{L^{p}(Q^{+}_{1}, \mu)}^{p} &\leq C\left(\left\|\frac{{\bf F}}{\mu}\right\|^{p}_{L^{p}(Q^{+}_{2}, \mu)}  + \left\|\frac{f_{N}}{\mu}\right\|^{p}_{L^{p}(Q^{+}_{2}, \mu)}+ N^{p}\hmu(Q^{+}_{1})\right)\\
& \leq C\left(\left\|\frac{{\bf F}}{\mu}\right\|^{p}_{L^{p}(Q^{+}_{2}, \mu)}  + \left\|\frac{f_{N}}{\mu}\right\|^{p}_{L^{p}(Q^{+}_{2}, \mu)}+ \|\nabla u\|_{L^{2}(Q^{+}_{2},\mu)}^{p} \hmu(Q^{+}_{1})^{1 - \frac{p}{2}}\right) .
\end{split}
\]
This last estimate completes the proof. \end{proof}
We now state a standard result that is used in the above proof. 
\begin{lemma} \label{measuretheory-lp}
Assume that $g\geq 0$ is a measurable function in a bounded subset $U\subset \mathbb{R}^{n+1}$. Let $\theta>0$ and $\varpi>1$ be given constants. If $\mu$ is a weight in  $L^{1}_{loc}(\mathbb{R}^{n+1})$, then for any $1\leq p < \infty$ 
\[
g\in L^{p}(U,\mu) \Leftrightarrow S:= \sum_{j\geq 1} \varpi^{pj}\mu(\{x\in U: g(x)>\theta \varpi^{j}\}) < \infty. 
\]
Moreover, there exists a constant $C>0$ such that 
\[
C^{-1} S \leq \|g\|^{p}_{L^{p}(U,\mu)} \leq C (\mu(U) + S), 
\]
where $C$ depends only on $\theta, \varpi$ and $p$. 
\end{lemma}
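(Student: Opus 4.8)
The plan is to establish the quantitative two-sided estimate directly; the claimed equivalence $g\in L^p(U,\mu)\Leftrightarrow S<\infty$ is then immediate since $\mu(U)<\infty$. First I would set $m(\lambda)=\mu(\{x\in U:g(x)>\lambda\})$, a non-increasing function with $0\le m(\lambda)\le\mu(U)$, abbreviate $a_j=m(\theta\varpi^j)$ so that $S=\sum_{j\ge1}\varpi^{pj}a_j$, and note at the outset that whenever either $\|g\|_{L^p(U,\mu)}<\infty$ or $S<\infty$ one has $g<\infty$ $\mu$-a.e. on $U$. This lets me decompose $U$, up to a $\mu$-null set, as the disjoint union $U=U_0\cup\bigcup_{j\ge1}U_j$ with $U_0=\{g\le\theta\varpi\}$ and $U_j=\{\theta\varpi^j<g\le\theta\varpi^{j+1}\}$; write $b_j=\mu(U_j)$.

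For the upper bound I would integrate $g^p$ over the shells, using $g^p\le(\theta\varpi)^p$ on $U_0$, $g^p\le(\theta\varpi^{j+1})^p=(\theta\varpi)^p\varpi^{pj}$ on $U_j$, and $b_j\le m(\theta\varpi^j)=a_j$, to obtain
\[
\|g\|_{L^p(U,\mu)}^p=\int_{U_0}g^p\,d\mu+\sum_{j\ge1}\int_{U_j}g^p\,d\mu\le(\theta\varpi)^p\mu(U)+(\theta\varpi)^p\sum_{j\ge1}\varpi^{pj}a_j=(\theta\varpi)^p\bigl(\mu(U)+S\bigr).
\]
For the lower bound I would use $a_j=\sum_{k\ge j}b_k$ (since $\{g>\theta\varpi^j\}=\bigcup_{k\ge j}U_k$ up to a null set), swap the order of summation by Tonelli for series, and apply the elementary geometric bound $\sum_{j=1}^{k}\varpi^{pj}\le\frac{\varpi^p}{\varpi^p-1}\varpi^{pk}$:
\[
S=\sum_{j\ge1}\varpi^{pj}\sum_{k\ge j}b_k=\sum_{k\ge1}b_k\sum_{j=1}^{k}\varpi^{pj}\le\frac{\varpi^p}{\varpi^p-1}\sum_{k\ge1}\varpi^{pk}b_k\le\frac{\varpi^p}{\theta^p(\varpi^p-1)}\sum_{k\ge1}\int_{U_k}g^p\,d\mu\le\frac{\varpi^p}{\theta^p(\varpi^p-1)}\,\|g\|_{L^p(U,\mu)}^p,
\]
the middle step because $g^p\ge(\theta\varpi^k)^p$ on $U_k$. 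Combining the two displays yields $C^{-1}S\le\|g\|_{L^p(U,\mu)}^p\le C(\mu(U)+S)$ with $C=C(\theta,\varpi,p)$, and hence also the equivalence.

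I do not expect a genuine obstacle here: the only points needing a word of care are the justification that $g$ is finite $\mu$-a.e. (so that the shell decomposition exhausts $U$ modulo a null set) and the verification that the constants produced are independent of $U$, $\mu$, and $g$ — both evident from the computation above. An alternative route would run everything through the layer-cake identity $\|g\|_{L^p(U,\mu)}^p=p\int_0^\infty\lambda^{p-1}m(\lambda)\,d\lambda$ together with the monotonicity of $m$, splitting the $\lambda$-integral over the intervals $(\theta\varpi^j,\theta\varpi^{j+1}]$; the discrete shell decomposition is slightly cleaner since it avoids any measurability discussion for $m$.
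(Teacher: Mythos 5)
Your proof is correct, and the paper itself gives no proof of this lemma at all—it is merely labeled a standard result and invoked without argument—so there is no paper argument to compare against. Your shell-decomposition argument (splitting $U$ into the level sets $U_j=\{\theta\varpi^j<g\le\theta\varpi^{j+1}\}$, getting the upper bound by majorizing $g^p$ on each shell, and getting the lower bound by $a_j=\sum_{k\ge j}b_k$ plus the geometric-sum bound $\sum_{j=1}^{k}\varpi^{pj}\le\frac{\varpi^p}{\varpi^p-1}\varpi^{pk}$) is indeed the standard route, and the constant you produce, $C=\max\{(\theta\varpi)^p,\ \frac{\varpi^p}{\theta^p(\varpi^p-1)}\}$, depends only on $\theta,\varpi,p$ as required. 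The preliminary observation that $g<\infty$ $\mu$-a.e.\ whenever either side is finite, together with $\mu(U)<\infty$ (bounded $U$, $\mu\in L^1_{\mathrm{loc}}$), is exactly what is needed to justify the disjoint decomposition modulo a null set, so no gap remains.
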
 \noindent

\section{Lipchitz estimates for weak solutions of homogeneous equations with degenerate coefficients} \label{Lipchits-est-section}
The primary objective of this section is to demonstrate that coefficients of the form given in \eqref{Example-class-A} belong to $\mathcal{A}(Q_{2}, \Lambda, M_{0}, \mu)$ and coefficients of the form given in \eqref{Example-class-B} belong to $\mathcal{B}(Q_{2}^{+}, \Lambda, M_{0}, \mu)$. To that end, if $\A$ is of the form given in \eqref{Example-class-A}, we will have to demonstrate that  Lipchitz estimates for weak solutions of the type stated in items (i) and (ii) of Definition \ref{class-A}  with coefficients $\langle\A\rangle_{B_{r}(x_0)}(y)$ are possible. The same will be shown for the class given in  \eqref{Example-class-B}.  
Note that for the class  of coefficients given in \eqref{Example-class-A} all $x$-averages are of the form
\begin{equation}\label{Ave-class-A}
\mathbb{A}_{0}(y) = \mu(y)\begin{bmatrix} \B_{0}(y)&0\\
0&b_{0}(y)
\end{bmatrix} 
\end{equation}
where $\begin{bmatrix} \B_{0}(y)&0\\
0&b_{0}(y)\end{bmatrix} 
$ is a measurable  $(n+1)\times (n+1)$ matrix that is uniformly elliptic in $\mathbb{R}^{n+1}$, 
while for the class  coefficients given in  \eqref{Example-class-B} all $x$-averages are of the form  
\begin{equation}\label{Ave-class-B}
\tilde{\A}_{0}(y) = \mu(y)\begin{bmatrix} \tilde{\B}_{0}&0\\
0&1 
\end{bmatrix} 
\end{equation}
where $\begin{bmatrix} \tilde{\B}_{0} &0\\
0&1 
\end{bmatrix} 
$ is an elliptic $(n+1)\times (n+1)$ constant matrix. In the next two subsections we will prove Lipchitz estimates for weak solutions of equations associated with coefficients in \eqref{Ave-class-A} and \eqref{Ave-class-B}. For future reference as well as their independent interest, we will state and prove  statements related to Lipchitz estimates for weak solutions  that are slightly general than what we need for the proof of Theorem \ref{extension-theorem}.  
\begin{lemma} \label{Ex-1} Let $\tau \in (0,1)$,  $\Lambda_0>0$, and $\mu \in A_2(\R)$.  Let  $\mathbb{A}_{0} (y)$ be a symmetric measurable $(n+1)\times (n+1)$ matrix of the form \eqref{Ave-class-A} such that 
\begin{equation*} 
\Lambda ^{-1} \mu(y) |\xi|^2 \leq \wei{\mathbb{A}_{0}(y)\xi, \xi} \leq \Lambda  \mu(y) |\xi|^2, \quad \forall \ \xi \in \mathbb{R}^{n+1}, \text{and for a. e. $y \in (0,\tau)$},
\end{equation*}
Suppose also that there is some constant $C_0 >0$ independent on $\tau$ so that  
\begin{equation} \label{growth-mu}
\int_{0}^{\tau y} \mu(s) ds \leq C_0 \tau \mu(y \tau),\quad \text{for a..e} \quad y \in (0,1).
\end{equation}
Then, for every weak solution $v \in W^{1,2}(Q_{\tau}, \mu)$ of 
\begin{equation} \label{v-eqn-example-1}
\left\{
\begin{array}{cccl}
\textup{div}[\A_0(y) \nabla v(X)] & =& 0,  &\quad    X= (x,y) \in Q_{\tau}, \\
\displaystyle{\lim_{y \rightarrow 0^+ }\wei{\A_0(y) \nabla v(x,y), e_{n+1}} } & =& 0, & \quad  x \ \in \ B_{\tau},
\end{array}
\right.
\end{equation}
and every $\theta \in (0,1)$, there is $C= C(n, \Lambda_0, C_0, [\mu]_{A_q}, \theta)$ such that
\[
\norm{\nabla v}_{L^\infty(Q_{\theta \tau})} \leq C  \left [\frac{1}{\hmu(Q_{\tau})}\int_{Q_{\tau}} |\nabla v|^2 \mu(y) dX\right]^{1/2}.
\]
\end{lemma}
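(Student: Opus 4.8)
The plan is to exploit the block structure of $\A_0(y)$ in \eqref{Ave-class-A} to decouple the equation, reduce the degenerate problem to a \emph{uniformly} elliptic one after a change of the $y$-variable, and then quote classical interior/boundary Lipschitz (actually $C^{1,\alpha}$) estimates. First I would observe that since $\A_0(y)$ has the form $\mu(y)\,\mathrm{diag}(\B_0(y),b_0(y))$ with $\mathrm{diag}(\B_0,b_0)$ uniformly elliptic in $\R^{n+1}$, the factor $\mu(y)$ cancels out of the equation and of the conormal condition: $v$ solves $\div_x[\B_0(y)\nabla_x v] + \partial_y[b_0(y)\partial_y v]=0$ in $Q_\tau$ with $\lim_{y\to0^+} b_0(y)\partial_y v = 0$. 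Next I would perform the standard Liouville-type substitution $t = \int_0^y b_0(s)^{-1}\,ds$ in the $y$-variable (legitimate because $b_0$ is bounded above and below), which turns $\partial_y[b_0(y)\partial_y v]$ into $b_0(y(t))^{-1}\partial_t^2 v$ and leaves the Neumann condition as $\partial_t v = 0$ at $t=0$; after this change the equation becomes genuinely uniformly elliptic in the new coordinates $(x,t)$ with bounded measurable coefficients depending only on $t$. Because the coefficients depend only on the transversal variable $t$, one can differentiate the equation in the tangential directions $x_1,\dots,x_n$ freely, so $\nabla_x v$ again solves an equation of the same type; combined with the equation itself this controls all of $\nabla v = (\nabla_x v, \partial_y v)$.

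The key steps, in order, are: (1) cancel $\mu(y)$ and reduce to the uniformly elliptic, $y$-dependent divergence-form equation with homogeneous conormal condition on $\{y=0\}$; (2) reflect evenly across $\{y=0\}$ — the homogeneous conormal (Neumann) condition means the even reflection $\bar v(x,y)=v(x,|y|)$ is a weak solution of the reflected equation with coefficients $\overline{\B_0}(|y|),\overline{b_0}(|y|)$ on the full cylinder $B_\tau\times(-\tau,\tau)$, so the boundary estimate becomes an interior estimate; (3) apply De Giorgi–Nash–Moser together with the classical interior Lipschitz/$C^{1,\alpha}$ estimate for uniformly elliptic equations whose coefficients depend on a single variable (differentiate in the $n$ tangential directions to bound $\nabla_x v$ in $L^\infty$, then recover $\partial_y v$ from the equation, using Morrey/Campanato as needed), obtaining $\|\nabla v\|_{L^\infty(Q_{\theta\tau})}\le C\,\big(\fint_{B_\tau\times(-\tau,\tau)}|\nabla v|^2\big)^{1/2}$ with $C$ depending only on $n,\Lambda_0,\theta$; (4) translate the plain averages back into the weighted averages $\hmu(Q_\tau)^{-1}\int_{Q_\tau}|\nabla v|^2\mu\,dX$ using the doubling property of $\mu\in A_2$ (Lemma \ref{doubling}) and the hypothesis \eqref{growth-mu}, which is exactly what is needed to absorb the scaling of $\mu$ over $(0,\tau)$ versus a point value; finally undo the $y\leftrightarrow t$ change of variables, which only distorts the cylinder by bounded factors since $b_0$ is bounded above and below.

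I expect the main obstacle to be step (4): carefully matching the \emph{weighted} $L^2$ average appearing in the conclusion (and implicitly in the class $\mathcal{A}$) with the \emph{unweighted} $L^2$ average that the classical uniformly-elliptic theory produces. On $Q_\tau$ the weight $\mu$ varies, so one must show $c\,\mu(y_0)\le \hmu(Q_\tau)/|Q_\tau| \le C\,\mu(y_0)$ uniformly for $y$ near the relevant scale; this is where Lemma \ref{doubling} and the scaling hypothesis \eqref{growth-mu} enter, and where the constant's dependence on $[\mu]_{A_2}$ (equivalently the $M_0$ of the class) comes from. A secondary technical point is justifying that a $W^{1,2}(Q_\tau,\mu)$ weak solution is genuinely a $W^{1,2}_{\mathrm{loc}}$ (unweighted) solution after cancelling $\mu$ — this follows because $\mu$ is bounded below on compact subsets of $(0,\tau)$ away from $y=0$, and the behaviour near $y=0$ is handled by the even reflection once the conormal condition is used. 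Everything else (the Liouville substitution, De Giorgi–Nash–Moser, tangential differentiation) is routine.
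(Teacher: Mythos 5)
Your central reduction in step (1) is incorrect, and the error propagates through the rest of the plan. You claim that because $\A_0(y)=\mu(y)\,\mathrm{diag}(\B_0(y),b_0(y))$, ``the factor $\mu(y)$ cancels out of the equation,'' so that $v$ solves $\textup{div}_x[\B_0(y)\nabla_x v]+\partial_y[b_0(y)\partial_y v]=0$. This is false: since the $(n+1)$-st derivative in the divergence hits $\mu(y)$, one has
\[
\partial_y\big[\mu(y)b_0(y)\partial_y v\big]=\mu(y)\,\partial_y\big[b_0(y)\partial_y v\big]+\mu'(y)\,b_0(y)\,\partial_y v,
\]
and the second term does not vanish. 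Dividing the equation through by $\mu(y)$ therefore leaves a drift $(\mu'/\mu)\,b_0\,\partial_y v$. The same obstruction defeats step (2): after the substitution $t=\int_0^y b_0^{-1}$, the $y$-block becomes $\mu'(y)\partial_t v+\mu(y)b_0(y)^{-1}\partial_t^2 v$, so after normalizing by $\mu$ there is still a first-order term with coefficient $\mu'(y)/\mu(y)$. For the model weight $\mu(y)=|y|^\alpha$, this coefficient is $\alpha/y$, which is unbounded near $y=0$ and is not in any Lebesgue space that classical De Giorgi--Nash--Moser or Schauder theory with bounded measurable coefficients tolerates. The problem does not reduce to a uniformly elliptic equation, and your steps (3)--(4) (quoting interior Lipschitz theory for uniformly elliptic equations and then trading unweighted for weighted averages) have no valid starting point.

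What the paper actually does keeps the degenerate structure throughout. It differentiates in the tangential directions (difference quotients), observes that $\nabla_x^k v$ solves the same degenerate conormal problem, and invokes the Fabes--Kenig--Serapioni $L^\infty$ estimate for $A_2$-degenerate equations to control $\|\nabla_x^k v\|_{L^\infty(Q_{4/5})}$ for $k=0,1,2$ — so far this matches your instinct to differentiate tangentially. The genuinely new step is the control of $\partial_y v$. The paper defines an $x$-averaged flux $h(y)$ built from $a(y)\partial_y v$ (with $a=\mu b_0$ the $(n+1,n+1)$ entry of $\A_0$), derives an ODE-type identity $h'=g$ in the weak sense with $g$ controlled by $\|\nabla_x^2 v\|_\infty\mu(y)$, shows via the reverse H\"older inequality (Lemma \ref{Reverse-H}) that $h$ has a continuous representative with $h(0)=0$, and writes $h(y)=\int_0^y g(s)\,ds$. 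The pointwise growth hypothesis \eqref{growth-mu} is then used exactly here, to convert the bound $|h(y)|\le C\|\nabla_x^2 v\|_\infty\int_0^y\mu(s)\,ds$ into $|h(y)|\le C\mu(y)$; dividing by $a(y)\sim\mu(y)$ (allowed since $\mu\in A_2$ implies $\mu>0$ a.e.) gives $|\partial_y v|\le C$. So \eqref{growth-mu} is not, as your step (4) suggests, merely a device for comparing weighted and unweighted averages over $Q_\tau$; it is the crucial cancellation that compensates for the degeneracy of $a(y)$ in the flux-divide step. You should discard the ``cancel $\mu$'' reduction and instead prove the $\partial_y v$ estimate directly via the flux-integration argument just described.
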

\begin{proof} Observe that by the dilation
\[
v_\tau (X) := \frac{1}{\tau} v(\tau X), \quad \mu_\tau(y) := \mu(y \tau), \quad \A_{0, \tau}(y) := \A_0(y \tau)
\]
and assumption \eqref{growth-mu} 
we can assume without loss of generality that $\tau =1$. We write $M_0 = [\mu]_{A_2} = [\mu_\tau]_{A_2}$. For simplicity, we also assume that $\theta =1/2$. For each $k \in \mathbb{N}$, by using difference quotients, and induction on $k$, for example see \cite[Chapter 6.3]{Evans-book}, we can prove that $w: = \nabla_x^k v \in W^{1,2}(Q_{r}, \mu)$ with the estimate
\begin{equation} \label{nabla-x-L-2}
\int_{Q_{r}} |\nabla w|^2 \mu(y) dX \leq C(\Lambda, r, k) \int_{Q_1} |\nabla v|^2 \mu(y) dX, \quad \forall \ 0 < r <1.
\end{equation}
Then, for every $\phi \in C^\infty(\overline{Q_1})$ vanishing near $\partial Q_1 \setminus (\overline{B_1} \times \{0\})$, we can use $\nabla_x^k \phi$ as a test function for \eqref{v-eqn-example-1} to obtain
\[
\int_0^1\int_{B_1} \wei{\A_0(y) \nabla v(x,y), \nabla \nabla_x^k \phi(x,y)} dxdy =0.
\]
From \eqref{nabla-x-L-2}, and with the note that $\nabla^{l}_x \phi(\cdot,y) =0$ on $\partial B_1$ for all $y \in (0,1)$ and all $l \in \N$, we can apply the integration by parts in $x$ to find that
\[
\int_0^1\int_{B_1} \wei{\A_0(y) \nabla w(x,y), \nabla  \phi(x,y)} dxdy =0.
\]
Hence, $w$ is a weak solution of 
\begin{equation} \label{D-k-v-eqn}
\left\{
\begin{array}{cccl}
\text{div}[\A_0(y) \nabla w] & = &  0,  & \quad \text{in} \ Q_{1}, \\
\displaystyle{\lim_{y \rightarrow 0^+} \wei {\A_0(y) \nabla w (x,y), e_{n+1}}} & = & 0, & \quad \text{on} \  B_1 \times \{0\}.
\end{array} \right.
\end{equation}
It then follows from the regularity theory estimates in \cite{Fabes-1, Fabes} that 
\begin{equation} \label{gratient-x-v-estimate}
 \norm{ \nabla_x^k v}_{L^\infty(Q_{4/5})} \leq C(n, M_0) \left[ \frac{1}{\hmu(Q_1)} \int_{Q_1} |\nabla v(x,y)|^2 \mu(y) dX \right]^{1/2}, \quad k =0, 1, 2.
\end{equation}
It remains to show that $\partial_y v \in L^\infty(Q_{1/2})$. To this end, let us fix a non-negative function $\phi_0 \in C_0^\infty(B_1(0))$, with $0 \leq \phi_0 \leq 1$ and
\[
\phi_0(x) =1 \quad \text{on}\quad  B_{1/2}(0), \quad \text{and} \quad |\nabla \phi_0| \leq 2.
\]
For a fixed $x_0 \in B_{1/2}$ and $r_0 >0$ such that  $B_{r_0} (x_0) \subset B_{4/5}$, we denote
\[
\phi_{x_0, r}(x) = \phi_0((x-x_0)/r), \quad 0 < r < r_0.
\]
Note that
\begin{equation} \label{phi-x-0}
\phi_{x_0, r} =1 \quad \text{on} \quad B_{r/2}(x_0), \quad \text{and} \quad |\nabla \phi_{x_0, r}| \leq \frac{2}{r} \quad \text{in} \quad B_{r}(x_0).
\end{equation}
Also, let $\chi \in C_0^\infty((-\infty, 1))$ be non-negative. Using $\phi_{x_0, r}(x)\chi(y)$ as a test function for the equation of $v$, we infer that
\[
 \int_{Q_1} \chi(y)   \wei{\B_0(y) \nabla _{x} v, \nabla_{x}  \phi_{x_0, r}(x)} 
 = -  \int_{Q_1} \phi_{x_0, r}(x)\chi'(y) a (y) \partial_y v  dxdy 
\] 
Because $ \nabla_x^2 v \in W^{1,2}(Q_{4/5}, \mu)$, we can use the integration by parts in $x$ rewrite the above identity as
\begin{equation} \label{Ex-1-first-es}
\begin{split}
 \sum_{i, j =1}^n \int_{Q_1} \chi(y)   \phi_{x_0, r}(x)  \B_0^{i, j}(y) \partial_{x_i x_j} v dxdy =   \int_{Q_1} \phi_{x_0, r}(x)\chi'(y) a (y) \partial_y v dxdy.
\end{split}
\end{equation}
Let us now denote
\[
\begin{split}
g(y)  = -   \sum_{i, j =1}^n \frac{1}{|B_r(x_0)|} \int_{B_1}\phi_{x_0, r}(x)\B_0^{i, j}(y) \partial_{x_i x_j} v(x,y)  dx,  \quad 
h (y)   = \frac{1}{|B_r(x_0)|}\int_{B_1} \phi_{x_0, r}(x) a (y) \partial_y v(x,y) dx
\end{split}
\]
Observe that both $h, g \in L^2((0,1), \mu^{-1})$, and moreover from \eqref{Ex-1-first-es}, 
\begin{equation} \label{h-g-dy}
\int_0^1 h(y) \chi'(y) dy =  -\int_0^1  g(y) \chi(y)  dy, \quad \forall \chi \in C_0^\infty((-\infty, 1)).
\end{equation}
This particularly implies that $h \in W^{1,2}((0,1), \mu^{-1})$, and 
\[
 h'(y) = g(y), \quad \text{for a. e.} \quad y \in (0,1).
\]
Also, observe that it follows from Lemma \ref{Reverse-H} that  $W^{1,2}((0,1), \mu^{-1})\hookrightarrow W^{1, 1+\beta}(0,1)$ for some $\beta >0$ depending only on $[\mu^{-1}]_{A_2} = [\mu]_{A_2}$, and since 
$W^{1, 1+\beta}(0,1) \hookrightarrow C([0,1])$, we see that $h \in C([0,1])$. From this,  by choosing $\chi\in C_{0}^{\infty}(-\infty, 1)$ such that $\chi(0) =1$, we obtain from \eqref{h-g-dy} that
\[
h(1)\chi(1) - h(0)\chi(0) = \int_{0}^{1} (h\chi)'dy = \int_{0}^{1}h'\chi dy + \int_{0}^{1}h\chi' dy = \int_{0}^{1}g \chi dy + \int_{0}^{1}h\chi' dy = 0.
\]
As a consequence, $h(0) = 0$. Now, we can write
\begin{equation} \label{h-g-relation}
h(y)  = \int_0 ^y g( s) ds.
\end{equation}
We next estimate this integration of $g(y)$. From \eqref{gratient-x-v-estimate}, and the definition of $g$, it follows that
\[
\begin{split}
\left|\int_0^y g(s) ds \right| 
& \leq C(n, \Lambda_0)  \norm{\nabla^2_x v}_{L^\infty(Q_{4/5})}  \int_0^y \mu(s) ds \\
& \leq C(n, \Lambda_0, M_0) \left[\frac{1}{\hmu(Q_1)}\int_{Q_1} |\nabla v(x,y)|^2 \mu(y) dx dy \right]^{1/2}   \int_0^y \mu(s) ds.
\end{split}
\]
This estimate, and the assumption \eqref{growth-mu} imply
\[
|h(y)| \leq  C(n, \Lambda_0, M_0) \left[\frac{1}{\hmu(Q_1)} \int_{Q_1} |\nabla v(x,y)|^2 \mu(y) dx dy \right]^{1/2} \mu(y).
\]
Hence, from the definition of $h$, and if $x_0$ is a Lebesgue point of $a (y) \partial_y v (x, y)$, we see that for a.e. $y \in (0, 4/5)$, 
\[
\begin{split}
\lim_{r \rightarrow 0^+} |h(y)| & = \Big| a (y) \partial_y v (x_0, y) \Big| \leq  C(n, \Lambda_0, M_0) \left[\frac{1}{\hmu(Q_1)} \int_{Q_1} |\nabla v(x,y)|^2 \mu(y) dx dy \right]^{1/2} \mu(y).
\end{split}
\]
Observe that from the ellipticity condition,
\[
\begin{split}
\mu(y) |\partial_y v(x_0, y)| & \leq \Lambda |a(y) \partial_y v(x_0,y)| \\
& \leq C(n, \Lambda, M_0)  \left[\frac{1}{\hmu(Q_1)}\int_{Q_1} |\nabla v(x,y)|^2 \mu(y) dx dy \right]^{1/2}  \mu(y).
\end{split}
\]
Since $\mu \in \A_2(\R)$, we particular see that $|\{y \in (0,1): \mu(y) =0\}| =0$. Therefore,
\[
|\partial_y v(x_0, y)|\leq C(n, \Lambda, M_0)  \left[\frac{1}{\hmu(Q_1)}\int_{Q_1} |\nabla v(x,y)|^2 \mu(y) dx dy \right]^{1/2}, \quad \text{for a.e} \quad y \in (0, 4/5).
\]
This and \eqref{growth-mu} imply the following
\begin{equation} \label{gradient-y-v-est}
|\partial_y v(x_0, y)| \leq C(n, \Lambda, M_0) \left[\frac{1}{\hmu(Q_1)}\int_{Q_1} |\nabla v(x,y)|^2 \mu(y) dx dy \right]^{1/2}, \quad \text{for a. e.} \quad (x_0, y)  \in B_{1/2}\times (0, 4/5).
\end{equation}
The estimate \eqref{gratient-x-v-estimate} together with \eqref{gradient-y-v-est} complete the proof of the lemma.
\end{proof}

To state our next result, we impose additional condition for the weight $\mu$.  The weight $\mu$ could be degenerate or singular at $y =0$. However, for $y>0$, we require that there is $C_1 >0$ such that for a.e. $y \in (0,1)$ and $0 < \tau < y/2$
\begin{equation} \label{sup-inf-mu}
\begin{split}
& \inf_{s\in\Gamma_{3\tau/2}(y)} \mu(s), \ \text{and} \  \sup_{s\in \Gamma_{3\tau/2}(y)} \mu(s)\ \text{ both exist}, \quad \inf_{s\in\Gamma_{3\tau/2}(y)} \mu(s) > 0, \quad \text{and} \\
&  \sup_{y \in (0, 1), \tau < y/2} \frac{\sup_{s \in \Gamma_{3\tau/2}(y)} \mu(s)}{\inf_{s \in \Gamma_{3\tau/2}(y)} \mu(s)} \leq C_1.
\end{split}\end{equation}
where as before we use the notation that   
$\Gamma_{3\tau/2}(y) = (y -3\tau/2, y+ 3\tau/2)$. 
 
\begin{remark} \label{Ex-2-remark} If $\mu(y) = |y|^{\alpha}$ for some $\alpha \in \R$, then \eqref{sup-inf-mu} holds. Indeed, consider the case $\alpha <0$, then for each $y_0 \in (0,1)$ and each $\tau \in (0,y_0/2)$, we see that
\[
\inf_{s \in \Gamma_{3\tau/2}(y_0)} \mu (s)= (y_0 + 3\tau/2)^\alpha >0, \quad \sup_{s \in \Gamma_{3\tau/2}(y_0)} \mu(s) = (y_0 -3\tau/2)^\alpha <\infty,
\]
and
\[
\displaystyle{\frac{\sup_{s \in \Gamma_{3\tau/2}(y_0)} \mu(s)}{\inf_{s\in \Gamma_{3\tau/2}(y_0)} \mu(s)} \leq \frac{1}{7^\alpha}}, \quad \forall \ y_0 \in (0,1) \quad \tau \in (0, y_0/2).
\]
When $\alpha >0$, the proof is also similar.
\end{remark}

\begin{lemma} \label{Ex-2} Let $\mathbb{A}_{0}: (0,2) \rightarrow \R^{(n+1)\times (n+1)}$ be a symmetric measurable matrix. Assume also that there are positive number $\Lambda $ and a weight function $\mu$ such that 
\[
\Lambda ^{-1} \mu(y) |\xi|^2 \leq \wei{\mathbb{A}_{0}(y)\xi, \xi} \leq \Lambda \mu(y) |\xi|^2, \quad \forall \ \xi \in \mathbb{R}^{n+1}, \text{and for a. e.  $y \in (0, 2)$}. 
\]
Moreover, assume that \eqref{sup-inf-mu} holds for the weight function $\mu$. Then for every $\theta \in (0,1)$, there is  $C =C(\Lambda, \theta, C_1, n)$ such that the following statement holds: For every $X_0 =(x_0, y_0) \in \overline{Q}_1$  and $\tau \in (0,1)$ so that $y_0 -2\tau >0$, if $v \in W^{1,2}(D_{3\tau/2}(X_0), \mu)$  is a weak solution of 
\begin{equation} \label{cyllinder-D-tau-eqn}
\textup{div}[\A_0(y) \nabla v]  = 0, \quad \text{in} \quad D_{3\tau/2}(X_0),
\end{equation}
then 
\begin{equation} \label{Ex-2-est}
\norm{\nabla v}_{L^\infty(D_{3 \theta \tau/2}(X_0))} \leq C  \left [\frac{1}{\hmu(D_{3\tau/2}(X_0))}\int_{D_{3\tau/2}(X_0)} |\nabla v(x,y)|^2 \mu(y) dxdy \right]^{1/2}.
\end{equation}
\end{lemma}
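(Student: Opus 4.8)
The plan is to use hypothesis \eqref{sup-inf-mu} to reduce \eqref{Ex-2-est} to a local gradient estimate for a \emph{uniformly} elliptic equation whose coefficients depend only on $y$, and then to run the scheme of Lemma \ref{Ex-1}. Since $y_0-2\tau>0$, the interval $\Gamma_{3\tau/2}(y_0)=(y_0-3\tau/2,\,y_0+3\tau/2)$ is bounded away from $0$, so on $D_{3\tau/2}(X_0)$ the weight $\mu$ is comparable, with ratio at most $C_1$, to the positive constant $m:=\inf_{\Gamma_{3\tau/2}(y_0)}\mu$; hence $m^{-1}\A_0(y)$ is uniformly elliptic on $D_{3\tau/2}(X_0)$ with constants depending only on $\Lambda$ and $C_1$, and $v$ solves $\textup{div}[m^{-1}\A_0(y)\nabla v]=0$ there. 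Because of this comparability the right-hand side of \eqref{Ex-2-est} equals, up to a factor $C(C_1)$, the un-weighted $L^2$-average of $\nabla v$ over $D_{3\tau/2}(X_0)$; so, after the dilation $(x,y)\mapsto X_0+\tau(x,y)$ and a routine finite-overlap covering argument that passes from small dilation factors to an arbitrary $\theta\in(0,1)$, it suffices to prove: \emph{if $v$ solves $\textup{div}[A(y)\nabla v]=0$ in $D_{3/2}$ with $A$ symmetric, measurable, uniformly elliptic, depending on $y$ only, then $\|\nabla v\|_{L^\infty(D_1)}\le C\,\|\nabla v\|_{L^2(D_{3/2})}$.} Note that the weight now enters only through $C_1$, which is why no $A_p$-constant appears in the statement.

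Next I would remove the off-diagonal block of $A$ by the shear $\tilde x=x-\int_{y_0}^{y}A_{22}(s)^{-1}A_{12}(s)\,ds$, $\tilde y=y$, where $A_{12}$ denotes the first $n$ entries of the last column and $A_{22}$ the last diagonal entry: this map is bi-Lipschitz with constants controlled by $\Lambda,C_1$ (since $|A_{22}^{-1}A_{12}|$ is bounded there), it preserves $L^2$-norms of gradients up to a bounded factor, and it transforms the equation into $\textup{div}[\tilde A(\tilde y)\nabla\tilde v]=0$ with $\tilde A=\mathrm{diag}\big(A_{11}-A_{12}A_{22}^{-1}A_{12}^{T},\,A_{22}\big)$ still uniformly elliptic. (Keeping track of how the shear distorts cylinders is what forces the covering step above.) So from now on $A(y)=\mathrm{diag}(A_{11}(y),A_{22}(y))$. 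Since $A$ is independent of $x$, the difference-quotient method shows that every horizontal derivative $\nabla_x^{k}v$ is again a weak solution of the same equation; iterating the Caccioppoli inequality bounds $\|\nabla(\nabla_x^{k}v)\|_{L^2}$ on shrinking cylinders by $\|\nabla v\|_{L^2(D_{3/2})}$, and De Giorgi--Nash--Moser local boundedness then gives $\|\nabla_x^{k}v\|_{L^\infty(D_{5/4})}\le C_k\|\nabla v\|_{L^2(D_{3/2})}$ for every $k\ge 1$.

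It remains to bound $\partial_y v$, and here the key object is the normal flux $\Phi:=A_{22}(y)\,\partial_y v=\langle A(y)\nabla v,e_{n+1}\rangle$. From the equation, $\partial_y\Phi=-\textup{div}_x[A_{11}(y)\nabla_x v]=-\sum_{i,j\le n}A_{11,ij}(y)\,\partial_{x_ix_j}v$, so $\partial_y\Phi\in L^\infty(D_{5/4})$ with norm $\le C\|\nabla_x^2 v\|_\infty\le C\|\nabla v\|_{L^2(D_{3/2})}$, and $y\mapsto\Phi(x,y)$ is Lipschitz on the $y$-interval for a.e.\ $x$. On the other hand $\nabla_x^{j}\partial_y v=\partial_y(\nabla_x^{j}v)$, and since $\nabla_x^{j}v$ is a solution its full gradient is controlled in $L^2$; thus $\partial_y v\in H^{j}_x L^2_y$ for every $j$ with norm $\le C_j\|\nabla v\|_{L^2(D_{3/2})}$, and the Sobolev embedding $H^{j}(\mathbb R^n;L^2_y)\hookrightarrow L^\infty(\mathbb R^n;L^2_y)$ for $j>n/2$ bounds the essential supremum in $x$ of $\|\partial_y v(x,\cdot)\|_{L^2_y}$, hence of $\|\Phi(x,\cdot)\|_{L^2_y}$, by $C\|\nabla v\|_{L^2(D_{3/2})}$. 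Combining the Lipschitz-in-$y$ control with this $L^\infty_x L^2_y$ bound (for a.e.\ $x$ pick a $y$ where $|\Phi(x,y)|$ does not exceed its $y$-average, then propagate by the Lipschitz bound) yields $\|\Phi\|_{L^\infty(D_{9/8})}\le C\|\nabla v\|_{L^2(D_{3/2})}$; dividing by $A_{22}(y)\ge\Lambda^{-1}$ gives the same bound for $\partial_y v$. Together with the bound on $\nabla_x v$ this proves the reduced statement, and unwinding the shear, the dilation, and the $m^{-1}$ normalization gives \eqref{Ex-2-est}.

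The main difficulty, relative to Lemma \ref{Ex-1}, is precisely this last estimate on $\partial_y v$: there is no boundary condition to pin the flux $\Phi$ down at a single value of $y$, so the ``initial value $\Phi(x,0)=0$'' that made the flux argument of Lemma \ref{Ex-1} trivial must be replaced by an average in $y$, which one can only turn into pointwise control by exploiting the extra horizontal regularity through a vector-valued Sobolev embedding in $x$. The shear reduction and the attendant cylinder bookkeeping are a secondary technical nuisance; everything else — Caccioppoli, De Giorgi--Nash--Moser, difference quotients, and the doubling property of $\mu$ (Lemma \ref{doubling}) — is standard.
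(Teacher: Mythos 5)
Your reduction to the uniformly elliptic setting is precisely the paper's: since $y_0-2\tau>0$, condition \eqref{sup-inf-mu} gives $0<\lambda_0:=\inf_{\Gamma_{3\tau/2}(y_0)}\mu\le\mu(y)\le C_1\lambda_0$ on $\Gamma_{3\tau/2}(y_0)$, so $\lambda_0^{-1}\A_0$ is uniformly elliptic with constants depending only on $\Lambda,C_1$, the weighted space $W^{1,2}(D_{3\tau/2}(X_0),\mu)$ collapses to the unweighted one, and the weighted $L^2$-average of $\nabla v$ is comparable to the unweighted one up to a factor $C_1$. Where you and the paper part ways is the next step: the paper simply invokes Lemma~\ref{reg-v-q} — the interior Lipschitz bound for uniformly elliptic equations with coefficients depending only on $y$ — and that lemma is not proved in the paper but is cited from \cite[Lemma 3.3]{Byun-P} (see also \cite{Kim-Krylov}). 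You instead reprove that statement from scratch: a shear $\tilde x=x-\int_{y_0}^{y}A_{22}^{-1}A_{12}$ to kill the off-diagonal block (at the cost of a covering argument to absorb the distortion of cylinders), difference quotients, Caccioppoli, and De~Giorgi--Nash--Moser to control $\nabla_x^kv$ in $L^\infty$, and then the flux $\Phi=A_{22}\partial_y v$: $\partial_y\Phi\in L^\infty$ because (after the shear) it is a combination of the bounded $\nabla_x^2 v$, while $\Phi\in L^\infty_xL^2_y$ by the vector-valued Sobolev embedding $H^j_x(L^2_y)\hookrightarrow L^\infty_x(L^2_y)$ for $j>n/2$; the two together give $\Phi\in L^\infty$ and hence $\partial_y v\in L^\infty$. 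Each of these steps is sound, and your closing observation — that unlike Lemma~\ref{Ex-1} there is no Neumann datum to anchor the flux at $y=0$, so an average in $y$ must replace the initial value — accurately identifies why the structure of the two lemmas differs. The trade-off: the paper's route is short because the entire analytic content is packaged into the cited Lemma~\ref{reg-v-q}, which is stated directly for arbitrary symmetric $y$-dependent matrices and so needs no shear or covering; your route buys a self-contained argument at the cost of that bookkeeping.
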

\begin{proof}  Observe that
\begin{equation} \label{lambda-1-2}
0< \lambda_0:= \inf_{s \in \Gamma_{3\tau/2}(y_0)}\mu(s) \leq \mu(y) \leq  \sup_{s \in \Gamma_{3\tau/2}(y_0)} \mu (s)< \infty, \quad 
\text{for a.e.} \ y \in \Gamma_{3\tau/2}(y_0) = (y_0 -3\tau/2, y_0 + 3\tau/2).
\end{equation}
By dividing the coefficient matrix $\A_{0}$ and the weight $\mu$ by $\lambda_0$, let us introduce the notation 
\[ 
\tilde{\A}_0 = \A_0/\lambda_0, \quad \tilde{\mu} = \mu/\lambda_0.
\]
It follows then that the new weight $\tilde{\mu}$ will satisfy the estimate 
\[
1 \leq \tilde{\mu}(y) \leq  C_0, \quad \forall y\in (y_0 -3\tau/2, y_0 + 3\tau/2). 
\]
Hence, 
\begin{equation} \label{uniform-ellip-A-til}
\Lambda^{-1} |\xi|^2  \leq \wei{\tilde{\A}_0(y) \xi, \xi} \leq C_1 \Lambda |\xi|^2, \quad \text{for a. e.} \quad y \in (y_0 -3\tau/2, y_0 + 3\tau/2).
\end{equation}
On the other hand, from \eqref{lambda-1-2}, and since $v \in W^{1,2}(D_{3\tau/2}(X_0), \mu)$, we observe $v \in W^{1,2}(D_{3\tau/2}(X_0))$. From this and  with \eqref{uniform-ellip-A-til}, we infer that $v \in W^{1,2}(D_{3\tau/2}(X_0))$  is a weak solution  of the uniformly elliptic equation
\begin{equation} \label{v-uniformly-ellip-eqn}
\text{div}[\tilde{\A}_0(y) \nabla v] = 0, \quad \text{on} \quad D_{3\tau/2}(X_0).
\end{equation}
Up to a translation, we can now applying Lemma \ref{reg-v-q} for the uniformly elliptic equation \eqref{v-uniformly-ellip-eqn}. Then, we conclude that there is $C = C(\Lambda, C_1, n, \theta)$ such that
\begin{equation} \label{q-gradient-v}
\norm{\nabla v}_{L^\infty(D_{3\theta \tau/2}(X_0))} \leq C\left\{\fint_{D_{3\tau/2}(X_0)} |\nabla v(x,y)|^2 dxdy\right\}^{1/2}.
\end{equation}
Observe that from \eqref{lambda-1-2}, we have
\[
\fint_{D_{3\tau/2}(X_0)} |\nabla v|^2 dxdy \leq 
\frac{C_1}{\hmu(D_{3\tau/2}(X_0))}\int_{D_{3\tau/2}(X_0)} |\nabla v(x,y)|^2 \mu(y) dxdy.
\]
This last estimate and \eqref{q-gradient-v} together imply
\[
\norm{\nabla v}_{L^\infty(D_{3\theta \tau/2 }(X_0))} \leq C(n,\Lambda, C_1, \theta)  \left [\frac{1}{\hmu(D_{3\tau/2}(X_0))}\int_{D_{3\tau/2}(X_0)} |\nabla v(x,y)|^2 \mu(y) dxdy \right]^{1/2}.
\]
This is the desired estimate, and the proof of the lemma is now complete.
\end{proof}

We reiterate that the last two lemmas show that the class of coefficients of the form given in \eqref{class-A} belong to $\mathcal{A}(Q_2, \Lambda, M_0, \mu)$ where $\mu(y) = |y|^{\alpha}$, for $\alpha \in (-1, 1)$. These weights are in $A_{2}(\mathbb{R})$, satisfy both \eqref{growth-mu} and  \eqref{sup-inf-mu}. 

\begin{lemma} \label{Ex-1-half} Let $\tau \in (0,1)$,  $\Lambda_0>0$, and $\mu \in A_2(\R)$ such that \eqref{growth-mu} holds.  Also, let  $\tilde{\A}_{0} (y)$ be a symmetric measurable $(n+1)\times (n+1)$ matrix of the form given in \eqref{Ave-class-B}
Assume also that
\begin{equation*} 
\Lambda ^{-1} \mu(y) |\xi|^2 \leq \wei{\tilde{\A}_{0}(y)\xi, \xi} \leq \Lambda  \mu(y) |\xi|^2, \quad \forall \ \xi \in \mathbb{R}^{n+1}, \text{and for a. e. $y \in (0,\tau)$},
\end{equation*}
Then, for every weak solution $v \in W^{1,2}(Q_{\tau}^+, \mu)$ of 
\begin{equation} \label{v-eqn-example-1-half}
\left\{
\begin{array}{cccl}
\textup{div}[\tilde{\A}_0(y) \nabla v(X)] & =& 0,  &\quad    X= (x,y) \in Q_{\tau}^+, \\
 v & = & 0, & \quad \text{on} \quad T_\tau \times (0, \tau), \\
\displaystyle{\lim_{y \rightarrow 0^+ }\wei{\tilde{\A}_0(y) \nabla v(x,y), e_{n+1}} } & =& 0, & \quad  x \ \in \ B_{\tau}^+,
\end{array}
\right.
\end{equation}
and every $\theta \in (0,1)$, there is $C= C(n, \Lambda_0, C_0, [\mu]_{A_q}, \theta)$ such that
\[
\norm{\nabla v}_{L^\infty(Q^+_{\theta \tau})} \leq C  \left [\frac{1}{\hmu(Q_{\tau})}\int_{Q_{\tau}^+} |\nabla v|^2 \mu(y) dX\right]^{1/2}.
\]
\end{lemma}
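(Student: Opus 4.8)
The plan is to reduce the mixed Dirichlet--Neumann problem on the half-cylinder $Q_\tau^+$ to the Neumann problem on a full cylinder already treated in Lemma \ref{Ex-1}, by means of an odd reflection across the flat boundary $T_\tau\times(0,\tau)$. Since the matrix $\tilde{\A}_0(y)$ in \eqref{Ave-class-B} is block-diagonal in the $x$-variable with the $(n,n+1)$ and $(n+1,n)$ entries vanishing, and since the weight $\mu(y)$ does not see the $x$-variable, the coefficients are invariant under $x_n \mapsto -x_n$; therefore the odd extension
\[
\bar v(x', x_n, y) = \begin{cases} v(x', x_n, y), & x_n > 0,\\ -v(x', -x_n, y), & x_n < 0, \end{cases}
\]
is a weak solution of $\textup{div}[\tilde{\A}_0(y)\nabla\bar v] = 0$ in $Q_\tau$ with the Neumann condition $\lim_{y\to 0^+}\wei{\tilde{\A}_0(y)\nabla\bar v, e_{n+1}} = 0$ on $B_\tau$. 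First I would verify carefully that $\bar v \in W^{1,2}(Q_\tau, \mu)$ (the Dirichlet condition $v = 0$ on $T_\tau\times(0,\tau)$, together with $\mu$ depending only on $y$, guarantees that the weak gradient of $\bar v$ has no singular part on the hyperplane $\{x_n = 0\}$), and that the weak formulation survives the reflection — this is where the block structure of $\tilde{\A}_0$ and the absence of an $f$-term are essential.

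Next I would apply Lemma \ref{Ex-1} to $\bar v$ on $Q_\tau$: the matrix $\tilde{\A}_0(y)$ has exactly the form \eqref{Ave-class-A} (with $\B_0(y) = \mu(y)\tilde{\B}_0$ and $b_0(y) = \mu(y)$), it satisfies the required ellipticity bounds, $\mu \in A_2(\R)$ with the growth hypothesis \eqref{growth-mu}, so Lemma \ref{Ex-1} yields
\[
\norm{\nabla\bar v}_{L^\infty(Q_{\theta\tau})} \leq C\left[\frac{1}{\hmu(Q_\tau)}\int_{Q_\tau}|\nabla\bar v|^2\mu(y)\,dX\right]^{1/2}.
\]
Finally I would translate this back: restricting to $x_n > 0$ gives $\norm{\nabla v}_{L^\infty(Q^+_{\theta\tau})}$ on the left, while on the right the symmetry of $\bar v$ gives $\int_{Q_\tau}|\nabla\bar v|^2\mu\,dX = 2\int_{Q_\tau^+}|\nabla v|^2\mu\,dX$ and $\hmu(Q_\tau) = 2\hmu(Q_\tau^+)$, so the factors of $2$ cancel and the constant is merely doubled, delivering the claimed estimate.

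The main obstacle I anticipate is the careful justification of the reflection at the level of weighted Sobolev spaces and weak solutions: one must confirm that the odd extension of a $W^{1,2}(Q_\tau^+,\mu)$ function vanishing on $T_\tau\times(0,\tau)$ lies in $W^{1,2}(Q_\tau,\mu)$ with no distributional gradient concentrated on $\{x_n=0\}$ — this uses that $\mu=\mu(y)$ is bounded above and below on compact subsets of $\{y>0\}$ away from $y=0$ is \emph{not} needed, only that $\mu$ is locally integrable and independent of $x_n$, plus the trace-zero condition — and that the test functions in Definition \ref{weak-solution-Q-R-plus} correspond, under reflection, exactly to the admissible test functions for the Neumann problem in the sense used by Lemma \ref{Ex-1}. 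Once the reflection is set up cleanly, the rest is a direct citation of Lemma \ref{Ex-1} together with the doubling bookkeeping.
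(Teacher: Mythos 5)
Your overall strategy---odd reflection across $\{x_n=0\}$ followed by an application of Lemma~\ref{Ex-1}---is indeed the approach taken in the paper, but there is a genuine gap in the step where you assert the invariance of the coefficients under $x_n\mapsto -x_n$.

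You write that ``since the matrix $\tilde{\A}_0(y)$ in \eqref{Ave-class-B} is block-diagonal in the $x$-variable with the $(n,n+1)$ and $(n+1,n)$ entries vanishing, [...] the coefficients are invariant under $x_n\mapsto -x_n$.'' This is the wrong set of entries to inspect. The reflection is across the hyperplane $\{x_n=0\}$, so for the reflected function $\bar v$ to solve the same equation you need the entries $\tilde{\A}_0^{in}=\tilde{\A}_0^{ni}$ to vanish for \emph{every} $i\neq n$, including $i=1,\dots,n-1$. The block structure in \eqref{Ave-class-B} only gives you $\tilde{\A}_0^{n+1,\,n}=\tilde{\A}_0^{n,\,n+1}=0$; the remaining cross-terms $\tilde{\B}_0^{in}$ for $i<n$ are a priori nonzero, and in that case a direct odd reflection produces a solution of a \emph{different} equation (the $i$--$n$ cross-terms flip sign on the reflected half). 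The paper handles this by first applying a linear change of variables in $x$ that diagonalizes $\tilde{\B}_0$ (the paper calls it a ``rotation''; strictly one should use a transformation preserving $\{x_n=0\}$, e.g.\ a shear, since a generic orthogonal diagonalization does not fix that hyperplane), after which $\tilde{\B}_0^{in}=0$ for $i\neq n$ and the reflection argument you describe becomes valid. Without that preliminary normalization, your claimed invariance fails and the proof does not go through. Once you insert the diagonalization step, the rest of your argument---checking that $\bar v\in W^{1,2}(Q_\tau,\mu)$ via the trace-zero condition, matching the weak formulations, citing Lemma~\ref{Ex-1}, and the factor-of-two bookkeeping---is sound and matches the paper.
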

\begin{proof} Observe that $\tilde{\B}_0$ is a uniformly elliptic constant matrix. By rotation in $x$-variable, we can assume that $\tilde{\B}_0$ is diagonal. Then, let $\tilde{v}$ be the odd reflection with respect to $x_n$ of $v$, i.e.
\[
\tilde{v}(x', x_n ,y) = \left\{
\begin{array}{ll}
v(x', x_n,y), & \quad \text{for} \quad X= (x', x_n, y) \in Q_\tau, x_n >0, \\
-v(x', -x_n,y), & \quad \text{for} \quad X= (x', x_n, y) \in Q_\tau, x_n <0.
\end{array} \right.
\] 
Standard calculations will show that $\tilde{v}$ is  a weak solution of
\begin{equation} \label{v-eqn-example-1-half-reflect}
\left\{
\begin{array}{cccl}
\textup{div}[\tilde{\A}_0(y) \nabla \tilde{v} (X)] & =& 0,  &\quad    X= (x,y) \in Q_{\tau},\\
\displaystyle{\lim_{y \rightarrow 0^+ }\wei{\tilde{\A}_0(y) \nabla \tilde{v}(x,y), e_{n+1}} } & =& 0, & \quad  x \ \in \ B_{\tau},
\end{array}
\right.
\end{equation}
Lemma \ref{Ex-1-half} then follows from Lemma \ref{Ex-1}.
\end{proof}
Our next lemma is the Lipchitz regularity estimate of weak solutions in half cylinders.
\begin{lemma} \label{Ex-2-half} 
For a given a weight function $\mu$ that satisfies \eqref{sup-inf-mu}, let $\tilde{\A}_{0}: (0,2) \rightarrow \R^{(n+1)\times (n+1)}$ be a symmetric measurable matrix of the form given in \eqref{Ave-class-B}. 
Assume also that there exists $\Lambda > 0 $ such that 
\[
\Lambda ^{-1} \mu(y) |\xi|^2 \leq \wei{\tilde{\A}_{0}(y)\xi, \xi} \leq \Lambda \mu(y) |\xi|^2, \quad \forall \ \xi \in \mathbb{R}^{n+1}, \text{and for a. e.  $y \in (0, 2)$}. 
\] 
Then for every $\theta \in (0,1)$, there is  $C =C(\Lambda, \theta, C_1, n)$ such that the following statement holds: For every $x_0 = (x_0', 0) \in \overline{T}_1$, $X_0 =(x_0, y_0) \in \overline{T}_1\times (0,1)$  and $\tau \in (0,1)$ so that $y_0 -2\tau >0$, if $v \in W^{1,2}(D_{3\tau/2}^+(X_0), \mu)$  is a weak solution of 
\begin{equation} \label{cyllinder-D-tau-eqn-half}
\left\{
\begin{array}{cccl}
\textup{div}[\tilde{\A}_0(y) \nabla v]  & = & 0,  &\quad \text{in} \quad D_{3\tau/2}^+(X_0),\\
 v & = & 0, & \quad \text{on} \quad T_{3\tau/2}(x_0) \times (y_0 -3\tau/2, y_0 + 3\tau/2),
\end{array} \right.
\end{equation}
then 
\begin{equation} \label{Ex-2-est-half}
\norm{\nabla v}_{L^\infty(D_{3 \theta \tau/2}^+(X_0))} \leq C  \left [\frac{1}{\hmu(D_{3\tau/2}(X_0)}\int_{D_{3\tau/2}^+(X_0)} |\nabla v(x,y)|^2 \mu(y) dxdy \right]^{1/2}.
\end{equation}
\end{lemma}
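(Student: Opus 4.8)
The plan is to follow the same scheme as in the proof of Lemma \ref{Ex-2}: because the half-cylinder $D_{3\tau/2}^+(X_0)$ stays a definite distance away from $\{y=0\}$, the weight is comparable to a positive constant there, so \eqref{cyllinder-D-tau-eqn-half} is effectively a uniformly elliptic Dirichlet problem on a half-cylinder, and a uniformly elliptic boundary Lipschitz estimate transfers back. First I would use $y_0-2\tau>0$ together with \eqref{sup-inf-mu}: on $I_{3\tau/2}(y_0)$ one has $0<\lambda_0:=\inf_{s\in I_{3\tau/2}(y_0)}\mu(s)\le \mu(y)\le C_1\lambda_0$. Since both sides of \eqref{Ex-2-est-half} are invariant under the rescaling $\mu\mapsto\lambda_0\mu$, $\tilde{\A}_0\mapsto\lambda_0\tilde{\A}_0$, I may assume $1\le\mu(y)\le C_1$ on $I_{3\tau/2}(y_0)$. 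Then the weighted and unweighted Sobolev norms are equivalent on $D_{3\tau/2}^+(X_0)$, so $v\in W^{1,2}(D_{3\tau/2}^+(X_0))$ is a weak solution, vanishing on $T_{3\tau/2}(x_0)\times I_{3\tau/2}(y_0)$, of the uniformly elliptic, $x$-independent equation $\textup{div}[\tilde{\A}_0(y)\nabla v]=0$ with ellipticity constants depending only on $\Lambda,C_1$.

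The core step is a boundary Lipschitz estimate for this uniformly elliptic problem, i.e. a half-cylinder analogue of Lemma \ref{reg-v-q}. Because $\tilde{\A}_0(y)$ has the block form \eqref{Ave-class-B}, the $x$--$y$ cross terms vanish; following the device of Lemma \ref{Ex-1-half}, I would extend $v$ by odd reflection $\tilde v$ across $\{x_n=0\}$ (note $x_0=(x_0',0)$ already lies on this hyperplane), so that $\tilde v$ is a weak solution of $\textup{div}[\tilde{\A}_0(y)\nabla \tilde v]=0$ on the full cylinder $D_{3\tau/2}(X_0)$. Applying the interior uniformly elliptic Lipschitz estimate Lemma \ref{reg-v-q} to $\tilde v$ — legitimate since $\tilde{\A}_0$ is independent of $x$ and uniformly elliptic — yields, for each $\theta\in(0,1)$,
\[
\norm{\nabla v}_{L^\infty(D_{3\theta\tau/2}^+(X_0))}\le C(n,\Lambda,C_1,\theta)\left(\fint_{D_{3\tau/2}^+(X_0)}|\nabla v(x,y)|^2\,dx\,dy\right)^{1/2}.
\]
To finish, I would convert the Lebesgue average back to the weighted one: from $1\le\mu(y)\le C_1$ on $I_{3\tau/2}(y_0)$ one gets $\hmu(D_{3\tau/2}(X_0))\le C_1|D_{3\tau/2}(X_0)|$ and $\mu\ge1$, so $\fint_{D_{3\tau/2}^+(X_0)}|\nabla v|^2\,dx\,dy\le 2C_1\,\hmu(D_{3\tau/2}(X_0))^{-1}\int_{D_{3\tau/2}^+(X_0)}|\nabla v|^2\mu(y)\,dx\,dy$. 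Undoing the (scale-invariant) normalization then gives \eqref{Ex-2-est-half} with $C=C(\Lambda,\theta,C_1,n)$.

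The step I expect to be the main obstacle is exactly the boundary Lipschitz estimate for the $x$-independent uniformly elliptic equation with homogeneous Dirichlet data on the flat face. The odd-reflection argument requires the $x_n$-row of the constant block $\tilde{\B}_0$ to have no off-diagonal entries, and a coordinate change preserving $\{x_n=0\}$ cannot in general achieve this, so the reflection must be set up with care (or one first reduces to a configuration in which it applies, as in the reduction used for Lemma \ref{Ex-1-half}). A robust alternative that avoids reflection altogether is to differentiate the equation in the tangential directions $x'$ — all $x'$-difference quotients of $v$ solve the same equation and still vanish on $\{x_n=0\}$, giving $L^\infty$ control of $\nabla_{x'}v$ and its tangential derivatives — and then recover $\partial_{x_n}v$ near the face from $v(x',0,y)\equiv0$ and $\partial_yv$ from the divergence-form equation written one-dimensionally in $y$, exactly the bootstrap used for the $y$-derivative in the proofs of Lemma \ref{Ex-1} and Lemma \ref{Ex-2}. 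With that estimate in hand, the remaining computations — the normalization by $\lambda_0$ and the passage between weighted and unweighted averages — are routine.
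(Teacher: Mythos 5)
Your argument is exactly what the paper intends: the proof of Lemma \ref{Ex-2-half} is deferred verbatim to that of Lemma \ref{Ex-2}, with the interior estimate Lemma \ref{reg-v-q} replaced by the boundary estimate Lemma \ref{reg-v-q-half-cyll}, and the latter is established by precisely the odd reflection you describe. Your caveat about the reflection correctly flags that the ``rotation in $x$'' in the proof of Lemma \ref{reg-v-q-half-cyll} (and Lemma \ref{Ex-1-half}) need not fix $\{x_n=0\}$; however, you overstate the obstacle in claiming that no coordinate change preserving $\{x_n=0\}$ can annihilate the $(i,n)$ cross-terms of $\tilde{\B}_0$ --- the shear $z_i = x_i - (\tilde{\B}_0)_{in}(\tilde{\B}_0)_{nn}^{-1}x_n$ for $i<n$, $z_n=x_n$ does exactly this while preserving the half-space and the $(x,y)$-block structure of $\tilde{\A}_0$, has unit determinant, and distorts the cylinders and ellipticity constants only by $\Lambda$-controlled amounts, so the odd reflection across $\{z_n=0\}$ then goes through (your tangential-difference-quotient fallback would also work, but is not the paper's route).
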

\begin{proof} The proof is exactly the same as that of Lemma \ref{Ex-2}, using Lemma \ref{reg-v-q-half-cyll} instead. We therefore skip it.
\end{proof}
\begin{remark} It is essential for our application that the constants $C$ in the estimates \eqref{Ex-2-est} and \eqref{Ex-2-est-half} of Lemma \ref{Ex-2} and Lemma \ref{Ex-2-half} are independent on the location $y_0$ and the radius $\tau$. Moreover, we emphasize that we do not require $\mu \in A_2(\R)$ in these two lemmas. Lemma \ref{Ex-2} and Lemma \ref{Ex-2-half} therefore hold with $\mu(y) = |y|^\alpha$ with $\alpha \in \R$, and the constants $C$ in  \eqref{Ex-2-est} in \eqref{Ex-2-est-half} depends only on $\Lambda, \theta, \alpha$ and $n$ for this case.
\end{remark}
\section{Proof of Theorem \ref{extension-theorem} and Theorem \ref{fractional-theorem}} \label{proof-extension-problem-sec}

This section shows that Theorem \ref{extension-theorem} follows from Theorem \ref{local-grad-estimate-interior} and Theorem \ref{local-grad-estimate-half-cylinder}. 
\begin{proof}[Proof of Theorem \ref{extension-theorem}] The proof is standard, using partition of unity, and flattening of the boundary $\partial \Omega$. First of all, observe that with $\mu(y) = |y|^{\alpha}$ and $\alpha \in (-1,1)$, then $\mu \in A_2(\R)$ with $M_0: = [\mu]_{A_2} = C(\alpha)$.  Let $\delta_0 = \delta(\Lambda, M_0, n, p)$, where $\delta(\Lambda, M_0, n, p)$ is the number defined in Theorem \ref{local-grad-estimate-interior}. Similarly, let $\hat{\delta}_0 = \delta((n+1)\Lambda, M_0, n, p)$ be the number defined in Theorem \ref{local-grad-estimate-half-cylinder}. Then, take 
\begin{equation} \label{choice-delta-extension}
\delta = \min\{\delta_0, \hat{\delta}_0\}/(2C_0)
\end{equation}
where $C_0 = C_0(\Lambda, M_0, n) >1$ will be determined. We prove Theorem \ref{extension-problem} with this choice of $\delta$.  To this end, for any number $x_0 \in \overline{\Omega}$, we consider the two cases.\\
\ \\
{\bf Case I:} If $x_0 \in \Omega$, and $r  \in (0,r_0)$, where $r_{0}>0$ is from \eqref{B-BMO-mu},  such that $B_{2r}(x_0) \subset \Omega$. Since $u \in W^{1,2}(B_{2r}(x_0) \times (0, 2), \mu)$ is a weak solution of
\[
\left\{
\begin{array}{cccl}
\textup{div}[\A(X) \nabla u(X)] & = & \textup{div}[\F], & \quad \text{in} \quad B_{2r}(x_0) \times (0,2), \\
\lim_{y \rightarrow 0^+} \wei{\A(X) \nabla u - \F(X), e_{n+1}} & =& f(x), & \quad \text{on} \quad B_{2r}(x_0).
\end{array} \right.
\]
By a suitable dilation in $x$-variable, and a translation, Lemma \ref{Ex-1}, and Lemma \ref{Ex-2}, we can see that $\A \in \mathcal{A}(B_{2r}(x_0)\times (0,2),  \Lambda, M_0, \mu)$.  Therefore, we can obtain from Theorem \ref{local-grad-estimate-interior} that
\begin{equation} \label{extensional-problem-est-1}
\begin{split} 
& \norm{\nabla u}_{L^p(B_{r}(x_0) \times (0,1), \mu)}  \leq C\left[\hmu(B_{2r}(x_0) \times (0,1))^{\frac{1}{p} -\frac{1}{2}} \norm{\nabla u}_{L^2(\Omega_2, \mu)}   + \norm{\F/\mu}_{L^p(\Omega_2, \mu)} + \norm{ f/\mu}_{L^p(\Omega_2, \mu)} \right].
\end{split}
\end{equation}
\ \\
{\bf Case II:} If $x_0 \in \partial \Omega$. Since $\partial \Omega \in C^1$, after translating, and rotating we can assume that $x_0 = 0 \in \R^n$,  and for all $r \in (0,r_0)$, 
\[
B_r \cap \Omega = \{x = (x', x_n) \in B_r: x_n > \gamma(x')\}
\]
for some $C^1$ function $\gamma$ mapping some open set in $\R^{n-1}$ containing the origin of  $\R^{n-1}$,  
and 
\[
\gamma(0) =0, \quad \nabla_{x'} \gamma(0) =0.
\]
By taking $r$ sufficiently small, which can be made uniformly by the compactness of $\Omega$, and from the continuity of $\nabla \gamma$, we can assume also that
\begin{equation} \label{gamma-gradient}
\sup_{x'\in  } |\nabla_{x'} \gamma'(x')| \leq \delta/(2C_0).
\end{equation}
Then, let us define 
\[ 
\Phi(x', x_n) = (x', x_n - \gamma(x')), \quad  \Psi(z', z_n) = \Phi^{-1}(z', z_n) = (z', z_n + \gamma(z')).
\]
Then, observe that
\begin{equation} \label{gradient-Phi-Psi}
\norm{\nabla \Phi}_{L^\infty} \leq n+1, \quad \norm{\nabla \Psi}_{L^\infty} \leq n+1.
\end{equation}
Then, choose $\rho \in (0, r/\sqrt{n+1})$ and sufficiently small, we can see that $B_{2\rho}^+ \subset \Phi(\Omega \cap B_r)$. We then define
\[
\hat{u}(Z) = u(\Psi(z), y), \quad \hat{\F}(Z) = \F(\Psi(z), y), \quad \hat{f}(z) = f(\Psi(z)), \quad Z = (z, y) \in B_{2\rho}^{+ } \times (0,2),
\]
and
\[
\hat{\A}(Z) = \mu(y) \left(\begin{matrix} \hat{\B}(z) & 0 \\ 0  &1 \end{matrix}\right), \quad \hat{\B}(z) = \nabla \Phi(\Psi(z)) \B(\Psi(z)) \nabla \Phi(\Psi(z))^*, \quad Z = (z, y) \in B_{2\rho}^+ \times (0,2).
\]
We observe that the above transformation affects the matrix only the $x$-variable and so $\hat{u} \in W^{1,2}(B_{2\rho}^+ \times (0,2), \mu)$ is a weak solution of
\begin{equation} \label{u-hat-eqn}
\left\{
\begin{array}{cccl}
\textup{div}[\hat{\A}(Z) \nabla \hat{u}(Z)] & = & \textup{div}[\hat{\F}(Z)], & \quad \text{in} \quad B_{2\rho}^+ \times (0,2), \\
\hat{u} & = & 0, & \quad \text{on} \quad T_{2\rho} \times (0,2), \\
\lim_{y \rightarrow 0^+} \wei{\hat{\A}(Z) \nabla \hat{u}(Z) - \hat{\F}(Z), e_{n+1}} & = &0, & \quad \text{a.e}  \quad z \in B_{2\rho}^+.
\end{array} \right.
\end{equation}
By some standard calculation, see for example \cite[p. 2163-2164]{LTT}, we can check and see that there is 
$C_0 = C_0(\Lambda, M_0, n)$ such that
\[
\begin{split}
& \sup_{\tau \in (0, \rho)} \sup_{(Z_0 = (z_0,y_0) \in \overline{B}_\rho^+ [0,1]} \frac{1}{\hmu(D_\tau(Z_0))} \int_{D_\tau(Z_0)} |\hat{\A}(z,y) - \wei{\hat{\A}}_{B_{\tau}(z_0) \cap B_\rho^+} |^2 \mu^{-1}(y) dzdy  \\
& \leq C_0 \left[ [\B]_{\textup{BMO}}(\Omega) + \sup_{x'} |\nabla \gamma(x')|^2 \right] < \hat{\delta}_0,
\end{split}
\]
where we have used our choice of $\delta$ in \eqref{choice-delta-extension}, and \eqref{gamma-gradient} in the above estimate.  Moreover, it follows from \eqref{gradient-Phi-Psi} that
\[
[(n+1)\Lambda]^{-1}|\xi|^2 \mu(y) \leq \wei{\hat{\A}(Z)\xi, \xi} \leq (n+1)\Lambda|\xi|^2 \mu(y), \quad
\text{for a.e.} \quad Z= (z,y) \in B_{2\rho}^+ \times (0,2), \quad \forall \ \xi \in \R^{n+1}.
\]
From the last two estimates, Lemma \ref{Ex-1}, Lemmas \ref{Ex-2}-\ref{Ex-2-half}, and Remark \ref{Ex-2-remark}, we conclude that $\hat{\A} \in \mathcal{B}(B_{2\rho}^+ \times (0,2), (n+1)\Lambda, M_0, \mu)$. From this, we can use Theorem \ref{local-grad-estimate-half-cylinder} to obtain
\[
\begin{split}
&\norm{\nabla \hat{ u}}_{L^p(B_\rho^+ \times (0,2), \mu)} \\
& \leq C\left[\hmu(B_{2\rho}^+ \times (0,2))^{\frac{1}{p} -\frac{1}{2}} \norm{\nabla \hat{u}}_{L^2(B_{2\rho}^+ \times (0,2), \mu)}   + \norm{\hat{\F}/\mu}_{L^p(B_{2\rho}^+ \times (0,2), \mu)} + \norm{\hat{f}/\mu}_{L^p(B_{2\rho}^+\times (0,2), \mu)}  \right].
\end{split}
\]
Changing back to $X = (x,y)$ variable and using the fact that  $|\nabla u(\Psi(z), y)| \leq \sqrt{n+1} |\nabla \hat{u}(z, y)|$, we then obtain
\[
\begin{split}
&\norm{\nabla u}_{L^p(\Psi(B_\rho^+) \times (0,2), \mu)} \\
& \leq C\left[\hmu(B_{2\rho} \times (0,2))^{\frac{1}{p} -\frac{1}{2}} \norm{\nabla u}_{L^2(\Omega_2, \mu)}   + \norm{\F/\mu}_{L^p(\Omega_2, \mu)}  + \norm{f/\mu}_{L^p(\Omega_2, \mu)}  \right].
\end{split}
\]
Let $\rho' = \rho/\sqrt{n+1}$. From \eqref{gradient-Phi-Psi}, it follows that  $B_{\rho'} \cap \Omega \subset \Psi(B_\rho^+)$. Therefore, 
\begin{equation} \label{boundary-est-gradient-extension}
\begin{split}
&\norm{\nabla u}_{L^p((B_{\rho'} \cap \Omega)\times (0,2), \mu)} \leq C\left[\hmu(B_{2\rho} \times (0,2))^{\frac{1}{p} -\frac{1}{2}} \norm{\nabla u}_{L^2(\Omega_2, \mu)}   + \norm{\F/\mu}_{L^p(\Omega_2, \mu)}  + \norm{f/\mu}_{L^p(\Omega_2, \mu)}  \right].
\end{split}
\end{equation}
The rest of the proof is now standard. We cover $\overline{\Omega}$ with finite number of interior balls $\{B_{r_i}(x_i)\}_{i=1,\cdots K_1}$ and boundary balls $\{B_{\rho_k}(z_k)\}_{k=1, \cdots, K_2}$, where $B_{2r_i}(x_i) \subset \Omega$, $z_k \in \partial \Omega$, $\rho_k$ is chosen as $\rho'$ above, and $K_1, K_2 \in \N$ are some numbers. Then, we use
\eqref{extensional-problem-est-1} for the balls $B_{r_i}(x_i)$, and use \eqref{boundary-est-gradient-extension} for the balls $B_{\rho_i}(z_i)$. Adding all estimates together, we obtain
\[
\begin{split}
& \norm{\nabla u}_{L^p(\Omega_1, \mu)}  \leq C\left[\hmu(B \times (0,2))^{\frac{1}{p} -\frac{1}{2}} \norm{\nabla \hat{u}}_{L^2(\Omega_2, \mu)}   + \norm{\hat{\F}/\mu}_{L^p(\Omega_2, \mu)} + \norm{\hat{f}/\mu}_{L^p(\Omega_R, \mu)}  \right],
\end{split}
\]
with some ball $B \subset \R^n$ large enough so that 
\[
\displaystyle{\cup_{i=1}^{K_1} B_{2r_i}(x_i) \cup_{i=1}^{K_2} B_{2\rho_i\sqrt{n+1}}(z_i)} \subset B.
\]
The proof is complete.
\end{proof}
\begin{proof}[Proof of Theorem \ref{fractional-theorem}]
As it has been demonstrated in \cite{Caffa-Sil, Caffa-Stinga, Capella, T-Stinga2010} that for $f\in L^{2}(\Omega)$,  if $u$ is a solution to the fractional elliptic equation $L^{s}u = f$, for $0<s<1$ in the sense defined in Section 2, then $u = Tr|_{\Omega}(U) = U(x, 0)$, where $U = U(x, y): \Omega\times (0, \infty)\to  \mathbb{R}$ solves the degenerate equation 
\[
\tag{EP} \left\{
\begin{array}{cccl}
\textrm{div}(y^{1-2s} \tilde{\A}(x)\nabla U) & = & 0,&\quad \Omega\times (0, \infty)\\
U &= &0,&\quad \partial \Omega\times (0, \infty)\\
\lim_{y\to 0^{+}}\langle y^{1-2s} \tilde{\A}(x)\nabla U, {\bf e}_{n+1}\rangle  &= & f,&\quad \Omega\times\{0\}
\end{array}
\right., \quad\quad  \text{where}\,\, \tilde{\A}(x) =  \left( \begin{matrix} \mathbb{B}(x)&0\\
0&1
\end{matrix} \right). 
\]
We may now directly apply Theorem \ref{extension-theorem} to (EP) to conclude that for $p\geq 2, $ if $\mathbb{B}$ satisfies \eqref{B-BMO-mu}, and $f\in L^{p}(\Omega)$, then $\nabla U \in L^{p}(\Omega\times (0, 1), y^{1-2s}dX)$ with the estimate 
\[
\int_{\Omega}\int_{0}^{1} |\nabla U|^{p} y^{1-2s}dy dx \leq C \left[\left( \int_{\Omega\times (0, 2)}|\nabla U|^{2} y^{1-2s}dy dx \right)^{p/2} + \int_{\Omega} \int_{0}
^{2} |f(x)|^{p} y^{(1-2s)(1-p)}dy dx\right].\]
Next we will estimate the terms in the right hand side of the above estimate. To that end, using the test function $\chi(y)^{2} U(x, y)$, where $\chi\in C_{c}^{\infty}(-\infty, 3)$,  $0\leq \chi\leq 1$ and $\chi(0) =1$,   we obtain from (EP) that for any $s\in (0, 1)$
\[
\begin{split}
\int_{\Omega} \int_{0}^{3}\langle\tilde{\A}(x)\nabla U, \nabla U\rangle \chi(y)^{2}  y^{1-2s}dy dx &=  - 2\int_{\Omega}\int_{0}^{3} U(x, y) \langle \tilde{\A}(x)\nabla U, {\bf e}_{n+1} \rangle  \chi(y)\chi'(y)  y^{1-2s}dy dx  +  \int_{\Omega}  U(x, 0) f(x)dx\\
&\leq \Lambda/2 \int_{\Omega}\int_{0}^{3} |\nabla U|^{2}\chi(y)^{2} y^{1-2s}dy dx + C(\Lambda) \int_{\Omega}\int_{0}^{3} |U|^{2} \chi'(y)^{2} y^{1-2s}dy dx \\
&+ \epsilon^{2}\int_{\Omega}  |U|^{2}(x, 0)dx + C/\epsilon^{2}\int_{\Omega} |f(x)|^{2}dx.
\end{split}
\]
where we have applied Young's inequality. 
From the above and the trace lemma, Lemma \ref{trace-zero}, it follows that  by choosing $\epsilon$ small, 
\[
\begin{split}
\int_{\Omega}\int_{0}^{3} |\nabla U|^{2}  \chi(y)^{2}  y^{1-2s}dy dx& \leq C(\Lambda) \left(\int_{\Omega}\int_{0}^{3} |U|^{2}  y^{1-2s}dy dx  + \int_{\Omega} |f(x)|^{2}dx\right)\\
&\leq C(\Lambda) \left(\int_{\Omega}|u(x)|^{2} dx  + \int_{\Omega} |f(x)|^{2}dx\right) \leq  C \int_{\Omega} |f(x)|^{2}dx,
\end{split}
\]
where the last inequality follows from \cite[Theorem 2.5]{Caffa-Stinga}. 
On the other hand, the quantity 
\[
 \int_{\Omega} \int_{0} ^{2} |f(x)|^{p} y^{(1-2s)(1-p)}dy dx = \int_{0}^{3} y^{(1-2s)(1-p)}\int_{\Omega}|f|^{p}(x)dx \leq C \int_{\Omega}  |f(x)|^{p}  dx\]
 provided $(1-2s)(1-p) +1 >0$, obtaining the  estimate  
\[
\int_{\Omega}\int_{0}^{2}|\nabla U|^{p} y^{1-2s}dy dx  \leq  C \|f\|_{L^{p}(\Omega)}^{p}. 
\]
Since $U$ vanishes on $\partial \Omega\times (0, \infty)$, then from Poincar\'e's inequality we have that 
\[
\|U\|_{W^{1, p}(\Omega\times (0, 2), y^{1-2s}dX)} \leq C \|f\|_{L^{p}}.
\] 
We now apply the characterization of traces of weighted Sobolev spaces presented in \cite[Theorem 2.8]{Nekvinda} (see also \cite{Capella,Cabre})  to conclude that $u = Tr|_{\Omega}(U)$ is in the fractional Sobolev space $W^{\alpha, p}(\Omega)$
with the estimate 
\[
\|u\|_{W^{\alpha, p}(\Omega)} \leq C \|U\|_{W^{1, p}(\Omega\times (0, 2), y^{1-2s}dX)} \leq C \|f\|_{L^{p}}
\]
where $\alpha = 1- \frac{2-2s}{p}$.  
\end{proof}

\ \\ 
\noindent \textbf{Acknowledgement.} T. Mengesha's research is supported by NSF grants DMS-1312809 and DMS-1615726.  T. Phan's research is supported by the Simons Foundation, grant \#~354889. The authors would like to thank their colleague Abner J. Salgado for fruitful discussion on spectral fractional elliptic equations. 

\appendix
\section{Lipschitz estimates for uniformly elliptic equations}
We recall some fundamental results on Lipschitz estimates for uniformly elliptic equations. 
\begin{lemma} \label{reg-v-q} Let $\A_0: (-1,1) \rightarrow \R^{(n+1)\times (n+1)}$ be a symmetric, measurable matrix satisfying the uniform ellipticity condition: There is $\Lambda_0 >0$ such that
\[
\Lambda_0^{-1} |\xi|^2 \leq \wei{\A_0(y) \xi, \xi} \leq \Lambda_0|\xi|^2, \quad \text{for all} \quad 
\xi \in \R^{n+1}, \quad \text{for a.e.} \quad y \in (-1,1).
\]
Then, for every weak solution $v \in W^{1,2}(D_1)$ of
\begin{equation} \label{uniform-elliptic-eqn}
\textup{div}[\A_0(y) \nabla v(x,y)] = 0, \quad \text{for} \quad X= (x,y) \in D_1,
\end{equation}
and for every $\theta \in (0,1)$, there is $C = C(\Lambda_0, \theta, n) >0$ such that
\[
\norm{\nabla v}_{L^\infty(D_{\theta})} \leq C \left\{\fint_{D_1} |\nabla v|^2 dxdy \right\}^{1/2}.
\]
\end{lemma}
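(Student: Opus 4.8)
The plan is to reduce to a single small interior scale by translation and scaling, remove the off-diagonal coupling of $\A_0$ by a volume-preserving shear so that the coefficient matrix becomes block diagonal, then bound all tangential derivatives by difference quotients together with the classical interior local boundedness estimate for uniformly elliptic divergence-form equations, and finally recover the normal derivative by averaging the equation in $x$ so as to produce a one-variable function that is automatically Lipschitz. For the first reduction: since $\A_0$ depends only on $y$ and the ellipticity constant is unchanged under $x$-translations and isotropic dilations, a covering of $D_\theta$ by rescaled copies of a fixed small cylinder shows it suffices to prove $\norm{\nabla v}_{L^\infty(D_{\theta_0})}\le C(\Lambda_0,n)\norm{\nabla v}_{L^2(D_1)}$ for one radius $\theta_0=\theta_0(\Lambda_0,n)$ which we may take as small as we like (the general $\theta<1$ then follows with $C$ blowing up like $(1-\theta)^{-(n+1)/2}$). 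Writing $\A_0(y)=\begin{pmatrix}\B(y)&\beta(y)\\\beta(y)^{T}&a(y)\end{pmatrix}$ with $\B$ the upper-left $n\times n$ block, uniform ellipticity gives $\wei{\B(y)\zeta,\zeta}\ge\Lambda_0^{-1}|\zeta|^2$ and $a(y)\ge\Lambda_0^{-1}$, so $c(y):=\int_0^y\B(s)^{-1}\beta(s)\,ds$ is well defined and Lipschitz with $c(0)=0$. The shear $(\xi,y)\mapsto(\xi+c(y),y)$ has unit Jacobian, and $V(\xi,y):=v(\xi+c(y),y)$ solves $\textup{div}[\tilde{\A}_0(y)\nabla V]=0$ with $\tilde{\A}_0(y)=\textup{diag}\big(\B(y),\,a(y)-\beta(y)^{T}\B(y)^{-1}\beta(y)\big)$ block diagonal and uniformly elliptic with constant controlled by $\Lambda_0$ (the lower-right entry equals $1/(\A_0(y)^{-1})_{n+1,n+1}$). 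Since $c$ is bounded, for $\theta_0$ small depending only on $\Lambda_0$ this shear carries $D_{\theta_0}$ into the domain of $V$ and is bi-Lipschitz with $\Lambda_0$-controlled constants, so it is enough to treat the block-diagonal case, and from now on $\A_0(y)=\textup{diag}(\B(y),b(y))$ with $\wei{\B(y)\zeta,\zeta}\ge\Lambda_0^{-1}|\zeta|^2$ and $b(y)\ge\Lambda_0^{-1}$.

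Next I would establish interior bounds on all tangential derivatives. Because $\A_0$ is independent of $x$, difference quotients of $v$ in the $x_k$-directions again solve the homogeneous equation; a standard induction using difference quotients (e.g. \cite[Ch.~6.3]{Evans-book}) shows $\nabla_x^{m}v\in W^{1,2}_{\textup{loc}}(D_1)$ for every $m$, each a weak solution of the same equation, with energy bounds $\int_{D_r}|\nabla\nabla_x^{m}v|^2\le C(r,m)\int_{D_1}|\nabla v|^2$ for $r<1$. Applying the classical interior local boundedness estimate for uniformly elliptic divergence-form equations to $\partial_{x_i}v$ and to $\partial_{x_ix_j}v$, together with these energy bounds, gives, for any $\theta<\theta'<1$,
\[
\norm{\nabla_x v}_{L^\infty(D_{\theta'})}+\norm{\nabla_x^{2}v}_{L^\infty(D_{\theta'})}\le C(\Lambda_0,n,\theta)\,\norm{\nabla v}_{L^2(D_1)}.
\]

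It remains to bound $\partial_y v$. Fix $x_0\in B_{\theta}$ and $\rho>0$ small with $B_\rho(x_0)\subset B_{\theta'}$, and choose $\phi\in C_c^\infty(B_\rho(x_0))$ with $0\le\phi\le1$ and $\phi\equiv1$ on $B_{\rho/2}(x_0)$. Testing the equation for $v$ against $\phi(x)\chi(y)$ with $\chi\in C_c^\infty((-1,1))$, and integrating by parts in $x$ (justified by the bound on $\nabla_x^2v$), yields after dividing by $|B_\rho(x_0)|$ the identity
\[
\int_{-1}^{1}\chi'(y)\,h(y)\,dy=\int_{-1}^{1}\chi(y)\,g(y)\,dy,
\]
where $h(y)=\fint_{B_\rho(x_0)}\phi(x)\,b(y)\,\partial_y v(x,y)\,dx$ and $g(y)=\fint_{B_\rho(x_0)}\phi(x)\sum_{i,j=1}^{n}\B^{ij}(y)\,\partial_{x_ix_j}v(x,y)\,dx$. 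Hence $h$ has weak derivative $h'=-g$ on $(-1,1)$ with $\|g\|_{L^\infty((-\theta',\theta'))}\le C\norm{\nabla v}_{L^2(D_1)}$ by the tangential bound, while $\|h\|_{L^2((-\theta',\theta'))}\le C\norm{\nabla v}_{L^2(D_1)}$ by Jensen's inequality and the energy bound; a one-variable function with a bounded weak derivative and a controlled $L^2$-norm satisfies $\|h\|_{L^\infty((-\theta,\theta))}\le C\norm{\nabla v}_{L^2(D_1)}$ with $C$ independent of $x_0$ and $\rho$. Letting $\rho\to0^+$ along Lebesgue points of $x\mapsto b(y)\partial_y v(x,y)$ gives $|b(y)\,\partial_y v(x_0,y)|\le C\norm{\nabla v}_{L^2(D_1)}$ for a.e. $(x_0,y)\in D_\theta$; dividing by $b(y)\ge\Lambda_0^{-1}$ and combining with the tangential estimate (and undoing the shear and the initial rescaling) completes the proof.

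The genuine obstacle is the off-diagonal block $\beta(y)$: because $\beta$ is merely measurable one cannot differentiate it, and the naive test-function computation for $\partial_y v$ leaves an uncontrolled term involving $\beta'$. The shear in the first step is exactly what eliminates this term, and the only cost is the bookkeeping with the size of the cylinder on which control is obtained, which is why the argument is organized around a single small radius $\theta_0(\Lambda_0,n)$ before invoking scaling; everything else is classical elliptic regularity.
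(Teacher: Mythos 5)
The paper does not prove Lemma~\ref{reg-v-q}; it simply cites \cite[Lemma~3.3]{Byun-P}. Your proposal therefore supplies a genuinely different, self-contained argument, and its overall strategy---shear to block-diagonal form, control all tangential derivatives via difference quotients and the interior local boundedness estimate, then recover $\partial_y v$ by averaging the equation in $x$---is sound and closely parallels the technique the paper itself uses in Lemma~\ref{Ex-1} for the weighted, already block-diagonal case.

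There is, however, a concrete error in the shear step. For $\Phi(\xi,y)=(\xi+c(y),y)$ and $V=v\circ\Phi$, the divergence-form change of variables produces the new coefficient $\tilde{\A}_0=(D\Phi)^{-1}\A_0(D\Phi)^{-T}$, not the congruence $(D\Phi)^{T}\A_0D\Phi$; since $D\Phi$ is upper block-triangular with $(n{+}1,n{+}1)$-entry equal to $1$, the $(n{+}1,n{+}1)$-entry of $\tilde{\A}_0$ is always $a(y)$ and can never become $a-\beta^{T}\B^{-1}\beta$. Writing $\gamma=c'$, a direct block computation gives
\[
\tilde{\A}_0(y)=\left(\begin{matrix}\B-\gamma\beta^{T}-\beta\gamma^{T}+a\,\gamma\gamma^{T} & \beta-a\gamma\\ \beta^{T}-a\gamma^{T} & a\end{matrix}\right),
\]
so with your choice $\gamma=\B^{-1}\beta$ the off-diagonal equals $(I-a\B^{-1})\beta$, which is nonzero in general, and $\tilde{\A}_0$ is not block diagonal. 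This matters: without block diagonality, testing against $\phi(x)\chi(y)$ and integrating by parts in $x$ leaves in $g(y)$ a term of the form $\fint\phi\,\tilde{\beta}\cdot\nabla_x\partial_y v\,dx$, involving the mixed second derivative $\partial_{x_i y}v$ for which you have no a~priori $L^\infty$ control, and the one-dimensional ODE argument does not close as written.

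The fix is small: take $c(y)=\int_0^{y}\beta(s)/a(s)\,ds$, i.e.\ $\gamma=\beta/a$. Then the off-diagonal $\beta-a\gamma$ vanishes and
\[
\tilde{\A}_0(y)=\mathrm{diag}\bigl(\B(y)-\beta(y)\beta(y)^{T}/a(y),\ a(y)\bigr),
\]
which is symmetric, block diagonal, and---being congruent to $\A_0$ by a matrix with operator norm bounded by $1+\Lambda_0^{2}$---uniformly elliptic with constants controlled by $\Lambda_0$ alone. With this corrected shear, the remainder of your argument (tangential $L^\infty$ bounds up to order two, the identity $h'=-g$, $g\in L^\infty$ from the second tangential derivatives, $h\in L^\infty$, and passage to Lebesgue points) carries through and yields a valid proof.
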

\noindent
\begin{proof} This lemma is known, see for example \cite{Byun-P, Kim-Krylov}. For the proof, one can find it in \cite[Lemma 3.3]{Byun-P}.
\end{proof}
\begin{lemma} \label{reg-v-q-half-cyll} Let $\A_0: (-1,1) \rightarrow \R^{(n+1)\times (n+1)}$ be a symmetric, measurable matrix 
of the form \[
\A_0(y) = a_0(y) \left( \begin{matrix} \tilde{\B}_0 & 0 \\ 0 & 1 \end{matrix} \right).
\]
where $\tilde{\B}_0$ is a symmetric  $n\times n$ constant matrix and  $a_0$ is a measurable function defined on $(-1,1)$. Assume also that there is $\Lambda_0 >0$ such that
\[
\Lambda_0^{-1} |\xi|^2 \leq \wei{\A_0(y) \xi, \xi} \leq \Lambda_0|\xi|^2, \quad \text{for all} \quad 
\xi \in \R^{n+1}, \quad \text{for a.e.} \quad y \in (-1,1).
\]
Then, for every weak solution $v \in W^{1,2}(D_1^+)$ of
\begin{equation} \label{uniform-elliptic-eqn-half}
\left\{
\begin{array}{cccl}
\textup{div}[\A_0(y) \nabla v(X)] & = & 0, & \quad \text{for} \quad X= (x,y) \in D_1^+, \\
v & = & 0, & \quad \text{on} \quad T_1 \times (-1,1),
\end{array} \right.
\end{equation}
and for every $\theta \in (0,1)$, there is $C = C(\Lambda_0, \theta, n) >0$ such that
\[
\norm{\nabla v}_{L^\infty(D_{\theta}^+)} \leq C \left\{\fint_{D_1^+} |\nabla v|^2 dxdy \right\}^{1/2}.
\]
\end{lemma}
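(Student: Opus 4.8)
The plan is to reduce, by an affine change of the $x$-variables, to a situation in which the $x_n$-direction (the direction normal to the flat part $T_1\times I_1$ of the boundary) decouples from the remaining variables, and then to run the classical odd-reflection argument and invoke the interior estimate of Lemma \ref{reg-v-q}.

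First I would normalize the constant matrix $\tilde{\B}_0$. Writing $\tilde{\B}_0=\begin{pmatrix}\B' & b\\ b^{T} & \beta\end{pmatrix}$ in block form with $\B'$ an $(n-1)\times(n-1)$ block and $\beta>0$, the linear map $L(x',x_n)=(x'-\beta^{-1}b\,x_n,\ \beta^{-1/2}x_n)$ fixes the hyperplane $\{x_n=0\}$ and the half-space $\{x_n>0\}$, and transforms $\tilde{\B}_0$ into $L\tilde{\B}_0L^{T}=\mathrm{diag}(\hat{\B}',1)$ with $\hat{\B}'=\B'-\beta^{-1}bb^{T}$ still symmetric and positive definite, with bounds depending only on $\Lambda_0,n$. (This is why a mere rotation will not do when $\tilde{\B}_0$ is not already block-diagonal in $e_n$.) Since this change of variables affects only the $x$-block and leaves the coefficient depending only on $y$, the function $w(z,y):=v(L^{-1}z,y)$ is a weak solution, on $L(B_1^+)\times I_1$, of $\div[\A_1(y)\nabla w]=0$, where $\A_1(y)=a_0(y)\,\mathrm{diag}(\hat{\B}',1,1)$ is uniformly elliptic (constants depending only on $\Lambda_0,n$) and, crucially, has both its $z_n$-block and its $y$-block equal to the scalar $a_0(y)$, with no cross terms involving $z_n$; moreover $w=0$ on $\{z_n=0\}$.

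Next I would let $\tilde{w}$ be the odd reflection of $w$ across $\{z_n=0\}$. Because $\A_1(y)$ is invariant under $z_n\mapsto -z_n$, a standard computation shows that $\tilde{w}$ is a weak solution of $\div[\A_1(y)\nabla\tilde{w}]=0$ on the reflected open set $\mathcal O=\big(L(B_1^+)\cup R(L(B_1^+))\cup L(B_1\cap\{z_n=0\})\big)\times I_1$, where $R$ is the reflection in the $z_n$-coordinate; $\mathcal O$ is a fixed neighborhood of $L(\{x_n=0,\ |x|<1\})\times I_1$. For $\theta\in(0,1)$ the compact set $\overline{L(B_\theta^+)}\times\overline{I_\theta}$ lies inside $\mathcal O$, so I would cover it by finitely many cylinders $D_{r_i}(Z_i)$ with $\overline{D_{2r_i}(Z_i)}\subset\mathcal O$ and apply a rescaled and translated version of Lemma \ref{reg-v-q} on each, obtaining $\norm{\nabla\tilde w}_{L^\infty(L(D_\theta^+))}\le C\big(\fint_{\mathcal O}|\nabla\tilde w|^2\big)^{1/2}$. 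By the symmetry of $\tilde w$, the right-hand side equals $\big(\fint_{L(D_1^+)}|\nabla w|^2\big)^{1/2}$, and undoing the change of variables — the Jacobian of $L$ and $\norm{L},\norm{L^{-1}}$ contributing only constants depending on $\Lambda_0,n$, together with $|D_1^+|\simeq 1$ — then yields $\norm{\nabla v}_{L^\infty(D_\theta^+)}\le C(\Lambda_0,\theta,n)\big(\fint_{D_1^+}|\nabla v|^2\big)^{1/2}$, as claimed.

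The only genuinely delicate points are (i) verifying that the affine map $L$ really does decouple the normal direction while preserving the flat boundary and the ellipticity constants, and (ii) the book-keeping in the finite covering near the flat face, i.e. ensuring that the enlarged cylinders $D_{2r_i}(Z_i)$ remain inside the reflected domain $\mathcal O$ (here one uses $\theta<1$, so that $\overline{D_\theta^+}$ stays away from the ``equator'' of $B_1^+$). Both are routine once the structure above is set up, and everything else is a direct appeal to the uniformly elliptic interior estimate of Lemma \ref{reg-v-q}.
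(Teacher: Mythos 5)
Your proof is correct and follows the same overall strategy as the paper: reduce to a coefficient in which the normal direction decouples, odd-reflect across the flat boundary, and invoke the interior estimate of Lemma \ref{reg-v-q}. The one place where you deviate from the paper is the reduction step, and the deviation is in fact a necessary fix rather than a stylistic choice. The paper asserts that ``by rotation in the $x$ variable we can assume $\tilde{\B}_0$ is diagonal,'' but an orthogonal diagonalization of $\tilde{\B}_0$ fixes the hyperplane $\{x_n=0\}$ only when $e_n$ is already an eigenvector of $\tilde{\B}_0$; otherwise it tilts the flat face $T_1\times I_1$ and the odd reflection no longer applies. Your affine shear $L(x',x_n)=(x'-\beta^{-1}b\,x_n,\ \beta^{-1/2}x_n)$ (equivalently, a block $LDL^{T}$ factorization of $\tilde{\B}_0$ pivoting on the $x_n$-entry, with the Schur complement $\hat{\B}'=\B'-\beta^{-1}bb^{T}$) does exactly what is required: it fixes $\{x_n=0\}$ and maps $\{x_n>0\}$ to itself, kills the $x'$--$x_n$ cross terms, leaves the coefficient a function of $y$ alone, and has operator norm and Jacobian controlled by $\Lambda_0$ and $n$. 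The remainder — the odd reflection producing a weak solution on the symmetric open set $\mathcal{O}$, a finite covering of $\overline{L(D_\theta^+)}$ by cylinders whose doubles stay inside $\mathcal{O}$, the rescaled interior estimate on each, and undoing $L$ — is routine and correctly carried out.
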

\begin{proof}  By rotation in $x$ variable, we can assume that $\tilde{\B}_0$ is diagonal. Then, let $\tilde{v}$ be the odd reflection of $v$ with respect the $x_n$, i.e.
\[
\tilde{v}(x', x_n ,y) = \left\{
\begin{array}{ll}
v(x', x_n, y), & \quad X = (x', x_n, y) \in D_1, \quad x_n >0, \\
-v(x', -x_n, y), & \quad X = (x', x_n, y) \in D_1, \quad x_n <0.
\end{array} \right.
\]
It then follows that $\tilde{v}$ is a weak solution of
\[
\textup{div}[\A_0(y) \nabla \tilde{v}(X)]  =  0,  \quad \text{for} \quad X= (x,y) \in D_1.
\]
The desired estimate the follows from Lemma \ref{reg-v-q}.
\end{proof}

\end{document}